\documentclass[11pt]{amsart}
\usepackage[margin=0.9in]{geometry} 
\usepackage{esint,amsthm,amsmath,amssymb,amsfonts,graphicx,color,comment,enumerate,caption,bbm,bm,hyperref,enumitem,mathrsfs}
\usepackage{mathtools}

\usepackage{tikz,pgf}
\usepackage{pgfplots}
\usetikzlibrary{calc}
\usetikzlibrary{patterns}

\usepackage{cleveref}
\allowdisplaybreaks

\DeclareMathOperator\supp{supp}

\newtheorem{lemma}{Lemma}[section]
\newtheorem{remark}{Remark}[section]
\numberwithin{equation}{section}
\newtheorem{theorem}{Theorem}[section]
\newtheorem{proposition}[theorem]{Proposition}

\newtheorem{corollary}[theorem]{Corollary}

\title{Bilinear Bochner-Riesz Means on the Complex Sphere}

\author[S. Bagchi, Md N. Molla, J. Singh, M. N. Vempati]
{Sayan Bagchi \and Md Nurul Molla \and Joydwip Singh \and Manasa N. Vempati} 

\address[S. Bagchi]{Department of Mathematics and Statistics, Indian Institute of Science Education and Research Kolkata, Mohanpur--741246, West Bengal, India.}
\email{sayan.bagchi@iiserkol.ac.in}

\address[Md N. Molla]{Department of General Sciences, BITS Pilani Dubai Campus, International Academic City, Dubai, 345055, UAE}
\email{nurul@dubai.bits-pilani.ac.in}

\address[J. Singh]{Department of Mathematical Sciences, Indian Institute of Science Education and Research Mohali, Mohali--140306, Punjab, India.}
\email{joydwipsingh@iisermohali.ac.in}

\address[M. N. Vempati]{Department of Mathematics, Louisiana State University, Baton Rouge, 70803, USA.}
\email{nvempati@lsu.edu}

\subjclass[2020]{Primary 43A85, 42B15; Secondary 32V20}

\keywords{Bilinear Bochner-Riesz means, Complex sphere, Sub-Laplacians, Bilinear spectral multipliers}

\begin{document}

\begin{abstract}
In this paper, we establish the boundedness of the bilinear Bochner–Riesz means $\mathcal{B}^{\alpha}_R$ on the complex sphere $\mathbb{S}$. More precisely, we prove that $\mathcal{B}^{\alpha}_R$ is bounded from $L^{p_1}(\mathbb{S}) \times L^{p_2}(\mathbb{S}) \to L^p(\mathbb{S})$ where $1/p_1+1/p_2=1/p$ and $1\leq p_1, p_2 \leq \infty$, for an admissible range of exponents, with the required smoothness parameter $\alpha$ described in terms of the topological dimension of $\mathbb{S}$. To facilitate our proof, we establish several analytic estimates, including restriction-type estimates, weighted Plancherel estimates with large power of weights and bilinear weighted Plancherel estimates, which are derived from the ground up in our setting and may be considered of independent interest.
    
\end{abstract}

\maketitle

\tableofcontents

\section{Introduction}
The study of Bochner-Riesz means is a central topic in harmonic analysis, originating from the problem of improving the convergence of Fourier expansions, and is closely connected with several fundamental problems in the subject, including the restriction and Kakeya conjectures. Let us consider the bilinear analogues of the Bochner-Riesz means. For $R>0$ and $\alpha\geq 0$, the bilinear Bochner-Riesz operator is denoted by $B_R^\alpha$ and defined by
 \begin{align*}
    B_R^\alpha(f,g)(x) &= \int_{\mathbb{R}^n} \int_{\mathbb{R}^n} 
    \left( 1 - \frac{|\xi|^2 + |\eta|^2}{R^2} \right)_+^\alpha
   \widehat{f}(\xi) \, \widehat{g}(\eta) \, e^{2\pi i x \cdot (\xi + \eta)} \, d\xi \, d\eta,
 \end{align*}
for $f,g \in \mathcal{S}(\mathbb{R}^n)$, where $(t)_{+} = \max\{t, 0\}$ for $t \in \mathbb{R}$ and $\widehat{f}$ denotes the Fourier transform of $f$. 

Bilinear Bochner-Riesz means arise naturally in connection with summability and convergence questions for products of Fourier series. A central problem is to determine the optimal range of the parameter $\alpha$ for which $B_R^\alpha$ is bounded from $L^{p_1}(\mathbb{R}^n) \times L^{p_2}(\mathbb{R}^n)$ into $L^p(\mathbb{R}^n)$ under the H\"older relation $\frac{1}{p} = \frac{1}{p_1} + \frac{1}{p_2}$ and $1\leq p_1, p_2 \leq \infty$. This question has attracted considerable attention in recent years; see e.g. \cite{Grafakos_Li_Disc_Multiplier_2006, Bernicot_Germain_Bilinear_multilier_narrow_support_2013, Bernicot_Grafakos_Song_Yan_Bilinear_Bochner_Riesz_2015, Jeong_Lee_Vargas_Bilinear_Bochner_Riesz_2018, Liu_Wang_Bilinear_Bochner_Riesz_Non_Banach_2020} and references therein. 

In dimension $n=1$, the problem is essentially resolved in the Banach triangle, namely when $1 \le p_1,p_2,p \le \infty$; see \cite{Bernicot_Grafakos_Song_Yan_Bilinear_Bochner_Riesz_2015, Grafakos_Li_Disc_Multiplier_2006}. In dimensions $n \ge 2$, Bernicot et al. \cite{Bernicot_Grafakos_Song_Yan_Bilinear_Bochner_Riesz_2015} established a number of positive and negative results. Subsequently, Jeong, Lee, and Vargas
\cite{Jeong_Lee_Vargas_Bilinear_Bochner_Riesz_2018} obtained improvements in certain ranges ($p_1, p_2 \geq 2$) by connecting bilinear Bochner-Riesz means with square-function estimates. Further progress in the non-Banach range ($0<p<1$) was obtained by Liu and Wang \cite{Liu_Wang_Bilinear_Bochner_Riesz_Non_Banach_2020}, who lowered the smoothness threshold in some cases. 

We recall below some of the main results from \cite{Bernicot_Grafakos_Song_Yan_Bilinear_Bochner_Riesz_2015} and \cite{Liu_Wang_Bilinear_Bochner_Riesz_Non_Banach_2020} which will serve as the starting point of our discussion.

\begin{theorem}\cite[Proposition 4.10, 4.11]{Bernicot_Grafakos_Song_Yan_Bilinear_Bochner_Riesz_2015}, \cite[Theorem 1.1]{Liu_Wang_Bilinear_Bochner_Riesz_Non_Banach_2020}
\label{Theorem: Euclidean bilinear Bochner-Riesz for Grafakos}
Let $n \geq 2$ and $1\leq p_1, p_2 \leq \infty$ with $1/p = 1/p_1 +1/p_2$. Then the following hold
\begin{align*}
    \|B^{\alpha}_R(f,g)\|_{L^{p}(\mathbb{R}^n)} &\leq C \|f\|_{L^{p_1}(\mathbb{R}^n)} \|g\|_{L^{p_2}(\mathbb{R}^n)} ,
\end{align*}
whenever $p_1, p_2, p$ and $\alpha$ satisfy one of the following conditions:
\begin{enumerate}
    \item (Region I) $2 \leq p_1, p_2 \leq \infty$, $1\leq p \leq 2$ and $\alpha> (n-1)(1-\frac{1}{p})$. 
    \item (Region II) $2 \leq p_1, p_2, p \leq \infty$ and $\alpha> \frac{n-1}{2} + n(\frac{1}{2}-\frac{1}{p})$.
    \item (Region III) $2 \leq p_2 < \infty$, $1\leq p_1, p \leq 2$ and $\alpha> n(\frac{1}{2}-\frac{1}{p_2})-(1-\frac{1}{p})$.
    \item (Region III) $2 \leq p_1 < \infty$, $1\leq p_2, p \leq 2$ and $\alpha> n(\frac{1}{2}-\frac{1}{p_1})-(1-\frac{1}{p})$.
    \item (Region IV) $1\leq p_1 \leq 2 \leq p_2 \leq \infty$, $0<p<1$ and $\alpha> n(\frac{1}{p_1}-\frac{1}{2})$.
    \item (Region IV) $1\leq p_2 \leq 2 \leq p_1 \leq \infty$, $0<p<1$ and $\alpha> n(\frac{1}{p_2}-\frac{1}{2})$.
    \item (Region V) $1\leq p_1 \leq p_2 \leq 2$ and $\alpha> n(\frac{1}{p}-1)-(\frac{1}{p_2}-\frac{1}{2})$.
    \item (Region V) $1\leq p_2 \leq p_1 \leq 2$ and $\alpha> n(\frac{1}{p}-1)-(\frac{1}{p_1}-\frac{1}{2})$. 
\end{enumerate}
\end{theorem}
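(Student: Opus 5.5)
This is the Euclidean result quoted from Bernicot–Grafakos–Song–Yan and Liu–Wang. I would prove it by decomposing the multiplier and then interpolating between a few extreme cases. By scaling we may take $R=1$. Fix a smooth dyadic partition of unity near the boundary of the ball and write
\[
(1-|\xi|^2-|\eta|^2)_+^\alpha=m_0(\xi,\eta)+\sum_{j\ge 1}2^{-j\alpha}m_j(\xi,\eta),
\]
where $m_0$ is smooth with compact support and each $m_j$ is a smooth bump supported in the shell $1-|\xi|^2-|\eta|^2\sim 2^{-j}$. The kernel of $m_0$ is Schwartz, so it is harmless in every region. For each $j$ we estimate $\|T_{m_j}\|_{L^{p_1}\times L^{p_2}\to L^p}\lesssim 2^{j\gamma(p_1,p_2)}$, and summation converges exactly when $\alpha>\gamma$. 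It therefore suffices to find $\gamma$ at the vertices of each region and then interpolate. For $p<1$ we use quasi-Banach multilinear interpolation, or prove the bound directly.

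The key vertex estimates are as follows.

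\textbf{(a)} At $L^2\times L^2\to L^1$ we freeze $|\eta|$ and view $m_j$ as a superposition, in $t=|\eta|^2$, of linear Bochner–Riesz pieces in $\xi$ at radius $\sqrt{1-t}$ with width $2^{-j}$. Combining Cauchy–Schwarz in $\xi,\eta$ with Plancherel gives a bound that is uniform in $j$ up to logarithmic losses, so $\alpha>0$ suffices there.

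\textbf{(b)} At $L^\infty\times L^\infty\to L^\infty$ (and $L^1$ endpoints) we bound the $L^1(\mathbb{R}^{2n})$ norm of the kernel of $m_j$. This is a $2n$-dimensional annulus of width $2^{-j}$, whose kernel decays like the $(2n-1)$-dimensional sphere measure. The resulting bounds $2^{j(n-1/2)}$-type, and sharper ones obtained via the product structure $K(x-y,x-z)$, are what produce the thresholds $\frac{n-1}{2}+n(\frac12-\frac1p)$ and $(n-1)(1-\frac1p)$.

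\textbf{(c)} For Regions III–V, where one exponent is at most $2$, we bound the inner operator in $L^{p_1}\to L^2$ and its dual using the Stein–Tomas restriction theorem and the Hausdorff–Young inequality. For example, writing $T_{m_j}(f,g)(x)=\int \widehat g(\eta)e^{ix\eta}\,S_{j,\eta}f(x)\,d\eta$, where $S_{j,\eta}$ is a linear Bochner–Riesz-type piece, we combine the $L^{p_1}\to L^2$ bound of order $2^{jn(1/p_1-1/2)}$ with a trivial $\widehat g\in L^{p_2'}$ estimate. Liu–Wang's non-Banach Region IV bound $n(\frac1{p_1}-\frac12)$ comes out this way, after splitting into compactly supported pieces in physical space of scale $2^j$ (localization) so that the $L^p$ quasi-norm with $p<1$ can be controlled by Hölder on balls.

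After the vertex bounds, we interpolate. Bilinear complex interpolation is applied within each region to the family $z\mapsto (1-|\cdot|^2)_+^{z}$. The Region I range between $(2,2,1)$ and $(\infty,\infty,\infty)$ gives the linear-in-$1/p$ threshold. Regions III and V arise by interpolating between the $L^2\times L^2\to L^1$ vertex and the $L^1\times L^2$ and $L^1\times L^1\to L^{1/2}$ vertices.

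I expect the main obstacle to be the sharp kernel estimates at the $L^1$ and $L^\infty$ vertices that yield the factor $(n-1)$ rather than $(2n-1)$. Obtaining these requires exploiting the product structure and the restriction theorem, rather than crude size bounds of the $2n$-dimensional kernel. For the non-Banach Region IV–V vertices, the quasi-Banach localization argument is the delicate step.
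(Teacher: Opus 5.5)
Your high-level scheme is right, and your steps for $(2,2,1)$ and $(\infty,\infty,\infty)$ are fine. The scheme is: bounds $2^{j\gamma}$ for the dyadic pieces $T_j$ at finitely many points, then bilinear interpolation. The paper only quotes this theorem, but for its sphere analogue it uses exactly this scheme, at the points $(p_1,p_2,p)=(2,2,1)$, $(2,\infty,2)$, $(\infty,\infty,\infty)$, $(1,\infty,1)$, $(1,2,\tfrac23)$, $(1,1,\tfrac12)$ and their mirror images.

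\textbf{The vertex bookkeeping is wrong, and it hides the two estimates the theorem rests on.} Region I is the triangle spanned by $(2,2,1)$, $(2,\infty,2)$, $(\infty,2,2)$. Interpolating $(2,2,1)$ ($\alpha>0$) with $(\infty,\infty,\infty)$ ($\alpha>n-\tfrac12$) gives on the diagonal only $(n-\tfrac12)(1-\tfrac1p)$. That is strictly worse than $(n-1)(1-\tfrac1p)$, and two points cannot fill a two-dimensional region anyway. Region III is spanned by $(2,2,1)$, $(2,\infty,2)$, $(1,\infty,1)$, all with $p\ge1$. The points $(1,2,\tfrac23)$ and $(1,1,\tfrac12)$ are vertices of Regions IV and V, not III.

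So you must prove $\alpha>\tfrac{n-1}{2}$ at $(2,\infty,2)$, which Regions I--III need, and $\alpha>\tfrac n2$ at $(1,\infty,1)$, which Regions III--IV need. Your proposal gives neither. The bound $\|K_j\|_{L^1(\mathbb{R}^{2n})}\sim 2^{j(n-1/2)}$ yields only $n-\tfrac12$ at both points, and ``sharper bounds from the product structure'' is a placeholder. The factor $n-1$ is needed at $(2,\infty,2)$, not at $(\infty,\infty,\infty)$, where $n-\tfrac12$ is exactly the kernel-norm bound.

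For $(2,\infty,2)$ one route is to write $T_j(f,g)(x)=\int g(x-z)\,(K_j(\cdot,z)*f)(x)\,dz$. This gives $\|T_j(f,g)\|_2\le\|g\|_\infty\|f\|_2\int\sup_\xi|\mathcal{F}_1K_j(\xi,z)|\,dz$. For fixed $\xi$, $\mathcal{F}_1K_j(\xi,\cdot)$ is an $n$-dimensional Bochner--Riesz kernel, and the integral is $O(2^{j(n-1)/2})$.

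\textbf{Step (c) fails as written.} Hausdorff--Young gives $\widehat g\in L^{p_2'}$ only when $p_2\le2$. It is therefore unavailable precisely in Regions III and IV, where $p_2\ge2$, and cannot produce Liu--Wang's bound $n(\tfrac1{p_1}-\tfrac12)$. Also, the $L^{p_1}\to L^2$ norm of a width-$2^{-j}$ Bochner--Riesz piece is $O(2^{-j(1/p_1-1/2)})$, not $2^{jn(1/p_1-1/2)}$. The growing factor comes from physical-space localization, not from the linear operator.

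What works is the following:
\begin{enumerate}
\item Localize $x,y,z$ to a ball $B$ of radius $2^{j(1+\gamma)}$; the remainder is controlled by the decay of $K_j$ beyond scale $2^j$.
\item Use $\|h\|_{L^p(B)}\le|B|^{1/p-1}\|h\|_{L^1(B)}$ and $\|g\|_{L^2(B)}\le|B|^{1/2-1/p_2}\|g\|_{L^{p_2}}$.
\item Bound the $L^1$ output by H\"older and Plancherel after separating variables, $\Psi_j=\sum_l\phi_{j,l}\otimes\psi_l$, as in the paper's $(1,\infty,1)$ argument on the sphere.
\end{enumerate}
This gives $|B|^{1/2}\approx2^{jn/2}$ at both $(1,\infty,1)$ and $(1,2,\tfrac23)$.

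At $(1,1,\tfrac12)$ the relevant quantity is again not $\|K_j\|_{L^1(\mathbb{R}^{2n})}$. It is $\sup_v\int|K_j(x,x+v)|\,dx\lesssim2^{-j/2}$, obtained by integrating the pointwise bound $|K_j(w)|\lesssim2^{-j}(1+|w|)^{-(2n-1)/2}$ (for $|w|\lesssim2^j$) over an $n$-plane. Then $\|T_j(f,g)\|_{L^{1/2}(B)}\le|B|\,\|T_j(f,g)\|_{L^1}$ gives the threshold $n-\tfrac12$.
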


\vspace{-0.3cm}
\begin{figure}[!ht]
\begin{centering}
\definecolor{qqqqff}{rgb}{0,0,1}
\begin{tikzpicture}[line cap=round,line join=round,x=0.8cm,y=0.8cm]
\draw (6,6)-- (0,6);
\draw (6,0)-- (6,6);
\draw (0,3)-- (6,3);
\draw [->] (0,0) -- (8,0);
\draw [->] (0,0) -- (0,7.5);
\draw (3,6)-- (3,3);
\draw (0,6)-- (6,0);
\draw (3,3)-- (3,0);
\draw (3,3)-- (6,6);
\draw (0,3)-- (3,0);
\begin{scriptsize}
\fill [color=qqqqff] (0,0) circle (1.5pt);
\draw[color=qqqqff] (1,-.5) node {$\alpha>n-\frac{1}{2}$};
\fill [color=qqqqff] (0,6) circle (1.5pt);
\draw[color=qqqqff] (0.65,6.5) node {$\alpha > \frac{n}{2}$};
\fill [color=qqqqff] (6,6) circle (1.5pt);
\draw[color=qqqqff] (6.19,6.5) node {$\alpha>n-\frac{1}{2}$};
\fill [color=qqqqff] (6,0) circle (1.5pt);
\draw[color=qqqqff] (6.7,0.4) node {$\alpha>\frac{n}{2}$};
\fill [color=qqqqff] (3,0) circle (1.5pt);
\draw[color=qqqqff] (4.2,-0.5) node {$\alpha>\frac{n-1}{2}$};
\fill [color=qqqqff] (3,6) circle (1.5pt);
\draw[color=qqqqff] (3.3,6.5) node {$\alpha>\frac{n}{2}$};
\fill [color=qqqqff] (0,3) circle (1.5pt);
\draw[color=qqqqff] (-1,3.6) node {$\alpha>\frac{n-1}{2}$};
\fill [color=qqqqff] (3,3) circle (1.5pt);
\draw[color=qqqqff] (4,3.33) node {$\alpha>0$};
\fill [color=qqqqff] (6,3) circle (1.5pt);
\draw[color=qqqqff] (6.7,2.8) node {$\alpha>\frac{n}{2}$};
\fill [color=qqqqff] (6,-0.5) circle (0pt);
\draw[color=qqqqff] (6.0,-0.38) node {$1$};
\fill [color=qqqqff] (3,-0.5) circle (0pt);
\draw[color=qqqqff] (2.9,-0.48) node {$\frac{1}{2}$};
\fill [color=qqqqff] (-0.25,-0.42) circle (0pt);
\draw[color=qqqqff] (-0.25,-0.28) node {$O$};
\fill [color=qqqqff] (-0.5,3) circle (0pt);
\draw[color=qqqqff] (-0.38,3.0) node {$\frac{1}{2}$};
\fill [color=qqqqff] (-0.5,6) circle (0pt);
\draw[color=qqqqff] (-0.38,6.0) node {$1$};
\fill [color=qqqqff] (8.5,-0.5) circle (0pt);
\draw[color=qqqqff] (8.5,-0.16) node {$\frac{1}{p_1}$};
\fill [color=qqqqff] (-0.5,8.5) circle (0pt);
\draw[color=qqqqff] (-0.58,7.5) node {$\frac{1}{p_2}$};
\draw[color=qqqqff] (2.3,2.1) node {$I$};
\draw[color=qqqqff] (1.0,1.0) node {$II$};
\draw[color=qqqqff] (4.1,1.0) node {$III$};
\draw[color=qqqqff] (1.0,4.1) node {$III$};
\draw[color=qqqqff] (4.7,2.1) node {$IV$};
\draw[color=qqqqff] (2.3,4.7) node {$IV$};
\draw[color=qqqqff] (3.7,4.7) node {$V$};
\draw[color=qqqqff] (5,4) node {$V$};
\end{scriptsize}
\end{tikzpicture}
        \caption{Here $O=(0,0)$, and $\alpha>\alpha(p_1, p_2)$ represents that $B^{\alpha}_R$ is bounded on $L^{p_1}(\mathbb{R}^n) \times L^{p_2}(\mathbb{R}^n) \to L^p(\mathbb{R}^n)$ for $\alpha>\alpha(p_1, p_2)$ (see Theorem \ref{Theorem: Euclidean bilinear Bochner-Riesz for Grafakos}).}
        \label{Figure: Euclidean picture}
\end{centering}        
\end{figure}

\medskip
Beyond the Euclidean setting, the boundedness of bilinear Bochner-Riesz means has also been investigated recently in several other contexts. For instance, on M\'etivier groups the bilinear Bochner-Riesz means was studied by the first three authors in \cite{Bagchi_Molla_Singh_Bilinear_Metivier_2026}, by the third author in \cite{Singh_Steins_Square_function_2026}; while analogous questions for Grushin operators were considered in \cite{Bagchi_Molla_Singh_Bilinear_Bochner_Riesz_Grushin}. A common theme in these works is to establish the boundedness results for the bilinear Bochner-Riesz means with smoothness thresholds expressed in terms of the \emph{topological dimension} of the corresponding underlying space. We would like to emphasize that the above three results, to the best of our knowledge, are the first occurrence in the literature where the role of the topological dimension appears in a systematic way in bilinear multiplier theory through Bochner-Riesz means.

\medskip
Continuing in this line of research, the present article provides a further manifestation of this phenomenon in a fundamentally different geometric setting. One of the main contributions of the present paper is to obtain an analogue of Theorem \ref{Theorem: Euclidean bilinear Bochner-Riesz for Grafakos} for the bilinear Bochner-Riesz means associated with the sub-Laplacian defined on the complex sphere, where in particular, the smoothness parameter $\alpha$ is possibly described in terms of the \emph{topological dimension} of the underlying complex sphere. On the other hand, it is also important to note that since the complex sphere is conformally equivalent to the Heisenberg group, it has traditionally served as a prototypical example in the analysis of strictly pseudoconvex CR manifolds of hypersurface type, and has been studied by various authors, see \cite{Folland_Complex_sphere_1972, Folland_Stein_Heisenberg_Group_1974, Geller_Kohn_Laplacian_1980}. While this work is motivated by phenomena observed in \cite{Bagchi_Molla_Singh_Bilinear_Metivier_2026,Bagchi_Molla_Singh_Bilinear_Bochner_Riesz_Grushin}, its proofs requires the development of new tools and substantial modification of the methods employed in those earlier works. These changes are necessitated by the geometry and spectral structure of the complex sphere. To clarify, we briefly indicate some of the main difficulties that arise in our setting. The principle obstacle we encounter is the absence of several of the fundamental tools that played a crucial role in the earlier analysis. This includes \emph{restriction-type estimates}, \emph{weighted Plancherel estimates for large power of certain weights}, \emph{bilinear weighted Plancherel estimates} and \emph{inclusion of sub-Riemannian ball into product of Euclidean balls}. In the earlier works \cite{ Bagchi_Molla_Singh_Bilinear_Metivier_2026, Bagchi_Molla_Singh_Bilinear_Bochner_Riesz_Grushin}, such estimates were already available in the literature and could be used effectively. In contrast, in the present context, the corresponding estimates do not appear to be known and must be developed from the ground up. A significant part of the present work is therefore devoted to developing suitable substitutes and establishing new estimates adapted to the complex sphere and may be of independent interest. However, the developing of these tools alone is not sufficient to carry out the previous arguments. The compact nature of the complex sphere alongside the absence of relevant group-invariant structure, the continuous spectrum and scale-invariant features, introduce additional structural barriers. A thorough analysis of these obstructions are given after Theorem \ref{Theorem: Bilinear near diagonal case}. Before stating our main results, we recall some preliminary facts concerning the complex sphere.

\medskip

For $n \geq 2$, let $\mathbb{S} := S^{2n-1} = \{ (z_1, \ldots ,z_n) \in \mathbb{C}^n : |z_1|^2 + \cdots + |z_n|^2 =1 \}$ denote the complex unit sphere. Denote $d=2n-1$ to be the topological dimension and $Q=2n$ to be the homogeneous dimension of $\mathbb{S}$. For $f \in C^{\infty}(\mathbb{S})$, define 
\begin{align*}
     Tf(z) := \frac{d}{d \theta}f(e^{i \theta} z)|_{\theta=0} .
\end{align*}
Then the sub-Laplacian $\mathcal{L}$ is defined by
\begin{align*}
    \mathcal{L} &= \Delta + T^2 ,
\end{align*}
where $\Delta$ denotes the minus of the Laplace-Beltrami operator on $\mathbb{S}$.

The sub-Laplacian $\mathcal{L}$ is non-negative, essentially self-adjoint operator on $C^{\infty}(\mathbb{S})$ with respect to the uniform measure $\sigma$ of $\mathbb{S}$ and invariant under the action of the unitary group $\mathbb{U}(n)$.

For $\ell, \ell' \in \mathbb{N}_0$, let $\mathcal{H}_{\ell, \ell'}(\mathbb{S})$ denote the restriction of homogeneous harmonic polynomials of bidegree $(\ell, \ell')$ in $\mathbb{C}^n$ to the unit sphere $\mathbb{S}$. We say a function $f$ is homogeneous of bidegree $(\ell,\ell')$, if
\begin{align*}
    f(\lambda z) = \lambda^{\ell} \Bar{\lambda}^{\ell'} f(z)  \quad \text{for all} \quad \lambda \in \mathbb{C}\setminus \{0\} \quad \text{and} \quad z \in \mathbb{C}^n .
\end{align*}
The spaces $\mathcal{H}_{\ell, \ell'}(\mathbb{S})$, often denoted by $\mathcal{H}_{\ell, \ell'}$, are called the spaces of complex spherical harmonics of bidegree $(\ell, \ell')$. These spaces are eigen spaces of the sub-Laplacian $\mathcal{L}$, in fact, for all $f \in \mathcal{H}_{\ell, \ell'}$ we have
\begin{align*}
    \mathcal{L} f &= \lambda_{\ell, \ell'} f  \quad \quad \text{where} \quad \quad \lambda_{\ell, \ell'} = 4\ell \ell'+2(n-1)(\ell+\ell') .
\end{align*}
Since $\mathcal{L}$ is essentially self-adjoint, using the the spectral theorem for $f \in C^{\infty}(\mathbb{S})$, $R>0$ and $\alpha \geq 0$, the Bochner-Riesz means associated with the sub-Laplacian $\mathcal{L}$ of order $\delta$, denoted by $S^{\delta}_R$ and is defined by
\begin{align*}
    S^{\delta}_R(\sqrt{\mathcal{L}}) f(z) &= \sum_{\ell, \ell'=0}^{\infty} \left(1- \frac{\sqrt{\lambda_{\ell, \ell'}}}{R} \right)_{+}^{\delta} \pi_{\ell, \ell'} f(z) ,
\end{align*}
where $\pi_{\ell, \ell'}$ denote the projection of $L^2(\mathbb{S})$ onto the eigen space $ \mathcal{H}_{\ell, \ell'}$.

\medskip
We would like remark that, $L^p$-boundedness of Bochner-Riesz means $S^{\delta}_R(\sqrt{\mathcal{L}})$ on $\mathbb{S}$ has been studied by Casarino and Peloso \cite{Casarino_Peloso_Riesz_Mean_2011}. They proved that for  $1\leq p<\infty$, whenever $\delta>d|1/p-1/2|$, the operator $S^{\delta}_R(\sqrt{\mathcal{L}})$ is bounded on $L^p(\mathbb{S})$. Here $d=2n-1$ is the topological dimension of $\mathbb{S}$. A notable feature of this result is the required smoothness $\delta$ depends on $d$, rather than the homogeneous dimension $Q=2n$.

\medskip
Next we consider bilinear analogue of the Bochner-Riesz means $S^{\delta}_R(\sqrt{\mathcal{L}})$ on $\mathbb{S}$. For $\alpha \geq 0 $, $R>0$ and $f, g \in C^{\infty}(\mathbb{S})$, the bilinear Bochner-Riesz means associated with the sub-Laplacian $\mathcal{L}$ is given by
\begin{align*}
    \mathcal{B}_R^{\alpha}(f,g)(z) & = \sum_{\substack{\ell_1,\ell_1' =0,\\ \ell_2,\ell_2' =0}}^{\infty} \left(1- \frac{\sqrt{\lambda_{\ell_1, \ell_1'}} + \sqrt{\lambda_{\ell_2, \ell_2'}}}{R} \right)_{+}^{\alpha} \pi_{\ell_1, \ell_1'} f(z) \,  \pi_{\ell_2, \ell_2'} g(z) .
\end{align*}

Analogous to the Euclidean setup, in this paper we are interested in the $L^{p_1}(\mathbb{S}) \times L^{p_2}(\mathbb{S})$ to $L^{p}(\mathbb{S})$ boundedness of $\mathcal{B}_R^{\alpha}$, where $1\leq p_1, p_2 \leq \infty$ and the exponents satisfy the H\"older's relation $1/p=1/p_1 +1/p_2$. In addition, our main objective is to express smoothness threshold in terms of the topological dimension $d=2n-1$ of $\mathbb{S}$. In this direction, our first main result concerning the boundedness of $\mathcal{B}_R^{\alpha}$ is the following.

\begin{theorem}
\label{Theorem: Bilinear Bochner-Riesz Main theorem}
Let $1\leq p_1, p_2 \leq \infty$ with $1/p = 1/p_1 +1/p_2$. Then the following hold
\begin{align*}
    \|\mathcal{B}_R^{\alpha}(f, g)\|_{L^{p}(\mathbb{S})} &\leq C \|f\|_{L^{p_1}(\mathbb{S})} \|g\|_{L^{p_2}(\mathbb{S})} ,
\end{align*}
with some constant $C>0$ independent of $R$, whenever $p_1, p_2, p$ and $\alpha> \alpha(p_1, p_2)$ satisfy one of the following conditions:
\begin{enumerate}
    \item (Region I) $2 \leq p_1, p_2 \leq \infty$, $1\leq p \leq 2$ and $\alpha(p_1, p_2)= (d-1)(1-\frac{1}{p})$.
    \item (Region II) $2 \leq p_1, p_2, p \leq \infty$ and $\alpha(p_1, p_2)= \frac{d-1}{2} + d(\frac{1}{2}-\frac{1}{p})$.
    \item (Region III) $2 \leq p_2 \leq \infty$, $1\leq p_1, p \leq 2$ and $\alpha(p_1, p_2)= Q(\frac{1}{p_1}-\frac{1}{2})+(d-1)(1-\frac{1}{p})$.
    \item (Region III) $2 \leq p_1 \leq \infty$, $1\leq p_2, p \leq 2$ and $\alpha(p_1, p_2)= Q(\frac{1}{p_2}-\frac{1}{2})+(d-1)(1-\frac{1}{p})$.
    \item (Region IV) $1\leq p_1 \leq 2 \leq p_2 \leq \infty$, $0<p\leq 1$ and $\alpha(p_1, p_2)= (d+1)(\frac{1}{p}-1)+Q(\frac{1}{2}-\frac{1}{p_2})$.
    \item (Region IV) $1\leq p_2 \leq 2 \leq p_1 \leq \infty$, $0<p \leq 1$ and $\alpha(p_1, p_2)= (d+1)(\frac{1}{p}-1)+Q(\frac{1}{2}-\frac{1}{p_1})$.
    \item (Region V) $1\leq p_1, p_2 \leq 2$ and $\alpha(p_1, p_2)= (d+1)(\frac{1}{p}-1)$.
\end{enumerate}
\end{theorem}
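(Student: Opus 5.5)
The proof will follow the strategy of Bernicot--Grafakos--Song--Yan as adapted in \cite{Bagchi_Molla_Singh_Bilinear_Metivier_2026, Bagchi_Molla_Singh_Bilinear_Bochner_Riesz_Grushin}, with ingredients replaced by versions valid on $\mathbb{S}$. First I will treat the corners of each region and then recover the full ranges by bilinear complex interpolation in $(1/p_1,1/p_2,\alpha)$. For $R\le 1$ only finitely many bidegrees contribute, namely those with $\lambda_{\ell,\ell'}\le R^2$, and each $\pi_{\ell,\ell'}$ is bounded on all $L^p$ with smooth kernel, so the estimate is trivial. Hence assume $R\ge 1$.

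\textbf{Decomposition.} Write $(1-s)_+^\alpha=\sum_{j\ge 0}2^{-j\alpha}\phi_j(s)$ with $\phi_j$ smooth and supported where $1-s\sim 2^{-j}$; $\phi_0$ is supported away from $s=1$. For each $j$ I will decompose $\phi_j\big((\sqrt{\lambda_1}+\sqrt{\lambda_2})/R\big)$ in the variable $\sqrt{\lambda_1}/R\in[0,1]$ into $\sim 2^{j}$ intervals of length $2^{-j}$. On each piece, $\sqrt{\lambda_2}/R$ is confined to an interval of length $\lesssim 2^{-j}$. A Fourier series expansion in the two variables separates the multiplier into $\sum_k c_k\, m^1_{k}(\sqrt{\mathcal{L}})f\cdot m^2_{k}(\sqrt{\mathcal{L}})g$, with rapidly decaying coefficients, where each $m^i_k$ is a smooth bump at scale $2^{-j}R$. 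Each piece is then handled by H\"older's inequality together with linear estimates for spectrally localized multipliers of $\sqrt{\mathcal{L}}$. The key linear inputs are:
\begin{enumerate}
\item[(a)] the $L^2$ bound;
\item[(b)] a Stein--Tomas type restriction estimate $\|\chi_{[R,R+1)}(\sqrt{\mathcal{L}})\|_{L^p\to L^2}\lesssim R^{d(1/p-1/2)-1/2}$ for $p\le p_d$, which gives the $(d-1)/2$ gain in Regions I/II;
\item[(c)] the kernel bound $\|K_{m(\sqrt{\mathcal{L}})}(z,\cdot)\|_{L^2}\lesssim R^{Q/2}$, which gives the $Q$-factors in Regions III/IV;
\item[(d)] weighted Plancherel estimates with the weight $d(z,w)^{s}$, and a bilinear version, to control kernels off the diagonal. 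These are needed for the $(d+1)(1/p-1)$ terms near $L^1$.
\end{enumerate}

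\textbf{Region by region.}
\begin{itemize}
\item \emph{Region II corner $(\infty,\infty)$:} bound the bilinear kernel in $L^1$. Use (b) summed over the $2^{j}$ pieces, with Cauchy--Schwarz in the piece index, to get growth $2^{j(d-1)/2}$, so $\alpha>(d-1)/2$.
\item \emph{$(2,2)\to 1$:} Cauchy--Schwarz and orthogonality of the pieces give $\alpha>0$.
\item \emph{Region I and the rest of II:} interpolate.
\item \emph{Region III at $(1,2)$, $(1,\infty)$:} control the $L^1\to L^2$ norm of $m^1_k(\sqrt{\mathcal{L}})$ by $(2^{-j}R)^{Q/2}$-type bounds, using (c) localized via (d). Combine with the $L^\infty$ or $L^2$ estimate of the $g$ factor and sum over $j$.
\item \emph{Regions IV, V with $p<1$:} localize to balls of radius $R^{-1}2^{j}$ in the Carnot--Carath\'eodory metric, with finite propagation speed for the wave equation $\cos(t\sqrt{\mathcal{L}})$ handling the kernel tails. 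Use H\"older's inequality on the ball. Since $\sigma(B(z,r))\sim r^{Q}$ for the CC metric, the naive loss would be $Q(1/p-1)$. To reach $(d+1)(1/p-1)$ I will instead cover the sub-Riemannian ball by products of a Euclidean $d-1$ dimensional patch and the $T$-direction, and use the weighted estimates with weights in the $T$ variable only.
\end{itemize}

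\textbf{Main obstacle.} I expect the hardest step to be the restriction estimate (b) with the topological exponent $d$ rather than $Q$, uniformly in $R$. I will try to prove it from explicit kernels of $\pi_{\ell,\ell'}$ in terms of Jacobi polynomials $P^{(n-2,|\ell-\ell'|)}$ of $\langle z,w\rangle$, summing over bidegrees with $\lambda_{\ell,\ell'}\in[R^2,(R+1)^2)$. I would then use uniform Jacobi asymptotics, in the spirit of Casarino--Peloso, to get pointwise kernel decay. A $TT^*$ argument then yields the bound. The second difficulty is the weighted Plancherel estimate for large powers of the weight. For this I would first prove it at the level of individual $\pi_{\ell,\ell'}$ via Gegenbauer/Jacobi orthogonality, and then sum using the spectral localization.
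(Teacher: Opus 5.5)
\textbf{Main gap: the restriction estimate (b) is false.} Your whole $d$-dimensional gain in Regions I--II rests on the Stein--Tomas bound (b) with exponent $d$, and that bound does not hold, so these regions cannot be reached your way.

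At $p=1$, the norm $\|\chi_{[R,R+1)}(\sqrt{\mathcal{L}})\|_{L^1\to L^2}$ equals
$\sup_w\big(\sum_{\lambda_{\ell,\ell'}\in[R^2,(R+1)^2)}\|Y^{\ell,\ell'}_w\|_{L^2}^2\big)^{1/2}=\big(\omega_{2n-1}^{-1}\sum_{\lambda_{\ell,\ell'}\in[R^2,(R+1)^2)}d(\ell,\ell')\big)^{1/2}$. The bidegrees $(0,\ell')$ with $2(n-1)\ell'\in[R^2,(R+1)^2)$ alone give about $R$ terms, each with $d(0,\ell')\sim R^{2(n-1)}$. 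So this norm is $\gtrsim R^{d/2}=R^{(Q-1)/2}$, not $\lesssim R^{(d-1)/2}$.

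Testing on the indicator of a ball $B(w,c/R)$ gives, for every $1\le p<2$, the lower bound $R^{Q(1/p-1/2)-1/2}$. This exceeds (b) by a factor $R^{1/p-1/2}$, and no Jacobi asymptotics can get around a lower bound.

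The $d$-type bound you want holds only after discarding large $|\ell-\ell'|$ (Propositions~\ref{Proposition: Projection estimate near diagonal} and \ref{Proposition: Truncated restriction type estimate}). That is exactly why the paper uses such estimates only for the restricted classes of Theorem~\ref{Theorem: Bilinear Bochner-Riesz theorem with restricted f and g} and for $\mathcal{B}^\alpha_{R,s}$.

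\textbf{The mechanism you are missing.} In the main theorem the topological dimension enters through weighted Plancherel bounds with the weight $\varpi(z,w)=|1-|\langle z,w\rangle|^2|^{1/2}$ raised to a power $\beta<\frac12$. These are Corollary~\ref{Corollary: Linear weighted Plancherel} and its new bilinear version, Proposition~\ref{Proposition: Bilinear weighted Plancherel with weight}. They are paired via Cauchy--Schwarz with $\int_{B(z,r)}\varpi(z,w)^{-2\beta}\,d\sigma(w)\lesssim r^{Q-2\beta}$ (Lemma~\ref{Lemma: Integral of weight}) on balls of radius $2^{j(1+\gamma)}/R$. This gives:
\begin{itemize}
\item[(i)] $2^{j(Q/2-\beta)}2^{-j/2}\to 2^{j(d-1)/2}$ at $(2,\infty,2)$;
\item[(ii)] $2^{j(Q-\beta_1-\beta_2)}2^{-j/2}\to 2^{j(d-1/2)}$ at $(\infty,\infty,\infty)$.
\end{itemize}
Your item (d) uses weights only to control kernel tails, so this gain never appears.

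\textbf{Corner values.} Your bookkeeping at the vertices is also off. The statement's threshold at $(\infty,\infty,\infty)$ is $d-\frac12$. The value $\frac{d-1}{2}$ is the threshold at $(2,\infty,2)$, a vertex that Region I needs and that your plan never treats. As written, your $(\infty,\infty)$ step asserts far more than the theorem, or its Euclidean model Theorem~\ref{Theorem: Euclidean bilinear Bochner-Riesz for Grafakos}, and it does so on the strength of (b).

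\textbf{Regions IV and V.} Since $Q=d+1$, the ``naive'' loss $Q(\frac1p-1)$ you describe is exactly the claimed $(d+1)(\frac1p-1)$, so the product covering is unnecessary there. The paper in fact gets $(1,1,\frac12)$ and $(1,2,\frac23)$ with no localization at all. It applies the $L^1$ kernel bound $\|\mathcal{K}_{m(\sqrt{\mathcal{L}})}(z,\cdot)\|_{L^1}\lesssim\|m(R\cdot)\|_{L^2_\beta}$, $\beta>d/2$ (Proposition~\ref{Prop: Linear L1 norm of kernel}), termwise to a one-variable Fourier expansion of $\Psi_j$.
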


\begin{figure}[!ht]
\begin{centering}
\definecolor{qqqqff}{rgb}{0,0,1}
\begin{tikzpicture}[line cap=round,line join=round,x=0.7cm,y=0.7cm]
\draw (6,6)-- (0,6);
\draw (6,0)-- (6,6);
\draw (0,3)-- (6,3);
\draw [->] (0,0) -- (8,0);
\draw [->] (0,0) -- (0,7.5);
\draw (3,6)-- (3,3);
\draw (0,6)-- (6,0);
\draw (3,3)-- (3,0);
\draw (0,3)-- (3,0);
\begin{scriptsize}
\fill [color=qqqqff] (0,0) circle (1.5pt);
\draw[color=qqqqff] (1,-.5) node {$\alpha>d-\frac{1}{2}$};
\fill [color=qqqqff] (0,6) circle (1.5pt);
\draw[color=qqqqff] (0.8,6.5) node {$\alpha > \frac{Q}{2}$};
\fill [color=qqqqff] (6,6) circle (1.5pt);
\draw[color=qqqqff] (6.19,6.5) node {$\alpha>d+1$};
\fill [color=qqqqff] (6,0) circle (1.5pt);
\draw[color=qqqqff] (6.9,0.4) node {$\alpha>\frac{Q}{2}$};
\fill [color=qqqqff] (3,0) circle (1.5pt);
\draw[color=qqqqff] (4.2,-0.5) node {$\alpha>\frac{d-1}{2}$};
\fill [color=qqqqff] (3,6) circle (1.5pt);
\draw[color=qqqqff] (3.3,6.5) node {$\alpha>\frac{d+1}{2}$};
\fill [color=qqqqff] (0,3) circle (1.5pt);
\draw[color=qqqqff] (-1,3.7) node {$\alpha>\frac{d-1}{2}$};
\fill [color=qqqqff] (3,3) circle (1.5pt);
\draw[color=qqqqff] (3.7,3.33) node {$\alpha>0$};
\fill [color=qqqqff] (6,3) circle (1.5pt);
\draw[color=qqqqff] (7,2.8) node {$\alpha>\frac{d+1}{2}$};
\fill [color=qqqqff] (6,-0.5) circle (0pt);
\draw[color=qqqqff] (6.0,-0.38) node {$1$};
\fill [color=qqqqff] (3,-0.5) circle (0pt);
\draw[color=qqqqff] (2.9,-0.48) node {$\frac{1}{2}$};
\fill [color=qqqqff] (-0.25,-0.42) circle (0pt);
\draw[color=qqqqff] (-0.25,-0.28) node {$O$};
\fill [color=qqqqff] (-0.5,3) circle (0pt);
\draw[color=qqqqff] (-0.38,3.0) node {$\frac{1}{2}$};
\fill [color=qqqqff] (-0.5,6) circle (0pt);
\draw[color=qqqqff] (-0.38,6.0) node {$1$};
\fill [color=qqqqff] (8.5,-0.5) circle (0pt);
\draw[color=qqqqff] (8.5,-0.16) node {$\frac{1}{p_1}$};
\fill [color=qqqqff] (-0.5,8.5) circle (0pt);
\draw[color=qqqqff] (-0.58,7.5) node {$\frac{1}{p_2}$};
\draw[color=qqqqff] (2.3,2.1) node {$I$};
\draw[color=qqqqff] (1.0,1.0) node {$II$};
\draw[color=qqqqff] (4.1,1.0) node {$III$};
\draw[color=qqqqff] (1.0,4.1) node {$III$};
\draw[color=qqqqff] (4.7,2.1) node {$IV$};
\draw[color=qqqqff] (2.3,4.7) node {$IV$};
\draw[color=qqqqff] (4.7,4.7) node {$V$};
\end{scriptsize}
\end{tikzpicture}
\begin{tikzpicture}[line cap=round,line join=round,x=0.7cm,y=0.7cm]
\draw (6,6)-- (0,6);
\draw (6,0)-- (6,6);
\draw (0,3)-- (6,3);
\draw [->] (0,0) -- (8,0);
\draw [->] (0,0) -- (0,7.5);
\draw (3,6)-- (3,3);
\draw (0,6)-- (6,0);
\draw (3,3)-- (3,0);
\draw (0,3)-- (3,0);
\begin{scriptsize}
\fill [color=qqqqff] (0,0) circle (1.5pt);
\draw[color=qqqqff] (1,-.5) node {$\alpha>d-\frac{1}{2}$};
\fill [color=qqqqff] (0,6) circle (1.5pt);
\draw[color=qqqqff] (0.75,6.5) node {$\alpha > \frac{d}{2}$};
\fill [color=qqqqff] (6,6) circle (1.5pt);
\draw[color=qqqqff] (6.19,6.5) node {$\alpha>d$};
\fill [color=qqqqff] (6,0) circle (1.5pt);
\draw[color=qqqqff] (6.8,0.4) node {$\alpha>\frac{d}{2}$};
\fill [color=qqqqff] (3,0) circle (1.5pt);
\draw[color=qqqqff] (4.2,-0.5) node {$\alpha>\frac{d-1}{2}$};
\fill [color=qqqqff] (3,6) circle (1.5pt);
\draw[color=qqqqff] (3.3,6.5) node {$\alpha>\frac{d}{2}$};
\fill [color=qqqqff] (0,3) circle (1.5pt);
\draw[color=qqqqff] (-1,3.7) node {$\alpha>\frac{d-1}{2}$};
\fill [color=qqqqff] (3,3) circle (1.5pt);
\draw[color=qqqqff] (3.7,3.33) node {$\alpha>0$};
\fill [color=qqqqff] (6,3) circle (1.5pt);
\draw[color=qqqqff] (6.8,2.8) node {$\alpha>\frac{d}{2}$};
\fill [color=qqqqff] (6,-0.5) circle (0pt);
\draw[color=qqqqff] (6.0,-0.38) node {$1$};
\fill [color=qqqqff] (3,-0.5) circle (0pt);
\draw[color=qqqqff] (2.9,-0.48) node {$\frac{1}{2}$};
\fill [color=qqqqff] (-0.25,-0.42) circle (0pt);
\draw[color=qqqqff] (-0.25,-0.28) node {$O$};
\fill [color=qqqqff] (-0.5,3) circle (0pt);
\draw[color=qqqqff] (-0.38,3.0) node {$\frac{1}{2}$};
\fill [color=qqqqff] (-0.5,6) circle (0pt);
\draw[color=qqqqff] (-0.38,6.0) node {$1$};
\fill [color=qqqqff] (8.5,-0.5) circle (0pt);
\draw[color=qqqqff] (8.5,-0.16) node {$\frac{1}{p_1}$};
\fill [color=qqqqff] (-0.5,8.5) circle (0pt);
\draw[color=qqqqff] (-0.58,7.5) node {$\frac{1}{p_2}$};
\draw[color=qqqqff] (2.3,2.1) node {$I$};
\draw[color=qqqqff] (1.0,1.0) node {$II$};
\draw[color=qqqqff] (4.1,1.0) node {$III$};
\draw[color=qqqqff] (1.0,4.1) node {$III$};
\draw[color=qqqqff] (4.7,2.1) node {$IV$};
\draw[color=qqqqff] (2.3,4.7) node {$IV$};
\draw[color=qqqqff] (4.7,4.7) node {$V$};
\end{scriptsize}
\end{tikzpicture}
        \caption{Here $O=(0,0)$, and $\alpha>\alpha(p_1, p_2)$ represents that $\mathcal{B}_R^{\alpha}$ is bounded on $L^{p_1}(\mathbb{S}) \times L^{p_2}(\mathbb{S}) \to L^p(\mathbb{S})$ for $\alpha>\alpha(p_1, p_2)$. Left side picture describes Theorem \ref{Theorem: Bilinear Bochner-Riesz Main theorem}; while the right side picture describes Theorem \ref{Theorem: Bilinear Bochner-Riesz theorem with restricted f and g}.}
        \label{Figure: Metivier group picture}
\end{centering}        
\end{figure}

Note our result shows that the topological-dimensional phenomenon observed in the linear theory persists in the bilinear setting too in several admissible regions of both the Banach $(p\geq 1)$ and non-Banach $(0<p<1)$ ranges. We now compare Theorem \ref{Theorem: Bilinear Bochner-Riesz Main theorem} with its Euclidean counterpart, namely Theorem \ref{Theorem: Euclidean bilinear Bochner-Riesz for Grafakos}. One can also compare with \cite{Bagchi_Molla_Singh_Bilinear_Metivier_2026, Bagchi_Molla_Singh_Bilinear_Bochner_Riesz_Grushin}. As suggested by the Euclidean counterpart, the crucial step is to prove the boundedness of $\mathcal{B}^{\alpha}_R$ at certain special points with smoothness parameter expressed in terms of the topological dimension of the complex sphere. One of the main contribution of this paper is to establish the boundedness at those special points with desired smoothness threshold and then the entire range is obtained by symmetry and bilinear interpolation technique as described in \cite[Section 4.3]{Bernicot_Grafakos_Song_Yan_Bilinear_Bochner_Riesz_2015}. More precisely, we establish the boundedness of $\mathcal{B}^{\alpha}_R$ from $L^{p_1}(\mathbb{S}) \times L^{p_2}(\mathbb{S}) \to L^p(\mathbb{S})$ at the points $(p_1, p_2, p)= (2,2,1)$, $(2, \infty, 2)$, $(\infty, \infty, \infty)$, $(1,1,1/2)$, $(1,2,2/3)$ and $(1, \infty, 1)$. For the points $(p_1, p_2, p)= (2,2,1)$, $(2, \infty, 2)$, $(\infty, \infty, \infty)$ the smoothness threshold is given by $0, (d-1)/2$ and $d-1/2$ respectively. These are exact analogue of the Theorem \ref{Theorem: Euclidean bilinear Bochner-Riesz for Grafakos}, where the Euclidean dimension $n$ is replaced by the topological dimension $d=2n-1$ of $\mathbb{S}$. While for the points $(p_1, p_2, p)= (1,1,1/2)$ and $(1,2,2/3)$ our smoothness threshold is $d+1$ and $(d+1)/2$ respectively. Compared to the Euclidean estimates (Theorem \ref{Theorem: Euclidean bilinear Bochner-Riesz for Grafakos}), this entails a loss of $3/2$ and $1/2$ order in the smoothness threshold respectively. This loss is primarily due to the absence of the explicit kernel expression of bilinear Bochner-Riesz means associated with the sub-Laplacian $\mathcal{L}$ in $\mathbb{S}$ as well as our approach in handling these two points. In contrast, at the point $(p_1, p_2, p)= (1, \infty, 1)$ we are only able to prove the boundedness result with smoothness threshold $Q/2$, replacing Euclidean dimension $n$ by $Q$, the homogeneous dimension of $\mathbb{S}$. One can also compare Figure \ref{Figure: Euclidean picture} and Figure \ref{Figure: Metivier group picture}.

\medskip
Nevertheless, we can improve Theorem \ref{Theorem: Bilinear Bochner-Riesz Main theorem} at the points $(p_1, p_2, p)= (1,1,1/2)$, $(1,2,2/3)$ and $(1, \infty, 1)$ provided the input functions $f,g$ are taken from a more restrictive class rather than from the full $L^{p_1}(\mathbb{S})$ and $L^{p_2}(\mathbb{S})$ respectively. In fact, below we state our second main result of this paper, which establishes an exact analogue of Theorem \ref{Theorem: Euclidean bilinear Bochner-Riesz for Grafakos} under the additional assumption on the input functions.

\begin{theorem}
\label{Theorem: Bilinear Bochner-Riesz theorem with restricted f and g}
Fix $R>0$. Define 
\begin{align*}
     \mathfrak{A}_R := \bigoplus\limits_{\substack{\ell_1, \ell_1'=0 \\ \{|\ell_1-\ell_1'| \leq \kappa_1 R\} \cup \{|\ell_1-\ell_1'| \geq \kappa_1' R^2\}}}^{\infty} \mathcal{H}_{\ell_1, \ell_1'} \quad \text{and} \quad \mathfrak{B}_R := \bigoplus\limits_{\substack{\ell_2, \ell_2'=0 \\ \{|\ell_2-\ell_2'| \leq \kappa_2 R\} \cup \{|\ell_2-\ell_2'| \geq \kappa_2' R^2\}}}^{\infty} \mathcal{H}_{\ell_2, \ell_2'} 
\end{align*}
for some $\kappa_1, \kappa_1', \kappa_2, \kappa_2'>0$. Let $1\leq p_1, p_2 \leq \infty$ with $1/p = 1/p_1 +1/p_2$. Then the following bound holds true
\begin{align*}
    \|\mathcal{B}_R^{\alpha}(f,g)\|_{L^p(\mathbb{S})} &\leq C \|f\|_{L^{p_1}(\mathbb{S})} \|g\|_{L^{p_2}(\mathbb{S})} ,
\end{align*}
with some constant $C>0$ independent of $R$, whenever $p_1, p_2, p$ and $\alpha> \alpha(p_1, p_2)$ satisfy one of the following conditions:
\begin{enumerate}
    \item (Region III) $2 \leq p_2 \leq \infty$, $1\leq p_1, p \leq 2$, $\alpha(p_1, p_2)= d(\frac{1}{2}-\frac{1}{p_2})-(1-\frac{1}{p})$ and $g \in \mathfrak{B}_R$.
    \item (Region III) $2 \leq p_1 \leq \infty$, $1\leq p_2, p \leq 2$, $\alpha(p_1, p_2)= d(\frac{1}{2}-\frac{1}{p_1})-(1-\frac{1}{p})$ and $f \in \mathfrak{A}_R$.
    \item (Region IV) $1\leq p_1 \leq 2 \leq p_2 \leq \infty$, $0<p\leq 1$, $\alpha(p_1, p_2)= d(\frac{1}{p_1}-\frac{1}{2})$ and $g \in \mathfrak{B}_R$.
    \item (Region IV) $1\leq p_2 \leq 2 \leq p_1 \leq \infty$, $0<p\leq 1$, $\alpha(p_1, p_2)= d(\frac{1}{p_2}-\frac{1}{2})$ and $f \in \mathfrak{A}_R$.
    \item (Region V) $1\leq p_1, p_2 \leq 2$ and $\alpha(p_1, p_2)= d(\frac{1}{p}-1)$ and $f \in \mathfrak{A}_R$, $g \in \mathfrak{B}_R$ with same parity.
\end{enumerate}
Here same parity means either $f$ and $g$ both comes from $\bigoplus\limits_{\substack{\ell_i, \ell_i'=0 \\ \{|\ell_i-\ell_i'| \leq \kappa_i R\}}}^{\infty} \mathcal{H}_{\ell_i, \ell_i'}$ or from $\bigoplus\limits_{\substack{\ell_i, \ell_i'=0 \\ \{|\ell_i-\ell_i'| \geq \kappa_i' R^2\}}}^{\infty} \mathcal{H}_{\ell_i, \ell_i'}$ for $i=1,2$ respectively.
\end{theorem}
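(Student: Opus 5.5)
Following the Euclidean scheme of \cite[Section 4.3]{Bernicot_Grafakos_Song_Yan_Bilinear_Bochner_Riesz_2015}, the plan is to prove the estimate only at the corner points $(p_1,p_2,p)=(1,\infty,1)$, $(1,2,2/3)$ and $(1,1,1/2)$ (and their symmetric counterparts), with thresholds $d/2$, $d/2$ and $d$, for inputs in $\mathfrak{A}_R$ or $\mathfrak{B}_R$. The whole of Regions III--V then follows by bilinear complex interpolation (Stein's analytic family in $\alpha$) against the unrestricted points $(2,2,1)$, $(2,\infty,2)$, $(\infty,\infty,\infty)$ of Theorem \ref{Theorem: Bilinear Bochner-Riesz Main theorem}. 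Interpolation respects the restriction because $\mathfrak{A}_R,\mathfrak{B}_R$ are ranges of spectral projections commuting with $\mathcal{L}$; one interpolates the operators $(f,g)\mapsto\mathcal{B}^\alpha_R(f,P_{\mathfrak B_R}g)$ and checks that this projection is harmless at the endpoints used. That check is a point I flag: $P_{\mathfrak B_R}$ is a multiplier in $T$, i.e.\ in the $e^{i\theta}$ action, hence bounded on $L^p$ by transference from the circle.

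\textbf{Step 1: decomposition.} Write $(1-t)_+^\alpha=\sum_{j\ge0}2^{-j\alpha}\phi_j(t)$ with $\phi_j$ supported where $1-t\sim2^{-j}$, and decompose each bilinear piece as $\sum_k \psi_k(\sqrt{\mathcal L}/R)f\cdot \psi_{j,k}(\sqrt{\mathcal L}/R)g$, where the first factor is localized to a $2^{-j}$-interval. This yields a sum of roughly $2^{j}$ products of linear spectral multipliers, each with essentially compact support and $2^{-j}$ smoothness.

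\textbf{Step 2: kernel localization.} Using finite propagation speed for $\mathcal L$ on $\mathbb S$, split each factor into a part whose kernel lives on a sub-Riemannian ball of radius $2^{j}/R$ and a rapidly decaying tail. The localized parts are then estimated with the weighted Plancherel and restriction-type estimates the abstract announces for $\mathbb S$. The point of the restriction $|\ell-\ell'|\le\kappa R$ or $|\ell-\ell'|\ge \kappa' R^2$ is that on such spectral pieces the joint spectrum of $(\mathcal L, iT)$ lies in a regime where the multiplier behaves like a $d$-dimensional Euclidean one: $T$ is either essentially bounded relative to $\sqrt{\mathcal L}$, or dominant. Consequently the $L^1\to L^2$ restriction bound carries the factor $R^{d/2}2^{-j/2}$ rather than $R^{Q/2}$. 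I would use this to obtain, for $f\in L^1$ localized to a ball $B$ of radius $2^j/R$,
\[
\|F(\sqrt{\mathcal L})f\|_{L^2}\lesssim |B|^{-1/2}\,(R^{-1}2^j)^{d/2}\cdots\|F\|_{L^2}\|f\|_{L^1},
\]
with the volume-versus-$d$ gain coming from the restricted spectrum.

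\textbf{Step 3: the endpoints.} For $(1,\infty,1)$ and $(1,2,2/3)$, apply Cauchy--Schwarz on each ball: put the $f$-factor in $L^2$ with the restricted Plancherel bound, and the $g$-factor in $L^2$ or $L^\infty$ using the unrestricted $L^2$ or kernel-$L^1$ bounds. Summing over $k$ and over a bounded-overlap cover of $\mathbb S$ gives a bound of $2^{j(d/2-\alpha+\epsilon)}$, which is summable exactly when $\alpha>d/2$. For $(1,1,1/2)$, both factors are placed in $L^2$ via the restricted estimate, giving $2^{j(d-\alpha+\epsilon)}$. This is why the same-parity condition is needed: the two $T$-regimes must produce compatible localization scales, so that the kernel supports can be matched to a common ball. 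Whenever $p<1$, sum the ball contributions using the $p$-triangle inequality.

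The main obstacle is Step 2, namely establishing the $d$-dimensional weighted Plancherel and restriction estimates on the restricted spectral subspaces. The difficulty is the lattice-point structure of $\lambda_{\ell,\ell'}$ and the absence of dilations on compact $\mathbb S$. My first attempt would be to express $\pi_{\ell,\ell'}$ kernels through Jacobi polynomials $P^{(n-2,|\ell-\ell'|)}_{\min(\ell,\ell')}$ and use their uniform asymptotics (Darboux/Hilb type) in the two regimes of $|\ell-\ell'|$, counting eigenvalues in windows of width $R2^{-j}$.
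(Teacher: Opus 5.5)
On the near-diagonal class $|\ell-\ell'|\le\kappa R$ your plan matches the paper's. Corollary \ref{Corollary: L1 to L2 estimate for diagonal operator} with $s=1$ gives the $L^1\to L^2$ bound $R^{d/2}$, and H\"older on $B_R^j$ then gives $(2^j/R)^QR^d=2^{jQ}R^{-1}\le 2^{jd}$ for $2^j\le R$. No Jacobi asymptotics are needed: only $\|\sum_E\pi_{\ell,\ell'}\|_{L^1\to L^2}^2=\omega_{2n-1}^{-1}\sum_E d(\ell,\ell')$ and lattice-point counting.

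The gap is the far-off-diagonal class $|\ell-\ell'|\ge\kappa'R^2$. Your Step 2 claims the same $R^{d/2}2^{-j/2}$ restriction bound there, and that is false.
If $\lambda_{\ell,\ell'}\le R^2$ and $|\ell-\ell'|\ge\kappa'R^2$, then $4\ell\ell'\le R^2$ forces $\min(\ell,\ell')\le 1/(4\kappa')$ and $\max(\ell,\ell')\sim R^2$. Hence $d(\ell,\ell')\sim R^{2(n-1)}$. For instance, the pairs $(\ell,0)$ with $\lambda_{\ell,0}=2(n-1)\ell$ in the $j$-th window number about $R^22^{-j}$, and they lie in this class for small $\kappa'$. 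So the $L^1\to L^2$ norm squared is $\sim R^{2n}2^{-j}=R^Q2^{-j}$.
This is exactly what Proposition \ref{Proposition: Truncated restriction type estimate} records: on this class $2^M\lesssim 1/\kappa'$, so the term $N^{Q/2}2^{-M/2}$ dominates and there is no gain. Feeding $R^{Q/2}$ into your Step 3 on a ball of radius $2^j/R$ gives $2^{jQ}$ at $(1,1,1/2)$ and $2^{jQ/2}$ at $(1,2,2/3)$ and $(1,\infty,1)$. These are only the thresholds of Theorem \ref{Theorem: Bilinear Bochner-Riesz Main theorem}.

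What is missing is that on this class the gain comes from the \emph{support} of the kernel, not from the restriction bound. The kernel of $F_M(\sqrt{\mathcal L},|T|)$ is concentrated where $\varpi(z,w)\lesssim 2^M/R\lesssim 1/R$, a tube much thinner than the sub-Riemannian ball. The paper gets this as follows:
\begin{enumerate}
\item Weighted Plancherel with arbitrarily large powers of $\varpi$ (Proposition \ref{Proposition: Weighted Plancherel for large N case}), built on the three-term recurrence for $|\langle\cdot,w\rangle|^2Y_w^{\ell,\ell'}$ and on $1-\beta_{\ell,\ell'}\lesssim\lambda_{\ell,\ell'}/|\ell-\ell'|^2$.
\item Lemma \ref{Lemma: Ball contained in product of two Euclidean like ball}, to view $B_R^j$ as a set of $\varpi$-width $2^j/R$ and $\Im\langle z,e\rangle$-height $2^{2j}/R^2$, which is then cut into $\varpi$-separated pieces of width $2^{M_1}/R$.
\item Lemma \ref{Lemma: Triangle inequality for weight} together with the large-power estimate, to dispose of non-matching pieces, so that H\"older is applied only on the thin pieces.
\end{enumerate}
Finite propagation speed cannot supply this. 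It localizes only to balls of radius $2^j/R$, which is precisely the set on which H\"older loses.

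This is also the real reason for the same-parity hypothesis: one class gains from restriction on the whole ball, the other from support on thin $\varpi$-tubes, and the two gains do not combine on a common set. Your explanation of parity is inconsistent with your own Step 2, since if both classes had the $R^{d/2}$ bound, mixed inputs would cost nothing.

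Two smaller points:
\begin{enumerate}
\item The sharp projection onto $\mathfrak B_R$ is not uniformly bounded on $L^1$ or $L^\infty$; on the circle its norm grows like $\log R$. Use smooth cutoffs in $|T|$ instead, as the paper does with $\Omega^{<}_{\kappa_i,R}$ and $\Omega^{>}_{\kappa_i',R}$; these are uniformly bounded by transference.
\item In items (1) and (3) it is $g$, the input with $p_2\ge 2$, that is assumed restricted. Yet at $(1,\infty,1)$ and $(1,2,2/3)$ your Step 3 applies the restricted bound to the $L^1$ input $f$. You need to reconcile which input your argument actually requires to be restricted.
\end{enumerate}
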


Next we show that the smoothness threshold of Theorem \ref{Theorem: Bilinear Bochner-Riesz Main theorem} can be further improved, if the input functions are considered from the following set
\begin{align*}
     \mathfrak{F}_{R^s} &:= \bigoplus\limits_{\substack{\ell, \ell'=0 \\ \{|\ell-\ell'| \leq \kappa R^s\}}}^{\infty} \mathcal{H}_{\ell, \ell'} \quad \quad \text{for} \quad s \in [0,1] \quad \text{and} \quad \kappa, R>0 .
\end{align*}
The study of the boundedness of $\mathcal{B}_R^{\alpha}(f,g)$ for functions $f, g \in \mathfrak{F}_{R^s}$, can be equivalently reformulated as the study of the following bilinear operator, which may be view as a bilinear Bochner-Riesz means localized near the diagonal. Precisely, for $s \in [0,1]$ and $\kappa, R>0$, we define
\begin{align*}
    \mathcal{B}_{R, s}^{\alpha}(f,g)(z) & = \sum_{\substack{\ell_1,\ell_1', \ell_2,\ell_2' =0\\ |\ell_1-\ell_1'|\leq \kappa R^s, |\ell_2-\ell_2'|\leq \kappa R^s }}^{\infty} \left(1- \frac{\sqrt{\lambda_{\ell_1, \ell_1'}} + \sqrt{\lambda_{\ell_2, \ell_2'}}}{R} \right)_{+}^{\alpha} \pi_{\ell_1, \ell_1'} f(z) \,  \pi_{\ell_2, \ell_2'} g(z) .
\end{align*}

Note that for $0<\kappa<1$, if we take $s=0$, then we obtain \emph{diagonal bilinear Bochner-Riesz means} denoted by $\mathcal{B}_{R, 0}^{\alpha}=: \mathcal{B}_{R, diag}^{\alpha}$ and is defined by
\begin{align*}
    \mathcal{B}_{R, diag}^{\alpha}(f,g)(z) & = \sum_{\ell_1, \ell_2 =0}^{\infty} \left(1- \frac{\sqrt{\lambda_{\ell_1, \ell_1}} + \sqrt{\lambda_{\ell_2, \ell_2}}}{R} \right)_{+}^{\alpha} \pi_{\ell_1, \ell_1} f(z) \,  \pi_{\ell_2, \ell_2} g(z) .
\end{align*}

\medskip
The following result establishes the boundedness of $\mathcal{B}_{R, s}^{\alpha}$ from $L^{p_1}(\mathbb{S})\times L^{p_2}(\mathbb{S})$ to $L^p(\mathbb{S})$. Our result can be seen as an exact analogue of the Euclidean result, Theorem \ref{Theorem: Euclidean bilinear Bochner-Riesz for Grafakos} where Euclidean dimension $n$ is replaced by $(d-1+s)$ except at the region $V$. We also note that in this short-term interaction part, our estimate improves the smoothness threshold compared to both the Euclidean result Theorem \ref{Theorem: Euclidean bilinear Bochner-Riesz for Grafakos} and also Theorem \ref{Theorem: Bilinear Bochner-Riesz Main theorem}, \ref{Theorem: Bilinear Bochner-Riesz theorem with restricted f and g}.

\begin{theorem}
\label{Theorem: Bilinear near diagonal case}
Let $1\leq p_1, p_2 \leq \infty$ with $1/p = 1/p_1 +1/p_2$ and $s\in [0,1]$. Then the following bound holds true
\begin{align*}
    \|\mathcal{B}_{R, s}^{\alpha}(f,g)\|_{L^p(\mathbb{S})} &\leq C \|f\|_{L^{p_1}(\mathbb{S})} \|g\|_{L^{p_2}(\mathbb{S})} ,
\end{align*}
with some constant $C>0$ independent of $R$, whenever $p_1, p_2, p$ and $\alpha> \alpha(p_1, p_2)$ satisfy one of the following conditions:
\begin{enumerate}
    \item (Region I) $2 \leq p_1, p_2 \leq \infty$, $1\leq p \leq 2$ and $\alpha(p_1, p_2)= (d-2+s)(1-\frac{1}{p})$.
    \item (Region II) $2 \leq p_1, p_2, p \leq \infty$ and $\alpha(p_1, p_2)= \frac{d-2+s}{2} + (d-1+s)(\frac{1}{2}-\frac{1}{p})$.
    \item (Region III) $2 \leq p_2 \leq \infty$, $1\leq p_1, p \leq 2$, $\alpha(p_1, p_2)= (d-1+s)(\frac{1}{2}-\frac{1}{p_2})-(1-\frac{1}{p})$.
    \item (Region III) $2 \leq p_1 \leq \infty$, $1\leq p_2, p \leq 2$, $\alpha(p_1, p_2)= (d-1+s)(\frac{1}{2}-\frac{1}{p_1})-(1-\frac{1}{p})$.
    \item (Region IV) $1\leq p_1 \leq 2 \leq p_2 \leq \infty$, $0<p\leq 1$, $\alpha(p_1, p_2)= (d-1+s)(\frac{1}{p_1}-\frac{1}{2})$.
    \item (Region IV) $1\leq p_2 \leq 2 \leq p_1 \leq \infty$, $0<p\leq 1$, $\alpha(p_1, p_2)= (d-1+s)(\frac{1}{p_2}-\frac{1}{2})$.
    \item (Region V) $1\leq p_1, p_2 \leq 2$ and $\alpha(p_1, p_2)= (d-1+s)(\frac{1}{p}-1)$.
\end{enumerate}
    
\end{theorem}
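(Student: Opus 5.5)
Following the strategy indicated for Theorem \ref{Theorem: Bilinear Bochner-Riesz Main theorem}, I would prove Theorem \ref{Theorem: Bilinear near diagonal case} by establishing sharp bounds at the vertex points $(p_1,p_2,p)=(2,2,1)$, $(2,\infty,2)$, $(\infty,\infty,\infty)$, $(1,\infty,1)$, $(1,2,2/3)$ and $(1,1,1/2)$. The full ranges in Regions I--V would then follow from symmetry in $(f,g)$ and the bilinear (complex/real) interpolation scheme of \cite[Section 4.3]{Bernicot_Grafakos_Song_Yan_Bilinear_Bochner_Riesz_2015}. The guiding principle is that on $\mathfrak{F}_{R^s}$ the spectral window $\{\sqrt{\lambda_{\ell,\ell'}}\le R,\ |\ell-\ell'|\le \kappa R^s\}$ has only about $R^{d-1+s}$ spectral dimensions rather than $R^{d}$. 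Write $\ell=m+k$, $\ell'=m$ with $|k|\le \kappa R^s$, so that $\lambda\approx 4m^2$ and $\dim\mathcal H_{m+k,m}\sim m^{2n-3}=m^{d-2}$. Summing over $m\lesssim R$ and $|k|\lesssim R^s$ then gives
\[
\sum\dim \mathcal H\sim R^{d-1+s}.
\]
This produces the effective dimension $d-1+s$ in every estimate.

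\emph{Step 1: kernel bounds for the truncated projections.} Let $\Pi_{R,s}$ be the projection onto $\bigoplus_{|\ell-\ell'|\le\kappa R^s}\mathcal H_{\ell,\ell'}$. For any $F$ supported in $[0,R]$, I would show
\[
\sup_z\|K_{F(\sqrt{\mathcal L})\Pi_{R,s}}(z,\cdot)\|_{L^2}\lesssim \Big(\sum |F(\sqrt{\lambda_{\ell,\ell'}})|^2\dim\mathcal H_{\ell,\ell'}\Big)^{1/2}.
\]
This uses $\mathbb U(n)$-invariance: the zonal function gives $\|Z_{\ell,\ell'}(z,\cdot)\|_2^2=\dim\mathcal H_{\ell,\ell'}/\sigma(\mathbb S)$. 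Combined with the counting above, this yields $L^1\to L^2$ and $L^2\to L^\infty$ bounds for smooth windows $F$ localised at scale $\delta$ near $R$, namely of order $(R^{d-1+s}\delta)^{1/2}$, the near-diagonal version of a Plancherel/restriction estimate with ``dimension'' $d-1+s$. The spectrum is already discrete at scale $\sim 1$, which is fine since $\delta\ge 1/R$ suffices after the usual dyadic decomposition of $(1-t)_+^\alpha$.

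\emph{Step 2: the vertex estimates.} I would decompose
\[
\bigl(1-\tfrac{a+b}{R}\bigr)_+^\alpha=\sum_j 2^{-j\alpha}\phi_j\bigl(\tfrac{a+b}{R}\bigr),
\]
and, as in the Euclidean argument, expand each piece in a Fourier series in $a$ and $b$ (or split $a\in[kR2^{-j},(k+1)R2^{-j}]$), so that each piece becomes a sum of products $F_{j,k}(\sqrt{\mathcal L})f\cdot G_{j,k}(\sqrt{\mathcal L})g$.
\begin{itemize}
\item At $(2,2,1)$, Cauchy--Schwarz together with orthogonality handles the sum for every $\alpha>0$.
\item At $(2,\infty,2)$ and $(\infty,\infty,\infty)$, I would use the Step 1 bounds: $\|G(\sqrt{\mathcal L})\Pi_{R,s}\|_{L^\infty\to L^\infty}$ follows from a weighted $L^2$ kernel bound, $\int |K(z,w)|^2(1+R\,\rho(z,w))^{2\beta}\,dw$. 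This gives the thresholds $(d-2+s)/2$ and $d-2+s+\tfrac12$.
\item At $(1,\infty,1)$, $(1,2,2/3)$ and $(1,1,1/2)$, the kernel is smooth at scale $\delta$, so I would combine the $L^1\to L^2$ bound with the weighted Plancherel estimates on sub-Riemannian balls of radius $1/R$. The $L^p$ quasi-norm for $p<1$ is handled by splitting into local parts near the ball $B(w,1/R)$ and tails controlled by weights.
\end{itemize}

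\emph{Main obstacle.} The hard step will be the weighted Plancherel estimate with large weight power for the truncated projections $\Pi_{R,s}$: getting kernel decay $(1+R\rho)^{-N}$ while keeping the dimension count $R^{d-1+s}$ rather than $R^{Q}$. For the volume side, balls of radius $r$ have measure about $r^{Q}$, and it is unclear how the truncation interacts with the $T$-direction. I would first try to exploit that $T$ acts on $\mathcal H_{\ell,\ell'}$ as $i(\ell-\ell')$. On the range of $\Pi_{R,s}$ one has $|T|\le\kappa R^s$, so the functions are essentially constant along the $e^{i\theta}$-orbits at scale $R^{-s}$. I would then decompose $\mathbb S$ via the Hopf fibration over $\mathbb{CP}^{n-1}$, of real dimension $d-1$, and treat the fibre as contributing dimension $s$. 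Region V's threshold $(d-1+s)(1/p-1)$, with no $-(1/p_2-1/2)$ gain, reflects that I would not attempt the finer Euclidean kernel analysis there.
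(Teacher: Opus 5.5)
Your dimension count and Step~1 are correct: they are the paper's Proposition~\ref{Proposition: Projection estimate near diagonal} and Corollary~\ref{Corollary: L1 to L2 estimate for diagonal operator}. The vertex $(2,2,1)$ is also fine. Every other vertex in your plan, however, rests on a large-power $(1+R\varrho)^{\beta}$-weighted Plancherel or kernel-decay bound for $F(\sqrt{\mathcal{L}})\Pi_{R,s}$ at scale $1/R$. You leave this open, and it fails as stated. $\Pi_{R,s}$ is convolution along the fibres $\theta\mapsto e^{i\theta}z$ with a Dirichlet kernel of order $\kappa R^s$. For a bump $F$, $K(e^{i\theta}w,w)$ is essentially $R^{d-1}\sum_{|k|\le\kappa R^s}e^{ik\theta}$. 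It is therefore still of size $R^{d-1+s}$ for $|\theta|\le cR^{-s}$, where $\varrho(e^{i\theta}w,w)=|1-e^{i\theta}|^{1/2}\approx R^{-s/2}\gg R^{-1}$. So such weights cost about $R^{(1-s/2)\beta}$, and the Hopf-fibration remedy remains a heuristic. Separately, at $(2,\infty,2)$, controlling the $g$-factor through $L^\infty\to L^\infty$ norms obtained from Sobolev-type kernel bounds gives at best $\alpha>(d-1+s)/2$. That is half an order short of $(d-2+s)/2$.

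The paper uses no large weights for the truncated operators in its local estimates. It restricts $z,f,g$ to $B(e,3\cdot 2^{j(1+\gamma)}/R)$ (radius $2^j/R$, not $1/R$) and converts your $R$-powers into $2^j$-powers by H\"older on that ball.
\begin{itemize}
\item At $p\le 1$ only the unweighted bound $R^{(d-1+s)/2}$ is used, and $(2^j/R)^{Q}R^{d-1+s}=2^{jQ}R^{s-2}\le 2^{j(d-1+s)}$ for $2^j\le R$.
\item At $(\infty,\infty,\infty)$ and $(2,\infty,2)$ it uses Cauchy--Schwarz against $\varpi^{\pm\beta}$ with $\beta<1/2$, Lemma~\ref{Lemma: Integral of weight}, and Propositions~\ref{Proposition: Improved weighted Plancherel for near} and~\ref{Proposition: Bilinear weighted Plancherel with weight and s operator}.
\end{itemize}
These weights are compatible with the truncation because $\varpi(e^{i\theta}z,w)=\varpi(z,w)$. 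Hence multiplication by functions of $\varpi(\cdot,w)$ commutes with $T$; compare \eqref{Shifts of zonal spherical harmonics}, which only moves $(\ell,\ell')$ along $(1,1)$. By contrast, $\varrho$ involves $\langle z,w\rangle$, which shifts $\ell-\ell'$.

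At $(2,\infty,2)$ the missing half order comes from estimating the $u$-integral in weighted $L^2$. Then only $\|\Psi_j(\eta_1,\cdot)\|_{L^2}\approx 2^{-j/2}$ enters, while $f$ is handled by orthogonality; see \eqref{Fjs hat with sobolev}. With your sharper count one can even drop the weights, e.g.\ $R^{d-1+s}2^{-j/2}(2^j/R)^{Q}\le 2^{j(d-3/2+s)}$ at $(\infty,\infty,\infty)$.

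Your fibre-smearing remark does bear on the reduction to inputs supported in $B(e,3\cdot 2^{j(1+\gamma)}/R)$. The paper imports that step from the untruncated case, and it uses $\varrho$-decay of the kernel at scale $2^j/R$. An anisotropic localization ($\varpi\lesssim 2^j/R$, fibre angle $\lesssim\max\{R^{-s},2^{2j}R^{-2}\}$) is the natural repair, and its volume count still gives $2^{j(d-1+s)}$ at $(1,1,1/2)$. As written, though, your proposal proves no threshold beyond $(2,2,1)$.
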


As noted above, the boundedness of bilinear Bochner-Riesz means associated to certain sub-elliptic operators has previously been investigated with smoothness threshold formulated in terms of the topological dimension, see \cite{Bagchi_Molla_Singh_Bilinear_Metivier_2026, Bagchi_Molla_Singh_Bilinear_Bochner_Riesz_Grushin, Singh_Steins_Square_function_2026}. The strategy in this paper is partly inspired by these aforementioned works. However, as indicated earlier, the complex geometry of the underlying space introduces several substantial new difficulties which prevents a direct adaption of the technique developed in previously studied setting. In particular, the absence of the several analytic estimates, group invariance structure on the complex sphere, together with discrete spectrum of the associated sub-Laplacian makes the analysis in this setting particularly delicate and challenging.  We now discuss how these difficulties manifest themselves in the development of several tools required in the present work.

\medskip

One of the key idea in earlier papers are the use of \emph{weighted Plancherel estimates} and \emph{restriction-type estimates} in the bilinear setting. The development of fundamental tools like \emph{weighted Plancherel estimates} (Proposition \ref{Proposition: Linear weighted Plancherel with weight}), \emph{weighted Plancherel estimates} with large power of weights (Proposition \ref{Proposition: Weighted Plancherel for large N case}), bilinear weighted Plancherel estimates (Proposition \ref{Proposition: Bilinear weighted Plancherel with weight}), \emph{restriction-type estimates} (Proposition \ref{Proposition: Truncated restriction type estimate}), as well as containment of sub-Riemannian balls into product of two Euclidean balls (Lemma \ref{Lemma: Ball contained in product of two Euclidean like ball}) in complex sphere setting constitute an important contribution of this work. We would like to emphasize that these estimates are novel in our setup. By contrast, in the earlier works of \cite{Bagchi_Molla_Singh_Bilinear_Metivier_2026, Bagchi_Molla_Singh_Bilinear_Bochner_Riesz_Grushin} analogous tools were already available in the literature. Consequently, a significant portion of this paper is devoted to building up these auxiliary tools and the establishing corresponding estimates, which are of an independent interest in the analysis on the complex sphere. For example, in case of M\'etivier group $(G \cong \mathbb{R}^{d_1+d_2})$ or Grushin operator on $\mathbb{R}^{d_1+d_2}$ setup, one can use the product structure of the underlying space to show that sub-Riemannian ball is contained inside product of two Euclidean balls (see \cite[Lemma 2.1]{Bagchi_Molla_Singh_Bilinear_Metivier_2026} \cite[Lemma 2.1]{Bagchi_Molla_Singh_Bilinear_Bochner_Riesz_Grushin}), however  for complex sphere no such obvious product structure is available, and finding analogous result (Lemma \ref{Lemma: Ball contained in product of two Euclidean like ball}) is bit more delicate and technical. We would like to also mention that one of the key estimate in proving Theorem \ref{Theorem: Bilinear Bochner-Riesz theorem with restricted f and g} is the $(L^1, L^2)$ restriction-type estimates for the operator $F_M(\sqrt{\mathcal{L}}, |T|)$ (see Proposition \ref{Proposition: Truncated restriction type estimate}). Moreover, in order to prove the Proposition \ref{Proposition: Weighted Plancherel for large N case}, we need to analyze the effect of large power of weights over the zonal spherical harmonics. This procedure produces shifts in the zonal spherical harmonic expansion, and one must also control the growth of the coefficients arising from these shifts. This makes the corresponding estimate substantially more involved in the present setup. Altogether, these estimates provide the foundation for our approach and reflect the additional analytic complexity of the complex sphere setting.

\medskip
\textbf{Notation:}
Throughout the paper we use standard notations. For two non-negative numbers $A_1$ and $A_2$, by the expression $A_1\lesssim A_2$, we mean there exists a constant $C>0$ such that $A_1\leq C A_2$. Whenever the implicit constant depends on $\epsilon$, we write $A_1\lesssim_{\epsilon} A_2$. We write $A_1\sim A_2$, when both $A_1\lesssim A_2$ and $A_2\lesssim A_1$ hold. Denote $\mathbb{N} = \{1, 2, \ldots \}$ and $\mathbb{N}_{0} = \{0, 1, 2, \ldots \}$. For a function $F: \mathbb{R}^n \to \mathbb{R}$, denote $\delta_N F(\lambda) := F(N \lambda)$. Let $\omega_{n-1}$ denote the surface measure of the unit sphere in $\mathbb{R}^n$. For any $z \in \mathbb{C}^n$, we write $\Re z$ and $\Im z$ to denote the real and imaginary part of $z$ respectively. We denote $\mathbb{U}(n)$ to be the unitary group. For any $\beta = (\beta_1, \ldots, \beta_n) \in \mathbb{N}_0^n$, denote $|\beta|=\beta_1 +\ldots + \beta_n$. Also we use the notation $e$ to denote the point $(1,0, \ldots, 0)$ of $\mathbb{S}$.

\medskip
\textbf{Organization:}
The organization of the paper is as follows. In the next section we discuss about the sub-Riemannian geometry of the underlying complex sphere, prove some properties of sub-Riemannian balls and weights. Moreover, we also record some projection estimate, properties of zonal spherical harmonics and write bilinear Bochner-Riesz operator as bilinear spectral multipliers. In section \ref{Section: Restriction type estimates} we prove $(L^1, L^2)$ restriction-type estimates for the joint functional calculus of $\sqrt{\mathcal{L}}$ and $|T|$ (Proposition \ref{Proposition: Truncated restriction type estimate}), which is one of our main contribution and also prove some other projection estimates. Section \ref{Section: Weighted Plancherel estimates} is divided into two parts: first we discuss some linear weighted Plancherel estimates and prove one of the key result, that is, weighted Plancherel estimates for the joint functional calculus of $\sqrt{\mathcal{L}}$ and $|T|$ with arbitrary large power of weights (Proposition \ref{Proposition: Weighted Plancherel for large N case}); while in the next subsection we prove bilinear analogue of the weighted Plancherel estimates (Proposition \ref{Proposition: Bilinear weighted Plancherel with weight}). Finally, the last three sections \ref{Section: Proof of first main theorem}, \ref{Section: Proof of second main theorem} and \ref{Section: Proof of third main theorem} are devoted to the proof of our three main results of this paper, Theorem \ref{Theorem: Bilinear Bochner-Riesz Main theorem}, \ref{Theorem: Bilinear Bochner-Riesz theorem with restricted f and g} and \ref{Theorem: Bilinear near diagonal case}.

\section{Analysis on Complex sphere}

In this section we recall some necessary preliminaries and  develop several key tools on the complex sphere that are essential for our subsequent analysis. More precisely, we prove that the sub-Riemannian ball centered at $e$ is contained inside a set which is described in terms of a weight function $\varpi$ and imaginary parts of $\langle z, e \rangle$ (see Lemma \ref{Lemma: Ball contained in product of two Euclidean like ball}). We also establish triangle inequality of the weight $\varpi$ (see Lemma \ref{Lemma: Triangle inequality for weight}). We further show that bilinear Bochner-Riesz means on $\mathbb{S}$ form a particular class of the more general bilinear spectral multiplier associated with the sub-Laplacian on complex sphere. 

\subsection{Sub-Riemannian geometry of the complex sphere}
For $n \geq 2$, the complex sphere $\mathbb{S} \subseteq \mathbb{C}^n$ is smooth real hypersurface of $\mathbb{C}^n$ of dimension $2n-1$. Suppose $\langle \cdot , \cdot \rangle$ denote the usual Hermitian inner product on $\mathbb{C}^n$, that is $\langle z, w \rangle = \sum_{j = 1}^n z_j\, \Bar{w}_j$. Consider $T_z \mathbb{S}$ to denote the real tangent space at each $z \in \mathbb{S}$, given by
\begin{align*}
    T_z \mathbb{S} &:= \{ w \in \mathbb{C}^n : \Re \langle z, w \rangle = 0 \} .
\end{align*}
Let $\mathbb{C} T \mathbb{S} = T \mathbb{S} \otimes_{\mathbb{R}} \mathbb{C}$ denote the complexified tangent bundle of $\mathbb{S}$.
The unit sphere $\mathbb{S}$ can be considered as a model example of CR manifold. Let $\mathbb{C} T \mathbb{C}^n$ denote the complexified tangent bundle of $\mathbb{C}^n$, which decomposes as $\mathbb{C} T \mathbb{C}^n = T_{1,0}\mathbb{C}^n \oplus T_{0,1}\mathbb{C}^n$ into holomorphic and anti-holomorphic components. Since, $\mathbb{S}$ is real hypersurface of $\mathbb{C}^n$, it has a natural CR structure. In fact, $T_{1,0}\mathbb{S} = T_{1,0}\mathbb{C}^n \cap \mathbb{C}T \mathbb{S}$ is the subbundle of $\mathbb{C} T \mathbb{S}$, that defines the CR structure. Moreover, $T_{0,1}\mathbb{S} = T_{0,1}\mathbb{C}^n \cap \mathbb{C}T \mathbb{S}$ with $T_{1,0}\mathbb{S} \cap T_{0,1}\mathbb{S} = \{0\}$. The horizontal distribution $\mathcal{H}\mathbb{S}$ is given by
\begin{align*}
    \mathcal{H}\mathbb{S} &= \Re (T_{1,0}\mathbb{S} \oplus T_{0,1}\mathbb{S}) .
\end{align*}
There is a natural group action of $S^1$ on $\mathbb{S}$, defined by
\begin{align*}
  e^{i\theta}\cdot  (z_1,\cdots , z_n) \to (e^{i \theta} z_1, \cdots , e^{i \theta} z_n).
\end{align*}
For $f \in C^{\infty}(\mathbb{S})$, define 
\begin{align}
\label{Transversal vector field}
     Tf(z) := \frac{d}{d \theta}f(e^{i \theta} z)|_{\theta=0} = i \sum_{j=1}^{n} \left( z_j \frac{\partial f}{\partial z_j } - \Bar{z}_j \frac{\partial f}{\partial \Bar{z}_j } \right).
\end{align}
Then we have
\begin{align*}
    T \mathbb{S} &= \mathcal{H} \mathbb{S} \oplus \mathbb{R} T .
\end{align*}
Let us denote
\begin{align*}
    S_{j} := \frac{\partial}{\partial z_j}- \Bar{z}_j\sum_{k=1}^{n} z_k \frac{\partial}{\partial z_k} \quad \quad \text{for} \quad \quad  j = 1, \ldots, n.
\end{align*}
Then $S_j$ generate the holomorphic tangent space $T_{1,0}\mathbb{S}$ and $\mathbb{C}T \mathbb{S}$ is generated by $S_j$, $\Bar{S}_j$ and $T$. Moreover, the sub-Laplacian $\mathcal{L}$ on $\mathbb{S}$ defined earlier can be also written as
\begin{align*}
    \mathcal{L} = -2\sum_{j=1}^{n} S_j \Bar{S}_j + \Bar{S}_j S_j .
\end{align*}

Recall that, for $\ell, \ell' \in \mathbb{N}_0$, we say a function $f$ is homogeneous of bidegree $(\ell,\ell')$ if $f(\lambda z) = \lambda^{\ell} \Bar{\lambda}^{\ell'} f(z)$ for all $\lambda \in \mathbb{C}\setminus \{0\}$ and $z \in \mathbb{C}^n$. We define $\mathcal{H}_{\ell, \ell'}(\mathbb{C}^n)$ to be the space of homogeneous harmonic polynomials of bidegree $(\ell, \ell')$ in $\mathbb{C}^n$. Since a given homogeneous harmonic function on $\mathbb{S}$ in $\mathbb{C}^n$ can be the restriction of at most one homogeneous harmonic polynomial in $\mathbb{C}^n$, every element of $\mathcal{H}_{\ell, \ell'}(\mathbb{C}^n)$ can be identified with its restriction to $\mathbb{S}$. We denote $\mathcal{H}_{\ell, \ell'}(\mathbb{S})$ to be the restriction of homogeneous harmonic polynomials of bidegree $(\ell, \ell')$ in $\mathbb{C}^n$ to the unit sphere $\mathbb{S}$. In this paper, instead of $\mathcal{H}_{\ell, \ell'}(\mathbb{S})$ we simply write it as $\mathcal{H}_{\ell, \ell'}$. The spaces $\mathcal{H}_{\ell, \ell'}$ are also called the spaces of complex spherical harmonics of bidegree $(\ell, \ell')$. It is known that (see (\cite[Theorem 2.2]{Cowling_Kilima_Sikora_sublaplacian_2011}), these spaces are invariant under the action of $\mathbb{U}(n)$, pairwise orthogonal and moreover
\begin{align*}
    L^2(\mathbb{S}) &= \bigoplus_{\ell, \ell' = 0}^{\infty} \mathcal{H}_{\ell, \ell'} .
\end{align*}
The spaces $\mathcal{H}_{\ell, \ell'}$ are finite dimensional, say $\dim(\mathcal{H}_{\ell,\ell'}) =: d(\ell, \ell')$. Let us fix an orthonormal basis $\{u_1, \ldots, u_{d(\ell, \ell')}\}$ of $ \mathcal{H}_{\ell, \ell'}$. If we denote $\pi_{\ell, \ell'}$ to be the projection of $L^2(\mathbb{S})$ onto the space $ \mathcal{H}_{\ell, \ell'}$, then we can write
\begin{align}
\label{Writing projection in terms of orthonormal basis}
    \pi_{\ell, \ell'} f(z) &= \sum_{i=1}^{d(\ell, \ell')} \langle f, u_i \rangle u_i(z) .
\end{align}
It is also known that the operators $\mathcal{L}$ and $T$ maps every $\mathcal{H}_{\ell, \ell'}$ to itself, that is, for all $f \in \mathcal{H}_{\ell, \ell'}$
\begin{align*}
    \mathcal{L} f &= \lambda_{\ell, \ell'} f  \quad \quad \text{and} \quad \quad T f = i (\ell-\ell') f \quad \text{where} \quad \lambda_{\ell, \ell'} = 4\ell \ell'+2(n-1)(\ell+\ell') .
\end{align*}

Since $\mathcal{L}$ is essentially self-adjoint, using the the spectral theorem one can define the functional calculus of $\mathcal{L}$. For any bounded Borel function $F : \mathbb{R} \to \mathbb{C}$, the spectral multiplier operator $F(\mathcal{L})$ is given by
\begin{align}
\label{Definition of spectral multiplier}
    F(\mathcal{L})f(z) &=  \sum_{\ell, \ell'=0}^{\infty} F(\lambda_{\ell, \ell'}) \, \pi_{\ell, \ell'} f(z) .
\end{align}
Moreover, $\mathcal{L}$ and $T$ commutes strongly, hence they admit a joint functional calculus (see \cite{Martini_Sharp_Multiplier_Kohn_Laplacian_2017}). We also define the operator $|T|$ by
\begin{align*}
    |T| f &:= |\ell-\ell'| f \quad \quad \text{for all} \quad f \in \mathcal{H}_{\ell, \ell'} .
\end{align*}
Consequently, $\mathcal{L}$ and $|T|$ also admit joint spectral decomposition with respect to the complex spherical harmonics of bidegree $(\ell, \ell')$.

\subsection{The sub-Riemannian distance, weight and balls}
Let $\Tilde{\varrho}$ denote the sub-Riemannian control distance on $\mathbb{S}$, that for all $z, w \in \mathbb{S}$, $\Tilde{\varrho}(z,w)$ is defined to be the infimum of length of all horizontal curves joining $z$ and $w$. Since sometimes it is difficult to work with the sub-Riemannian control distance $\Tilde{\varrho}$, it is very convenient to work with some equivalent definition. It is known that for all $z, w \in \mathbb{S}$, the sub-Riemannian control distance $\Tilde{\varrho}$ is equivalent to $\varrho$, where
\begin{align*}
    \varrho(z,w) &:= |1-\langle z, w \rangle|^{1/2} .
\end{align*}
Note that $\varrho$ is $\mathbb{U}(n)$ invariant. With respect to this distance $\varrho$ and $R>0$, we define $B(z,R)$ to be the ball centered at $z \in \mathbb{S}$ and of radius $R$, that is
\begin{align*}
    B(z,R) &= \{w \in \mathbb{S} : \varrho(z,w) < R \} .
\end{align*}
Let $\sigma(B(z, R))$ denote the measure of the ball $B(z, R)$ with respect to $\sigma$. Then (see \cite{Martini_Sharp_Multiplier_Kohn_Laplacian_2017})
\begin{align*}
    \sigma(B(z, R)) \sim \min \{1, R^{Q} \},
\end{align*}
where $Q = 2n$ is called the homogeneous dimension of the complex sphere, in the sense that there exists a constant $C>0$ such that for all $z\in \mathbb{S}$ and $\kappa, R \in (0,\infty)$,
\begin{align*}
    \sigma(B(z, \kappa R)) &\leq C (1+\kappa)^Q \,  \sigma(B(z, R)) .
\end{align*}
Then $(\mathbb{S}, \varrho, \sigma)$ become a doubling metric measure space with homogeneous dimension $Q=2n$.

Let us also define the weight function $\varpi : \mathbb{S} \times \mathbb{S} \to \mathbb{C}$ given by
\begin{align*}
    \varpi(z,w) &:= |1-|\langle z,w \rangle|^2|^{1/2} .
\end{align*}

We next record some important properties of the weight and distance functions, which will be used later in our proofs.

\begin{lemma}\cite[p. 3333]{Casarino_Cowling_Martini_Sikora_Kohn_Laplacian_Forms_2017}
\label{Lemma: Integral of weight}
Let $0\leq \gamma<1$ and $R>0$. Then for $z \in \mathbb{S}$ we have
\begin{align*}
    \int_{B(z, R)} \frac{d\sigma(w)}{\varpi(z,w)^{\gamma}} &\leq C \min\{R^{Q-\gamma}, 1\} .
\end{align*}
    
\end{lemma}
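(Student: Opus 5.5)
By $\mathbb{U}(n)$-invariance of $\sigma$, $\varrho$ and $\varpi$, I would reduce to the case $z=e$. Choose $U\in\mathbb{U}(n)$ with $Ue=z$. The substitution $w\mapsto Uw$ preserves $\sigma$. It maps $B(e,R)$ onto $B(z,R)$, and $\varpi(z,Uw)=\varpi(e,w)$.

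For $z=e$ we have $\langle w,e\rangle=w_1$, so with $\zeta:=w_1$ in the closed unit disc $\overline{\mathbb{D}}$:
\begin{align*}
B(e,R)=\{w\in\mathbb{S}: |1-\zeta|<R^2\}, \qquad \varpi(e,w)=(1-|\zeta|^2)^{1/2}.
\end{align*}
Both the domain and the integrand therefore depend only on the first coordinate. I would use the classical slice formula for the unit sphere (Rudin, \emph{Function theory in the unit ball}, 1.4.4). For $n\geq 2$ and integrable $h$ on $\overline{\mathbb{D}}$ it states
\begin{align*}
\int_{\mathbb{S}} h(w_1)\,d\sigma(w) = c_n\int_{\mathbb{D}} h(\zeta)\,(1-|\zeta|^2)^{n-2}\,dA(\zeta).
\end{align*}
This gives
\begin{align*}
\int_{B(e,R)}\frac{d\sigma(w)}{\varpi(e,w)^{\gamma}} = c_n\int_{\{\zeta\in\mathbb{D}:\,|1-\zeta|<R^2\}} (1-|\zeta|^2)^{\,n-2-\gamma/2}\,dA(\zeta).
\end{align*}
Since $n\ge 2$ and $0\le\gamma<1$, the exponent satisfies $n-2-\gamma/2>-1$.

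\emph{Large $R$.} For $R\geq c_0$ with $c_0>0$ a fixed constant, I would bound the integral by the integral over the whole disc. This is finite because the exponent exceeds $-1$, so the integral is $\lesssim 1\lesssim\min\{R^{Q-\gamma},1\}$.

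\emph{Small $R$.} For $R<c_0$ with $c_0$ small, write $\zeta=re^{i\theta}$ with $|\theta|\le\pi$. The conditions $|1-\zeta|<R^2$ and $|\zeta|<1$ force $1-r\le|1-\zeta|<R^2$. They also force $|\theta|\lesssim|\sin\theta|\le|\zeta-1|/r\lesssim R^2$, which uses $r\ge 1/2$ and $|\theta|\leq\pi/2$ once $R$ is small. Moreover $1-r^2\sim 1-r$. Setting $t=1-r$, the integral is therefore bounded by
\begin{align*}
C\int_{|\theta|\lesssim R^2}\int_0^{R^2} t^{\,n-2-\gamma/2}\,dt\,d\theta \lesssim R^2\cdot R^{2(n-1-\gamma/2)} = R^{2n-\gamma}=R^{Q-\gamma}.
\end{align*}
The $t$-integral converges at $0$ because its exponent exceeds $-1$. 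Combining the two cases gives the claimed bound $C\min\{R^{Q-\gamma},1\}$.

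The only step that is not elementary is the slice formula, i.e.\ identifying the pushforward of $\sigma$ under $w\mapsto w_1$ as a multiple of $(1-|\zeta|^2)^{n-2}dA$. If I wanted to avoid citing it, I would derive it from polar coordinates in $\mathbb{C}^n$. Write $\mathbb{C}^n=\mathbb{C}\times\mathbb{C}^{n-1}$, integrate a function of the form $h(w_1/|w|)\,\phi(|w|)$ over $\mathbb{C}^n$, and compute the measure of the fibres $\{w'\in\mathbb{C}^{n-1}:|w'|^2=1-|\zeta|^2\}$, which are spheres of real dimension $2n-3$ and radius $(1-|\zeta|^2)^{1/2}$.
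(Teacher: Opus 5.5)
Your argument is correct and complete. The paper gives no proof of this lemma; it quotes the estimate from Casarino--Cowling--Martini--Sikora. So the comparison is between a citation and your self-contained computation.

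After the $\mathbb{U}(n)$ reduction to $z=e$, the slice formula turns the left-hand side into $c_n\int (1-|\zeta|^2)^{n-2-\gamma/2}\,dA(\zeta)$ over $\{\zeta\in\mathbb{D}: |1-\zeta|<R^2\}$. For small $R$ you localise this region inside $\{1-r<R^2,\ |\theta|\lesssim R^2\}$, which makes the exponent $Q-\gamma$ transparent. The factor $R^2$ comes from the angular direction, essentially $\Im\langle w,e\rangle$ as in Lemma \ref{Lemma: Ball contained in product of two Euclidean like ball}. The factor $R^{2(n-1)-\gamma}$ comes from the radial direction, which carries both the density $(1-|\zeta|^2)^{n-2}$ and the weight.

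Your route also shows that the hypothesis $\gamma<1$ is not needed. The parameter $\gamma$ enters only through the integrability condition $n-2-\gamma/2>-1$. Hence the same proof gives the bound for all $0\le\gamma<2n-2=d-1$, which is exactly the range in Lemma \ref{Lemma: Integral of distance and weight}. The paper only ever applies the lemma with $\gamma=2\beta$, $0\le\beta<1/2$, so the narrower statement suffices there.
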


\begin{lemma}\cite[Lemma 4.1]{Martini_Sharp_Multiplier_Kohn_Laplacian_2017}
\label{Lemma: Integral of distance and weight}
Let $\beta, \gamma \geq 0$ such that $\beta+\gamma>Q$ and $\gamma<d-1$. Then for $z \in \mathbb{S}$ and $R>0$ we have
\begin{align*}
    \int_{\mathbb{S}} \frac{d\sigma(w)}{(1+R\, \varrho(z,w))^{\beta} (1+R\, \varpi(z,w))^{\gamma}} &\leq C \min\{1, R^{-Q}\} .
\end{align*}
    
\end{lemma}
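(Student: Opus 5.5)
We prove the bound by a direct computation: reduce to an integral over the unit disc in $\mathbb{C}$, then split into the region near $1$ (handled by parabolic rescaling) and the region away from $1$.

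\emph{Small $R$.} If $R\le 1$, the integrand is at most $1$ and $\sigma(\mathbb{S})<\infty$, so the integral is $\lesssim 1=\min\{1,R^{-Q}\}$. We may therefore assume $R>1$, where the target bound is $R^{-Q}$.

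\emph{Reduction to the disc.} Since $\varrho$, $\varpi$ and $\sigma$ are $\mathbb{U}(n)$-invariant, we may take $z=e$. Both $\varrho(e,w)=|1-w_1|^{1/2}$ and $\varpi(e,w)=(1-|w_1|^2)^{1/2}$ depend only on $\zeta=w_1=\langle w,e\rangle$. For $n\ge 2$, the push-forward of $\sigma$ under $w\mapsto w_1$ is $c_n(1-|\zeta|^2)^{n-2}\,d\zeta$ on the unit disc $\mathbb{D}$. The integral therefore equals
\begin{align*}
c_n\int_{\mathbb{D}} \frac{(1-|\zeta|^2)^{n-2}\, d\zeta}{(1+R|1-\zeta|^{1/2})^{\beta}\,(1+R(1-|\zeta|^2)^{1/2})^{\gamma}} .
\end{align*}

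\emph{Region away from $\zeta=1$.} Where $|1-\zeta|\ge c$, the first factor is $\lesssim R^{-\beta}$. In polar coordinates with $t=1-|\zeta|^2$, what remains is bounded by
\begin{align*}
\int_0^1 t^{n-2}(1+Rt^{1/2})^{-\gamma}\,dt\lesssim R^{-\gamma}.
\end{align*}
This uses $\gamma<2n-2=d-1$, which makes $t^{n-2-\gamma/2}$ integrable at $0$. This region thus contributes $\lesssim R^{-\beta-\gamma}\le R^{-Q}$, since $\beta+\gamma>Q$ and $R>1$.

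\emph{Region near $\zeta=1$.} Use local coordinates $t=1-|\zeta|^2$ and $s=\arg\zeta$. Then $d\zeta\sim dt\,ds$ and $|1-\zeta|\sim t+|s|$. Rescale parabolically by $t=a/R^2$, $s=b/R^2$. The Jacobian is $R^{-4}$ and $t^{n-2}=a^{n-2}R^{-2(n-2)}$, so we pick up exactly $R^{-2n}=R^{-Q}$ times
\begin{align*}
\int_{\mathbb{R}}\int_0^\infty \frac{a^{n-2}\,da\,db}{(1+(a+|b|)^{1/2})^{\beta}(1+a^{1/2})^{\gamma}} ,
\end{align*}
with the domain enlarged harmlessly. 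It remains to show this integral is finite.

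\emph{Main step: finiteness of the model integral.} Integrate in $a$ first, for fixed $b$. Because $\gamma/2<n-1$ (again $\gamma<d-1$), the $a$-integral behaves like $(1+|b|)^{\,n-1-\gamma/2-\beta/2}$. The region $a\lesssim|b|$ contributes $|b|^{\,n-1-\gamma/2}(1+|b|)^{-\beta/2}$. The tail $a\gtrsim|b|$ contributes $\int_{|b|}^\infty a^{n-2-\gamma/2-\beta/2}\,da$, which converges because $\beta+\gamma>Q>2n-2$. The subsequent $b$-integral converges exactly when $(\beta+\gamma)/2-n+1>1$, i.e. $\beta+\gamma>Q$. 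The order of integration matters: integrating in $a$ first avoids any separate condition on $\beta$ alone, such as $\beta>2$.

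I expect this bookkeeping, in particular justifying that $d\zeta\sim dt\,ds$ and $|1-\zeta|\sim t+|s|$ uniformly near $\zeta=1$, to be the only delicate point. Combining the two regions gives the claimed $\min\{1,R^{-Q}\}$ bound.
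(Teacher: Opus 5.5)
Your argument is correct. The paper gives no proof of this lemma: it is quoted from Martini's work (Lemma 4.1 there). So what you have is a self-contained substitute for the citation, not a variant of an argument in the text.

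Your route is natural. $\mathbb{U}(n)$-invariance together with the push-forward density $c_n(1-|\zeta|^2)^{n-2}\,d\zeta$ turns the problem into a weighted integral on the disc. Near $\zeta=1$, your equivalence $|1-\zeta|\sim t+|s|$ (with $t=\varpi(e,w)^2$) is the disc-coordinate form of $\varrho(z,e)^2\sim\varpi(z,e)^2+|\Im\langle z,e\rangle|$, which is established in the proof of Lemma \ref{Lemma: Ball contained in product of two Euclidean like ball}. The parabolic rescaling $t,s\sim R^{-2}$ then produces exactly $R^{-4}\cdot R^{-2(n-2)}=R^{-Q}$.

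What your version adds over the citation is that it shows exactly where each hypothesis enters:
\begin{itemize}
\item $\gamma<d-1$ makes $t^{n-2-\gamma/2}$ integrable in the region away from the pole.
\item The same condition makes $\int_0^{|b|}a^{n-2}(1+a)^{-\gamma/2}\,da$ grow like $|b|^{n-1-\gamma/2}$ for $|b|\ge 1$.
\item $\beta+\gamma>Q$ is precisely the integrability of the resulting tail $|b|^{n-1-(\beta+\gamma)/2}$.
\end{itemize}
The step you flag as delicate is not: $dA=r\,dr\,ds=\tfrac12\,dt\,ds$ exactly, and $|1-re^{is}|^2=(1-r)^2+4r\sin^2(s/2)$.

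Two small bookkeeping remarks.
\begin{itemize}
\item Your tail bound $\int_{|b|}^\infty a^{n-2-(\beta+\gamma)/2}\,da$ only makes sense for $|b|\ge 1$; for $|b|<1$ it diverges at $a=0$. There you should keep the $1+$ and bound the whole $a$-integral by $\int_0^\infty a^{n-2}(1+a)^{-(\beta+\gamma)/2}\,da<\infty$, uniformly in $b$.
\item Your comment that integrating in $a$ first avoids a separate condition such as $\beta>2$ is moot. Under the hypotheses, $\beta>Q-\gamma>Q-(d-1)=2$ automatically, so integrating in $b$ first works equally well.
\end{itemize}
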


Now we state two very crucial lemmas of this paper, which will be important in the proof of Theorem \ref{Theorem: Bilinear Bochner-Riesz theorem with restricted f and g} and form our central contribution for this section. Our first result stated below, discusses a special feature of the sub-Riemannian ball centered at $e$. This can be seen as an analogue of \cite[Lemma 2.1]{Bagchi_Molla_Singh_Bilinear_Metivier_2026} and \cite[Lemma 2.1]{Bagchi_Molla_Singh_Bilinear_Bochner_Riesz_Grushin}.

\begin{lemma}
\label{Lemma: Ball contained in product of two Euclidean like ball}
For any $R>0$, there exists some constant $C>0$ such that
\begin{align*}
    B(e, R) \subseteq B^{\varpi, \Im}(e, C (R, R^2)) ,
\end{align*}
where
\begin{align*}
    B^{\varpi, \Im}(e, C(R, R^2)) &:= \{z \in \mathbb{S} : \varpi(z,e) \leq C R, \  |\Im \langle z, e \rangle| \leq C R^2\} .
\end{align*}
    
\end{lemma}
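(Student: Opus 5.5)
Write $\zeta := \langle z, e\rangle = z_1$ for $z \in \mathbb{S}$, so that $\varrho(z,e) = |1-\zeta|^{1/2}$ and $\varpi(z,e) = |1-|\zeta|^2|^{1/2} = (1-|\zeta|^2)^{1/2}$, since $|\zeta|\le 1$ by Cauchy--Schwarz. The statement then becomes an elementary inequality about a single complex number $\zeta$ in the closed unit disc. If $|1-\zeta| < R^2$, we want to show
\begin{align*}
1-|\zeta|^2 \le C^2 R^2 \quad \text{and} \quad |\Im \zeta| \le C R^2 .
\end{align*}
The plan is to prove both inequalities directly from $|1-\zeta|<R^2$, with no appeal to the sub-Riemannian structure beyond the equivalence $\tilde\varrho \sim \varrho$ already recorded.

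First I would treat the imaginary part. Since $\Im \zeta = -\Im(1-\zeta)$, we get $|\Im\zeta| \le |1-\zeta| < R^2$, so $C=1$ suffices for this condition.

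Next I would handle the weight. Using $|\zeta|\le 1$ and the reverse triangle inequality,
\begin{align*}
1-|\zeta|^2 = (1-|\zeta|)(1+|\zeta|) \le 2(1-|\zeta|) \le 2|1-\zeta| < 2R^2 .
\end{align*}
This gives $\varpi(z,e) < \sqrt{2}\,R$.

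Taking $C = \sqrt{2}$ covers both conditions for every $R>0$. For large $R$ the inclusion is trivial anyway, since $\varpi \le 1$ and $|\Im\zeta|\le 1$, but the bound above already holds uniformly in $R$, so no case split is needed.

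The only point requiring care, and the one I would check first, is the convention: $B(e,R)$ is defined through $\varrho(z,e) = |1-\langle z,e\rangle|^{1/2}$, so the ball condition reads $|1-\zeta|<R^2$ rather than $<R$. This squaring is exactly what produces the anisotropic scales $(R,R^2)$. If one instead wanted the ball defined by the control distance $\tilde\varrho$, the same argument applies after replacing $R$ by a constant multiple of $R$, using the equivalence $\tilde\varrho\sim\varrho$. I expect no real obstacle beyond this bookkeeping.
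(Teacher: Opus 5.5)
Your argument is correct and is essentially the paper's proof: both reduce the inclusion to an elementary inequality for the single number $z_1=\langle z,e\rangle$, namely $\varpi(z,e)+|\Im z_1|^{1/2}\lesssim |1-z_1|^{1/2}$. The paper routes this through $|1-\Re z_1|$ and $|z'|=\varpi(z,e)$, while you bound $|\Im z_1|$ and $1-|z_1|^2$ directly by $|1-z_1|$, which gives the explicit uniform constant $C=\sqrt{2}$.

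Your one-sided formulation is actually the cleaner one. The paper states a two-sided equivalence $\varrho(z,e)\sim\varpi(z,e)+|\Im\langle z,e\rangle|^{1/2}$, but the reverse direction fails near $z=-e$ (there $\varrho=\sqrt{2}$ while the right side vanishes). Only the direction you prove is needed for the lemma.
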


\begin{proof}
Let us write $z=(z_1, z')$ where $z' = (z_2, \ldots, z_n)$. Therefore, we have
\begin{align*}
    \varrho(z,e) = |1- z_1 |^{1/2} &\sim |1-\Re z_1|^{1/2} + |\Im z_1|^{1/2} \\
    &\sim |1-(\Re z_1)^2|^{1/2} + |\Im z_1|^{1/2} \\
    &\sim |z'| + |\Im z_1|^{1/2} .
\end{align*}
Since $\varpi(z,e)=(1-|z_1|^2)^{1/2}=|z'|$, from the above observation, we obtain
\begin{align*}
    \varrho(z,e) &\sim \varpi(z,e) + |\Im \langle z, e \rangle|^{1/2} .
\end{align*}
Consequently, in view of the above estimate one easily get the lemma.  
\end{proof}

Next we establish triangle inequality for the weight function, which serves an important tool for our proofs later.
\begin{lemma}
\label{Lemma: Triangle inequality for weight}
For all $w,z,x \in \mathbb{S}$, we have
\begin{align*}
    \varpi(w,z) \leq \varpi(w,x) + \varpi(x,z) . 
\end{align*}
\end{lemma}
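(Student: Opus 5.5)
The plan is to interpret $\varpi(z,w)=(1-|\langle z,w\rangle|^2)^{1/2}$ as the sine of the Fubini--Study (Hermitian) angle between the complex lines $\mathbb{C}z$ and $\mathbb{C}w$ in $\mathbb{C}^n$, and then use the triangle inequality for that angle together with subadditivity of sine on $[0,\pi/2]$. Concretely, for $z,w\in\mathbb{S}$ define $\theta(z,w)\in[0,\pi/2]$ by $\cos\theta(z,w)=|\langle z,w\rangle|$. Then $\varpi(z,w)=\sin\theta(z,w)$. Since $\theta(e^{i\phi}z,w)=\theta(z,w)$, $\theta$ descends to a function on $\mathbb{CP}^{n-1}$. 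There it is the Fubini--Study distance, which is a metric. We can see this directly: $\theta(z,w)=\min_{\phi}\arccos\Re\langle e^{i\phi}z,w\rangle$, which is the minimal great-circle (real spherical) distance between the fibers $S^1 z$ and $S^1 w$ in $S^{2n-1}\subset\mathbb{R}^{2n}$.

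\textbf{Step 1: triangle inequality for $\theta$.} Let $\angle(u,v)=\arccos\Re\langle u,v\rangle$ be the real angle metric on the sphere. Choose phases so that $\theta(w,x)=\angle(w,e^{i\phi_1}x)$ and $\theta(x,z)=\angle(x,e^{i\phi_2}z)$. By $\mathbb{U}(n)$-invariance, in particular invariance under multiplication by scalars $e^{i\phi}$, we get $\angle(e^{i\phi_1}x,e^{i(\phi_1+\phi_2)}z)=\theta(x,z)$. The triangle inequality for $\angle$ then gives
\begin{align*}
\theta(w,z)\le \angle(w,e^{i(\phi_1+\phi_2)}z)\le \theta(w,x)+\theta(x,z).
\end{align*}

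\textbf{Step 2: pass to sines.} If $\theta(w,x)+\theta(x,z)\ge\pi/2$, then the right-hand side $\sin\theta(w,x)+\sin\theta(x,z)$ is at least $1\ge\varpi(w,z)$. To see this, put $a=\theta(w,x)$ and $b=\theta(x,z)$ with $a,b\in[0,\pi/2]$ and $a+b\ge \pi/2$. Then $\sin a+\sin b\ge \sin a+\sin(\pi/2-a)=\sin a+\cos a\ge1$. Otherwise $a+b<\pi/2$, and since $\sin$ is increasing on $[0,\pi/2]$ and subadditive there ($\sin(a+b)=\sin a\cos b+\cos a\sin b\le\sin a+\sin b$), we get
\begin{align*}
\varpi(w,z)=\sin\theta(w,z)\le\sin(a+b)\le \sin a+\sin b=\varpi(w,x)+\varpi(x,z).
\end{align*}

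The only step needing care is Step 1, specifically checking that the minimum over phases is attained and equals $\arccos|\langle z,w\rangle|$. This follows from $\max_\phi\Re(e^{i\phi}\langle z,w\rangle)=|\langle z,w\rangle|$. The remaining steps are elementary. As an alternative to Step 1, one could reduce by $\mathbb{U}(n)$-invariance to $x=e$ and compute directly. However, the angular argument avoids messy algebra and is the route I would take.
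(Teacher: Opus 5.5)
Your proof is correct, and it is a geometric version of the paper's coordinate computation. The paper reduces by $\mathbb{U}(n)$-invariance to $x=e$ and sets $b_1=|w'|=\varpi(w,e)$, $b_2=|z'|=\varpi(e,z)$. It then uses Cauchy--Schwarz on the tail coordinates to get $|\langle w,z\rangle|\ge \sqrt{1-b_1^2}\sqrt{1-b_2^2}-b_1b_2$, squares this, and checks by an explicit identity that $1-\bigl(\sqrt{1-b_1^2}\sqrt{1-b_2^2}-b_1b_2\bigr)^2\le (b_1+b_2)^2$.

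In your notation, with $\sin a=b_1$ and $\sin b=b_2$, the Cauchy--Schwarz step says exactly $\cos\theta(w,z)\ge\cos(a+b)$. That is your Step 1 rewritten through cosines. The algebraic identity says exactly $\sin(a+b)\le\sin a+\sin b$, which is your Step 2. So both arguments run the same chain of inequalities, the paper in coordinates and you in angles.

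What your version buys is the following. Step 1 follows from the triangle inequality for the great-circle metric on $S^{2n-1}\subset\mathbb{R}^{2n}$ after optimizing phases, with no reduction to $x=e$ and no computation. Your case split also makes the regime $a+b\ge\pi/2$ explicit. In that regime the paper's lower bound $\sqrt{1-b_1^2}\sqrt{1-b_2^2}-b_1b_2=\cos(a+b)$ can be negative, and squaring it in \eqref{Firstninequality for 1 minus zw} is then not justified. The conclusion still holds there trivially: negativity means $b_1^2+b_2^2>1$, hence $(b_1+b_2)^2>1\ge\varpi(w,z)^2$, which is exactly your first case. The paper's route, in turn, is fully self-contained, using only Cauchy--Schwarz and algebra rather than the metric property of the spherical angle.
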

\begin{proof}
First note that since $\varpi(w,x)=|1-|\langle w, x \rangle|^2|^{1/2} $ is $\mathbb{U}(n)$ invariant, we can assume $x=e$ and hence it is enough to show that
\begin{align}
\label{Required to prove in triangle inequality}
    \varpi(w,z) \leq  \varpi(w,e) + \varpi(e, z) . 
\end{align}
Let us write $w=(w_1, w^{\prime})$ and $z=(z_1, z^{\prime})$. Set
\begin{align*}
    b_1:= |w'| = \sqrt{1-|w_1|^2} = \varpi(w,e) \quad \text{and} \quad b_2:= |z'| = \sqrt{1-|z_1|^2} = \varpi(e, z) .
\end{align*}
Then we write
\begin{align*}
    \langle w, z \rangle &= w_1 \Bar{z}_1 + \langle w^{\prime}, z^{\prime} \rangle .
\end{align*}
Note that Cauchy-Schwarz inequality implies $|\langle w^{\prime}, z^{\prime} \rangle| \leq |w^{\prime}| |z^{\prime}|$. Therefore application of reverse triangle inequality yields
\begin{align*}
    |\langle w, z \rangle| &\geq |w_1| |z_1| - |\langle w^{\prime}, z^{\prime} \rangle| \geq \sqrt{1-b_1^2} \sqrt{1-b_2^2} - b_1 b_2 .
\end{align*}
Consequently, we obtain
\begin{align}
\label{Firstninequality for 1 minus zw}
    1-|\langle w, z \rangle|^2 &\leq 1- \left(\sqrt{1-b_1^2} \sqrt{1-b_2^2} - b_1 b_2 \right)^2 .
\end{align}
Now using the following identity
\begin{align*}
    \left(\sqrt{1-b_1^2} \sqrt{1-b_2^2} - b_1 b_2 \right)^2 &= 1- (b_1+b_2)^2 + 2 b_1^2 b_2^2 + 2b_1 b_2 \left(1-\sqrt{1-b_1^2} \sqrt{1-b_2^2}\right) ,
\end{align*}
we can easily see that
\begin{align}
\label{Consequence of the identity}
    \left(\sqrt{1-b_1^2} \sqrt{1-b_2^2} - b_1 b_2 \right)^2 \geq 1- (b_1+b_2)^2 .
\end{align}
Therefore in view of \eqref{Firstninequality for 1 minus zw} and \eqref{Consequence of the identity} we get
\begin{align*}
    1-|\langle w, z \rangle|^2 &\leq (b_1+b_2)^2 = \big( \varpi(w,e) + \varpi(e, z) \big)^2 .
\end{align*}
Finally, taking square root in the above inequality we obtain the required estimate \eqref{Required to prove in triangle inequality}.
\end{proof}

\subsection{Projection estimates and zonal spherical harmonics} Recall that $\pi_{\ell, \ell'}$ denote the spectral projection of $L^2(\mathbb{S})$ onto the subspace $ \mathcal{H}_{\ell, \ell'}$. We have the following $(L^1, L^2)$-estimate of the spectral projection $\pi_{\ell, \ell'}$.

\begin{lemma}
\label{Lemma: L1 to l2 projection estimate}
Let $n \geq 2$. Then we have
\begin{align*}
    \|\pi_{\ell, \ell'}f\|_{L^{2}} &\leq C \lambda_{\ell, \ell'}^{(n-2)/2} (\ell+\ell')^{1/2} \, \|f\|_{L^1} .
\end{align*}
    
\end{lemma}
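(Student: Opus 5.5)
The estimate is a $TT^*$-type bound: the $(L^1,L^2)$ norm of $\pi_{\ell,\ell'}$ is the square root of its $(L^1,L^\infty)$ norm, and the latter is the sup norm of the reproducing kernel on the diagonal. I would carry this out in three steps.

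\emph{Step 1: reduce to the dimension.} By \eqref{Writing projection in terms of orthonormal basis}, $\pi_{\ell,\ell'}f(z)=\int_{\mathbb{S}} K(z,w) f(w)\,d\sigma(w)$ with $K(z,w)=\sum_i u_i(z)\overline{u_i(w)}$. Minkowski's integral inequality gives
\begin{align*}
\|\pi_{\ell,\ell'}f\|_{L^2}\le \sup_{w}\|K(\cdot,w)\|_{L^2}\,\|f\|_{L^1}.
\end{align*}
By orthonormality, $\|K(\cdot,w)\|_{L^2}^2=\sum_i|u_i(w)|^2$. Since $\mathcal{H}_{\ell,\ell'}$ is $\mathbb{U}(n)$-invariant and $\mathbb{U}(n)$ acts transitively on $\mathbb{S}$, this function of $w$ is constant. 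Integrating over $\mathbb{S}$ (with $\sigma$ normalized to be a probability measure, otherwise an extra constant appears) shows the constant equals $d(\ell,\ell')$. Hence
\begin{align*}
\|\pi_{\ell,\ell'}f\|_{L^2}\le d(\ell,\ell')^{1/2}\|f\|_{L^1}.
\end{align*}

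\emph{Step 2: bound the dimension.} I would use the standard formula
\begin{align*}
d(\ell,\ell')=\frac{(\ell+\ell'+n-1)(\ell+n-2)!\,(\ell'+n-2)!}{\ell!\,\ell'!\,(n-1)!\,(n-2)!}.
\end{align*}
For $n\ge2$ this gives $d(\ell,\ell')\sim (\ell+\ell'+1)(\ell+1)^{n-2}(\ell'+1)^{n-2}$.

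\emph{Step 3: compare with the eigenvalue.} It remains to show that, for $(\ell,\ell')\ne(0,0)$,
\begin{align*}
(\ell+1)^{n-2}(\ell'+1)^{n-2}\lesssim \lambda_{\ell,\ell'}^{n-2}.
\end{align*}
Since $\lambda_{\ell,\ell'}=4\ell\ell'+2(n-1)(\ell+\ell')\gtrsim \ell\ell'+\ell+\ell'$ and $(\ell+1)(\ell'+1)=\ell\ell'+\ell+\ell'+1\lesssim \ell\ell'+\ell+\ell'$ whenever $\ell+\ell'\ge1$, this holds. Combining the three steps gives $d(\ell,\ell')\lesssim \lambda_{\ell,\ell'}^{n-2}(\ell+\ell')$, and taking square roots yields the claim.

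The case $(\ell,\ell')=(0,0)$ must be treated separately. There the right-hand side vanishes (for $n>2$ the factor $\lambda_{0,0}^{(n-2)/2}=0$, and in every case $(\ell+\ell')^{1/2}=0$), while $\pi_{0,0}$ is the nonzero averaging projection. So the statement has to be understood for $\ell+\ell'\ge1$, or with $\lambda$ and $\ell+\ell'$ replaced by $1+\lambda$ and $1+\ell+\ell'$.

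The only genuinely non-routine point is the dimension count in Step 2. If I preferred not to quote the explicit formula, I would instead write $\mathcal{H}_{\ell,\ell'}$ as the quotient of the bidegree-$(\ell,\ell')$ polynomials by $|z|^2$ times the bidegree-$(\ell-1,\ell'-1)$ polynomials. This gives
\begin{align*}
d(\ell,\ell')=\binom{\ell+n-1}{n-1}\binom{\ell'+n-1}{n-1}-\binom{\ell+n-2}{n-1}\binom{\ell'+n-2}{n-1},
\end{align*}
and the same asymptotics follow by a telescoping estimate.
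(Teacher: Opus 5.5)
Your proof is correct and is essentially the paper's argument with the citation unpacked. The paper quotes Casarino--Peloso's Theorem~3.1 at $p=1$, whose content is exactly your Steps~1--2, namely $\|\pi_{\ell,\ell'}\|_{L^1\to L^2}=\sup_w\|Y_w^{\ell,\ell'}\|_{L^2}\sim d(\ell,\ell')^{1/2}$. It then uses $\min\{\ell,\ell'\}\lesssim\lambda_{\ell,\ell'}/(\ell+\ell')$ and $\max\{\ell,\ell'\}\lesssim\ell+\ell'$, which is the same comparison as your Step~3.

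Your observation that the inequality fails at $(\ell,\ell')=(0,0)$ is also correct. There the right-hand side vanishes while $\pi_{0,0}\neq0$, and the paper's statement overlooks this, so the lemma should be read for $\ell+\ell'\geq1$. This is harmless in the later applications, since $\|\pi_{0,0}f\|_{L^2}\le C\|f\|_{L^1}$.
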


\begin{proof}
The proof is an immediate consequence of Theorem 3.1 of \cite{Casarino_Peloso_Riesz_Mean_2011} at $p=1$ together with the subsequent remark in \cite{Casarino_Peloso_Riesz_Mean_2011} concerning the relationship between $q_{\ell, \ell'}$ and $ Q_{\ell, \ell'}$ with the eigenvalue $\lambda_{\ell, \ell'}$ of $\mathcal{L}$ and $(\ell +\ell')$ respectively. In particular, we have
\begin{align*}
    q_{\ell, \ell'} := \min\{\ell, \ell'\} \lesssim \frac{\lambda_{\ell, \ell'}}{\ell+\ell'} \quad \text{and} \quad Q_{\ell, \ell'} := \max\{\ell, \ell'\} \lesssim (\ell+\ell') .
\end{align*}
In view of the above observation the lemma follows from \cite[Theorem 3.1]{Casarino_Peloso_Riesz_Mean_2011} at $p=1$.
\end{proof}

Note that, the space $\mathcal{H}_{\ell, \ell'}$ are finite-dimensional subspace of $L^2(\mathbb{S})$, and for fixed $w \in \mathbb{S}$, the linear functional $f \mapsto f(w)$ is bounded on $\mathcal{H}_{\ell, \ell'}$. Then by Riesz representation theorem, there is a unique function $Y_w^{\ell, \ell'}$ in $\mathcal{H}_{\ell, \ell'}$ such that 
\begin{align*}
    f(w) = \langle f, Y_w^{\ell, \ell'} \rangle \quad \quad \text{for all} \quad \quad f \in \mathcal{H}_{\ell, \ell'} .
\end{align*}
The function $Y_w^{\ell, \ell'}$ are called the zonal spherical harmonics of bidegree $(\ell, \ell')$ with pole $w$.

In the following we need some properties of the zonal spherical harmonics $Y_w^{\ell, \ell'}$. These estimates are well known in the literature. From \cite[Proposition 2.8]{Cowling_Kilima_Sikora_sublaplacian_2011} we have
\begin{align}
\label{L2 norm of zonal harmonics}
    \|Y_w^{\ell, \ell'}\|_{L^2}^2 &= \omega_{2n-1}^{-1} d(\ell, \ell') , \quad \quad \text{for all} \quad w \in \mathbb{S} ,
\end{align}
and
\begin{align*}
    Y_w^{\ell, \ell'}(z) &= \overline{Y_z^{\ell,\ell'}(w)} \quad \quad \text{for all} \quad z, w \in \mathbb{S} .
\end{align*}
Moreover, let $a=\frac{n-1}{2}$. Then in view of \cite[Corollary 2.6]{Cowling_Kilima_Sikora_sublaplacian_2011} one can see that
\begin{align}
\label{Estimate of the L2 norm}
    \|Y_w^{\ell, \ell'}\|_{L^2}^2 & \leq C~ \frac{\ell+a+\ell'+a}{n-1} (\ell+a)^{n-2} (\ell'+a)^{n-2} .
\end{align}
The following result describes the effect of multiplication by the factor $|\langle z, w \rangle|^2$ on zonal spherical harmonics. In fact, from \cite[Corollary 3.3]{Cowling_Kilima_Sikora_sublaplacian_2011}, for $\ell, \ell' \in \mathbb{N}_0$, we have
\begin{align}
\label{Shifts of zonal spherical harmonics}
    |\langle \cdot, w \rangle|^2 Y_w^{\ell,\ell'}= \alpha_{\ell,\ell'} Y_w^{\ell+1,\ell'+1}+\beta_{\ell,\ell'} Y_w^{\ell,\ell'}+\gamma_{\ell,\ell'} Y_w^{\ell-1,\ell'-1} ,
\end{align}
where
\begin{align}
\label{Definition of alpha}
    \alpha_{\ell,\ell'}& = \frac{\ell'+1}{n+\ell+\ell'}\cdot \frac{\ell+1}{n+\ell+\ell'+1} ,
\end{align}
\begin{align}
\label{Definition of beta}
    \beta_{\ell,\ell'} &= \left\{\begin{array}{ll}
        1/2, & \quad \text{if} \ n=2, \ \ell=\ell'=0 \\
        \frac{\ell^2 +\ell'^2+(n-1)(\ell+\ell')+n-2}{(n+\ell+\ell'-1)^2-1}, & \quad \text{otherwise} ,
    \end{array}\right.
\end{align}
\begin{align}
\label{Definition of gamma}
    \gamma_{\ell,\ell'} &=\left\{\begin{array}{ll}
        0 & \quad \text{if either}\quad \ell=0 \ \text{or}\ \ell'=0 \\
        \frac{n+\ell-2}{n+\ell+\ell'-2}\cdot \frac{n+\ell'-2}{n+\ell+\ell'-3} & \quad \text{otherwise} .
    \end{array} \right.
\end{align}
Recall that $\{u_1, \ldots, u_{d(\ell, \ell')}\}$ is an orthonormal basis of  $\mathcal{H}_{\ell, \ell'}$. Then from \cite[eq (15)]{Cowling_Kilima_Sikora_sublaplacian_2011} we have
\begin{align*}
    Y_{w}^{\ell, \ell'}(z) &= \sum_{i=1}^{d(\ell, \ell')} \overline{u_i(w)} u_i(z) .
\end{align*}
Consequently, in view of \eqref{Writing projection in terms of orthonormal basis} we obtain
\begin{align}
\label{Writing projection as inner product}
    \pi_{\ell, \ell'} f(z) &= \langle f, Y_{w}^{\ell, \ell'} \rangle .
\end{align}
For $F \in C_c(\mathbb{R})$, let $\mathcal{K}_{F(\mathcal{L})}(z,w)$ denote the integral kernel of the spectral multiplier $F(\mathcal{L})$, that is,
\begin{align*}
    F(\mathcal{L})f(z) &= \int_{\mathbb{S}} \mathcal{K}_{F(\mathcal{L})}(z, w) f(w) \, d\sigma(w) ,
\end{align*}
where the integral kernel is given by (see \cite[Corollary 4.2]{Cowling_Kilima_Sikora_sublaplacian_2011})
\begin{align}
\label{Linear kernel of spectral multiplier}
    \mathcal{K}_{F(\mathcal{L})}(z, w) &= \sum_{\ell, \ell' = 0}^{\infty} F(\lambda_{\ell, \ell'}) Y_{w}^{\ell, \ell'}(z) ,
\end{align}
Let $\mathcal{K}_{\exp(-t\mathcal{L})}(z,w)$ denote the heat kernel of $\mathcal{L}$. Then it is known that $\mathcal{K}_{\exp(-t\mathcal{L})}(z,w)$ satisfies the Gaussian-type heat kernel bounds (see \cite[Proposition 3.1]{Martini_Sharp_Multiplier_Kohn_Laplacian_2017}), that is, there exists $b \in (0, \infty)$ such that for all $t \in (0,\infty)$ and $z,w \in \mathbb{S}$,
\begin{align}
\label{Gaussian type heat kernel bound}
    \left| \mathcal{K}_{\exp(-t\mathcal{L})} (z,w) \right| &\leq C \sigma(B(w, t^{1/2}))^{-1} \exp(-b \varrho(z,w)^2/t) .
\end{align}

\medskip
\subsection{Bilinear spectral multipliers}
Let us set $\mathcal{L}_1 := \mathcal{L} \otimes I$ and $\mathcal{L}_2 := I \otimes \mathcal{L}$. It is known that, the operators $\mathcal{L}_1$ and $\mathcal{L}_2$ commutes strongly (see \cite[Lemma 7.24]{Konrad_Unbounded_Selfadjoint_operator_2012}). Let $m : [0, \infty)^2 \to \mathbb{C}$ be a bounded Borel function, then the bivariate spectral theorem (\cite[Theorem 5.21]{Konrad_Unbounded_Selfadjoint_operator_2012}) allows us to consider 
\begin{align}
\label{Bivariate bochner-Riesz multiplier}
    & m(\mathcal{L}_1,\mathcal{L}_2)(f \otimes g)(z, z')
    = \sum_{\substack{\ell_1,\ell_1', \ell_2,\ell_2' =0}}^{\infty} m(\lambda_{\ell_1,\ell_1'}, \lambda_{\ell_2,\ell_2'}) \pi_{\ell_1, \ell_1'} f(z) \,  \pi_{\ell_2, \ell_2'} g(z') .
\end{align}

If we take $f, g$ from some nice class of functions (e.g. finite linear combinations of $\mathcal{H}_{\ell, \ell'}$), then one can see that the above expression is well-defined everywhere in $\mathbb{S} \times \mathbb{S}$, in fact, $m(\mathcal{L}_1,\mathcal{L}_2)(f \otimes g)(z,z')$ is also continuous on $\mathbb{S} \times \mathbb{S}$ (see also \cite{Bui_Bilinear_Multiplier_2026}). Consequently, the bilinear spectral multiplier is defined by
\begin{align*}
    \mathcal{B}_m (f,g)(z) &:= m(\mathcal{L}_1,\mathcal{L}_2)(f \otimes g)(z,z) .
\end{align*}

Let $\mathcal{K}_{m(\mathcal{L}_1,\mathcal{L}_2)}^{bi}$ denote the integral kernel corresponding to the bilinear spectral multiplier $\mathcal{B}_m$, then it is given by
\begin{align}
\label{Definition of bilinear integral kernel}
    \mathcal{B}_m (f,g)(z) &= \int_{\mathbb{S}} \int_{\mathbb{S}} \mathcal{K}_{m(\mathcal{L}_1,\mathcal{L}_2)}^{bi}(z, w, u) f(w) g(u) \, d\sigma(w) \, d\sigma(u) ,
\end{align}
where
\begin{align}
\label{Definition of bilinear kernel}
    \mathcal{K}_{m(\mathcal{L}_1,\mathcal{L}_2)}^{bi}(z,w,u) & = \sum_{\substack{\ell_1,\ell_1', \ell_2,\ell_2' =0}}^{\infty} m(\lambda_{\ell_1,\ell_1'}, \lambda_{\ell_2,\ell_2'}) \overline{Y_z^{\ell_1,\ell_1'}(w)}\,  \overline{Y_z^{\ell_2,\ell_2'}(u)} .
\end{align}
In particular, for $\alpha \geq 0$ and $R>0$, let $m^{\alpha} : [0, \infty)^2 \to \mathbb{C}$ be such that
\begin{align*}
    m^{\alpha}(\eta_1, \eta_2) &= \left(1-\frac{\sqrt{\eta_1}+\sqrt{\eta_2}}{R} \right)_{+}^{\alpha} ,
\end{align*}
then the bilinear Bochner-Riesz means $\mathcal{B}_R^{\alpha}$ associated with the sub-Laplacian $\mathcal{L}$ is given by
\begin{align*}
    \mathcal{B}_R^{\alpha}(f,g)(z) &= \mathcal{B}_{m^{\alpha}}(f, g)(z) .
\end{align*}

\section{Restriction type estimates}
\label{Section: Restriction type estimates}
In this section we prove $(L^1, L^2)$ restriction-type estimates for the sub-Laplacian $\mathcal{L}$ on $\mathbb{S}$. This estimates constitutes one of the main tools in the proof of Theorem \ref{Theorem: Bilinear Bochner-Riesz theorem with restricted f and g}, where they are used to lower the required smoothness parameter from the homogeneous dimension $Q$ to the topological dimension $d$ of the underlying space.

We begin by introducing the following discrete norm: for all $N \in \mathbb{N}$ and $F: \mathbb{R} \to \mathbb{C}$ supported in $[0,1]$, the norm $\|F\|_{N,2}$ is defined by
\begin{align*}
    \|F\|_{N,2} = \left(\frac{1}{N} \sum_{i=1}^{N} \sup_{\lambda \in [\frac{i-1}{N}, \frac{i}{N}]} |F(\lambda)|^2 \right)^{1/2}.
\end{align*}
Let us recall now few estimates for this discrete norm. From \cite[eq. (3.1), (3.2)]{Molla_Singh_Commutator_Metivier_Arxiv} we have
\begin{align}
\label{Discrete norm dominated by sup norm}
    \|F\|_{N, 2} &\leq \|F\|_{L^{\infty}} ,
\end{align}
 and
\begin{align}
\label{Cowling Sikora norm relation}
    \|F\|_{L^2} \leq \|F\|_{N,2} \leq C_s \left(\|F\|_{L^2} + N^{-s} \|F\|_{L^2_s} \right) \quad \quad \text{for} \quad s>1/2 .
\end{align}
Suppose $\Theta : \mathbb{R} \to \mathbb{C}$ be a bump function supported in $[N^2/2, 2 N^2]$ for $N \in \mathbb{N}$, such that 
\begin{align}
\label{Definition of Theta}
    \sum_{M \in \mathbb{Z}} \Theta_M(\tau) =1 \quad \text{where}\quad \Theta_M(\tau) = \Theta(2^{M} \tau).
\end{align}
Let $F : \mathbb{R} \to \mathbb{C}$ be a bounded Borel function supported in $[0, N]$. Then for $M \in \mathbb{Z}$, we define $F_M : \mathbb{R} \times \mathbb{R} \to \mathbb{C}$ by 
\begin{align}
\label{Definition of truncated function}
    F_M(\eta, \tau) = \left\{\begin{array}{ll}
        F(\sqrt{\eta}) \Theta_{M}(\tau),  & \quad \eta>0  \\
        0 & \quad \text{otherwise}
    \end{array} \right.  .
\end{align}
Consequently, the joint functional calculus of $\sqrt{\mathcal{L}}$ and $|T|$ gives
\begin{align}
\label{Definition for the truncated operator}
    F_M(\sqrt{\mathcal{L}}, |T|)f(z) &= \sum_{\substack{\ell, \ell'=0 \\ \ell \neq \ell'}}^{\infty} F_M(\sqrt{\lambda_{\ell, \ell'}}, |\ell-\ell'|)\, \pi_{\ell, \ell'}f(z) .
\end{align}
Note that since $F_M(\sqrt{\lambda_{\ell, \ell'}}, |\ell-\ell'|) = F(\sqrt{\lambda_{\ell, \ell'}}) \Theta(2^M |\ell-\ell'|)$ and $\Theta(0)=0$, hence without loss of generality, we can assume $\ell \neq \ell'$, in the above expression \eqref{Definition for the truncated operator} of $F_M(\sqrt{\mathcal{L}}, |T|)$.

We now present the main result of this section, which may be viewed as a restriction-type estimate for the sub-Laplacian with truncation along the spectrum of $|T|$. This type of truncation has been also considered earlier in various setups, for instance in \cite[Lemma 11]{Martini_Muller_Multiplier_Grushin_2014}, \cite[Lemma 10]{Martini_Spectral_Multiplier_Heisenberg_Reiter_2015}, \cite[Theorem 2.1]{Niedorf_Metivier_group_2023} and \cite[Theorem 3.4]{Niedorf_Bochner_Riesz_Grushin_2022}. All of these previous estimates were established within the framework of linear multiplier theory, and in the bilinear setting this was first considered in \cite{Bagchi_Molla_Singh_Bilinear_Bochner_Riesz_Grushin}. To the best of our knowledge, however, estimate of this type  for the complex sphere $\mathbb{S}$ have not yet been studied before and therefore constitute a new contribution in our setting.
\begin{proposition}
\label{Proposition: Truncated restriction type estimate} 
Let $F_M(\sqrt{\mathcal{L}}, |T|)$ be the operator defined as in \eqref{Definition for the truncated operator}. Then for $\epsilon>0$ we have
\begin{align*}
    \|F_M(\sqrt{\mathcal{L}}, |T|)f\|_{L^2} &\leq C \left( N^{d/2+\epsilon} 2^{-M \epsilon/2} + N^{Q/2} 2^{-M/2} \right) \|F(N \cdot)\|_{N, 2}\, \|f\|_{L^1} .
\end{align*}
\end{proposition}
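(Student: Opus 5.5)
The plan is to reduce the estimate, via orthogonality of the spaces $\mathcal{H}_{\ell,\ell'}$, to Lemma \ref{Lemma: L1 to l2 projection estimate} combined with a lattice-point count adapted to the truncation in $|T|$. Since the $\mathcal{H}_{\ell,\ell'}$ are pairwise orthogonal, \eqref{Definition for the truncated operator} gives
\begin{align*}
 \|F_M(\sqrt{\mathcal{L}}, |T|)f\|_{L^2}^2 = \sum_{\ell\neq \ell'} |F(\sqrt{\lambda_{\ell,\ell'}})|^2\, |\Theta(2^M|\ell-\ell'|)|^2 \, \|\pi_{\ell,\ell'}f\|_{L^2}^2 .
\end{align*}
Applying Lemma \ref{Lemma: L1 to l2 projection estimate} to each term, it suffices to bound
\begin{align*}
 \Sigma := \sum_{\substack{\ell\neq\ell' \\ |\ell-\ell'|\sim 2^{-M}N^2}} |F(\sqrt{\lambda_{\ell,\ell'}})|^2\, \lambda_{\ell,\ell'}^{n-2}(\ell+\ell')
\end{align*}
by $\big(N^{d+2\epsilon}2^{-M\epsilon}+N^{Q}2^{-M}\big)\|F(N\cdot)\|_{N,2}^2$. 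Since $F$ is supported in $[0,N]$, only $\lambda_{\ell,\ell'}\le N^2$ contributes.

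To compute $\Sigma$, I would parametrize by $k=|\ell-\ell'|\ge 1$ and $m=\min\{\ell,\ell'\}$. Then $\ell+\ell'=2m+k$ and
\begin{align*}
 \lambda_{\ell,\ell'}=4m(m+k)+2(n-1)(2m+k)\sim (2m+k)^2-k^2+(2m+k) .
\end{align*}
Next, split $\sqrt{\lambda}$ into the unit blocks $[i-1,i]$, $i=1,\dots,N$; these match the $N$ blocks $[\frac{i-1}{N},\frac iN]$ in the definition of $\|F(N\cdot)\|_{N,2}$. On each block $|F|^2$ is bounded by its supremum, so
\begin{align*}
 \Sigma\le \sum_{i=1}^N \sup_{[i-1,i]}|F|^2 \cdot W_i , \qquad W_i := \sum_{\substack{k\sim 2^{-M}N^2 \\ \sqrt{\lambda_{m,k}}\in[i-1,i]}} \lambda^{n-2}(2m+k) ,
\end{align*}
and it remains to show
\begin{align*}
 W_i\lesssim N^{d-1+2\epsilon}2^{-M\epsilon}+N^{Q-1}2^{-M}
\end{align*}
uniformly in $i$. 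Indeed, $\frac1N\sum_i\sup|F|^2=\|F(N\cdot)\|_{N,2}^2$, so this bound gives the claim after taking square roots.

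The counting step, which is the main obstacle, goes as follows. For fixed $k$, $\lambda$ is increasing in $m$ with derivative $\sim 2m+k$. Hence the number of $m$ with $\lambda\in[(i-1)^2,i^2]$ is $\lesssim 1+i/(2m+k)$, and on this set $2m+k\lesssim k+i$ while $\lambda\lesssim i^2$. I would treat two regimes separately.
\begin{enumerate}
 \item \emph{Near-diagonal regime $k\lesssim N$.} Here $2m+k\sim i$, the number of admissible $m$ is $O(1)$, and the sum over $k\sim 2^{-M}N^2$ has at most $\min\{2^{-M}N^2,N\}$ terms. Because $k\lesssim N$ forces $2^{-M}N^2\lesssim N$, one can write $\min\{2^{-M}N^2,N\}\le N(2^{-M}N)^{\epsilon}$. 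This yields
\begin{align*}
 W_i\lesssim N^{2n-4}\cdot N\cdot N\,(2^{-M}N)^{\epsilon}\le N^{d-1+2\epsilon}2^{-M\epsilon} .
\end{align*}
 \item \emph{Far-from-diagonal regime $k\gg N$.} Here $\lambda\le N^2$ forces $m\lesssim N^2/k$, so $2m+k\sim k$ and $\lambda\sim mk$. Using $\lambda^{n-2}\le N^{2n-4}$, $(2m+k)\sim 2^{-M}N^2$, and $O(1)$ values of $m$ per $k$, one must still gain from the sparseness of the $k$'s: for a given block there are only about $i^2/k$ admissible pairs $(m,k)$ at all. Summing $\lambda^{n-2}k$ over these pairs should give the factor $N^{Q-1}2^{-M}$.
\end{enumerate}
I expect the bookkeeping in the second regime to be the delicate point. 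One has to use $\lambda^{n-2}\sim(mk)^{n-2}$ rather than the crude bound $N^{2n-4}$, and organize the sum by the values of $mk$, so as to avoid an extra power of $K=2^{-M}N^2$. The $\epsilon$-loss in the first regime comes only from converting the count $\min\{K,N\}$ into a power of $2^{-M}$.
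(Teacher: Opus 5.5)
Your skeleton is the paper's. Orthogonality plus Lemma~\ref{Lemma: L1 to l2 projection estimate} is equivalent to the paper's Minkowski step combined with the bound on $\|Y_w^{\ell,\ell'}\|_{L^2}^2$. The paper likewise groups $(\ell,\ell')$ into the shells $H_i=\{(i-1)^2\le\lambda_{\ell,\ell'}\le i^2\}$ and reduces to a weighted lattice count in each shell. Your near-diagonal regime is fine. The claim $2m+k\sim i$ is false when $i\ll k\lesssim N$, but it is harmless: what you actually use, $2m+k\lesssim N$ with $O(1)$ admissible $m$ per $k$, is correct. It even gives directly $W_i\lesssim N^{2n-3}K=N^{Q-1}2^{-M}$, where $K:=2^{-M}N^2$.

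\textbf{The gap} is the far-from-diagonal regime $K\gg N$. You leave it at ``should give'', and the heuristic you offer there is wrong.

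\emph{Why sparseness is essential.} Taking $O(1)$ values of $m$ for every $k\sim K$ gives $W_i\lesssim N^{2n-4}K^2$. This loses the factor $K/N$ against $N^{Q-1}2^{-M}=N^{2n-3}K$. For $K\sim N^2$ it is $N^{2n}$ versus $N^{2n-1}$, and the $\epsilon$-term cannot absorb this.

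\emph{The correct count.} The number of admissible pairs $(m,k)$ in the shell is about $i$, not about $i^2/K$. In this regime $\lambda\sim(m+1)k$. So the shell is empty unless $i\gtrsim\sqrt K$, and $m+1$ ranges over an interval of bounded ratio around $i^2/K$. For each such $m$, $\lambda$ increases in $k$ with step exactly $4m+2(n-1)\sim m+1$. So at most $1+i/(m+1)\sim K/i$ values of $k$ are admissible. Hence the count is $\lesssim i^2/K+i\lesssim i$. With the crude bounds $\lambda^{n-2}\le i^{2n-4}$ and $2m+k\lesssim K$, one gets $W_i\lesssim i^{2n-3}K\le N^{2n-3}K=N^{Q-1}2^{-M}$. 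This is exactly the target, and it is attained up to constants here (take $|F|\equiv1$), so no factor may be lost.

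\emph{Your suggested remedy does not help.} Using $(mk)^{n-2}$ instead of $N^{2n-4}$ and organizing by the values of $mk$ gains nothing, since $mk\sim\lambda\sim i^2$ throughout the shell.

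\emph{The missing idea} is to fix the smaller index and count the larger one, rather than the other way round. This is what the paper does. For fixed $\ell$ it counts the $\ell'$ with $\frac{(i-1)^2+4a^2}{4(\ell+a)}\le\ell'+a\le\frac{i^2+4a^2}{4(\ell+a)}$, of which there are $\lesssim\frac{i+a}{\ell+a}$. It then bounds $\ell'+a\le\ell+a+2^{-M+1}N^2$ and uses the range $A_{i,M,N}\le\ell+a\le B_{i,M,N}$ forced by the $|T|$-truncation.
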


\begin{proof}
First we write
\begin{align*}
    F_M(\sqrt{\mathcal{L}}, |T|)f(z) &= \int_{\mathbb{S}} \mathcal{K}_{F_M(\sqrt{\mathcal{L}}, |T|)}(z, w) f(w) \, d\sigma(w) ,
\end{align*}
where
\begin{align*}
    \mathcal{K}_{F_M(\sqrt{\mathcal{L}}, |T|)}(z, w) &= \sum_{\substack{\ell, \ell'=0 \\ \ell \neq \ell'}}^{\infty} F_M(\sqrt{\lambda_{\ell, \ell'}}, |\ell-\ell'|) Y_w^{\ell, \ell'}(z) .
\end{align*}
Applying Minkowski's integral inequality we get
\begin{align*}
    \|F_M(\sqrt{\mathcal{L}}, |T|)f\|_{L^2} &\leq \int_{\mathbb{S}} |f(w)| \left( \int_{\mathbb{S}} |\mathcal{K}_{F_M(\sqrt{\mathcal{L}}, |T|)}(z, w)|^2 \, d\sigma(z) \right)^{1/2} d\sigma(w) .
\end{align*}
Hence, in order to estimate $\|F_M(\sqrt{\mathcal{L}}, |T|)\|_{L^1 \to L^2}$, it is enough to prove the following
\begin{align}
\label{Enough to prove in truncated restriction}
    \sup_{w \in \mathbb{S}}\|\mathcal{K}_{F_M(\sqrt{\mathcal{L}}, |T|)}(\cdot, w)\|_{L^2} &\leq C \left( N^{d/2+\epsilon} 2^{-M \epsilon/2} + N^{Q/2} 2^{-M/2} \right) \|F(N \cdot)\|_{N, 2} .
\end{align}
Note that, using orthogonality we get
\begin{align}
\label{Expression: Use of orthogonality in kernel estimate}
    \|\mathcal{K}_{F_M(\sqrt{\mathcal{L}}, |T|)}(\cdot, w)\|_{L^2}^2 &= \sum_{\substack{\ell, \ell'=0 \\ \ell \neq \ell'}}^{\infty} |F_M(\sqrt{\lambda_{\ell, \ell'}}, |\ell-\ell'|)|^2 \|Y_w^{\ell, \ell'}\|_{L^2}^2 .
\end{align}
For $i \in \mathbb{N}$, define
\begin{align}
\label{Definition of Hi set}
    H_i &= \{(\ell, \ell') \in \mathbb{N}_0^2 : (i-1)^2 \leq \lambda_{\ell, \ell'} \leq i^2 \} .
\end{align}
Now recall that $\lambda_{\ell, \ell'} = 4\ell \ell'+2(n-1)(\ell+\ell')$ and $a= \frac{n-1}{2}$. Then one can easily see that
\begin{align}
\label{Product of l and l prime equivalent to i^2 case}
    (i-1)^2 \leq \lambda_{\ell, \ell'} \leq i^2 \quad \text{if and only if} \quad \frac{(i-1)^2+4a^2}{4} \leq (\ell+a) (\ell'+a) \leq \frac{i^2+4a^2}{4} .
\end{align} 
Using the estimate \eqref{Estimate of the L2 norm} into \eqref{Expression: Use of orthogonality in kernel estimate} we obtain
\begin{align}
\label{Use of norm of Zonal harmonics}
    & \|\mathcal{K}_{F_M(\sqrt{\mathcal{L}}, |T|)}(\cdot, w)\|_{L^2}^2 \\
    &\nonumber \leq C \sum_{i=1}^{N} \sup_{(\ell, \ell') \in H_i} |F_M(\sqrt{\lambda_{\ell, \ell'}}, |\ell-\ell'|)|^2 \sum_{(\ell, \ell') \in H_i} (\ell+a)^{n-2} (\ell'+a)^{n-2} (\ell+a+\ell'+a) \\
    &\nonumber \leq C \sum_{i=1}^{N} \sup_{(\ell, \ell') \in H_i} |F_M(\sqrt{\lambda_{\ell, \ell'}}, |\ell-\ell'|)|^2 \left(\frac{i^2 +4a^2}{4} \right)^{n-2} \sum_{(\ell, \ell') \in H_i} (\ell+a+\ell'+a) .
\end{align}
Let us denote $H_i' = \{(\ell, \ell') \in H_i : \ell < \ell' \}$. Since $(\ell+a+\ell'+a)$ is symmetric in $\ell$ and $\ell'$, we have
\begin{align}
\label{L2 kernel estimate before l prime}
    & \|\mathcal{K}_{F_M(\sqrt{\mathcal{L}}, |T|)}(\cdot, w)\|_{L^2}^2 \\
    &\nonumber \leq C \sum_{i=1}^{N} \sup_{(\ell, \ell') \in H_i} |F(\sqrt{\lambda_{\ell, \ell'}})|^2 |\Theta(2^{M}  |\ell-\ell'|)|^2 \left(\frac{i^2 +4a^2}{4} \right)^{n-2} \sum_{(\ell, \ell') \in H_i'} (\ell+a+\ell'+a) \\
    &\nonumber \leq C \sum_{i=1}^{N} \sup_{(\ell, \ell') \in H_i} |F(\sqrt{\lambda_{\ell, \ell'}})|^2 \, i^{2n-4} \sum_{(\ell, \ell') \in H_{i,M}'} (\ell'+a) ,
\end{align}
where $H_{i,M}' = \{(\ell, \ell') \in H_i' : 2^{-M-1}N^2 \leq \ell'-\ell \leq 2^{-M+1}N^2 \}$.

\medskip

Note that if $(\ell, \ell') \in H_{i,M}'$, then we have the following upper and lower bounds for $\ell'-\ell$ and $(\ell+a) (\ell'+a)$:
\begin{align*}
    2^{-M-1}N^2 \leq \ell'-\ell \leq 2^{-M+1}N^2 \quad \text{and} \quad \frac{(i-1)^2+4a^2}{4} \leq (\ell+a) (\ell'+a) \leq \frac{i^2+4a^2}{4} ,
\end{align*}
Using these bounds one can obtain two quadratic inequalities in $\ell+a$ which upon solving give us the following estimate
\begin{align}
\label{boundforthesumlplusa}
    & A_{i, M, N} \leq (\ell+a) \leq B_{i, M, N} .
\end{align}
where
\begin{align*}
    A_{i, M, N} &= \frac{-2^{-M+1} N^2 + \sqrt{(2^{-M+1}N^2)^2+((i-1)^2+4a^2)}}{2} \\
   \text{and} \quad B_{i, M, N} &= \frac{-2^{-M-1} N^2 + \sqrt{(2^{-M-1}N^2)^2+(i^2+4a^2)}}{2} .
\end{align*}
On the other hand, if $(\ell, \ell') \in H_i'$ then 
\begin{align}
\label{On the other bound for l and lprime}
    0\leq \ell \leq i \quad \quad \text{and} \quad \quad \frac{(i-1)^2+4a^2}{4 (\ell+a)} \leq (\ell'+a) \leq \frac{i^2+4a^2}{4 (\ell+a)} .
\end{align}
Now we estimate 
\begin{align*}
    \mathfrak{J} := \sum_{(\ell, \ell') \in H_{i,M}'} (\ell'+a) .
\end{align*}
by considering the following two cases.

\medskip

\noindent \textbf{Case I: \texorpdfstring{$2^M \geq N^2$.}{}}
In this case $\ell' \leq \ell + 2$. So that $\ell'+a \leq C (\ell+a)$. Therefore using \eqref{boundforthesumlplusa} and \eqref{On the other bound for l and lprime} we obtain
\begin{align}
\label{Estimate of J first case}
    \mathfrak{J} &\leq C \sum_{A_{i, M, N} \leq (\ell+a) \leq \min\{B_{i, M, N}, (i+a)\}} \sum_{\frac{(i-1)^2+4a^2}{4 (\ell+a)} \leq (\ell'+a) \leq \frac{i^2+4a^2}{4 (\ell+a)}} (\ell+a) \\
    &\nonumber \leq C \sum_{\ell=0}^{i} (\ell+a) \frac{i+a}{\ell+a} \leq C~ i^2 2^{-M \epsilon} 2^{M \epsilon} .
\end{align}
Since $(\ell, \ell') \in H_{i,M}'$, we see that $2^M \leq 4 N^2$, otherwise $1 \leq \ell'-\ell \leq 2^{-M+1}N^2 \leq 1/2$, which is absurd. Hence from \eqref{Estimate of J first case} we get
\begin{align}
\label{Estimate of J first case last step}
    \mathfrak{J} &\leq C~ i^2 2^{-M \epsilon} N^{2 \epsilon} .
\end{align}

\medskip

\noindent \textbf{Case II: \texorpdfstring{$2^M < N^2$.}{}}
Similarly as in \eqref{Estimate of J first case} and using the fact $\ell'+a \leq \ell + a + 2^{-M+1}N^2$ we get
\begin{align}
\label{Estimate of J second case}
    \mathfrak{J} &\leq C \sum_{A_{i,M,N} \lesssim (\ell+a) \lesssim \min\{B_{i,M, N}, (i+a)\}} \sum_{\frac{(i-1)^2+4a^2}{4 (\ell+a)} \leq (\ell'+a) \leq \frac{i^2+4a^2}{4 (\ell+a)}} (\ell+a+ 2^{-M+1}N^2) \\
    &\nonumber \leq C \sum_{A_{i,M,N} \lesssim (\ell+a) \lesssim \min\{B_{i,M, N}, (i+a)\}} [(\ell+a)+ 2^{-M+1}N^2] \frac{i+a}{\ell+a} \\
    &\nonumber \leq C~ (i+a)^2 + (i+a) \, 2^{-M} N^2 \sum_{A_{i,M,N} \lesssim (\ell+a) \lesssim B_{i,M, N}} \frac{1}{\ell+a} \\
    &\nonumber \leq C~ i^2 \, 2^{-M \epsilon} N^{2 \epsilon} + i \, 2^{-M} N^2 .
\end{align}
Combining both the estimates \eqref{Estimate of J first case last step} and \eqref{Estimate of J second case} from \eqref{L2 kernel estimate before l prime} for all $w \in \mathbb{S}$ we obtain
\begin{align}
\label{Creating discrete norm in Restriction}
    \|\mathcal{K}_{F_M(\sqrt{\mathcal{L}}, |T|)}(\cdot, w)\|_{L^2}^2 & \leq C \sum_{i=1}^{N} \sup_{(\ell, \ell') \in H_i} |F(\sqrt{\lambda_{\ell, \ell'}})|^2 \, i^{2n-4} \left( i^2\, 2^{-M \epsilon} N^{2 \epsilon} + i\, 2^{-M} N^2 \right) \\
    &\nonumber \leq C \left(N^{2n-1+2 \epsilon} 2^{-M \epsilon} + N^{2n} 2^{-M} \right) \left( \frac{1}{N} \sum_{i=1}^{N} \sup_{\lambda \in [(i-1)^2, i^2]} |F(\sqrt{\lambda})|^2 \right) \\
    &\nonumber \leq C \left(N^{2n-1+2 \epsilon} 2^{-M \epsilon} + N^{2n} 2^{-M} \right) \left( \frac{1}{N} \sum_{i=1}^{N} \sup_{\lambda \in [\frac{(i-1)}{N}, \frac{i}{N}]} |F(N \lambda)|^2 \right) \\
    &\nonumber \leq C (N^{d+2\epsilon} 2^{-M \epsilon} + N^{Q} 2^{-M}) \|F(N \cdot)\|_{N, 2}^2 .
\end{align}
This completes the proof of \eqref{Enough to prove in truncated restriction}.
\end{proof}

The following proposition gives the $L^1 \to L^2$ estimates of the projection $\pi_{\ell, \ell'}$, which is required for the proof of Theorem \ref{Theorem: Bilinear Bochner-Riesz theorem with restricted f and g}. This type of estimates are already available in the literature, see \cite[Lemma 3.2]{Casarino_Peloso_Riesz_Mean_2011}. However, our result improves their projection estimates under the additional condition $|\ell-\ell'| \leq \kappa R^s$, for some $\kappa>0$ and $s\geq 0$.
\begin{proposition}
\label{Proposition: Projection estimate near diagonal}
Let $0\leq a <b$ be fixed. Then for any $s\geq 0$ and $\kappa, R>0$ we have
\begin{align}
\label{L1 to L2 projection estimate near diagonal}
    \Bigg\|\sum_{\substack{\lambda_{\ell, \ell'} \in [a,b] \\ |\ell-\ell'| \leq \kappa R^s}} \pi_{\ell, \ell'}f \Bigg\|_{L^{2}}^2 &\leq C b^{n-2}(\sqrt{b}+R^s) (\sqrt{b}-\sqrt{a}) R^s \, \|f\|_{L^1}^2 .
\end{align}
In particular, if $s \in [0,1]$, $a=R^2(1-2^{-j+1})^2$ and $b=R^2(1-2^{-j-1})^2$, then we have
\begin{align*}
    \Bigg\|\sum_{\substack{\lambda_{\ell, \ell'} \in [a,b] \\ |\ell-\ell'| \leq \kappa R^s}} \pi_{\ell, \ell'}f \Bigg\|_{L^{2}} &\leq C R^{(n-1+s/2)} 2^{-j/2} \, \|f\|_{L^1} .
\end{align*}
    
\end{proposition}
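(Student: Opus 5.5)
The plan is to mirror the proof of Proposition \ref{Proposition: Truncated restriction type estimate}: reduce to an $L^2$ bound of the kernel uniformly in the pole, and then count lattice points. The kernel of the operator in \eqref{L1 to L2 projection estimate near diagonal} is $\sum Y_w^{\ell,\ell'}(z)$, summed over $\lambda_{\ell,\ell'}\in[a,b]$ with $|\ell-\ell'|\le\kappa R^s$. By Minkowski's integral inequality and orthogonality of the $\mathcal{H}_{\ell,\ell'}$, it suffices to show
\begin{align*}
\sup_{w\in\mathbb{S}} \sum_{\substack{\lambda_{\ell,\ell'}\in[a,b]\\ |\ell-\ell'|\le \kappa R^s}} \|Y_w^{\ell,\ell'}\|_{L^2}^2 \lesssim b^{n-2}(\sqrt b+R^s)(\sqrt b-\sqrt a)R^s .
\end{align*}

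\textbf{Pointwise bound on each term.} By \eqref{Estimate of the L2 norm}, each term is at most
\begin{align*}
C(\ell+a_0+\ell'+a_0)\big((\ell+a_0)(\ell'+a_0)\big)^{n-2}, \qquad a_0=\tfrac{n-1}{2}
\end{align*}
(renamed to avoid the clash with the interval endpoint $a$). By \eqref{Product of l and l prime equivalent to i^2 case}, $(\ell+a_0)(\ell'+a_0)\sim \lambda_{\ell,\ell'}+4a_0^2$, so the product factor is $\lesssim b^{n-2}$; the region $b\lesssim 1$ is trivial and can be absorbed. For the linear factor, set $m=\min(\ell,\ell')$ and $k=|\ell-\ell'|\le\kappa R^s$. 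Since $(m+a_0)^2\le(\ell+a_0)(\ell'+a_0)\lesssim b$, we get $\ell+\ell'+2a_0\lesssim \sqrt b+R^s$. Hence it remains to count the pairs $(\ell,\ell')$ with $\lambda_{\ell,\ell'}\in[a,b]$ and $|\ell-\ell'|\le\kappa R^s$, and show this number is $\lesssim(\sqrt b-\sqrt a+1)R^s$.

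\textbf{Counting, which is the main step.} Parametrize by $k\in[0,\kappa R^s]$ and $m\ge0$. Then
\begin{align*}
\lambda = 4m(m+k)+2(n-1)(2m+k) = (2m+k+2a_0)^2-(k^2+4a_0^2).
\end{align*}
For fixed $k$, the condition $\lambda\in[a,b]$ says that $2m+k+2a_0$ lies in $[\sqrt{a+c_k},\sqrt{b+c_k}]$ with $c_k=k^2+4a_0^2$. The length of this interval is
\begin{align*}
\sqrt{b+c_k}-\sqrt{a+c_k}\le\sqrt b-\sqrt a,
\end{align*}
because $t\mapsto\sqrt{t+c}$ has decreasing derivative. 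The number of admissible $m$ is therefore at most $\tfrac12(\sqrt b-\sqrt a)+1$. Summing over the $\lesssim R^s$ values of $k$ (each $k>0$ counted twice for the two orderings) gives the count $\lesssim(\sqrt b-\sqrt a+1)R^s$.

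The main obstacle is the extra $+1$, which must be absorbed. In the application, $\sqrt b-\sqrt a=R2^{-j}\cdot\tfrac32$, which is $\ge1$ as long as $2^j\lesssim R$. For larger $j$ I would either state \eqref{L1 to L2 projection estimate near diagonal} with $\max\{\sqrt b-\sqrt a,1\}$, or note that in the paper's use only $2^j\le R$ matters, since the dyadic decomposition stops at scale $1/R$.

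\textbf{Specialization.} With $a=R^2(1-2^{-j+1})^2$ and $b=R^2(1-2^{-j-1})^2$ we have $b\le R^2$, $\sqrt b+R^s\lesssim R$ for $s\le1$, and $\sqrt b-\sqrt a\sim R2^{-j}$. This gives $R^{2n-4}\cdot R\cdot R2^{-j}\cdot R^s=R^{2n-2+s}2^{-j}$, and taking square roots yields the stated $R^{n-1+s/2}2^{-j/2}$.
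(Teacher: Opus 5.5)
Your argument is essentially the paper's proof. Both use orthogonality plus a per-bidegree $L^1\to L^2$ bound for $\pi_{\ell,\ell'}$ (the paper quotes Lemma~\ref{Lemma: L1 to l2 projection estimate}, you use $\sup_w\|Y_w^{\ell,\ell'}\|_{L^2}$ with \eqref{Estimate of the L2 norm}, which amounts to the same). Both then count lattice points along each line $\ell'-\ell=\mathrm{const}$, and the paper's range $A_h\le m\le B_h$ is equivalent to your condition $2m+k+2a_0\in[\sqrt{a+c_k},\sqrt{b+c_k}]$ after completing the square.

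Your concern about the $+1$ is justified. The paper silently drops it when it bounds $\sum_{A_h\le m\le B_h}(m+h)$ by $C(B_h+h)(B_h-A_h)$. The stated inequality is in fact false in general when $\sqrt b-\sqrt a\ll 1$: take a tiny window around a single diagonal eigenvalue $\lambda_{\ell,\ell}$.

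So the correct hypothesis is $\sqrt b-\sqrt a\gtrsim 1$, together with $R\ge 1$ so that the number $\lfloor\kappa R^s\rfloor+1$ of admissible $k$ is $\lesssim R^s$. This hypothesis forces $b\gtrsim 1$, and that is what actually disposes of small $b$. Small $b$ cannot simply be absorbed: for $a=0$ the term $\pi_{0,0}f$ is always present while the right-hand side tends to $0$ with $b$.

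The hypothesis holds in the paper's actual use ($a=0$, $b=N^2$ in Corollary~\ref{Corollary: L1 to L2 estimate for diagonal operator}) and, as you say, in the dyadic case for $2^j\lesssim R$.
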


\begin{remark}
When $\ell=\ell'$, $s=0$, $a= R(1-2^{-j+1})$ and $b=R(1-2^{-j-1})$, then this result coincides with \cite[just before eq. 5.3]{Casarino_Peloso_Riesz_Mean_2011}.
    
\end{remark}

\begin{proof}
Applying orthogonality and Lemma \ref{Lemma: L1 to l2 projection estimate} yields
\begin{align}
\label{Projection estimate near diagonal case}
    \Bigg\|\sum_{\substack{\lambda_{\ell, \ell'} \in [a,b] \\ |\ell-\ell'| \leq \kappa R^s}} \pi_{\ell, \ell'}f \Bigg\|_{L^{2}}^2 = \sum_{\substack{\lambda_{\ell, \ell'} \in [a,b] \\ |\ell-\ell'| \leq \kappa R^s}} \|\pi_{\ell, \ell'}f\|_{L^{2}}^2
    & \leq C \sum_{\substack{\lambda_{\ell, \ell'} \in [a,b] \\ |\ell-\ell'| \leq \kappa R^s}} \lambda_{\ell, \ell'}^{n-2} (\ell+\ell') \, \|f\|_{L^1}^2 \\
    &\nonumber \leq C b^{n-2} \|f\|_{L^1}^2 \sum_{\substack{\lambda_{\ell, \ell'} \in [a,b] \\ |\ell-\ell'| \leq \kappa R^s}} (\ell+\ell') .
\end{align}
Denote $m= 2\ell+n-1 $ and $m'= 2\ell'+n-1$. Now setting $\Tilde{a}=a+(n-1)^2$ and $\Tilde{b}=b+(n-1)^2$, we see that
\begin{align*}
    a\leq \lambda_{\ell, \ell'} \leq b \quad \text{if and only if} \quad \Tilde{a} \leq m m' \leq \Tilde{b} .
\end{align*}
Take $h=m'-m$. Then further setting  $A_h = \frac{-h+\sqrt{h^2+4\Tilde{a}}}{2}$ and $ B_h = \frac{-h+\sqrt{h^2+4\Tilde{b}}}{2}$, we obtain 
\medskip
\begin{align*}
   \Tilde{a} \leq m (m+h) \leq \Tilde{b} \quad \text{if and only if} \quad A_h \leq m \leq B_h .
\end{align*}
Hence we get
\begin{align}
\label{Final estimate for sum in m}
    \sum_{\substack{\lambda_{\ell, \ell'} \in [a,b] \\ |\ell-\ell'| \leq \kappa R^s}} (\ell+\ell') \leq C \sum_{\substack{\Tilde{a} \leq m m' \leq \Tilde{b} \\ |m-m'|\leq \kappa R^s}} (m+m') &\leq C \sum_{|h| \leq \kappa R^s} \sum_{A_h \leq m \leq B_h} (m+ h) \\
    &\nonumber \leq C \sum_{|h| \leq \kappa R^s} (B_h + h) (B_h-A_h) \\
    &\nonumber \leq C (\sqrt{\Tilde{b}}-\sqrt{\Tilde{a}}) (\sqrt{\Tilde{b}} + R^{s}) R^s \\
    &\nonumber \leq C (\sqrt{b}+ R^s)(\sqrt{b}-\sqrt{a})R^s .
\end{align}
Finally, plugging the estimate \eqref{Final estimate for sum in m} into \eqref{Projection estimate near diagonal case} we got the required estimate \eqref{L1 to L2 projection estimate near diagonal}.
\end{proof}

As an application of the above proposition, we derive the following corollary, which will be instrumental in  proving Theorem \ref{Theorem: Bilinear Bochner-Riesz theorem with restricted f and g} and Theorem \ref{Theorem: Bilinear near diagonal case}.
\begin{corollary}
\label{Corollary: L1 to L2 estimate for diagonal operator}
Let $F : \mathbb{R} \to \mathbb{C}$ be a bounded Borel function such that $\supp{F} \subseteq [0, N]$. For $s\in [0,1]$, define
\begin{align}
\label{Definition of the operator Ls}
    \chi_s(|T|) F(\sqrt{\mathcal{L}})f(z) &:= \sum_{\substack{\ell, \ell'=0 \\ |\ell- \ell'| \leq \kappa N^s}}^{\infty} F(\sqrt{\lambda_{\ell, \ell'}})\, \pi_{\ell, \ell'}f(z) .
\end{align}
Then we have
\begin{align*}
    \|\chi_s(|T|) F(\sqrt{\mathcal{L}})f\|_{L^2} &\leq C N^{(Q/2-1+s/2)} \|F\|_{L^{\infty}} \|f\|_{L^1} .
\end{align*}
\end{corollary}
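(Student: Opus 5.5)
The plan is to reduce the estimate to Proposition \ref{Proposition: Projection estimate near diagonal}, applied with $R=N$, $a=0$ and $b=N^2$. The only input needed beyond that proposition is orthogonality of the spaces $\mathcal{H}_{\ell,\ell'}$.

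First, since $\supp F \subseteq [0,N]$, only pairs $(\ell,\ell')$ with $\sqrt{\lambda_{\ell,\ell'}}\le N$ contribute, that is, those with $\lambda_{\ell,\ell'}\in[0,N^2]$. The spaces $\mathcal{H}_{\ell,\ell'}$ are pairwise orthogonal, so
\begin{align*}
\|\chi_s(|T|)F(\sqrt{\mathcal{L}})f\|_{L^2}^2 = \sum_{\substack{\lambda_{\ell,\ell'}\in[0,N^2]\\ |\ell-\ell'|\le \kappa N^s}} |F(\sqrt{\lambda_{\ell,\ell'}})|^2\,\|\pi_{\ell,\ell'}f\|_{L^2}^2 \le \|F\|_{L^\infty}^2 \Bigg\|\sum_{\substack{\lambda_{\ell,\ell'}\in[0,N^2]\\ |\ell-\ell'|\le \kappa N^s}} \pi_{\ell,\ell'}f\Bigg\|_{L^2}^2 .
\end{align*}
The last equality uses orthogonality once more, in the reverse direction.

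Next, apply \eqref{L1 to L2 projection estimate near diagonal} with $a=0$, $b=N^2$, $R=N$ and the given $s\in[0,1]$. This bounds the right-hand side by
\begin{align*}
C\,(N^2)^{n-2}\,(N+N^s)\,N\,N^s\,\|f\|_{L^1}^2 .
\end{align*}
Since $s\le1$ and we may take $N\ge1$, we have $N+N^s\le 2N$, so the bound is at most
\begin{align*}
C N^{2n-4+2+s}\|f\|_{L^1}^2 = C N^{2n-2+s}\|f\|_{L^1}^2 .
\end{align*}
Taking square roots gives the factor $N^{n-1+s/2}=N^{Q/2-1+s/2}$, because $Q=2n$.

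The only point needing care is small $N$. If $N<1$, only finitely many bounded eigenvalues lie in the support; indeed the only one with $\lambda\le N^2<1$ is $\lambda_{0,0}=0$. In that case the estimate reduces to $\|\pi_{0,0}f\|_{L^2}\lesssim \|f\|_{L^1}$, which is a uniform bound. Since the corollary will be applied with $N\sim R$ large, this case is inessential. Otherwise the argument is a direct specialization of Proposition \ref{Proposition: Projection estimate near diagonal}, and I expect no real obstacle.
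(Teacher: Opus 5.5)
Your argument is correct and is essentially the paper's own proof: use orthogonality of the $\mathcal{H}_{\ell,\ell'}$ to pull out $\|F\|_{L^\infty}^2$, then apply Proposition~\ref{Proposition: Projection estimate near diagonal} with $a=0$, $b=N^2$, $R=N$ to get the factor $N^{2n-2+s}$. One correction to your side remark: for $N<1$ the statement does not reduce to a harmless uniform bound. In fact it fails as $N\to 0$ with $C$ independent of $N$: take $f\equiv 1$ and $F(0)=1$, so the left side is independent of $N$ while the right side tends to $0$. The corollary should therefore be read with $N\ge 1$, which is what the paper tacitly assumes.
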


\begin{proof}
Using orthogonality and applying Proposition \ref{Proposition: Projection estimate near diagonal} with $a=0$ and $b=N^2$, we have
\begin{align*}
    \|\chi_s(|T|) F(\sqrt{\mathcal{L}})f\|_{L^2}^2 = \sum_{\substack{\ell, \ell'=0 \\ |\ell- \ell'| \leq \kappa N^s}}^{\infty} |F(\sqrt{\lambda_{\ell, \ell'}})|^2 \|\pi_{\ell, \ell'} f\|_{L^2}^2 &\leq \|F\|_{L^{\infty}}^2 \sum_{\substack{\lambda_{\ell, \ell} \in [0,N^2] \\ |\ell- \ell'| \leq \kappa N^s}} \|\pi_{\ell, \ell'} f\|_{L^2}^2 \\
    &\leq C \|F\|_{L^{\infty}}^2 \, N^{2(n-1+s/2)} \|f\|_{L^1}^2 .
\end{align*}
    This yields the desired estimate. 
\end{proof}

\section{Weighted Plancherel estimates}
\label{Section: Weighted Plancherel estimates}
In this section we will discuss various weighted Plancherel estimates for the sub-Laplacian $\mathcal{L}$ on complex sphere $\mathbb{S}$. The discussion is organized into two parts: the linear and the bilinear weighted Plancherel estimates.

\subsection{Linear weighted Plancherel estimates}
Let us begin this subsection by recalling a known weighted Plancherel estimate that is available in the literature.
\begin{proposition}\cite[Theorem 6.1 (v)]{Cowling_Kilima_Sikora_sublaplacian_2011}
\label{Proposition: Linear weighted Plancherel with weight}
Let $m : \mathbb{R} \to \mathbb{C}$ such that $\supp{m} \subseteq [0,N]$. Then for all $z \in \mathbb{S}$ and $0\leq \gamma <1/2$ we have
\begin{align*}
    \Bigg( \int_{\mathbb{S}} |\mathcal{K}_{m(\sqrt{\mathcal{L}})}(z,w) (1+N \varpi(z,w))^{\gamma}|^2 \, d\sigma(w) \Bigg)^{1/2} & \leq C N^{Q/2} \|m(N\cdot)\|_{N,2} .
\end{align*}
    
\end{proposition}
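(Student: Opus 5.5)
Since this is \cite[Theorem 6.1 (v)]{Cowling_Kilima_Sikora_sublaplacian_2011}, I only sketch how I would re-derive it with the tools recorded in Section 2. Both $\varpi$ and $\varrho$ are $\mathbb{U}(n)$-invariant and the kernel has the form $\mathcal{K}_{m(\sqrt{\mathcal{L}})}(z,w)=\sum m(\sqrt{\lambda_{\ell,\ell'}})\,\overline{Y^{\ell,\ell'}_w(z)}$. So I would fix $z=e$ and estimate $\int_{\mathbb{S}}|\mathcal{K}(e,w)|^2(1+N\varpi(e,w))^{2\gamma}\,d\sigma(w)$. Using $(1+N\varpi)^{2\gamma}\lesssim 1+N^{2\gamma}\varpi^{2\gamma}$, this splits into an unweighted part and a purely weighted part.

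\textbf{Unweighted part.} I would use orthogonality together with \eqref{Estimate of the L2 norm}, exactly as in the proof of Proposition \ref{Proposition: Truncated restriction type estimate} but without the truncation in $|T|$. Group $(\ell,\ell')$ into the shells $H_i$ of \eqref{Definition of Hi set}. On $H_i$ we have $(\ell+a)(\ell'+a)\sim i^2$, and for each $\ell\le i$ the admissible $\ell'$ run over an interval of length $\lesssim i/(\ell+a)$ around $i^2/(\ell+a)$. This gives $\sum_{H_i}(\ell+\ell'+2a)\lesssim\sum_{\ell\le i} i^3/(\ell+a)^2\lesssim i^3$. Hence
\[
\int_{\mathbb{S}}|\mathcal{K}(e,w)|^2\,d\sigma(w)\lesssim \sum_{i\le N}\sup_{H_i}|m|^2\, i^{2n-1}\lesssim N^{Q}\|m(N\cdot)\|_{N,2}^2 .
\]

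\textbf{Weighted part.} The key identity is $\varpi(e,w)^2=1-|\langle w,e\rangle|^2$, so the shift formula \eqref{Shifts of zonal spherical harmonics} computes $\int|\mathcal{K}|^2\varpi^2$ exactly. It yields a diagonal sum $\sum|m(\sqrt{\lambda_{\ell,\ell'}})|^2(1-\beta_{\ell,\ell'})\|Y^{\ell,\ell'}\|^2$ minus cross terms pairing $m$ at $(\ell,\ell')$ with $m$ at $(\ell\pm1,\ell'\pm1)$, with weights $\alpha_{\ell,\ell'}\|Y^{\ell+1,\ell'+1}\|^2$ and $\gamma_{\ell,\ell'}\|Y^{\ell-1,\ell'-1}\|^2$. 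Regrouping gives a second difference of $m$ along the diagonal direction $(1,1)$, where $\lambda$ increases by roughly $\ell+\ell'$, plus a remainder. From the explicit formulas \eqref{Definition of alpha}--\eqref{Definition of gamma}, the coefficient of that remainder should be $O((\ell+\ell')^{-2}+\text{differences of }\alpha,\gamma)$.

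I would then interpolate via H\"older, $\int|\mathcal{K}|^2\varpi^{2\gamma}\le(\int|\mathcal{K}|^2)^{1-\gamma}(\int|\mathcal{K}|^2\varpi^2)^{\gamma}$. The aim is $\int|\mathcal{K}|^2\varpi^{2\gamma}\lesssim N^{Q-2\gamma}\|m(N\cdot)\|^2_{N,2}$. This restriction to $\gamma<1/2$ is genuine: the discrete norm $\|\cdot\|_{N,2}$ controls $m$ only in a sup-on-cells $L^2$ sense, with no regularity. So one can gain at most ``half a derivative'' from the weight. I would therefore replace the full second-difference estimate by a first-order (Cauchy--Schwarz) bound on the cross terms.

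\textbf{Main obstacle.} The hard step is controlling the cross terms in the weighted part using only $\sup$-values of $m$ on the cells $[(i-1)^2,i^2]$. The difficulty is that consecutive diagonal neighbours $(\ell,\ell')\to(\ell+1,\ell'+1)$ may jump across several cells. If this cannot be closed directly, my fallback is to work instead with a dyadic decomposition of $\varpi$, namely $\{\varpi\sim 2^{-k}\}$ with $2^{-k}\ge N^{-1}$. On each such set I would bound $\int_{\varpi\sim2^{-k}}|\mathcal{K}|^2$ by $N^{Q}(2^kN^{-1})^{1-\delta}\|m(N\cdot)\|_{N,2}^2$, using the shift identity once to extract the factor $2^{-2k}$, and then sum the geometric series against $(N2^{-k})^{2\gamma}$. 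That sum converges precisely when $2\gamma<1$, which matches the range $0\le\gamma<1/2$ in the statement.
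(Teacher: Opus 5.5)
\textbf{There is a genuine gap in the interpolation step.} Your unweighted bound is fine; it is the same count $\sum_{H_i}(\ell'+a)\lesssim i^3$ the paper uses.

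The cross terms are also not the real obstacle. Since $\alpha_{\ell,\ell'}\|Y_z^{\ell+1,\ell'+1}\|_{L^2}^2=\gamma_{\ell+1,\ell'+1}\|Y_z^{\ell,\ell'}\|_{L^2}^2$ and $\gamma_{\ell+1,\ell'+1},\alpha_{\ell-1,\ell'-1}\lesssim 1-\beta_{\ell,\ell'}=\eta(\ell,\ell')^2$, Cauchy--Schwarz gives $\int_{\mathbb{S}}|\mathcal{K}|^2\varpi^2\,d\sigma\lesssim\sum|m(\sqrt{\lambda_{\ell,\ell'}})|^2\eta(\ell,\ell')^2\|Y_z^{\ell,\ell'}\|_{L^2}^2$. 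This needs no regularity of $m$. The paper, following the cited reference, removes the cross terms exactly by splitting $\ell+\ell'$ into residue classes: mod $3$ in Proposition \ref{Proposition: Bilinear weighted Plancherel with weight}, and mod $2\mathfrak{N}+1$ in Proposition \ref{Proposition: Weighted Plancherel for large N case}.

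What fails is your global H\"older step $\int|\mathcal{K}|^2\varpi^{2\gamma}\le(\int|\mathcal{K}|^2)^{1-\gamma}(\int|\mathcal{K}|^2\varpi^2)^{\gamma}$.
\begin{itemize}
\item To reach $N^{Q-2\gamma}$ this way you would need $\int|\mathcal{K}|^2\varpi^2\lesssim N^{Q-2}\|m(N\cdot)\|_{N,2}^2$.
\item In fact $\sum_{(\ell,\ell')\in H_i}\eta(\ell,\ell')^2\|Y_z^{\ell,\ell'}\|_{L^2}^2\sim i^{2n-2}$. The pairs with $\ell\ll\ell'$ dominate, since there $\eta^2\sim\lambda_{\ell,\ell'}/(\ell+\ell')^2$. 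So the $\varpi^2$ endpoint is only $N^{Q-1}$.
\item This is sharp for $\|\cdot\|_{N,2}$-bounded $m$. Put independent random signs on the unit cells; since $\sqrt{\lambda_{\ell+1,\ell'+1}}-\sqrt{\lambda_{\ell,\ell'}}\ge 2$, the cross terms have mean zero.
\item H\"older then gives only $\int|\mathcal{K}|^2\varpi^{2\gamma}\lesssim N^{Q-\gamma}$. Hence you get $\int|\mathcal{K}|^2(1+N\varpi)^{2\gamma}\lesssim N^{Q+\gamma}\|m(N\cdot)\|_{N,2}^2$, which misses the statement for every $\gamma>0$.
\end{itemize}
Your fallback has the same defect. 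One use of the shift identity gives $\int_{\varpi\sim 2^{-k}}|\mathcal{K}|^2\le 2^{2k}\int|\mathcal{K}|^2\varpi^2\lesssim 2^{2k}N^{Q-1}\|m(N\cdot)\|_{N,2}^2$. This is worse than trivial once $2^k\gtrsim N^{1/2}$, and it never yields your claimed $N^{Q}(2^k/N)^{1-\delta}$.

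\textbf{The missing idea is to interpolate on the spectral side, keeping the weight attached to each coefficient.} The paper itself only cites \cite{Cowling_Kilima_Sikora_sublaplacian_2011}, but reproduces that argument for its bilinear version. Consider the map $(c_{\ell,\ell'})\mapsto\sum c_{\ell,\ell'}\overline{Y_z^{\ell,\ell'}}$. It is bounded from $\ell^2(\|Y_z^{\ell,\ell'}\|_{L^2}^2)$ to $L^2$, and from $\ell^2(\eta^2\|Y_z^{\ell,\ell'}\|_{L^2}^2)$ to $L^2(\varpi^2)$. Stein--Weiss interpolation with change of measure then gives $\int|\mathcal{K}|^2\varpi^{2\gamma}\lesssim\sum|m(\sqrt{\lambda_{\ell,\ell'}})|^2\eta(\ell,\ell')^{2\gamma}\|Y_z^{\ell,\ell'}\|_{L^2}^2$ for $0\le\gamma\le 1$.

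Your H\"older step also works if you apply it separately to the pieces of $\mathcal{K}$ on which $\eta\sim 2^{-k}$. You then recombine them by Cauchy--Schwarz in $k$, using some $\gamma'\in(\gamma,1/2)$ and the fact that $2^k\lesssim N$.

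Only after this do you count: $\sum_{H_i}\eta^{2\gamma}\|Y_z^{\ell,\ell'}\|_{L^2}^2\lesssim i^{2\gamma+2n-4}\, i^{3-4\gamma}\sum_{\ell\le i}(\ell+a)^{2\gamma-2}\lesssim i^{2n-1-2\gamma}$. Summing over $i\le N$ gives $N^{Q-2\gamma}\|m(N\cdot)\|_{N,2}^2$.

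This is where $\gamma<1/2$ comes from: it is exactly the condition for $\sum_{\ell}(\ell+a)^{2\gamma-2}$ to converge. So the restriction reflects the lattice points with $\ell\ll\ell'$, where $|T|$ dominates. It is not a ``half a derivative'' limitation caused by $\|\cdot\|_{N,2}$ carrying no regularity, so your diagnosis points at the wrong obstruction.
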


Applying the estimate \eqref{Cowling Sikora norm relation} in the above proposition, we immediately obtain the following corollary, which will be used in the proof of Theorem \ref{Theorem: Bilinear Bochner-Riesz Main theorem}.
\begin{corollary}
\label{Corollary: Linear weighted Plancherel}
Let $m : \mathbb{R} \to \mathbb{C}$ such that $\supp{m} \subseteq [0,N]$. Then for all $z \in \mathbb{S}$, $0\leq \gamma <1/2$ and $\beta>1/2$ we have
\begin{align*}
    \Bigg( \int_{\mathbb{S}} |\mathcal{K}_{m(\sqrt{\mathcal{L}})}(z,w) (1+N \varpi(z,w))^{\gamma}|^2 \, d\sigma(w) \Bigg)^{1/2} & \leq C N^{Q/2} \left(\|m(N\cdot)\|_{L^2} + N^{-\beta} \|m(N\cdot)\|_{L^2_{\beta}} \right) .
\end{align*}
    
\end{corollary}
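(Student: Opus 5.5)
The plan is to plug the norm comparison \eqref{Cowling Sikora norm relation} directly into the right-hand side of Proposition \ref{Proposition: Linear weighted Plancherel with weight}. No new kernel analysis should be needed. Fix $z \in \mathbb{S}$, $0\leq \gamma<1/2$ and $\beta>1/2$, and let $m$ be supported in $[0,N]$.

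First, Proposition \ref{Proposition: Linear weighted Plancherel with weight} bounds the weighted $L^2$ norm of $\mathcal{K}_{m(\sqrt{\mathcal{L}})}(z,\cdot)$ by $C N^{Q/2}\|m(N\cdot)\|_{N,2}$, with $C$ uniform in $z$ and $N$.

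Next, we observe that $F:=m(N\cdot)$ is supported in $[0,1]$. Hence the discrete norm $\|F\|_{N,2}$ is well defined, and the second inequality in \eqref{Cowling Sikora norm relation} with $s=\beta>1/2$ applies:
\begin{align*}
\|m(N\cdot)\|_{N,2} \leq C_\beta \left(\|m(N\cdot)\|_{L^2} + N^{-\beta}\|m(N\cdot)\|_{L^2_\beta}\right).
\end{align*}
Combining the two bounds gives the claimed estimate, with a constant depending only on $\gamma$, $\beta$ and $n$.

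The only point that needs care is checking that the hypotheses of \eqref{Cowling Sikora norm relation} are met. The support condition holds after the dilation, as noted above. When $m(N\cdot)\notin L^2_\beta$, the right-hand side is infinite and there is nothing to prove. I do not expect any genuine obstacle: this is a formal corollary, and the substantive work lies in the cited weighted Plancherel estimate and the Sobolev-type comparison for the discrete norm.
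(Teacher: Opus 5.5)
Your proposal is correct and follows the paper's argument exactly: the corollary is obtained by inserting the comparison \eqref{Cowling Sikora norm relation}, with $s=\beta>1/2$ applied to $F=m(N\cdot)$ (which is supported in $[0,1]$), into the bound of Proposition \ref{Proposition: Linear weighted Plancherel with weight}. As in the paper, this implicitly takes $N\in\mathbb{N}$, since the discrete norm $\|\cdot\|_{N,2}$ is only defined for integer $N$.
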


We have the following improvement of the preceding result for the operator $\chi_s(|T|) m(\sqrt{\mathcal{L}})$, defined as in \eqref{Definition of the operator Ls}.
\begin{proposition}
\label{Proposition: Improved weighted Plancherel for near}
Let $m : \mathbb{R} \to \mathbb{C}$ such that $\supp{m} \subseteq [0,N]$. Then for all $z \in \mathbb{S}$, $0\leq \gamma <1/2$, $s \in [0,1]$ and $\beta>1/2$ we have
\begin{align*}
    \Bigg( \int_{\mathbb{S}} |\mathcal{K}_{\chi_s(|T|) m(\sqrt{\mathcal{L}})}(z,w) (1+N \varpi(z,w))^{\gamma}|^2 \, d\sigma(w) \Bigg)^{1/2} & \leq C N^{\frac{d+s}{2}} \left(\|m(N\cdot)\|_{L^2} + N^{-\beta} \|m(N\cdot)\|_{L^2_{\beta}} \right) .
\end{align*}
    
\end{proposition}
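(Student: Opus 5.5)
Since $(1+N\varpi(z,w))^{\gamma}$ is a bounded multiple of $1$ when $\gamma=0$, the plan is to reduce this weighted estimate to an unweighted $L^2$ kernel bound for $\chi_s(|T|)m(\sqrt{\mathcal{L}})$ with the improved power $N^{(d+s)/2}$, and then recover the weight $0\le\gamma<1/2$ by the mechanism behind Proposition \ref{Proposition: Linear weighted Plancherel with weight}, namely \cite[Theorem 6.1 (v)]{Cowling_Kilima_Sikora_sublaplacian_2011}. Concretely, I would first prove the discrete-norm version
\begin{align*}
\Bigg( \int_{\mathbb{S}} |\mathcal{K}_{\chi_s(|T|) m(\sqrt{\mathcal{L}})}(z,w) (1+N \varpi(z,w))^{\gamma}|^2 \, d\sigma(w) \Bigg)^{1/2} \leq C N^{\frac{d+s}{2}} \|m(N\cdot)\|_{N,2},
\end{align*}
and then conclude via \eqref{Cowling Sikora norm relation}, exactly as Corollary \ref{Corollary: Linear weighted Plancherel} is deduced from Proposition \ref{Proposition: Linear weighted Plancherel with weight}.

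\textbf{Unweighted part.} The kernel is $\sum_{|\ell-\ell'|\le\kappa N^s} m(\sqrt{\lambda_{\ell,\ell'}})Y_z^{\ell,\ell'}(w)$ (up to conjugation). By orthogonality and \eqref{Estimate of the L2 norm}, its squared $L^2$ norm is bounded by
\[
\sum_i \sup_{H_i}|m|^2 \sum_{(\ell,\ell')\in H_i,\ |\ell-\ell'|\le \kappa N^s} i^{2n-4}(\ell+\ell'),
\]
with $H_i$ as in \eqref{Definition of Hi set}. The counting argument of Proposition \ref{Proposition: Projection estimate near diagonal}, applied with $a=(i-1)^2$, $b=i^2$, $R=N$, bounds the inner sum by $(i+N^s)N^s$. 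Since $s\le1$ and $i\le N$, this is $\lesssim N^{1+s}$. Summing over $i\le N$ then gives
\[
N^{2n-4}\cdot N^{1+s}\cdot N\cdot\frac1N\sum_i\sup|m|^2 = N^{d+s}\|m(N\cdot)\|_{N,2}^2,
\]
using $2n-2=d-1+1$. This is the claimed $N^{(d+s)/2}$ in the case $\gamma=0$.

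\textbf{Inserting the weight.} Since $1-\varpi(z,w)^2=|\langle z,w\rangle|^2$, I would write $\varpi^2=1-|\langle\cdot,z\rangle|^2$ and use the three-term shift formula \eqref{Shifts of zonal spherical harmonics}. Multiplication by $\varpi^2$ acts on the coefficient sequence $c_{\ell,\ell'}=m(\sqrt{\lambda_{\ell,\ell'}})$ as a second-order difference operator along the diagonal direction $(\ell,\ell')\mapsto(\ell\pm1,\ell'\pm1)$. This shift preserves $\ell-\ell'$, and hence preserves the truncation $|\ell-\ell'|\le\kappa N^s$; it is the key structural point. The coefficients $\alpha,\beta,\gamma$ satisfy $1-\alpha-\beta-\gamma=O(1/(\ell+\ell'))$, so $\|\varpi\,\mathcal{K}\|_{L^2}^2$ is controlled by the $L^2$ sum of the diagonal differences of $c$ weighted by $\|Y^{\ell,\ell'}\|^2$. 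A full diagonal step changes $\sqrt{\lambda}$ by about $\ell+\ell'$ over $\sqrt\lambda$, which is $\sim 1$ away from the diagonal. This means one can only gain a factor $N^{-1}$ from differences when $m$ is smooth, which is why only $\gamma<1/2$ is available for rough $m$. I would follow the Cowling--Klima--Sikora argument: decompose into pieces localized in windows $[(i-1)/N,i/N]$ and interpolate between $\gamma=0$ (the bound above) and $\gamma=1$ (the difference bound, which costs a $\sup$ over adjacent windows), using complex interpolation in $\gamma$ with the $\|\cdot\|_{N,2}$ norm.

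The main obstacle is this weighted step: I must check that the $\gamma=1$ endpoint also carries the improved factor $N^{(d+s)/2}$ rather than $N^{Q/2}$. Because the shifts preserve $\ell-\ell'$, the same restricted counting from the unweighted part applies verbatim to each shifted sum, so I expect the improvement to survive. If the full interpolation is too delicate, a fallback is available: the bound $(1+N\varpi)^{\gamma}\le 1+N^{\gamma}\varpi^{\gamma}$ together with Hölder in the fine-scale decomposition should give the $\gamma<1/2$ range directly.
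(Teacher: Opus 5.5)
There is a genuine gap, and it is in the weighted step. Write $K=\mathcal{K}_{\chi_s(|T|)m(\sqrt{\mathcal{L}})}(z,\cdot)$.

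\textbf{The $\gamma=1$ endpoint you need is false.} Your plan needs an endpoint that still carries $N^{(d+s)/2}$, and you expect it because ``the same restricted counting applies verbatim''. Take $s=0$ and $\kappa<1$, so only $\ell=\ell'$ survives. Take $m$ bounded by $1$ with $c_\ell=m(\sqrt{\lambda_{\ell,\ell}})=(-1)^{\ell}$ for $\sqrt{\lambda_{\ell,\ell}}\le N$.

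Expand $\|\varpi(z,\cdot)K\|_{L^2}^2$ via \eqref{Shifts of zonal spherical harmonics}. The cross terms $-\alpha_{\ell,\ell}c_\ell\bar c_{\ell+1}\|Y_z^{\ell+1,\ell+1}\|_{L^2}^2$ and $-\gamma_{\ell,\ell}c_\ell\bar c_{\ell-1}\|Y_z^{\ell-1,\ell-1}\|_{L^2}^2$ are now nonnegative. Since $1-\beta_{\ell,\ell}\to 1/2$, this gives $\|\varpi(z,\cdot)K\|_{L^2}^2\ge\sum_\ell(1-\beta_{\ell,\ell})\|Y_z^{\ell,\ell}\|_{L^2}^2\gtrsim\|K\|_{L^2}^2\sim N^{d-1}$. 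Hence $\|N\varpi K\|_{L^2}\gtrsim N^{(d+1)/2}$, while $N^{d/2}\|m(N\cdot)\|_{N,2}\le N^{d/2}$. The Sobolev right-hand side is also $\lesssim N^{d/2}$ if $m$ is a sum of alternating unit-width bumps.

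\textbf{The mechanism is not the Cowling--Klima--Sikora one.} They split the kernel according to $\ell+\ell'\bmod 3$. This makes the shifted terms orthogonal and gives the diagonal identity $\|\varpi f\|^2=\sum|c_{\ell,\ell'}|^2(1-\beta_{\ell,\ell'})\|Y^{\ell,\ell'}\|^2$, followed by Stein--Weiss interpolation. The gain comes from $1-\beta_{\ell,\ell'}\approx 2\ell\ell'/(\ell+\ell')^2$ being small far from the diagonal. The restriction $\gamma<1/2$ comes from convergence of $\sum_\ell(\ell+a)^{-2+2\gamma}$, not from smoothness of $m$. Near the diagonal $1-\beta\approx 1/2$, so the weight gains nothing. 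With your unweighted exponent, the fallback therefore gives at best $N^{(d+s)/2+\gamma}$; any sharper use of $\varpi^{\gamma}$ would need decay of $\varpi K$, which the example rules out.

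\textbf{The missing half power is in your own unweighted computation.} That computation has an arithmetic slip: $\sum_{i\le N}\sup_{H_i}|m|^2\, i^{2n-4}N^{1+s}\le N^{2n-2+s}\|m(N\cdot)\|_{N,2}^2$, and $2n-2=d-1$ (since $d=2n-1$), not $d$. This matches the exponent $Q/2-1+s/2=(d-1+s)/2$ of Corollary \ref{Corollary: L1 to L2 estimate for diagonal operator}.

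With $\|K\|_{L^2}\lesssim N^{(d-1+s)/2}\|m(N\cdot)\|_{N,2}$ the weight is trivial. Since $\varpi\le 1$, we have $(1+N\varpi)^{\gamma}\le(1+N)^{\gamma}\lesssim N^{1/2}$ for $\gamma<1/2$. This gives the bound $N^{(d+s)/2}\|m(N\cdot)\|_{N,2}$, and \eqref{Cowling Sikora norm relation} concludes.

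This is in substance what the paper does. It runs the Cowling--Klima--Sikora argument with the truncated count $\sum_{(\ell,\ell')\in H_i',\,\ell'-\ell\le\kappa N^s}(\ell'+a)^{1-2\gamma}\lesssim N^{1+s-2\gamma}$; the weight contributes essentially nothing there, since $\eta^2=1-\beta\approx 1/2$ near the diagonal. It then relaxes this to $N^{2+s-4\gamma}$ using $\gamma\le 1/2$, i.e.\ it absorbs the factor $N^{2\gamma}$ from the weight into the spare factor $N$.
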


\begin{proof}
Let us first recall $H_i' = \{(\ell, \ell') \in H_i : \ell < \ell' \}$.
Note that our proof follows in same lines as in \cite[Theorem 6.1. (v) and eq. (23) of Proposition 5.2]{Cowling_Kilima_Sikora_sublaplacian_2011} along with the fact \eqref{Cowling Sikora norm relation}. However, in order to get the required improvement, following the proof of \cite[Theorem 6.1. (v) and eq. (23) of Proposition 5.2]{Cowling_Kilima_Sikora_sublaplacian_2011} it is enough to prove the following estimate: for $0\leq \gamma <1/2$,
\begin{align}
\label{Required to prove in New operator}
    \sum_{\substack{(\ell, \ell') \in H_i' \\ \ell'-\ell \leq \kappa N^s}} (\ell'+a)^{1-2\gamma} &\leq C N^{2+s-4\gamma}, \quad \quad \text{where} \quad a= \frac{n-1}{2} .
\end{align}
The above can be estimated similarly as in \eqref{Final estimate for sum in m}. Note $(\ell, \ell') \in H_i$ if and only if $(i-1)^2 \leq \lambda_{\ell, \ell'} \leq i^2$. Therefore, following the same notation as in the proof of Proposition \ref{Proposition: Projection estimate near diagonal}, in fact taking $a=(i-1)^2$ and $b=i^2$ with $i\leq N$ we see that
\begin{align}
\label{Number of integer in Ah and Bh}
    \# \left( \mathbb{Z} \cap [A_h, B_h] \right) \lesssim 1 .
\end{align}
Hence estimating similar to \eqref{Final estimate for sum in m} and using the fact \eqref{Number of integer in Ah and Bh} for $0\leq \gamma <1/2$, $s \in [0,1]$ we get
\begin{align*}
    \sum_{\substack{(\ell, \ell') \in H_i' \\ \ell'-\ell \leq \kappa N^s}} (\ell'+a)^{1-2\gamma} &\leq C \sum_{0\leq h \leq k N^s} \sum_{A_h \leq m \leq B_h} (m+h)^{1-2\gamma} 
    \leq C \sum_{0\leq h \leq k N^s} (B_h + N^s)^{1-2\gamma} \\
    &\leq C (N + N^s)^{1-2\gamma} N^{s}
    \leq C N^{1-2\gamma} N^{s} 
    \leq C N^{2+s-4\gamma} .
\end{align*}
    This gives the required estimate \eqref{Required to prove in New operator}.
\end{proof}

\begin{proposition}
\label{Prop: Linear L1 norm of kernel}
Let $\beta>d/2$. Then for all $m : \mathbb{R} \to \mathbb{C}$ supported in $[0,R]$ and $z \in \mathbb{S}$ we have
\begin{align*}
    \|\mathcal{K}_{m(\sqrt{\mathcal{L}})}(z,\cdot)\|_{L^1(\mathbb{S})} &\leq C \|m(R\cdot)\|_{L^2_{\beta}(\mathbb{R})} .
\end{align*}
    
\end{proposition}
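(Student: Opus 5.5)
We would prove this by the standard argument: split the kernel into dyadic annuli with respect to a weight, apply Cauchy--Schwarz on each piece, and control each piece by a weighted $L^2$ bound. What is new here is that the threshold is $d/2$ rather than $Q/2$. To get it we will not use the distance $\varrho$ alone but the mixed weight $(1+R\varrho)^{\beta'}(1+R\varpi)^{\gamma}$ with $\gamma<1/2$, exactly as in Lemma \ref{Lemma: Integral of distance and weight}. The gain of (almost) $1/2$ coming from the $\varpi$-weight in Corollary \ref{Corollary: Linear weighted Plancherel} accounts for the difference $Q-d=1$.

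\textbf{Step 1: Cauchy--Schwarz.} Fix $\beta>d/2$ and write $\beta=\beta_0+\gamma$ with $\gamma<1/2$ and $2\beta_0+2\gamma>Q$, i.e.\ $\beta_0+\gamma>Q/2$. This is possible since $\beta>(Q-1)/2$ lets us choose $\gamma\in(\beta-(Q-1)/2\cdot\ldots)$; concretely we take $\gamma$ close to $1/2$ so that $\beta+\gamma>Q/2$. Cauchy--Schwarz then gives
\begin{align*}
\|\mathcal{K}_{m(\sqrt{\mathcal L})}(z,\cdot)\|_{L^1}\le \Big(\int_{\mathbb S}\frac{d\sigma(w)}{(1+R\varrho)^{2\beta}(1+R\varpi)^{2\gamma}}\Big)^{1/2}\Big(\int_{\mathbb S}|\mathcal K(z,w)|^2(1+R\varrho)^{2\beta}(1+R\varpi)^{2\gamma}\,d\sigma(w)\Big)^{1/2}.
\end{align*}
The hypotheses of Lemma \ref{Lemma: Integral of distance and weight} (exponents $2\beta$ and $2\gamma<1\le d-1$, with $2\beta+2\gamma>Q$) bound the first factor by $C R^{-Q/2}$ when $R\ge1$. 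For $R\lesssim1$ the spectrum meets $[0,R]$ in finitely many eigenvalues, and the bound follows trivially from Lemma \ref{Lemma: L1 to l2 projection estimate} together with $\|m\|_\infty\lesssim\|m(R\cdot)\|_{L^2_\beta}$.

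\textbf{Step 2: weighted $L^2$ bound with $\varrho$-weight of order $\beta$.} It remains to show
\begin{align*}
\int_{\mathbb S}|\mathcal K(z,w)|^2(1+R\varrho)^{2\beta}(1+R\varpi)^{2\gamma}\,d\sigma\lesssim R^{Q}\|m(R\cdot)\|_{L^2_\beta}^2 .
\end{align*}
This is the standard combination of Corollary \ref{Corollary: Linear weighted Plancherel} (the $\varpi$-weight at scale $R$) with finite propagation speed/Gaussian bounds (the $\varrho$-weight). We decompose $m(R\cdot)$ via a Fourier-side partition $m=\sum_{k\ge0}m_k$, where $\widehat{m(R\cdot)}$ is localized to $|t|\sim 2^k$ (after writing $m$ as an even function of $\lambda$). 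Since $\mathcal L$ satisfies the Gaussian bounds \eqref{Gaussian type heat kernel bound}, it has finite propagation speed, so $\mathcal K_{m_k(\sqrt{\mathcal L})}(z,\cdot)$ is essentially supported in $B(z,2^k/R)$, up to rapidly decaying tails. On that ball $(1+R\varrho)^{\beta}\lesssim 2^{k\beta}$. The $\varpi$-weighted $L^2$ norm of each piece is bounded by Corollary \ref{Corollary: Linear weighted Plancherel}, applied with support $[0,2R]$ or a slight enlargement that is handled by the tails. This gives $R^{Q/2}2^{k\beta}\|\widehat{m(R\cdot)}\chi_{\sim 2^k}\|_{L^2}$ plus the Sobolev correction term. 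Summing the squares over $k$ reproduces $\|m(R\cdot)\|_{L^2_\beta}^2$, up to an $\epsilon$ that is absorbed because $\beta>d/2$ is strict.

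\textbf{Main obstacle.} The delicate point is Step 2. Once we cut by finite propagation speed, the pieces $m_k$ are no longer compactly supported in $[0,R]$, yet Corollary \ref{Corollary: Linear weighted Plancherel} requires compact spectral support. We would first try to split $m_k=m_k\psi+m_k(1-\psi)$, where $\psi$ is a cutoff to $[0,2R]$. The remainder $m_k(1-\psi)$ decays like $2^{-kM}(1+|\lambda|/R)^{-M}$ for every $M$, so it can be summed dyadically in $\lambda$, with each dyadic block handled by Corollary \ref{Corollary: Linear weighted Plancherel} at its own scale. A secondary issue is the relation between finite-propagation supports in $\varrho$ and the $\varpi$-weight. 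This is harmless, because $\varpi\le C\varrho$ by Lemma \ref{Lemma: Ball contained in product of two Euclidean like ball}.
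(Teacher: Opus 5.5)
Your route is the one the paper intends. The paper gives no detailed proof; it only points to the Cowling--Klima--Sikora $\varpi$-weighted Plancherel estimate and to Martini's Propositions 4.3(i) and 4.4. Your ingredients are exactly that argument:
\begin{itemize}
\item Cauchy--Schwarz against $(1+R\varrho)^{\beta}(1+R\varpi)^{\gamma}$, with Lemma \ref{Lemma: Integral of distance and weight} for the first factor;
\item a $\varpi$-weight of order $\gamma<1/2$, which costs nothing;
\item a finite-propagation-speed decomposition to pay for the $\varrho$-weight.
\end{itemize}
Step 1 is fine once the garbled sentence is replaced by the choice $\gamma\in(Q/2-\beta,1/2)$. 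The small-$R$ case is also fine.

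One step of Step 2 does not close as written. Let $a_k$ be the $L^2$ norm of $\widehat{m(R\cdot)}$ on $\{|t|\sim 2^k\}$. Corollary \ref{Corollary: Linear weighted Plancherel}, applied to the $k$-th piece, produces besides the main term the correction $R^{-\mu}\|m_k(R\cdot)\|_{L^2_\mu}\approx (2^k/R)^{\mu}a_k$ with $\mu>1/2$. This is not dominated by $a_k$ once $2^k>R$. Multiplied by your weight bound $2^{k\beta}$, it would force control of $\|m(R\cdot)\|_{L^2_{\beta+\mu+\epsilon}}$, i.e.\ more than $Q/2$ derivatives. That loses exactly the half derivative you are trying to gain.

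This is the discreteness of the spectrum showing up, and compactness of $\mathbb{S}$ fixes it. For $2^k\gtrsim R$ the ball $B(z,C2^k/R)$ is all of $\mathbb{S}$, so bound the weight by $(1+R\varrho)^{\beta}\lesssim R^{\beta}$, using $\varrho\le\sqrt{2}$, rather than by $2^{k\beta}$. Take $\mu\in(1/2,\beta]$, which is possible since $\beta>d/2\ge 3/2$. Then $R^{\beta}(2^k/R)^{\mu}\le 2^{k\beta}$, and every piece is bounded by $R^{Q/2}2^{k\beta}a_k$ uniformly in $k$.

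Three smaller points:
\begin{itemize}
\item The pieces are not orthogonal, so you must sum norms rather than squares and then apply Cauchy--Schwarz in $k$. This costs the $\epsilon$ you already mention.
\item Finite propagation speed gives exact support in a $\varrho$-ball of radius $C2^k/R$, so no tails arise there.
\item The spectral split $m_k=m_k\psi+m_k(1-\psi)$ must be made only after the $\varrho$-weight has been bounded using the support of $\mathcal{K}_{m_k(\sqrt{\mathcal{L}})}$, because $\mathcal{K}_{(m_k\psi)(\sqrt{\mathcal{L}})}$ is no longer compactly supported. Step 2 already uses this order, so that part is fine.
\end{itemize}
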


\begin{proof}
The above proposition is well known in the literature, however, not explicitly mentioned in the form stated above. For instance, with the help of the estimates available in \cite{Cowling_Kilima_Sikora_sublaplacian_2011} and adapting the ideas from \cite[Proposition 4.3 (i) and 4.4]{Martini_Sharp_Multiplier_Kohn_Laplacian_2017}, one can obtain the required estimate.
\end{proof}

\begin{remark}
Note that from the above proposition we can easily obtain $L^p$-boundedness of the compactly supported spectral multipliers and in particular for Bochner-Riesz multipliers. In fact, if $m : \mathbb{R} \to \mathbb{C}$ be a bounded Borel function supported in $[0,R]$ for $R>0$, then using Proposition \ref{Prop: Linear L1 norm of kernel} for $1\leq p \leq \infty$ and $\beta>d/2$ we get
\begin{align}
\label{Boundedness of spectral multiplier}
    \|m(\mathcal{L})f\|_{L^p(\mathbb{S})} &\leq C \|m(R\cdot)\|_{L^2_{\beta}(\mathbb{R})} \|f\|_{L^p(\mathbb{S})} .
\end{align}
Consequently, we also obtain the boundedness of Bochner-Riesz multipliers associated with the sub-Laplacian $\mathcal{L}$. In fact, for $1\leq p \leq \infty$ whenever $\delta>(d-1)/2$ we have
\begin{align*}
    \|S_R^{\delta}(\sqrt{\mathcal{L}})f\|_{L^p(\mathbb{S})} &\leq C \|f\|_{L^p(\mathbb{S})} .
\end{align*}
We would like to emphasize that, for $1<p<\infty$ the above result \eqref{Boundedness of spectral multiplier} was already established in \cite{Cowling_Kilima_Sikora_sublaplacian_2011} for general multipliers, not necessarily compactly supported. There they have proved the $L^p$boundedness of the spectral multipliers for $1<p<\infty$ and weak $(1,1)$ estimates. Here under the additional assumption on the multiplier, that is, if $m$ is compactly support, we also obtain the $L^1$ and $L^{\infty}$-boundedness.

\end{remark}

The following proposition is one of the main contributions of this subsection, which is about the weighted Plancherel estimates with large power of weights for the operator $F_M(\sqrt{\mathcal{L}}, |T|)$, see \eqref{Definition for the truncated operator}. The principle novelty here is we prove our estimate holds for all $\mathfrak{N} \in \mathbb{N}_0$ unlike Proposition \ref{Proposition: Linear weighted Plancherel with weight} where the estimate was only shown for $0\leq \gamma<1/2$.

\begin{proposition}
\label{Proposition: Weighted Plancherel for large N case}
Let $F_M$ be as defined in \eqref{Definition of truncated function}. Then for all $\mathfrak{N} \in \mathbb{N}_0$ and for all $w \in \mathbb{S}$ we have
\begin{align}
\label{Weighted Plancherel large power of weight}
    \left( \int_{\mathbb{S}} |\varpi(z,w)^{\mathfrak{N}} \mathcal{K}_{F_M(\sqrt{\mathcal{L}}, |T|)}(z,w)|^2 \, dz \right)^{1/2} &\leq C_N N^{Q/2-\mathfrak{N}} 2^{M \mathfrak{N}} \|F(N \cdot)\|_{N, 2} .
\end{align}
    
\end{proposition}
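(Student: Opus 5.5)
Since $\mathfrak{N}$ is an integer, I would first reduce to even powers: the bound for $\mathfrak{N}=2k+1$ follows from those for $2k$ and $2k+2$ by Cauchy--Schwarz, $\int \varpi^{2\mathfrak{N}}|\mathcal{K}|^2 \le (\int \varpi^{4k}|\mathcal{K}|^2)^{1/2}(\int \varpi^{4k+4}|\mathcal{K}|^2)^{1/2}$. So it suffices to treat $\varpi(z,w)^{2k}=(1-|\langle z,w\rangle|^2)^k$, which is a polynomial in $|\langle z,w\rangle|^2$. Writing the kernel as $\sum_{\ell\neq\ell'} c_{\ell,\ell'} Y_w^{\ell,\ell'}(z)$ with $c_{\ell,\ell'}=F(\sqrt{\lambda_{\ell,\ell'}})\Theta(2^M|\ell-\ell'|)$, I would apply \eqref{Shifts of zonal spherical harmonics}. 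A single multiplication gives
\begin{align*}
(1-|\langle\cdot,w\rangle|^2)\sum c_{\ell,\ell'}Y_w^{\ell,\ell'} = \sum_{\ell,\ell'} \big( (1-\beta_{\ell,\ell'})c_{\ell,\ell'} - \alpha_{\ell-1,\ell'-1}c_{\ell-1,\ell'-1} - \gamma_{\ell+1,\ell'+1}c_{\ell+1,\ell'+1}\big) Y_w^{\ell,\ell'}.
\end{align*}
The point is that these shifts move only along the diagonal direction $(\ell,\ell')\mapsto(\ell\pm1,\ell'\pm1)$, which preserves $\ell-\ell'$. Hence the $\Theta_M$ truncation is untouched and the new coefficient is a second-order difference operator $D$ acting on $j\mapsto c_{\ell_0+j,\ell_0'+j}$ along each diagonal with $h=\ell'-\ell$ fixed.

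Next I would compute the coefficients of $D$. Using \eqref{Definition of alpha}--\eqref{Definition of gamma}, with $m=\ell+\ell'$, one has $\alpha,\gamma = \tfrac14 + O(1/m)$ and $1-\beta = \tfrac12 + O(h^2/m^2) + O(1/m)$. Indeed $1-\beta_{\ell,\ell'}-\alpha_{\ell,\ell'}-\gamma_{\ell,\ell'}\approx ((\ell'-\ell)^2+\cdots)/(m^2)$ up to lower order. So $D c = -\tfrac14\Delta^2_{\rm diag} c + E$, where the error has size $(h^2/m^2 + 1/m)|c| + m^{-1}|\Delta c|$. Iterating $k$ times gives $\varpi^{2k}\mathcal{K}=\sum \tilde c_{\ell,\ell'}Y_w^{\ell,\ell'}$ with $\tilde c$ a combination of diagonal differences of order $\le 2k$ of $c$, weighted by factors of the type $(h/m)^{2a} m^{-b}$ with $2a+b+(\text{order})\gtrsim 2k$. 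Orthogonality and \eqref{L2 norm of zonal harmonics}, \eqref{Estimate of the L2 norm} then give $\|\varpi^{2k}\mathcal{K}\|_2^2 = \sum |\tilde c_{\ell,\ell'}|^2 \|Y^{\ell,\ell'}_w\|_2^2$. I also need the $\|Y_w\|_2^2$ factors to vary slowly under shifts by $O(k)$, and they do, since they are polynomial in $\ell,\ell'$.

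The main obstacle is gaining $(2^M/N)^{2k}$ from the differences. Along a diagonal, $\lambda_{\ell+j,\ell'+j}$ changes by about $4m$ per step, so $\sqrt{\lambda}$ changes by about $m/\sqrt{\ell\ell'}$. When $h\sim 2^{-M}N^2$ and $\sqrt{\lambda}\lesssim N$, we have $\ell\ell'\lesssim N^2$ and $m\sim h$, so a step changes $\sqrt\lambda$ by roughly $h/N\sim 2^{-M}N$. A naive difference bound would therefore lose rather than gain, and the cancellation cannot come from smoothness of $F$, since only $\|F(N\cdot)\|_{N,2}$ is allowed. Instead I would exploit that $\varpi$ is small: each application of $D$ produces either the factor $h^2/m^2$ or $1/m$, and I would bound differences trivially by sups. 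This also fits the count, since the number of lattice points on each diagonal with $\lambda\le N^2$ is $O(1)$ when $h$ is large (as in \eqref{Number of integer in Ah and Bh}). The wanted gain $N^{-2k}2^{2Mk}$ equals $(N^2 2^{-M})^{-k}2^{Mk}\sim h^{-k}2^{Mk}$. From $1/m\lesssim 1/h$ one gets $h^{-k}$, and the remaining $2^{Mk}$ is admissible as a loss since $2^M\ge1$. When $2^M\gtrsim N^2$ there is nothing to prove beyond $\varpi\le1$ and the unweighted Plancherel bound $N^{Q/2}\|F(N\cdot)\|_{N,2}$, which follows as in \eqref{Creating discrete norm in Restriction}.

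The delicate point is the $h^2/m^2$ terms, which are $O(1)$ rather than small when $\ell\ll\ell'$. I would handle these exactly by noting that $1-\beta-\alpha-\gamma$ is small, so that what survives is a genuine difference. If that fails, I would fall back on splitting into $\ell\sim\ell'$ and $\ell\ll\ell'$ and using in the latter regime that $\varpi$ is large only on a set where the kernel is already controlled. Finally I would regroup into the discrete norm over $i\le N$ as in \eqref{Creating discrete norm in Restriction}, taking $C_N=C_{\mathfrak{N}}$.
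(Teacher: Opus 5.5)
There is a genuine gap, and it sits exactly at the point you flag as ``delicate''. Your framework is sound. Reducing to even powers by Cauchy--Schwarz is fine, and so is observing that the shifts in \eqref{Shifts of zonal spherical harmonics} preserve $\ell-\ell'$, so $\Theta(2^M|\ell-\ell'|)$ is untouched. Computing $\|\varpi(\cdot,w)^{2k}\mathcal{K}\|_{L^2}^2=\sum|\tilde c_{\ell,\ell'}|^2\|Y_w^{\ell,\ell'}\|_{L^2}^2$ by orthogonality is a legitimate alternative to the paper's splitting of $\ell+\ell'$ modulo $2\mathfrak{N}+1$.

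What fails is the size of the recursion coefficients in the only regime that matters. If $2^M\gtrsim N$, the bound is trivial from $\varpi\le 1$. If $2^M\ll N$, then on the support $h=|\ell-\ell'|\ge 2^{-M-1}N^2\gg N\ge\sqrt{\lambda_{\ell,\ell'}}$. For $\ell<\ell'$ this gives $4\ell h\le\lambda_{\ell,\ell'}$, so $\ell\lesssim N^2/h\ll h$ and $m=\ell+\ell'\approx h$. In that regime the coefficients are
\[
\alpha_{\ell,\ell'}\approx(\ell+1)/m,\qquad \gamma_{\ell,\ell'}\approx(\ell+n-2)/m,\qquad 1-\beta_{\ell,\ell'}\approx(2\ell+n-1)/m,
\]
all of size $(\ell+a)/m\sim\lambda_{\ell,\ell'}/h^2$, not $\tfrac14,\tfrac14,\tfrac12$. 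Your asymptotics hold only when $|\ell-\ell'|\ll\ell+\ell'$, which is the opposite regime.

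Consequences:
\begin{itemize}
\item In the relevant regime, the error $E$ in $D=-\tfrac14\Delta^2_{\mathrm{diag}}+E$ is as large as the main term.
\item If differences are bounded by sups, as you propose, the main term $-\tfrac14\Delta^2_{\mathrm{diag}}$ gives no gain at all. So the claim that each application of $D$ produces a factor $h^2/m^2$ or $1/m$ has no mechanism behind it.
\item A per-step factor $1/m\sim1/h$ is in fact false: $(\ell+a)/m$ reaches $\sim 2^{2M}/N^2\sim 2^M/h$ at $\ell\sim N^2/h$.
\item The number of lattice points on a diagonal with $\lambda\le N^2$ is $\sim N^2/h\sim 2^M$, not $O(1)$. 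The bound \eqref{Number of integer in Ah and Bh} concerns a single annulus $H_i$.
\item The fallback of splitting into $\ell\sim\ell'$ and $\ell\ll\ell'$ postpones the whole difficulty, since only $\ell\ll\ell'$ is nontrivial.
\end{itemize}

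The missing idea is that no cancellation is needed. All three coefficients satisfy $\alpha_{\ell,\ell'},\gamma_{\ell,\ell'}\lesssim\eta(\ell,\ell')^2=1-\beta_{\ell,\ell'}\sim(\ell+a)(\ell'+a)/(\ell+\ell'+2a)^2$. The inequality $\eta(\ell,\ell')^2|\ell-\ell'|^2\lesssim\lambda_{\ell,\ell'}$ then gives $\eta^2\lesssim 2^{2M}/N^2$ on the support. Both $\eta^2$ and $\|Y_w^{\ell,\ell'}\|_{L^2}^2$ change only by bounded factors under diagonal shifts of size $O(k)$. Hence $|\tilde c_{\ell,\ell'}|\lesssim_k(2^{2M}/N^2)^k\max_{|i|\le k}|c_{\ell+i,\ell'+i}|$. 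Your orthogonality computation plus the unweighted count $\sum|F(\sqrt{\lambda_{\ell,\ell'}})|^2\|Y_w^{\ell,\ell'}\|_{L^2}^2\lesssim N^{Q}\|F(N\cdot)\|_{N,2}^2$ then closes the proof.

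This is essentially the paper's argument. The paper first splits $\ell+\ell'$ modulo $2\mathfrak{N}+1$ so that every shifted term is orthogonal. It then only needs the diagonal coefficient $C_{\ell,\ell'}\lesssim(1-\beta_{\ell,\ell'})^{\mathfrak{N}}$ of $(1-|\langle z,w\rangle|^2)^{\mathfrak{N}}Y_w^{\ell,\ell'}$, which it gets from $\alpha_{\ell,\ell'}\gamma_{\ell+1,\ell'+1}\sim(1-\beta_{\ell,\ell'})^2$.
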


Before we give the proof we will discuss an important lemma that encapsulates information necessary for the proof. In particular, this lemma describes the action of power of weights $\varpi$ over the zonal spherical harmonics.

\begin{lemma}
\label{Lemma: Shift of Zonal harmonics}
For any $N \in \mathbb{N}_0$, we have
\begin{align*}
 (1-|\langle z,w \rangle|^2)^N Y_w^{\ell, \ell'}(z) &=   C_{\ell, \ell'} Y_w^{\ell, \ell'}(z) + \sum_{\substack{j=-N \\ j \neq 0}}^{N} C_{\ell+j, \ell'+j} Y_w^{\ell+j, \ell'+j}(z) ,
\end{align*}
where
\begin{align*}
    C_{\ell, \ell'}= (1-\beta_{\ell, \ell'})^{N} + \mathcal{E}_{\ell, \ell'} \quad \text{with} \quad \mathcal{E}_{\ell, \ell'} \lesssim (1-\beta_{\ell, \ell'})^N ,
\end{align*}
and $C_{\ell+j, \ell'+j}=0$ if either $(\ell+j)<0$ or $(\ell'+j)<0$.
    
\end{lemma}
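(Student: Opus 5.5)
The plan is to argue by induction on $N$, using the three-term recurrence \eqref{Shifts of zonal spherical harmonics} as the only input. Write $\mathcal{S}$ for the operator of multiplication by $1-|\langle \cdot, w\rangle|^2$ acting on zonal harmonics with pole $w$. By \eqref{Shifts of zonal spherical harmonics},
\begin{align*}
\mathcal{S} Y_w^{\ell,\ell'} = (1-\beta_{\ell,\ell'}) Y_w^{\ell,\ell'} - \alpha_{\ell,\ell'} Y_w^{\ell+1,\ell'+1} - \gamma_{\ell,\ell'} Y_w^{\ell-1,\ell'-1},
\end{align*}
with the convention that terms with a negative index vanish; this is automatic since $\gamma_{\ell,\ell'}=0$ when $\ell=0$ or $\ell'=0$. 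The case $N=0$ is trivial. The case $N=1$ is exactly this identity, with $C_{\ell,\ell'}=1-\beta_{\ell,\ell'}$, $\mathcal{E}_{\ell,\ell'}=0$ and $C_{\ell\pm1,\ell'\pm1}=-\alpha_{\ell,\ell'}$ and $-\gamma_{\ell,\ell'}$ respectively.

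For the inductive step, assume the expansion holds for $N-1$, with coefficients $C^{(N-1)}_{\ell+j,\ell'+j}$ for $|j|\le N-1$. Apply $\mathcal{S}$ once more. Each $Y_w^{\ell+j,\ell'+j}$ produces the indices $j-1$, $j$ and $j+1$, so the support grows to $|j|\le N$. Indices with $\ell+j<0$ or $\ell'+j<0$ never appear, because the lowering coefficient $\gamma$ vanishes at the boundary. This settles the structural statement.

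For the diagonal coefficient I will track
\begin{align*}
C^{(N)}_{\ell,\ell'} = (1-\beta_{\ell,\ell'})C^{(N-1)}_{\ell,\ell'} + \alpha_{\ell-1,\ell'-1}C^{(N-1)}_{\ell-1,\ell'-1} + \gamma_{\ell+1,\ell'+1}C^{(N-1)}_{\ell+1,\ell'+1},
\end{align*}
with signs as above. The first term gives $(1-\beta_{\ell,\ell'})^N$ plus $(1-\beta_{\ell,\ell'})\mathcal{E}^{(N-1)}_{\ell,\ell'}$. The remaining pieces, along with the off-diagonal contributions returning to the diagonal, must be absorbed into $\mathcal{E}^{(N)}_{\ell,\ell'}$.

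The key quantitative input is the size of the coefficients, read off from \eqref{Definition of alpha}--\eqref{Definition of gamma}:
\begin{itemize}
\item $\alpha_{\ell,\ell'}$ and $\gamma_{\ell,\ell'}$ are $O(\ell\ell'/(\ell+\ell')^2)$;
\item a direct computation gives
\begin{align*}
1-\beta_{\ell,\ell'} = \frac{2\ell\ell'+(n-1)(\ell+\ell')+\text{const}}{(n+\ell+\ell'-1)^2-1} \sim \frac{\lambda_{\ell,\ell'}}{(\ell+\ell'+1)^2}.
\end{align*}
\end{itemize}
Hence $\alpha$ and $\gamma$ are dominated by $1-\beta$. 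Moreover, $1-\beta_{\ell+j,\ell'+j}\sim 1-\beta_{\ell,\ell'}$ for $|j|\le N$, since shifting both indices by a bounded amount changes $\lambda/(\ell+\ell')^2$ by a bounded factor. There is one exception: when $\min(\ell,\ell')$ is small compared with $N$, shifts can reach index $0$. There the comparability must be checked directly, using that $1-\beta\gtrsim 1/(\ell+\ell')$ as soon as either index is positive.

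With these facts, induction gives $|C^{(N)}_{\ell+j,\ell'+j}|\lesssim_N (1-\beta_{\ell,\ell'})^N$ for every $|j|\le N$. Each application of $\mathcal{S}$ multiplies by a factor $\lesssim 1-\beta$. In particular, $\mathcal{E}_{\ell,\ell'}=C^{(N)}_{\ell,\ell'}-(1-\beta_{\ell,\ell'})^N$ is $\lesssim (1-\beta_{\ell,\ell'})^N$, and I will record the stronger bound on the off-diagonal coefficients as well, since Proposition \ref{Proposition: Weighted Plancherel for large N case} will need it.

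The main obstacle is the uniform comparability of $1-\beta_{\ell+j,\ell'+j}$, and of $\alpha$ and $\gamma$ at shifted indices, with $1-\beta_{\ell,\ell'}$ near the boundary $\min\{\ell,\ell'\}\lesssim N$. I will handle that regime by a separate finite case check, with constants depending on $N$ and $n$.
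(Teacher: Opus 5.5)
Your induction is correct, but it organizes the argument differently from the paper. The paper expands $(1-D-S_+-S_-)^N$ into words and notes that only words with equally many $S_+$ and $S_-$ return to $Y_w^{\ell,\ell'}$. It then bounds each such word by grouping its raising and lowering factors into products $\alpha_{\ell+k,\ell'+k}\gamma_{\ell+k+1,\ell'+k+1}\sim(1-\beta_{\ell,\ell'})^2$, and its remaining factors as $1-\beta_{\ell+k,\ell'+k}\sim 1-\beta_{\ell,\ell'}$.

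Your recurrence is that expansion unrolled one application of $\mathcal{S}$ at a time. It has three advantages:
\begin{itemize}
\item it needs only the one-step bounds $\alpha_k,\gamma_k\lesssim 1-\beta_k$ plus comparability under bounded shifts;
\item it avoids the bookkeeping of balanced words and the pairing of up- and down-steps;
\item it controls all $2N+1$ coefficients at once.
\end{itemize}
The paper's version instead displays the split of $C_{\ell,\ell'}$ into the $r=0$ term $(1-\beta_{\ell,\ell'})^N$ plus balanced-walk corrections. However, $\mathcal{E}_{\ell,\ell'}$ is only claimed to be $\lesssim(1-\beta_{\ell,\ell'})^N$, so the lemma's content is exactly your bound $|C_{\ell,\ell'}|\lesssim_N(1-\beta_{\ell,\ell'})^N$. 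You also say Proposition~\ref{Proposition: Weighted Plancherel for large N case} will need the off-diagonal bounds; it does not. There, splitting $\ell+\ell'$ modulo $2\mathfrak{N}+1$ makes every shifted term orthogonal to $K_j$, so only the diagonal coefficient enters.

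Two of your size estimates are wrong at the boundary, and correcting them removes what you call the main obstacle.
\begin{itemize}
\item $\alpha_{\ell,\ell'}$ is not $O(\ell\ell'/(\ell+\ell')^2)$: for example $\alpha_{0,m}\sim 1/m$ and $\alpha_{0,0}=1/(n(n+1))$.
\item $1-\beta_{0,0}=1-1/n\neq 0$, so $1-\beta_{\ell,\ell'}\sim\lambda_{\ell,\ell'}/(\ell+\ell'+1)^2$ fails at $(0,0)$.
\end{itemize}
The uniform statement, read off from \eqref{Definition of alpha}--\eqref{Definition of gamma} (with the special value of $\beta_{0,0}$ when $n=2$), is
\[
\alpha_{\ell,\ell'}\sim 1-\beta_{\ell,\ell'}\sim\frac{(\ell+1)(\ell'+1)}{(\ell+\ell'+1)^2},\qquad \gamma_{\ell,\ell'}\lesssim\frac{(\ell+1)(\ell'+1)}{(\ell+\ell'+1)^2},
\]
valid for all $\ell,\ell'\geq 0$. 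Now shift both indices by $|j|\leq N$, keeping $\ell+j,\ell'+j\geq 0$. Each of $\ell+1$, $\ell'+1$ and $\ell+\ell'+1$ changes by a factor of at most $2N+1$. Hence $1-\beta_{\ell+j,\ell'+j}\sim_N 1-\beta_{\ell,\ell'}$ uniformly, including when $\min\{\ell,\ell'\}$ is small.

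So no separate boundary analysis is needed. It would not have been a finite case check anyway, since $\max\{\ell,\ell'\}$ is unbounded in that regime.
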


\begin{proof}
Recall from \eqref{Shifts of zonal spherical harmonics} for $\ell, \ell' \in \mathbb{N}_0$, we have
\begin{align*}
    |\langle z, w \rangle|^2 Y_w^{\ell,\ell'}(z) = \alpha_{\ell,\ell'} Y_w^{\ell+1,\ell'+1}(z) + \beta_{\ell,\ell'} Y_w^{\ell,\ell'}(z) + \gamma_{\ell,\ell'} Y_w^{\ell-1,\ell'-1}(z) .
\end{align*}
Let us now write 
\begin{align*}
    |\langle z,w \rangle|^2 Y_w^{\ell, \ell'}(z) &= (D + S_{+} + S_{-})Y_w^{\ell, \ell'}(z) ,
\end{align*}
where
\begin{align}
\label{Action of D and S}
    D Y_w^{\ell, \ell'}(z) &= \beta_{\ell,\ell'} Y_w^{\ell,\ell'}(z) , \\
    \label{Action of S+ and S-}
    S_{+} Y_w^{\ell, \ell'}(z) = \alpha_{\ell,\ell'} Y_w^{\ell+1,\ell'+1}(z) \quad &\text{and} \quad S_{-} Y_w^{\ell, \ell'}(z) = \gamma_{\ell,\ell'} Y_w^{\ell-1,\ell'-1}(z) .
\end{align}
Since the operators $(1-D)$, $S_{+}$ and $S_{-}$ do not commute, we see that
\begin{align}
\label{Acting weight written in terms of words}
    (1-|\langle z,w \rangle|^2)^N Y_w^{\ell, \ell'}(z) &= (1-D-S_{+}-S_{-})^N Y_w^{\ell, \ell'}(z) \\
    &\nonumber = \sum_{k=0}^N (-1)^k \sum_{\substack{\text{words} \ W \ \\ \text{containing} \ u \ \text{copies of} \ S_{+}, \\ v \ \text{copies of} \ S_{-} \ \text{with} \ u+v=k \\ \text{and} \ N-k \ \text{copies of} \ 1-D}} W Y_w^{\ell, \ell'}(z) .
\end{align}
In view of the action of $(1-D)$ \eqref{Action of D and S}, $S_{+}$ and $S_{-}$ \eqref{Action of S+ and S-} over $Y_w^{\ell, \ell'}(z)$, the above expression can be written as
\begin{align}
\label{Writing j zero and non zero parts}
    \sum_{\substack{j=-N \\ j \neq 0}}^N C_{\ell+j, \ell'+j} Y_w^{\ell+j, \ell'+j}(z) + C_{\ell, \ell'} Y_w^{\ell, \ell'}(z) ,
\end{align}
for some constants $C_{\ell+j, \ell'+j}$ for $j=0, \pm 1, \ldots, \pm N$. Note that if either $(\ell+j)<0$ or $(\ell'+j)<0$, then $C_{\ell+j, \ell'+j}=0$, which is due to \eqref{Shifts of zonal spherical harmonics} and \eqref{Definition of gamma}.

Here we are interested in the coefficient $C_{\ell, \ell'}$. Note that, when $k$ is odd, the terms in above summand \eqref{Acting weight written in terms of words} do not contribute to the coefficient $C_{\ell, \ell'}$. Precisely, $C_{\ell, \ell'} \neq 0$, only if $k$ is even. To see this, let us set $s=u-v$. Note that this measure how much $Y_w^{\ell, \ell'}(z)$ shifted after applying $u$ and $v$ many $S_{+}$ and $S_{-}$ respectively. Since $u+v=k$, we have
\begin{align*}
    s= k-2v .
\end{align*}
See that $C_{\ell, \ell'} \neq 0$, only if after applying all $(1-D)$, $S_{+}$ and $S_{-}$ to $Y_w^{\ell, \ell'}(z)$, the resulting element is same as $Y_w^{\ell, \ell'}(z)$, that is, eventually there is no shift to $Y_w^{\ell, \ell'}(z)$. This happens only when $s=0$. Hence $k$ has to be even and also $u=v$. This means in order to get $C_{\ell, \ell'} \neq 0$, number of $S_{+}$ and $S_{-}$ has to be same, say $r:=u=v=k/2$.

Therefore, whenever $C_{\ell, \ell'} \neq 0$ for $r \in \{0, 1, \ldots, \lfloor N/2 \rfloor\}$ we get
\begin{align}
\label{Only the interested coefficient cllprime}
    C_{\ell, \ell'} Y_w^{\ell, \ell'}(z) &= \sum_{r=0}^{\lfloor N/2 \rfloor} \sum_{\substack{\text{words} \ W \ \\ \text{containing} \ r \ \text{copies of} \ S_{+}, S_{-} \\ \text{and} \ N-2r \ \text{copies of} \ 1-D}} W Y_w^{\ell, \ell'}(z) .
\end{align}
Observe that, if $r=0$, then $W=(1-D)^{N}$ and hence from \eqref{Action of D and S} we have 
\begin{align}
\label{Coefficient for r=0 case}
    (1-D)^{N} Y_w^{\ell, \ell'}(z) &= (1-\beta_{\ell,\ell'})^{N} Y_w^{\ell, \ell'}(z) .
\end{align}
If $r=1$, then the word $W$ contain $1$ copy of $S_{+}$, $1$ copy of $S_{-}$ and $(N-2)$ copies of $(1-D)$. Consequently, in view of \eqref{Action of S+ and S-} and \eqref{Action of D and S} we get
\begin{align*}
    W Y_w^{\ell, \ell'}(z) &= \mathscr{C}_{\ell, \ell'} Y_w^{\ell, \ell'}(z) ,
\end{align*}
where $\mathscr{C}_{\ell, \ell'}$ must contain a factor of either $\alpha_{\ell, \ell'} \gamma_{\ell+1, \ell'+1}$ (if $S_{+}$ appear before $S_{-}$) or $\gamma_{\ell, \ell'} \alpha_{\ell-1, \ell'-1}$ (if $S_{-}$ appear before $S_{+}$) and the other factors are of the form $(1-\beta_{\ell, \ell'})$, $(1-\beta_{\ell+1, \ell'+1})$ or $(1-\beta_{\ell-1, \ell'-1})$ and for these later factors, their total numbers are add up to $N-2$. Note that when either $\ell=0$ or $\ell'=0$, the terms $\gamma_{\ell, \ell'} \alpha_{\ell-1, \ell'-1}$ and $(1-\beta_{\ell-1, \ell'-1})$ will not appear, because of \eqref{Definition of gamma}.

Now from \eqref{Definition of alpha} and \eqref{Definition of gamma} we have
\begin{align*}
    \alpha_{\ell, \ell'} \gamma_{\ell+1, \ell'+1} &= \frac{\ell'+1}{(n+\ell+\ell')} \frac{\ell+1}{(n+\ell+\ell'+1)} \frac{n+\ell-1}{(n+\ell+\ell')} \frac{n+\ell'-1}{(n+\ell+\ell'-1)} .
\end{align*}
On the other hand from \eqref{Definition of beta} we also get
\begin{align}
\label{Definition of 1-beta}
    (1-\beta_{\ell, \ell'})^2 & = \left(\frac{2\ell \ell'+(n-1)(\ell+\ell')+(n-1)(n-2)}{(n+\ell+\ell'-1)^2-1} \right)^2 .
\end{align}
From the above two expressions, for $\ell, \ell' \in \mathbb{N}_0$ we see that
\begin{align}
\label{Equivalence of alpha gamma-1}
    \alpha_{\ell, \ell'} \gamma_{\ell+1, \ell'+1} \sim (1-\beta_{\ell, \ell'})^2 .
\end{align}
Also note for $\ell, \ell' \in \mathbb{N}_0$ it satisfies
\begin{align}
\label{Equivalence of 1 -beta}
    (1-\beta_{\ell, \ell'}) \sim (1-\beta_{\ell+1, \ell'+1}) .
\end{align}
Therefore using the above observations \eqref{Equivalence of alpha gamma-1} and \eqref{Equivalence of 1 -beta} we obtain
\begin{align}
\label{Final expression for r=1 case}
    \mathscr{C}_{\ell, \ell'} &\lesssim (1-\beta_{\ell, \ell'})^{N-2} \alpha_{\ell, \ell'} \gamma_{\ell+1, \ell'+1} \quad \text{or} \quad (1-\beta_{\ell, \ell'})^{N-2} \gamma_{\ell, \ell'} \alpha_{\ell-1, \ell'-1} \\
    &\nonumber \lesssim (1-\beta_{\ell, \ell'})^N .
\end{align}
Now for the remaining cases $2 \leq r \leq \lfloor N/2 \rfloor$, the word $W$ contain $r$ copies of $S_{+}$, $S_{-}$ and $(N-2r)$ copies of $(1-D)$. Hence from \eqref{Action of S+ and S-} and \eqref{Action of D and S} we see
\begin{align*}
    W Y_w^{\ell, \ell'}(z) &= \mathscr{D}_{\ell, \ell'} Y_w^{\ell, \ell'}(z) ,
\end{align*}
where $\mathscr{D}_{\ell, \ell'}$ must contains factor of the form of $\alpha_{\ell+j, \ell'+j} \gamma_{\ell+j+1, \ell'+j+1}$ for some $|j| \leq r$ and they are $r$ many; while the other factors are of the form $(1-\beta_{\ell+j, \ell'+j})$ for some $|j| \leq r$ and for these later factors, their total numbers are add up to $N-2r$. Also note when either $(\ell+j)<0$ or $(\ell'+j)<0$ the terms $\alpha_{\ell+j, \ell'+j} \gamma_{\ell+j+1, \ell'+j+1}$ and $(1-\beta_{\ell+j, \ell'+j})$ will not appear.

Again from \eqref{Definition of alpha} and \eqref{Definition of gamma} we have
 \begin{align*}
    \alpha_{\ell+j, \ell'+j} \gamma_{\ell+j+1, \ell'+j+1} &= \frac{\ell'+j+1}{(n+\ell+\ell'+2j)} \frac{\ell+j+1}{(n+\ell+\ell'+2j+1)} \frac{n+\ell+j-1}{(n+\ell+\ell'+2j)} \frac{n+\ell'+j-1}{(n+\ell+\ell'+2j-1)} .
 \end{align*}
Hence in view of the above expression and \eqref{Definition of 1-beta}, for $|j| \leq r \leq \lfloor N/2 \rfloor$ and $(\ell+j), (\ell'+j) \in \mathbb{N}_0$ we see that
\begin{align*}
    \alpha_{\ell+j, \ell'+j} \gamma_{\ell+j+1, \ell'+j+1} \sim_{N} (1-\beta_{\ell, \ell'})^2 , \quad \quad \text{and} \quad \quad (1-\beta_{\ell, \ell'}) \sim_N (1-\beta_{\ell+j, \ell'+j}) .
\end{align*}
Consequently, we obtain
\begin{align}
\label{Coefficient for r non zero case}
    \mathscr{D}_{\ell, \ell'} &\lesssim (1-\beta_{\ell, \ell'})^{2r} (1-\beta_{\ell, \ell'})^{N-2r} \lesssim (1-\beta_{\ell, \ell'})^N .
\end{align}
Finally, combining the $r=0$ \eqref{Coefficient for r=0 case} and $r\geq 1$ (\ref{Final expression for r=1 case}, \ref{Coefficient for r non zero case}) cases, from \eqref{Only the interested coefficient cllprime} we get
\begin{align*}
    C_{\ell, \ell'}= (1-\beta_{\ell, \ell'})^{N} + \mathcal{E}_{\ell, \ell'} \quad \text{with} \quad \mathcal{E}_{\ell, \ell'} \lesssim_N (1-\beta_{\ell, \ell'})^N .
\end{align*}
Therefore in view of \eqref{Acting weight written in terms of words} and \eqref{Writing j zero and non zero parts} the proof of the lemma is completed.
\end{proof}

Now we are ready to prove Proposition \ref{Proposition: Weighted Plancherel for large N case}.

\begin{proof}[Proof of Proposition \ref{Proposition: Weighted Plancherel for large N case}]
Let us set $K(z,w) := \mathcal{K}_{F_M(\sqrt{\mathcal{L}}, |T|)}(z,w)$. Fix $\mathfrak{N} \in \mathbb{N}_0$ and we decompose $K(z,w) = \sum_{j=0}^{2\mathfrak{N}} K_j(z,w) $ where 
\begin{align*}
    K_j(z,w) &= \sum_{\substack{\ell, \ell'=0 \\ \ell \neq \ell' \\ \ell+\ell' \equiv j (\hspace{-3mm} \mod 2\mathfrak{N}+1)}}^{\infty} F(\sqrt{\lambda_{\ell, \ell'}}) \Theta(2^M |\ell-\ell'|) Y_w^{\ell, \ell'}(z) \quad \quad \text{for} \quad j=0, 1, \ldots, 2\mathfrak{N} .
\end{align*}
An application of Cauchy-Schwartz inequality gives
\begin{align}
\label{Application of Cauchy-Schwartz in weighted}
    \|\varpi(\cdot,w)^{\mathfrak{N}} K(\cdot,w)\|_{L^2}^2 =  \Big\|\varpi(\cdot,w)^{\mathfrak{N}} \sum_{j=0}^{2\mathfrak{N}} K_j(\cdot,w) \Big\|_{L^2}^2 &\leq (2\mathfrak{N}+1) \sum_{j=0}^{2\mathfrak{N}} \|\varpi(\cdot,w)^{\mathfrak{N}} K_j(\cdot,w)\|_{L^2}^2 \\
    &\nonumber= (2\mathfrak{N}+1) \sum_{j=0}^{2\mathfrak{N}} \left\langle \varpi(\cdot,w)^{2 \mathfrak{N}} K_j(\cdot,w), K_j(\cdot,w) \right\rangle .
\end{align}
Remember that $\varpi(z,w) = (1-|\langle z,w \rangle|^2)^{1/2}$. Then for each $j \in \{0, 1, \ldots, 2\mathfrak{N}\}$ we have
\begin{align}
\label{To show with weigh kernel}
    \varpi(z,w)^{2 \mathfrak{N}} K_j(z,w) &= \sum_{\substack{\ell, \ell'=0 \\ \ell \neq \ell' \\ \ell+\ell' \equiv j (\hspace{-3mm} \mod 2\mathfrak{N}+1)}}^{\infty} F(\sqrt{\lambda_{\ell, \ell'}}) \Theta(2^M |\ell-\ell'|) (1-|\langle z,w \rangle|^2)^{\mathfrak{N}} Y_w^{\ell, \ell'}(z) .
\end{align}
Recall that Lemma \ref{Lemma: Shift of Zonal harmonics} gives
\begin{align*}
    (1-|\langle z,w \rangle|^2)^{\mathfrak{N}} Y_w^{\ell, \ell'}(z) &= C_{\ell, \ell'} Y_w^{\ell, \ell'}(z) + \sum_{\substack{j=-\mathfrak{N} \\ j \neq 0}}^{\mathfrak{N}} C_{\ell+j, \ell'+j} Y_w^{\ell+j, \ell'+j}(z) ,
\end{align*}
where $C_{\ell, \ell'}= (1-\beta_{\ell, \ell'})^{\mathfrak{N}} + \mathcal{E}_{\ell, \ell'}$ with $\mathcal{E}_{\ell, \ell'} \lesssim (1-\beta_{\ell, \ell'})^{\mathfrak{N}}$ and $C_{\ell+j, \ell'+j}=0$ if either $(\ell+j)<0$ or $(\ell'+j)<0$.

\medskip
With the help of the above expression, from \eqref{To show with weigh kernel} we obtain
\begin{align}
\label{Wrting Kernel into main in ortho}
    \varpi(z,w)^{2 \mathfrak{N}} K_j(z,w) &= K_j^{main}(z,w) + K_j^{ortho}(z,w) ,
\end{align}
where
\begin{align*}
    K_j^{main}(z,w) &= \sum_{\substack{\ell, \ell'=0 \\ \ell \neq \ell' \\ \ell+\ell' \equiv j (\hspace{-3mm} \mod 2\mathfrak{N}+1)}}^{\infty} F(\sqrt{\lambda_{\ell, \ell'}}) \Theta(2^M |\ell-\ell'|) C_{\ell, \ell'} Y_w^{\ell, \ell'}(z) ,
\end{align*}
and $K_j^{ortho}(\cdot,w)$ is orthogonal to $K_j(\cdot, w)$ in $L^2(\mathbb{S})$.

\medskip
Consequently, plugging \eqref{Wrting Kernel into main in ortho} into the expression \eqref{Application of Cauchy-Schwartz in weighted} yields
\begin{align}
\label{Writing K as sum of Kj into main}
    \|\varpi(\cdot,w)^{\mathfrak{N}} K(\cdot,w)\|_{L^2}^2 &\leq (2\mathfrak{N}+1) \sum_{j=0}^{2\mathfrak{N}} \left\langle K_j^{main}(\cdot,w), K_j(\cdot,w) \right\rangle .
\end{align}
Using the fact $C_{\ell, \ell'} \lesssim (1-\beta_{\ell, \ell'})^{\mathfrak{N}}$, we obtain
\begin{align}
\label{Use of the fact of coefficient}
    \left\langle K_j^{main}(\cdot,w), K_j(\cdot,w) \right\rangle &\lesssim \sum_{\substack{\ell, \ell'=0 \\ \ell \neq \ell' \\ \ell+\ell' \equiv j (\hspace{-3mm} \mod 2\mathfrak{N}+1)}}^{\infty} |F(\sqrt{\lambda_{\ell, \ell'}})|^2 |\Theta(2^M |\ell-\ell'|)|^2 (1-\beta_{\ell, \ell'})^{\mathfrak{N}} \|Y_w^{\ell, \ell'}\|_{L^2}^2 .
\end{align}
From \eqref{Definition of 1-beta} we have
\begin{align}
\label{Definition of eta equals to 1 - beta}
    (1-\beta_{\ell, \ell'}) & = \frac{2\ell \ell'+(n-1)(\ell+\ell')+(n-1)(n-2)}{(n+\ell+\ell'-1)^2-1} =: \eta(\ell, \ell')^2 ,
\end{align}
Note that on the support of $\Theta$ and $F$ we get,
\begin{align*}
    (1-\beta_{\ell, \ell'}) &= \frac{\eta(\ell, \ell')^2 |\ell-\ell'|^2}{|\ell-\ell'|^2} \lesssim \frac{\lambda_{\ell, \ell'}}{|\ell-\ell'|^2} \lesssim \frac{N^2}{2^{-2M} N^4} \sim \frac{2^{2M}}{N^2} ,
\end{align*}
where we have used the fact that $\eta(\ell, \ell')^2 |\ell-\ell'|^2 \lesssim \lambda_{\ell, \ell'}$.

Hence from \eqref{Use of the fact of coefficient} we obtain
\begin{align}
\label{After use of cutoff theta}
    \left\langle K_j^{main}(\cdot,w), K_j(\cdot,w) \right\rangle &\lesssim \left(\frac{2^{2M}}{N^2} \right)^{\mathfrak{N}}  \sum_{\ell, \ell'=0 }^{\infty} |F(\sqrt{\lambda_{\ell, \ell'}})|^2 \|Y_w^{\ell, \ell'}\|_{L^2}^2 .
\end{align}
Therefore in view of \eqref{Writing K as sum of Kj into main} and \eqref{After use of cutoff theta} in order to get the required estimate \eqref{Weighted Plancherel large power of weight}, it is enough to prove the following:
\begin{align}
\label{To prove for discrete norm in weighted Plancherel}
    \sum_{\ell, \ell'=0 }^{\infty} |F(\sqrt{\lambda_{\ell, \ell'}})|^2 \|Y_w^{\ell, \ell'}\|_{L^2}^2 &\leq C N^{Q} \|F(N \cdot)\|_{N, 2}^2 .
\end{align}
Using the estimate \eqref{Estimate of the L2 norm}, estimating similarly as in \eqref{Use of norm of Zonal harmonics} and \eqref{L2 kernel estimate before l prime} yields
\begin{align}
\label{Taking out the sup norm}
    \sum_{\ell, \ell'=0 }^{\infty} |F(\sqrt{\lambda_{\ell, \ell'}})|^2 \|Y_w^{\ell, \ell'}\|_{L^2}^2 &\lesssim \sum_{i=1}^{N} \sup_{(\ell, \ell') \in H_i} |F(\sqrt{\lambda_{\ell, \ell'}})|^2 \, i^{2n-4} \sum_{(\ell, \ell') \in H_{i}'} (\ell'+a) ,
\end{align}
where $H_i' = \{(\ell, \ell') \in H_i : \ell < \ell' \}$.

Consequently, in view of the fact \eqref{On the other bound for l and lprime} we get
\begin{align*}
    \sum_{(\ell, \ell') \in H_{i}'} (\ell'+a) &\lesssim \sum_{\ell=0}^{i} \sum_{\frac{(i-1)^2+4a^2}{4 (\ell+a)} \leq (\ell'+a) \leq \frac{i^2+4a^2}{4 (\ell+a)}} (\ell'+a) \lesssim \sum_{\ell=0}^{\infty} \Big(\frac{i^2+4a^2}{4 (\ell+a)} \Big) \Big(\frac{i+a}{\ell+a}\Big) \lesssim i^3 .
\end{align*}
Finally, plugging the above estimate into \eqref{Taking out the sup norm} and then estimating similarly as in \eqref{Creating discrete norm in Restriction} we obtain \eqref{To prove for discrete norm in weighted Plancherel}. Hence the proof of the Proposition \ref{Proposition: Weighted Plancherel for large N case} is also completed.
\end{proof}

We now turn to the bilinear analogue of the above estimates. 
\subsection{Bilinear weighted Plancherel estimates} 
The principle new contribution of this subsection comes from our proof of the bilinear version of the weighted Plancherel estimates (Proposition \ref{Proposition: Bilinear weighted Plancherel with weight}). In order to do so, we will first start by defining discrete norm on $\mathbb{R}^2$.

\medskip

Let $N_1, N_2 \in \mathbb{N}$. For $F: \mathbb{R} \times \mathbb{R} \to \mathbb{C}$ supported in $[0,1] \times [0,1]$, the discrete norm on $\mathbb{R}^2$ is denoted by $\|F\|_{N_1,N_2,2}$ and is defined by
\begin{align*}
    \|F\|_{N_1,N_2,2} = \left(\frac{1}{N_1 N_2} \sum_{i=1}^{N_1} \sum_{j=1}^{N_2} \sup_{\substack{\lambda_1 \in [(i-1)/N_1, i/N_1] \\ \lambda_2 \in [(j-1)/N_2, j/N_2]}} |F(\lambda_1,\lambda_2)|^2 \right)^{1/2}.
\end{align*}

\medskip

Below we obtain the bilinear analogue of the Proposition \ref{Proposition: Linear weighted Plancherel with weight}. This will play a crucial role in the forthcoming bilinear weighted Plancherel estimates and the proof of Theorem \ref{Theorem: Bilinear Bochner-Riesz Main theorem} at $(p_1, p_2,p)=(\infty, \infty, \infty)$. From \eqref{Definition of bilinear kernel} recall that $\mathcal{K}_{m(\sqrt{\mathcal{L}_1}, \sqrt{\mathcal{L}_2})}^{bi}$ denote the integral kernel corresponding to the bilinear multiplier $m(\sqrt{\mathcal{L}_1}, \sqrt{\mathcal{L}_2})$.

\begin{proposition}
\label{Proposition: Bilinear weighted Plancherel with weight}
Let $m : \mathbb{R}^2 \to \mathbb{C}$ such that $\supp{m} \subseteq [0,N]^2$. Then for all $z \in \mathbb{S}$ and $0\leq \alpha_1, \alpha_2 <1/2$ we have
\begin{align}
\label{Required estimate for bilinear weighted Plancherel}
    & \Bigg( \int_{\mathbb{S}} \int_{\mathbb{S}} |\mathcal{K}_{m(\sqrt{\mathcal{L}_1}, \sqrt{\mathcal{L}_2})}^{bi}(z,w,u) (1+N \varpi(z,w))^{\alpha_1} (1+N \varpi(z,u))^{\alpha_2}|^2 \, d\sigma(w) \, d\sigma(u) \Bigg)^{1/2} \\
    &\nonumber \leq C N^Q \|m(N\cdot, N \cdot)\|_{N,N,2} .
\end{align}
    
\end{proposition}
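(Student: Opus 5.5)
The plan is to reduce the bilinear estimate to the linear weighted Plancherel estimate (Proposition \ref{Proposition: Linear weighted Plancherel with weight}), applied once in each variable, using the tensor structure of the kernel \eqref{Definition of bilinear kernel}. For fixed $z$, the kernel $\mathcal{K}^{bi}(z,w,u)$ is a sum over $(\ell_1,\ell_1',\ell_2,\ell_2')$ of $m(\sqrt{\lambda_{\ell_1,\ell_1'}},\sqrt{\lambda_{\ell_2,\ell_2'}})\,\overline{Y_z^{\ell_1,\ell_1'}(w)}\,\overline{Y_z^{\ell_2,\ell_2'}(u)}$. The weight $(1+N\varpi(z,w))^{\alpha_1}(1+N\varpi(z,u))^{\alpha_2}$ also factors in $w$ and $u$. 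So the $L^2(d\sigma(w)\,d\sigma(u))$ norm can be handled one variable at a time.

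\textbf{Step 1 (freeze the second variable).} Fix $u$ and set
\[
m_u(\eta_1):=\sum_{\ell_2,\ell_2'} m\bigl(\eta_1,\sqrt{\lambda_{\ell_2,\ell_2'}}\bigr)\,\overline{Y_z^{\ell_2,\ell_2'}(u)},
\]
a function supported in $[0,N]$. Then, as a function of $w$, $\mathcal{K}^{bi}(z,w,u)$ is (up to conjugation, using $Y_w(z)=\overline{Y_z(w)}$) the linear kernel $\mathcal{K}_{m_u(\sqrt{\mathcal{L}})}(z,w)$. The proof of Proposition \ref{Proposition: Linear weighted Plancherel with weight} in \cite{Cowling_Kilima_Sikora_sublaplacian_2011} actually gives the stronger intermediate bound
\[
\int |\mathcal{K}_{F(\sqrt{\mathcal{L}})}(z,w)|^2(1+N\varpi)^{2\gamma}\,d\sigma(w)\lesssim \sum_{i=1}^N \sup_{(\ell,\ell')\in H_i}|F(\sqrt{\lambda_{\ell,\ell'}})|^2\, i^{Q-1}.
\]
The final form $N^{Q}\|F(N\cdot)\|_{N,2}^2$ follows from this, as in \eqref{Creating discrete norm in Restriction}. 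I will invoke this shell-wise version rather than the final form.

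\textbf{Step 2 (integrate in $u$).} Apply Step 1 with $F=m_u$, multiply by $(1+N\varpi(z,u))^{2\alpha_2}$ and integrate in $u$. The obstacle is that the supremum over $(\ell_1,\ell_1')\in H_i$ sits inside the $u$-integral. To handle it, bound
\[
\sup_{(\ell_1,\ell_1')\in H_i}|m_u(\cdot)|^2\le \sum_{(\ell_1,\ell_1')\in H_i}|m_u(\sqrt{\lambda_{\ell_1,\ell_1'}})|^2 .
\]
This crude route loses the cardinality of $H_i$, so it is better to redo the Cowling--Kilima--Sikora argument at the level of coefficients before the supremum is taken. Their weighted estimate is obtained by writing the weighted norm as $\sum_{\ell,\ell'}|F(\sqrt{\lambda_{\ell,\ell'}})|^2\,c_{\ell,\ell'}(\gamma)$ plus controlled off-diagonal terms, with $\sum_{(\ell,\ell')\in H_i}c_{\ell,\ell'}\lesssim i^{Q-1}$. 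Performing this expansion in $w$ for fixed $u$ yields a bound $\sum_{\ell_1,\ell_1'}c_{\ell_1,\ell_1'}(\alpha_1)\,|m_u(\sqrt{\lambda_{\ell_1,\ell_1'}})|^2$. For each fixed $(\ell_1,\ell_1')$, the quantity $m_u(\sqrt{\lambda_{\ell_1,\ell_1'}})$ is the conjugate of a linear kernel in $u$ with multiplier $m(\sqrt{\lambda_{\ell_1,\ell_1'}},\cdot)$. The weighted $u$-integral of $|m_u(\sqrt{\lambda_{\ell_1,\ell_1'}})|^2$ is therefore bounded, again by the same expansion, by $\sum_{\ell_2,\ell_2'}c_{\ell_2,\ell_2'}(\alpha_2)\,|m(\sqrt{\lambda_{\ell_1,\ell_1'}},\sqrt{\lambda_{\ell_2,\ell_2'}})|^2$.

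\textbf{Step 3 (assemble).} Grouping $(\ell_1,\ell_1')\in H_i$ and $(\ell_2,\ell_2')\in H_j$ and taking the supremum of $|m|^2$ over each box $H_i\times H_j$ gives a total bounded by
\[
\sum_{i,j\le N}\sup|m|^2\, i^{Q-1}j^{Q-1}\le N^{2Q}\,\|m(N\cdot,N\cdot)\|_{N,N,2}^2 ,
\]
which is the estimate after taking square roots.

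The main obstacle is Step 2: justifying that the Cowling--Kilima--Sikora weighted estimate holds in the ``diagonal-coefficient'' form, with weights $c_{\ell,\ell'}$ summable on shells, before the supremum over shells is taken. This is needed so that the two variables decouple without losing the cardinality of $H_i$. If that form is not directly available for fractional $\gamma<1/2$, I would first interpolate (Stein complex interpolation in $\gamma$) between $\gamma=0$, which is plain orthogonality via \eqref{L2 norm of zonal harmonics} and \eqref{Estimate of the L2 norm}, and an endpoint estimate with the weight $\varpi$ itself, computed using \eqref{Shifts of zonal spherical harmonics}. I would carry out that interpolation jointly in $(\alpha_1,\alpha_2)$ on the bilinear kernel.
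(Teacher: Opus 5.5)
Your proposal is correct, and its ingredients are the paper's: a coefficient-level weighted inequality obtained from the recurrence \eqref{Shifts of zonal spherical harmonics} at the endpoint weight $\varpi^2$, interpolation in the exponent (the paper uses Stein--Weiss interpolation with change of measure), and the Cowling--Kilima--Sikora shell count $\sum_{(\ell,\ell')\in H_i}\eta(\ell,\ell')^{2\gamma}\|Y_z^{\ell,\ell'}\|_{L^2}^2\lesssim N^{Q-1-2\gamma}$ for $0\le\gamma<1/2$.

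The difference is organizational. The paper works on the bilinear kernel throughout. It splits the kernel according to $\ell_1+\ell_1'$ and $\ell_2+\ell_2'$ modulo $3$, proves the endpoint identities for $\mathfrak{M}^{1,1}$, $\mathfrak{M}^{1,0}$, $\mathfrak{M}^{0,1}$, and interpolates twice. You tensorize instead. Once the linear inequality
\[
\int_{\mathbb{S}}\Big|\sum_{\ell,\ell'}a_{\ell,\ell'}\overline{Y_z^{\ell,\ell'}(w)}\Big|^2\varpi(z,w)^{2\gamma}\,d\sigma(w)\le C\sum_{\ell,\ell'}|a_{\ell,\ell'}|^2\eta(\ell,\ell')^{2\gamma}\|Y_z^{\ell,\ell'}\|_{L^2}^2,\qquad 0\le\gamma\le 1,
\]
holds for arbitrary finitely supported coefficients, you combine it with $(1+N\varpi)^{2\gamma}\lesssim 1+N^{2\gamma}\varpi^{2\gamma}$. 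Applying it in $w$ for fixed $u$ and then in $u$ gives the bilinear bound by Fubini. This needs only one endpoint computation and one interpolation, and the $\chi_s$-variant follows verbatim.

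The step you leave vague is that endpoint. At $\gamma=1$ the recurrence produces cross terms $\alpha_{\ell,\ell'}a_{\ell,\ell'}\overline{a_{\ell+1,\ell'+1}}\|Y_z^{\ell+1,\ell'+1}\|_{L^2}^2$. The paper removes them by the mod-$3$ splitting: the recurrence shifts $\ell+\ell'$ by $\pm2$, so within a residue class one gets the exact identity with $1-\beta_{\ell,\ell'}=\eta(\ell,\ell')^2$, and Cauchy--Schwarz over the three classes costs only a constant. Alternatively, you can absorb the cross terms by Cauchy--Schwarz, using $\alpha_{\ell,\ell'}\|Y_z^{\ell+1,\ell'+1}\|_{L^2}^2=\gamma_{\ell+1,\ell'+1}\|Y_z^{\ell,\ell'}\|_{L^2}^2$, $\alpha_{\ell,\ell'}\gamma_{\ell+1,\ell'+1}\lesssim\eta(\ell,\ell')^4$ and $\eta(\ell,\ell')\sim\eta(\ell+1,\ell'+1)$.

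Finally, the per-shell factor $i^{Q-1}$ you state for the weight $(1+N\varpi)^{2\gamma}$ in Steps 1 and 3 is not right. Already the pairs in $H_i$ with $\min\{\ell,\ell'\}=0$ contribute about $N^{2\gamma}i^{Q-1-2\gamma}$, so the correct size is $i^{Q-1}+N^{2\gamma}i^{Q-1-2\gamma}$. This is harmless, since Step 3 only needs a per-shell bound $\lesssim N^{Q-1}$.
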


\begin{proof}
Let us denote $f(w,u) :=\mathcal{K}_{m(\sqrt{\mathcal{L}_1},\sqrt{\mathcal{L}_2})}^{bi}(z,w,u)$ and decompose it as follows
\begin{align}
\label{Decompositing f into f ij part}
    f(w,u) & = \sum_{i,j=0}^2 f_{i,j}(w,u),
\end{align}
where
\begin{align}
\label{Definition of F ij part}
    f_{i,j}(w,u) & = \sum_{\substack{\ell_1,\ell_1', \ell_2,\ell_2' =0 \\ \\ \ell_1+\ell_1' \equiv i (\hspace{-3mm} \mod 3) \\ \ell_2+\ell_2' \equiv j (\hspace{-3mm} \mod 3)}}^{\infty} m(\sqrt{\lambda_{\ell_1,\ell_1'}}, \sqrt{\lambda_{\ell_2,\ell_2'}}) \overline{Y_z^{\ell_1,\ell_1'}(w)}\,  \overline{Y_z^{\ell_2,\ell_2'}(u)} .
\end{align}
Let us denote
\begin{align*}
    \mathfrak{C}_{i,j} &:= \{(\ell_1, \ell_1', \ell_2, \ell_2') \in \mathbb{N}_0^4 : \  \ell_1+\ell_1' \equiv i (\hspace{-3mm} \mod 3),\  \ell_2+\ell_2' \equiv j (\hspace{-3mm} \mod 3) \} .
\end{align*}
For $\gamma_1, \gamma_2 \geq 0$, we set
\begin{align}
\label{Definition of M gamma 1 and Gmma 2 part}
    \mathfrak{M}^{\gamma_1, \gamma_2}f(w,u) &:= \sum_{\substack{\ell_1,\ell_1', \ell_2,\ell_2' =0}}^{\infty} \eta(\ell_1,\ell_1')^{\gamma_1} \eta(\ell_2,\ell_2')^{\gamma_2} m(\sqrt{\lambda_{\ell_1,\ell_1'}}, \sqrt{\lambda_{\ell_2,\ell_2'}}) \overline{Y_z^{\ell_1,\ell_1'}(w)}\,  \overline{Y_z^{\ell_2,\ell_2'}(u)}, 
\end{align}
where for $i=1,2$ we have $\eta(\ell_i,\ell_i')= (1-\beta_{\ell_i,\ell_i'})^{1/2}$, see \eqref{Definition of eta equals to 1 - beta}.

Consequently, in view of \eqref{Decompositing f into f ij part} and orthogonality we obtain
\begin{align}
\label{Decomposition of M into M times fij part}
     \|\mathfrak{M}^{\gamma_1, \gamma_2} f\|_{L^2(d\sigma(w),d\sigma(u))}^2 &= \sum_{i,j=0}^2 \|\mathfrak{M}^{\gamma_1, \gamma_2} f_{i,j} \|_{L^2(d\sigma(w),d\sigma(u))}^2.
\end{align}
 We now claim that for each $i,j \in \{0,1,2\}$, the following estimate holds
\begin{align}
\label{To prove the claim first}
    \|\varpi(z,\cdot)\varpi(z,\cdot) f_{i,j}\|_{L^2(d\sigma(w),d\sigma(u))}^2 &\leq \|\mathfrak{M}^{1,1} f_{i,j}\|_{L^2(d\sigma(w),d\sigma(u))}^2 \\
    \medskip
    \label{To prove the claim second}
    \|\varpi(z,\cdot) f_{i,j}\|_{L^2(d\sigma(w),d\sigma(u))}^2 &\leq \|\mathfrak{M}^{1,0} f_{i,j}\|_{L^2(d\sigma(w),d\sigma(u))}^2 \\
    \medskip
    \label{To prove the claim third}
    \|\varpi(z,\cdot) f_{i,j}\|_{L^2(d\sigma(w),d\sigma(u))}^2 &\leq \|\mathfrak{M}^{0,1} f_{i,j}\|_{L^2(d\sigma(w),d\sigma(u))}^2 .
\end{align}
Assuming the claim for the moment, let us complete the proof of the Proposition \ref{Proposition: Bilinear weighted Plancherel with weight}. Using \eqref{Decompositing f into f ij part}, \eqref{Decomposition of M into M times fij part} and the above claims we deduce that
\begin{align}
\label{Estimate for gamma 1 1 case first}
    \|\varpi(z,\cdot)\varpi(z,\cdot) f\|_{L^2(d\sigma(w),d\sigma(u))}^2 &\leq \|\mathfrak{M}^{1,1} f\|_{L^2(d\sigma(w),d\sigma(u))}^2 \\
    \medskip \label{Estimate for gamma 1 1 case second}
    \|\varpi(z,\cdot) f\|_{L^2(d\sigma(w),d\sigma(u))}^2 &\leq \|\mathfrak{M}^{1,0} f\|_{L^2(d\sigma(w),d\sigma(u))}^2 \\
    \medskip \label{Estimate for gamma 1 1 case third}
    \|\varpi(z,\cdot) f\|_{L^2(d\sigma(w),d\sigma(u))}^2 &\leq \|\mathfrak{M}^{0,1} f\|_{L^2(d\sigma(w),d\sigma(u))}^2 .
\end{align}
Note that trivially we have 
\begin{align}
\label{Estimate for gamma 0 0 case}
    \|f\|_{L^2(d\sigma(w),d\sigma(u))}^2 &\leq \|\mathfrak{M}^{0,0} f\|_{L^2(d\sigma(w),d\sigma(u))}^2 .
\end{align}
Now we interpolate the above four estimates \eqref{Estimate for gamma 1 1 case first}, \eqref{Estimate for gamma 1 1 case second}, \eqref{Estimate for gamma 1 1 case third} and \eqref{Estimate for gamma 0 0 case} as follows.  Fix $z \in \mathbb{S}$ and let $\alpha_1 \geq 0$.  Consider the linear operator 
\begin{align*}
    \mathfrak{T} : l^2_{\alpha_1} \to L^2_{\alpha_1}(\mathbb{S} \times \mathbb{S}) 
\end{align*}
defined by 
\begin{align*}
    \mathfrak{T} ((a_{\ell_1,\ell_1', \ell_2, \ell_2'})) = \sum_{\substack{\ell_1,\ell_1', \ell_2,\ell_2' = 0}}^{\infty} a_{\ell_1,\ell_1', \ell_2, \ell_2'}  \overline{Y_z^{\ell_1,\ell_1'}(\cdot)} \, \overline{Y_z^{\ell_2,\ell_2'}(\cdot)} ,
\end{align*}
where $l^2_{\alpha_1}$ and $L^2_{\alpha_1}(\mathbb{S} \times \mathbb{S})$ are the Banach spaces defined by 
\begin{align*}
    l^2_{\alpha_1} := \left\{ (a_{\ell_1,\ell_1', \ell_2, \ell_2'}) : \sum_{\substack{\ell_1,\ell_1', \ell_2,\ell_2' = 0}}^{\infty} |a_{\ell_1,\ell_1', \ell_2, \ell_2'}|^2 \eta(\ell_1,\ell_1')^{2\alpha_1} \omega_{2n-1}^{-2} d(\ell_1,\ell_1') d(\ell_2,\ell_2') < \infty \right\},
\end{align*}
and 
\begin{align*}
    L^2_{\alpha_1}(\mathbb{S}) := \left\{ g \in L^0(\mathbb{S} \times \mathbb{S}) : \int_{\mathbb{S}} \int_{\mathbb{S}} \varpi(z, w)^{2\alpha_1} |g(w, u)|^2 \, d\sigma(w) \, d\sigma(u) < \infty \right\} ,
\end{align*} 
where $L^0$ denote the space of all measurable functions.

Note that in view of \eqref{L2 norm of zonal harmonics} the inequality \eqref{Estimate for gamma 0 0 case} tells that the linear operator $\mathfrak{T}$ maps continuously $l^2_{0} $ into $L^2_{0}(\mathbb{S} \times \mathbb{S})$. On the other hand the inequality \eqref{Estimate for gamma 1 1 case second} implies that  $\mathfrak{T}$ maps continuously $l^2_{1} $ into $L^2_{1}(\mathbb{S} \times \mathbb{S})$. Therefore, by the Stein-Weiss complex interpolation with change of measure (see \cite{stein_Weiss_Interpolation_change_of_measure_1958}), it follows that for every $\alpha_1 \in [0,1]$, the operator $ \mathfrak{T} : l^2_{\alpha_1} \to L^2_{\alpha_1}(\mathbb{S} \times \mathbb{S})$ is bounded. Hence for $\alpha_1 \in [0,1]$ we obtain
\begin{align}
\label{After interpolation alpha 0 case}
    \|\varpi(z,\cdot)^{\alpha_1} f\|_{L^2(d\sigma(w),d\sigma(u))}^2 &\leq C_{\alpha_1} \|\mathfrak{M}^{\alpha_1,0} f\|_{L^2(d\sigma(w),d\sigma(u))}^2 .
\end{align}
Similarly, performing an interpolation between \eqref{Estimate for gamma 1 1 case first} and \eqref{Estimate for gamma 1 1 case third} gives, for every $\alpha_1 \in [0,1]$,
\begin{align}
\label{After interpolation alpha 1 case}
    \|\varpi(z,\cdot)^{\alpha_1} \varpi(z,\cdot) f\|_{L^2(d\sigma(w),d\sigma(u))}^2 &\leq C_{\alpha_1} \|\mathfrak{M}^{\alpha_1, 1} f\|_{L^2(d\sigma(w),d\sigma(u))}^2 .
\end{align}
Furthermore, for any $\alpha_1 \in [0,1]$, a subsequent interpolation between \eqref{After interpolation alpha 0 case} and \eqref{After interpolation alpha 1 case} with $0\leq \alpha_2 \leq 1$ yields
\begin{align}
\label{Final interpolation result}
     \|\varpi(z,\cdot)^{\alpha_1}\varpi(z,\cdot)^{\alpha_2} f\|_{L^2(d\sigma(w),d\sigma(u))}^2 &\leq C_{\alpha_1, \alpha_2} \|\mathfrak{M}^{\alpha_1,\alpha_2}f\|_{L^2(d\sigma(w),d\sigma(u))}^2 .
\end{align}
Note that from \eqref{Definition of M gamma 1 and Gmma 2 part} we have
\begin{align}
\label{L2 norm of M gamma 1 and Gamma2}
    & \|\mathfrak{M}^{\alpha_1,\alpha_2}f\|_{L^2(d\sigma(w),d\sigma(u))}^2 \\
    &\nonumber = \sum_{\substack{\ell_1,\ell_1', \ell_2,\ell_2' =0}}^{\infty} \eta(\ell_1,\ell_1')^{2\alpha_1} \eta(\ell_2,\ell_2')^{2\alpha_2} |m(\sqrt{\lambda_{\ell_1,\ell_1'}}, \sqrt{\lambda_{\ell_2,\ell_2'}})|^2 \|Y_z^{\ell_1,\ell_1'}\|_{L^2}^2 \|Y_z^{\ell_2,\ell_2'}\|_{L^2}^2 .
\end{align}
From \eqref{Definition of Hi set} first recall the definition of $H_i = \{(\ell, \ell') \in \mathbb{N}_0^2 : (i-1)^2 \leq \lambda_{\ell, \ell'} \leq i^2 \}$. As $m$ is supported on $[0,N]^2$, therefore $m$ vanishes whenever $\lambda_{\ell_1,\ell_1'}> N^2$ and $\lambda_{\ell_2,\ell_2'} > N^2$. Hence,  from \eqref{L2 norm of M gamma 1 and Gamma2} and using \cite[eq. (21) of Proposition 5.2]{Cowling_Kilima_Sikora_sublaplacian_2011} for $0\leq \alpha_1, \alpha_2 <1/2$, we deduce that
\begin{align}
\label{Use of Zonal bound in bilinear weighted}
    & \|\mathfrak{M}^{\alpha_1, \alpha_2}f\|_{L^2(d\sigma(w),d\sigma(u))}^2 \\
    &\nonumber \leq \sum_{i,j=1}^{N} \max \{|m(\sqrt{\lambda_{\ell_1,\ell_1'}}, \sqrt{\lambda_{\ell_2,\ell_2'}})|^2 : (\ell_1,\ell_1') \in H_i, (\ell_2,\ell_2')\in H_j \} \\
    &\nonumber \hspace{3cm} \left(\sum_{(\ell_1,\ell_1') \in H_i}\eta(\ell_1,\ell_1')^{2\alpha_1} \|Y_z^{\ell_1,\ell_1'}\|_{L^2}^2 \right) \left(\sum_{(\ell_2,\ell_2') \in H_j}\eta(\ell_2,\ell_2')^{2\alpha_2} \|Y_z^{\ell_2,\ell_2'}\|_{L^2}^2 \right) \\
    &\nonumber \leq C {N}^{(2n-1-2\alpha_1)} {N}^{(2n-1-2\alpha_2)} \sum_{i,j=1}^{N} \max \{|m(\sqrt{\lambda_{\ell_1,\ell_1'}}, \sqrt{\lambda_{\ell_2,\ell_2'}})|^2 : (\ell_1,\ell_1') \in H_i, (\ell_2,\ell_2')\in H_j \} .
\end{align}
Recall that $f(w,u) =\mathcal{K}_{m(\sqrt{\mathcal{L}_1},\sqrt{\mathcal{L}_2})}^{bi}(z,w,u)$. Consequently, plugging the above estimate into \eqref{Final interpolation result} $0\leq \alpha_1, \alpha_2<1/2$ we obtain
\begin{align}
\label{Bilinear final kernel estimate with weight}
    &\int_{\mathbb{S}} \int_{\mathbb{S}} |\mathcal{K}_{m(\sqrt{\mathcal{L}_1}, \sqrt{\mathcal{L}_2})}^{bi}(z,w,u)|^2 \varpi(z,w)^{2\alpha_1} \varpi(z,u)^{2\alpha_2}  \, d\sigma(w)\,  d\sigma(u) \\
    &\nonumber \leq C N^{(2n-1-2\alpha_1)} N^{(2n-1-2\alpha_2)} \sum_{i,j=1}^{N} \max\{|m(\sqrt{\lambda_{\ell_1,\ell_1'}}, \sqrt{\lambda_{\ell_2,\ell_2'}})|^2 : (\ell_1,\ell_1') \in H_i, (\ell_2,\ell_2')\in H_j \} \\
    &\nonumber \leq C N^{(2n-2\alpha_1)} N^{(2n-2\alpha_2)} \frac{1}{N^2} \sum_{i,j=1}^{N} \sup_{\substack{\lambda_1 \in [i-1,i) \\ \lambda_2 \in [j-1,j) }} |m(\lambda_1,\lambda_2)|^2 \\
    &\nonumber = C N^{2Q-2\alpha_1-2\alpha_2} \|m(N \cdot, N \cdot)\|_{N,N,2}^2 .
\end{align}
where the last inequality follows by the same argument  as in \eqref{Creating discrete norm in Restriction}.

Finally, interpolating between the two instances of the above estimate corresponding to the cases $\alpha_1, \alpha_2=0$ and arbitrary $\alpha_1, \alpha_2 \in [0,1/2)$, we get the required estimate \eqref{Required estimate for bilinear weighted Plancherel}.

\medskip
Therefore it only remains to establish the three claims \eqref{To prove the claim first}, \eqref{To prove the claim second} and \eqref{To prove the claim third}. Here we only show how to prove the claim \eqref{To prove the claim first}, since the arguments for \eqref{To prove the claim second} and \eqref{To prove the claim third} are analogous and in fact simpler.

Recall that $\varpi(x,y) = \sqrt{1-|\langle x,y \rangle|^2}$. Then we have
\begin{align}
\label{Plugging the definition of weight into L2 xpression}
    \|\varpi(z,\cdot)\varpi(z,\cdot) f_{i,j}\|_{L^2(d\sigma(w),d\sigma(u))}^2 & = \langle \varpi(z,\cdot)^2\varpi(z,\cdot)^2 f_{i,j}, f_{i,j} \rangle \\
    &\nonumber = \|f_{i,j}\|_{L^2(d\sigma(w),d\sigma(u))}^2 - \|\langle z, \cdot \rangle f_{i,j}\|_{L^2(d\sigma(w),d\sigma(u))}^2 \\
    &\nonumber \hspace{0.5cm} - \|\langle z, \cdot \rangle f_{i,j}\|_{L^2(d\sigma(w),d\sigma(u))}^2 + \|\langle z, \cdot \rangle \langle z, \cdot \rangle f_{i,j} \|_{L^2(d\sigma(w),d\sigma(u))}^2 .
\end{align}
From \eqref{Definition of F ij part}, we first note that 
\begin{align}
\label{Expression with beta0 part}
    \|f_{i,j}\|_{L^2(d\sigma(w),d\sigma(u))}^2 &= \sum_{(\ell_1, \ell_1', \ell_2, \ell_2') \in \mathfrak{C}_{i,j}} |m(\sqrt{\lambda_{\ell_1,\ell_1'}}, \sqrt{\lambda_{\ell_2,\ell_2'}})|^2 \|Y_z^{\ell_1,\ell_1'}\|_{L^2}^2 \|Y_z^{\ell_2,\ell_2'}\|_{L^2}^2.
\end{align}
Also observe that
\begin{align}
\label{Expression with z w part}
     \langle z, w \rangle f_{i,j}(w,u) & = \sum_{(\ell_1, \ell_1', \ell_2, \ell_2') \in \mathfrak{C}_{i,j}} m(\sqrt{\lambda_{\ell_1,\ell_1'}}, \sqrt{\lambda_{\ell_2,\ell_2'}}) \overline{\langle w,z \rangle Y_z^{\ell_1,\ell_1'}(w)} \,  \overline{Y_z^{\ell_2,\ell_2'}(u)} .
\end{align}
Therefore application of the identity \eqref{Shifts of zonal spherical harmonics} yields
\begin{align*}
    \|\langle \cdot, z \rangle Y_z^{\ell_1,\ell_1'} \|_{L^2}^2 &= \left\langle |\langle \cdot, z \rangle|^2 Y_z^{\ell_1,\ell_1'},  Y_z^{\ell_1,\ell_1'} \right\rangle = \beta_{\ell_1,\ell_1'} \|Y_z^{\ell_1,\ell_1'}\|_{L^2}^2 .
\end{align*}
Consequently, from \eqref{Expression with z w part} we obtain
\begin{align}
\label{Expression with beta1 part}
    \|\langle z,\cdot \rangle f_{i,j} \|_{L^2(d\sigma(w),d\sigma(u))}^2 &= \sum_{(\ell_1, \ell_1', \ell_2, \ell_2') \in \mathfrak{C}_{i,j}} |m(\sqrt{\lambda_{\ell_1,\ell_1'}}, \sqrt{\lambda_{\ell_2,\ell_2'}})|^2 \beta_{\ell_1,\ell_1'} \|Y_z^{\ell_1,\ell_1'}\|_{L^2}^2 \|Y_z^{\ell_2,\ell_2'}\|_{L^2}^2 .
\end{align}
In a similar manner, we also have the following:
\begin{align}
\label{Expression with beta2 part}
    \|\langle z,\cdot \rangle f_{i,j} \|_{L^2(d\sigma(w),d\sigma(u))}^2 &= \sum_{(\ell_1, \ell_1', \ell_2, \ell_2') \in \mathfrak{C}_{i,j}} |m(\sqrt{\lambda_{\ell_1,\ell_1'}}, \sqrt{\lambda_{\ell_2,\ell_2'}})|^2 \beta_{\ell_2,\ell_2'} \|Y_z^{\ell_1,\ell_1'}\|_{L^2}^2 \|Y_z^{\ell_2,\ell_2'}\|_{L^2}^2 ,
\end{align}
and
\begin{align}
\label{Expression with Beta 1 and beta 2 part}
    \|\langle z, \cdot \rangle \langle z, \cdot \rangle f_{i,j} \|_{L^2(d\sigma(w),d\sigma(u))}^2 &= \sum_{(\ell_1, \ell_1', \ell_2, \ell_2') \in \mathfrak{C}_{i,j}} |m(\sqrt{\lambda_{\ell_1,\ell_1'}}, \sqrt{\lambda_{\ell_2,\ell_2'}})|^2 \beta_{\ell_1,\ell_1'} \beta_{\ell_2,\ell_2'} \|Y_z^{\ell_1,\ell_1'}\|_{L^2}^2 \|Y_z^{\ell_2,\ell_2'}\|_{L^2}^2 .
\end{align}
Finally, collecting the identities \eqref{Expression with beta0 part}, \eqref{Expression with beta1 part}, \eqref{Expression with beta2 part} and \eqref{Expression with Beta 1 and beta 2 part} and substituting them into \eqref{Plugging the definition of weight into L2 xpression}, we obtain
\begin{align*}
    & \|\varpi(z,\cdot)\varpi(z,\cdot) f_{i,j}\|_{L^2(d\sigma(w),d\sigma(u))}^2 \\
    &= \sum_{(\ell_1, \ell_1', \ell_2, \ell_2') \in \mathfrak{C}_{i,j}} |m(\sqrt{\lambda_{\ell_1,\ell_1'}}, \sqrt{\lambda_{\ell_2,\ell_2'}})|^2 (1- \beta_{\ell_1,\ell_1'}- \beta_{\ell_2,\ell_2'} + \beta_{\ell_1,\ell_1'}\beta_{\ell_2,\ell_2'}) \|Y_z^{\ell_1,\ell_1'}\|_{L^2}^2 \|Y_z^{\ell_2,\ell_2'}\|_{L^2}^2 \\
    &= \sum_{(\ell_1, \ell_1', \ell_2, \ell_2') \in \mathfrak{C}_{i,j}} |m(\sqrt{\lambda_{\ell_1,\ell_1'}}, \sqrt{\lambda_{\ell_2,\ell_2'}})|^2 \eta(\ell_1,\ell_1')^2 \eta(\ell_2,\ell_2')^2 \|Y_z^{\ell_1,\ell_1'}\|_{L^2}^2 \|Y_z^{\ell_2,\ell_2'}\|_{L^2}^2 \\
    &= \|\mathfrak{M}^{1,1} f_{i,j}\|_{L^2(d\sigma(w),d\sigma(u))}^2 .
\end{align*}
    This completes the proof of the required claim \eqref{To prove the claim first}.
\end{proof}

\begin{proposition}
\label{Proposition: Bilinear weighted Plancherel with weight and s operator}
Let $m : \mathbb{R}^2 \to \mathbb{C}$ such that $\supp{m} \subseteq [0,N]^2$. For $s \in [0,1]$ and $\kappa>0$, we define the following bilinear kernel as
\begin{align}
\label{Definition of bilinear kernel with s operator}
    \mathcal{K}_{m(\sqrt{\mathcal{L}_1}, \sqrt{\mathcal{L}_2})}^{bi, \chi_s}(z, w, u) &:= \sum_{\substack{\ell_1,\ell_1', \ell_2,\ell_2' =0\\ |\ell_1-\ell_1'|\leq \kappa N^s, |\ell_2-\ell_2'|\leq \kappa N^s }}^{\infty} m(\sqrt{\lambda_{\ell_1, \ell_1'}}, \sqrt{\lambda_{\ell_2, \ell_2'}}) \overline{Y_z^{\ell_1,\ell_1'}(w)}\,  \overline{Y_z^{\ell_2,\ell_2'}(u)} .
\end{align}
Then for all $z \in \mathbb{S}$ and $0\leq \alpha_1, \alpha_2 <1/2$ we have
\begin{align*}
    & \Bigg( \int_{\mathbb{S}} \int_{\mathbb{S}} |\mathcal{K}_{m(\sqrt{\mathcal{L}_1}, \sqrt{\mathcal{L}_2})}^{bi, \chi_s}(z,w,u) (1+N \varpi(z,w))^{\alpha_1} (1+N \varpi(z,u))^{\alpha_2}|^2 \, d\sigma(w) \, d\sigma(u) \Bigg)^{1/2} \\
    &\nonumber \leq C N^{Q-1+s} \|m(N\cdot, N \cdot)\|_{N,N,2} .
\end{align*}
    
\end{proposition}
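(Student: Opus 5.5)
The plan is to repeat the proof of Proposition \ref{Proposition: Bilinear weighted Plancherel with weight} verbatim, with the multiplier replaced by $\tilde m(\ell_1,\ell_1',\ell_2,\ell_2') := m(\sqrt{\lambda_{\ell_1,\ell_1'}}, \sqrt{\lambda_{\ell_2,\ell_2'}})\,\mathbbm{1}_{\{|\ell_1-\ell_1'|\leq \kappa N^s\}}\mathbbm{1}_{\{|\ell_2-\ell_2'|\leq \kappa N^s\}}$. Only the final counting step changes, and there we import the near-diagonal counting of Proposition \ref{Proposition: Improved weighted Plancherel for near}.

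\textbf{Step 1: reduction to a coefficient estimate.} Set $f(w,u):=\mathcal{K}^{bi,\chi_s}_{m(\sqrt{\mathcal{L}_1},\sqrt{\mathcal{L}_2})}(z,w,u)$ and split it into the pieces $f_{i,j}$ according to $\ell_1+\ell_1'$ and $\ell_2+\ell_2'$ mod $3$. The cutoff does not change $\ell_k-\ell_k'$ under the shifts $(\ell,\ell')\mapsto(\ell\pm1,\ell'\pm1)$. So the identities \eqref{Expression with beta0 part}--\eqref{Expression with Beta 1 and beta 2 part} hold unchanged with the sums restricted to $|\ell_k-\ell_k'|\leq\kappa N^s$. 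Hence the claims \eqref{To prove the claim first}--\eqref{To prove the claim third} hold as before.

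\textbf{Step 2: interpolation.} The Stein--Weiss interpolation with change of measure gives
\[
\|\varpi(z,\cdot)^{\alpha_1}\varpi(z,\cdot)^{\alpha_2}f\|_{L^2}^2\lesssim \sum \eta(\ell_1,\ell_1')^{2\alpha_1}\eta(\ell_2,\ell_2')^{2\alpha_2}|\tilde m|^2\|Y_z^{\ell_1,\ell_1'}\|_{L^2}^2\|Y_z^{\ell_2,\ell_2'}\|_{L^2}^2 .
\]
Grouping by $(\ell_1,\ell_1')\in H_i$ and $(\ell_2,\ell_2')\in H_j$ with $i,j\leq N$, the right side factorises into
\[
\sum_{i,j\le N}\max_{H_i\times H_j}|m|^2\, S_i^{(\alpha_1)}S_j^{(\alpha_2)},\qquad S_i^{(\gamma)}:=\sum_{\substack{(\ell,\ell')\in H_i\\ |\ell-\ell'|\le\kappa N^s}}\eta(\ell,\ell')^{2\gamma}\|Y_z^{\ell,\ell'}\|_{L^2}^2 .
\]

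\textbf{Step 3: the key estimate.} We need to show
\[
S_i^{(\gamma)}\lesssim N^{2n-2+s-2\gamma}\qquad\text{for }0\le\gamma<1/2,\ i\le N .
\]
This is exactly the content of \eqref{Required to prove in New operator} combined with the zonal bound used in \cite[eq. (21)--(23) of Proposition 5.2]{Cowling_Kilima_Sikora_sublaplacian_2011}. One uses $\|Y_z^{\ell,\ell'}\|^2\lesssim i^{2n-4}(\ell'+a)$ from \eqref{Estimate of the L2 norm}, and on $H_i$ also $\eta^{2\gamma}\lesssim (i/(\ell'+a))^{2\gamma}$ (for $\ell<\ell'$). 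This reduces the task to bounding $\sum_{H_i',\,\ell'-\ell\le\kappa N^s}(\ell'+a)^{1-2\gamma}$ by $N^{1-2\gamma+s}$. That bound follows from \eqref{Number of integer in Ah and Bh}: for each $h$ there are $O(1)$ values of $m$, and there are $O(N^s)$ values of $h$.

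Compared with the unrestricted bound $N^{2n-1-2\gamma}$ of \eqref{Use of Zonal bound in bilinear weighted}, this gains a factor $N^{-(1-s)}$ per variable. I expect this step to be the main obstacle. It needs the precise form of the $\eta^{2\gamma}$ weight to be tracked inside the count, and care at small $\ell$, where $\eta$ is not small. Substituting the bound, the sum is $\lesssim N^{2(Q-1+s)-2\alpha_1-2\alpha_2}\|m(N\cdot,N\cdot)\|^2_{N,N,2}$, exactly as in \eqref{Bilinear final kernel estimate with weight}.

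\textbf{Step 4: conclusion.} Since $(1+N\varpi)^{\alpha_1}(1+N\varpi)^{\alpha_2}\lesssim \sum_{\epsilon_k\in\{0,\alpha_k\}}N^{\epsilon_1+\epsilon_2}\varpi^{\epsilon_1}\varpi^{\epsilon_2}$, we combine the case $\alpha_1=\alpha_2=0$ with the general case. The powers $N^{\alpha_1+\alpha_2}$ cancel, which yields the bound $C N^{Q-1+s}\|m(N\cdot,N\cdot)\|_{N,N,2}$.
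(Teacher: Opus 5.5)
Your proposal is correct and is essentially the paper's argument. The paper likewise reruns the proof of Proposition~\ref{Proposition: Bilinear weighted Plancherel with weight} with the restricted operator $\mathfrak{M}^{\gamma_1,\gamma_2}_s$, factorises over $H_i\times H_j$, and bounds each factor by $N^{Q-2+s-2\gamma}$ using \eqref{Estimate of the L2 norm} together with the near-diagonal count \eqref{Required to prove in New operator}, exactly as in your Step 3. Your worry about small $\ell$ is unnecessary: the explicit formula gives $\eta(\ell,\ell')^2\sim(\ell+a)(\ell'+a)/(\ell+\ell'+2a)^2$, so $\eta^{2\gamma}\lesssim (i/(\ell'+a))^{2\gamma}$ holds uniformly on $H_i'$, and this squared-denominator form (not the unsquared bound displayed in the paper) is what actually produces the exponent $1-2\gamma$ in the count.
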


\begin{proof}
For the proof of this proposition one can proceed analogously as of Proposition \ref{Proposition: Bilinear weighted Plancherel with weight}. For $\gamma_1, \gamma_2 \geq 0$ and $s \in [0,1]$, we set
\begin{align*}
    \mathfrak{M}^{\gamma_1, \gamma_2}_s f(w,u) &:= \sum_{\substack{\ell_1,\ell_1', \ell_2,\ell_2' =0\\ |\ell_1-\ell_1'|\leq \kappa N^s, |\ell_2-\ell_2'|\leq \kappa N^s }}^{\infty} \eta(\ell_1,\ell_1')^{\gamma_1} \eta(\ell_2,\ell_2')^{\gamma_2} m(\sqrt{\lambda_{\ell_1,\ell_1'}}, \sqrt{\lambda_{\ell_2,\ell_2'}}) \overline{Y_z^{\ell_1,\ell_1'}(w)}\,  \overline{Y_z^{\ell_2,\ell_2'}(u)} . 
\end{align*}
Also from \eqref{Definition of Hi set} recall $H_i = \{(\ell, \ell') \in \mathbb{N}_0^2 : (i-1)^2 \leq \lambda_{\ell, \ell'} \leq i^2 \}$. Then in place of \eqref{Use of Zonal bound in bilinear weighted}, here we have
\begin{align}
\label{M s alpha 1 alpha 2 estimate}
    & \|\mathfrak{M}^{\alpha_1,\alpha_2}_s f\|_{L^2(d\sigma(w),d\sigma(u))}^2 \\
    &\nonumber \leq \sum_{i,j=1}^{N} \max \{|m(\sqrt{\lambda_{\ell_1,\ell_1'}}, \sqrt{\lambda_{\ell_2,\ell_2'}})|^2 : (\ell_1,\ell_1') \in H_i , (\ell_2,\ell_2')\in H_j, |\ell_1-\ell_1'| \leq \kappa N^s, |\ell_2-\ell_2'| \leq \kappa N^s \} \\
    &\nonumber \hspace{3cm} \times \Bigg(\sum_{\substack{(\ell_1,\ell_1') \in H_i \\ |\ell_1-\ell_1'| \leq \kappa N^s}} \eta(\ell_1,\ell_1')^{2\alpha_1} \|Y_z^{\ell_1,\ell_1'}\|_{L^2}^2 \Bigg) \Bigg(\sum_{\substack{(\ell_2,\ell_2') \in H_j \\ |\ell_2-\ell_2'| \leq \kappa N^s }}\eta(\ell_2,\ell_2')^{2\alpha_2} \|Y_z^{\ell_2,\ell_2'}\|_{L^2}^2 \Bigg) .
\end{align}
From \eqref{Definition of eta equals to 1 - beta}, we have the following estimate (see also \cite[p. 627]{Cowling_Kilima_Sikora_sublaplacian_2011})
\begin{align*}
    \eta(\ell,\ell')^{2} &\leq C \frac{(\ell+a)(\ell' +a)}{(\ell+a +\ell'+a)} \quad \quad \text{where} \quad a=\frac{n-1}{2} .
\end{align*}
Hence, in view of the above estimate, estimate \eqref{Estimate of the L2 norm} and the fact \eqref{Product of l and l prime equivalent to i^2 case} for $0\leq \gamma <1/2$ we obtain
\begin{align*}
    \sum_{\substack{(\ell,\ell') \in H_i \\ |\ell-\ell'| \leq \kappa N^s}} \eta(\ell,\ell')^{2\gamma} \|Y_z^{\ell,\ell'}\|_{L^2}^2 &\lesssim i^{2(\gamma+n-2)} \sum_{\substack{(\ell,\ell') \in H_i' \\ |\ell-\ell'| \leq \kappa N^s}} (\ell'+a)^{1-2\gamma} ,
\end{align*}
where $H_i' = \{(\ell, \ell') \in H_i : \ell < \ell' \}$.

Consequently, using the estimate \eqref{Required to prove in New operator} for $1 \leq i \leq N$ and $0\leq \gamma <1/2$ yield
\begin{align*}
    \sum_{\substack{(\ell,\ell') \in H_i \\ |\ell-\ell'| \leq \kappa N^s}} \eta(\ell,\ell')^{2\gamma} \|Y_z^{\ell,\ell'}\|_{L^2}^2 &\lesssim N^{2(\gamma+n-2)} N^{2+s-4\gamma} \lesssim N^{Q-2+s-2 \gamma} .
\end{align*}
Therefore with the help of above estimate for $0\leq \alpha_1, \alpha_2 <1/2$, from \eqref{M s alpha 1 alpha 2 estimate} we have
\begin{align*}
    & \|\mathfrak{M}^{\alpha_1,\alpha_2}_s f\|_{L^2(d\sigma(w),d\sigma(u))}^2 \\
    &\lesssim N^{2(Q-2+s -\alpha_1 - \alpha_2)} \sum_{i,j=1}^{N} \max \{|m(\sqrt{\lambda_{\ell_1,\ell_1'}}, \sqrt{\lambda_{\ell_2,\ell_2'}})|^2 : (\ell_1,\ell_1') \in H_i , (\ell_2,\ell_2')\in H_j \} .
\end{align*}
Now proceeding similarly as in \eqref{Bilinear final kernel estimate with weight} of Proposition \ref{Proposition: Bilinear weighted Plancherel with weight} we get the required estimate.    
\end{proof}

\section{Proof of theorem \ref{Theorem: Bilinear Bochner-Riesz Main theorem}}
\label{Section: Proof of first main theorem}
This section concerns about the proof of our first main Theorem \ref{Theorem: Bilinear Bochner-Riesz Main theorem}. The main tools required in the proofs are \emph{restriction type estimates} and \emph{weighted Plancherel estimates} established in Section \ref{Section: Restriction type estimates} and \ref{Section: Weighted Plancherel estimates}.

Recall that, we have
\begin{align*}
    \mathcal{B}_R^{\alpha}(f,g)(z) & = \sum_{\substack{\ell_1,\ell_1', \ell_2,\ell_2' =0}}^{\infty} \left(1- \frac{\sqrt{\lambda_{\ell_1, \ell_1'}} + \sqrt{\lambda_{\ell_2, \ell_2'}}}{R} \right)_{+}^{\alpha} \pi_{\ell_1, \ell_1'} f(z) \,  \pi_{\ell_2, \ell_2'} g(z) .
\end{align*}
If $\ell_1=\ell_1'=\ell_2=\ell_2'=0$, then $\lambda_{\ell_1, \ell_1'}=\lambda_{\ell_2, \ell_2'}=0$. In this case applying H\"older's inequality for $1/p_1+1/p_2=1/p$ yields
\begin{align*}
    \|\pi_{0, 0} f \,  \pi_{0, 0} g\|_{L^p} &\leq C \|f\|_{L^{p_1}} \|g\|_{L^{p_2}} .
\end{align*}
Therefore enough to consider the case when at least one of $\ell_i$ or $\ell_i'$ for $i=1,2$ is not zero. Note that in this case, we have $\lambda_{\ell, \ell'} \geq 1$, which forces $R \geq 1$. Henceforth, in the rest of the proof we will always assume $R \geq 1$ and for brevity we continue with the same notation $\mathcal{B}_R^{\alpha}$.

Now first we decompose the bilinear Bochner-Riesz multiplier $\Big(1- \tfrac{\sqrt{\lambda_{\ell_1, \ell_1'}} + \sqrt{\lambda_{\ell_2, \ell_2'}}}{R} \Big)_{+}^{\alpha}$ dyadically as follows. Fix $\alpha>0$, and let $\Psi$ be a function in $C_c^{\infty}(1/2,2)$ satisfying $t^{\alpha} = \sum_{j \in \mathbb{Z}} 2^{-j\alpha} \Psi(2^j t)$  for $ t>0  $. Then we have
\begin{align}
\label{Partition of unity for 1-t}
    (1-t)_{+}^{\alpha} = \sum_{j\in \mathbb{Z}} 2^{-j\alpha} \Psi(2^j (1-t)), \quad \quad \text{for} \quad  t\in [0,1) .
\end{align}
Since $\supp{\Psi} \subseteq (\frac{1}{2}, 2)$, for $t\geq 0$, notice that the term corresponding to $j<0$ in the above summand vanishes. Thus,  it is enough to restrict to the case when $j \geq 0$. Therefore from \eqref{Partition of unity for 1-t} we can decompose
\begin{align*}
    \left(1- \frac{\sqrt{\lambda_1} + \sqrt{\lambda_2}}{R} \right)_{+}^{\alpha} &= \sum_{j=0}^{\infty} 2^{-j\alpha} \Psi_j\left( \frac{\sqrt{\lambda_1} }{R}, \frac{\sqrt{\lambda_2} }{R} \right) ,
\end{align*}
where $\Psi_j(\eta_1, \eta_2) :=\Psi(2^j(1-\eta_1-\eta_2))$ for $j \geq 0$.

Consequently, we can write
\begin{align}
\label{Writing B alpha into Bo and Bj}
    \mathcal{B}^{\alpha}_R(f,g) (z) &= \sum_{j = 0} ^{\infty} 2^{-j\alpha} \mathcal{B}_{R,j}(f,g)(z) ,
\end{align}
where for $j \geq 0$ we have
\begin{align}
\label{Definition of dyadic bilinear Bochner-Riesz}
    \mathcal{B}_{R,j}(f,g)(z) &= \sum_{\substack{\ell_1,\ell_1', \ell_2,\ell_2' =0}}^{\infty} \Psi_j\left( \tfrac{\sqrt{\lambda_{\ell_1, \ell_1'}}}{R}, \tfrac{\sqrt{\lambda_{\ell_2, \ell_2'}}}{R} \right) \pi_{\ell_1, \ell_1'} f(z) \,  \pi_{\ell_2, \ell_2'} g(z) .
\end{align}

Therefore, in order to prove Theorem \ref{Theorem: Bilinear Bochner-Riesz Main theorem}, we have to show that, for $1/p=1/p_1+1/p_2$ with $1\leq p_1, p_2 \leq \infty$ whenever $\alpha>\alpha(p_1, p_2)$ we have
\begin{align}
\label{Estimate for B R 1 case}
    \|\mathcal{B}_{R}^{\alpha}(f,g)\|_{L^p(\mathbb{S})} &\leq C \|f\|_{L^{p_1}(\mathbb{S})} \|g\|_{L^{p_2}(\mathbb{S})} .
\end{align}

\medskip

In order to estimate $\mathcal{B}_{R}^{\alpha}$ we prove the required estimate \eqref{Estimate for B R 1 case} at some particular points $(p_1, p_2, p)=(1,1,1/2)$, $(1,2,2/3)$, $(2,2,1)$, $(\infty, \infty, \infty)$, $(2, \infty, 2)$ and $(1, \infty, 1)$. Note that interchanging $f$ and $g$ one easily get the estimate for the other points $(p_1, p_2, p)=(2,1,2/3)$, $(\infty, 2, 2)$ and $(\infty, 1, 1)$. Finally, using the bilinear interpolation between these nine points we obtain the required estimate \eqref{Estimate for B R 1 case}, see \cite[Section 4.3]{Bernicot_Grafakos_Song_Yan_Bilinear_Bochner_Riesz_2015}.

First let us start with the estimate of \eqref{Estimate for B R 1 case} for the points $(p_1, p_2, p)=(1,1,1/2)$, $(1,2,2/3)$, $(2,2,1)$.

\medskip
\noindent \textbf{Estimate of \eqref{Estimate for B R 1 case} at $(p_1, p_2, p)=(1,1,1/2)$, $(1,2,2/3)$, $(2,2,1)$:}
In view of \eqref{Writing B alpha into Bo and Bj}, to prove \eqref{Estimate for B R 1 case} for these four points it is enough to show, for $j\geq 0$ whenever $\alpha>\alpha(p_1, p_2)$ we have
\begin{align}
\label{Required to proof after dyadic decomposition}
    \|\mathcal{B}_{R,j}(f,g)\|_{L^p(\mathbb{S})} &\leq C 2^{j \alpha(p_1, p_2)+j\epsilon} \|f\|_{L^{p_1}(\mathbb{S})} \|g\|_{L^{p_2}(\mathbb{S})} ,
\end{align}
where $\epsilon>0$ is sufficiently small.

\subsection{Proof of (\ref{Required to proof after dyadic decomposition}) at \texorpdfstring{$(p_1, p_2, p)=(1,1,1/2)$}{}}

Let us set $\eta_1 = \tfrac{\sqrt{\lambda_{\ell_1, \ell_1'}}}{R}$ and $\eta_2 = \tfrac{\sqrt{\lambda_{\ell_2, \ell_2'}}}{R}$. Note that we have $0\leq \eta_1, \eta_2 \leq 1$. Let us fix $\eta_1$ and view $\Psi_j(\eta_1, \cdot)$ as a function of $\eta_2$ supported in $[0,1]$. For each fixed $\eta_1$, the function $\eta_2 \to \Psi_j(\eta_{1}, \eta_2)$ is a smooth function on $\mathbb{R}$ supported in $(-\infty, 1)$. Choose a compactly supported smooth function $\zeta$ such that it is $1$ on $[-1,1]$ and $0$ outside  $[-2,2]$. Consequently, the function $\eta_2 \to \zeta(\eta_2) \Psi_j(\eta_{1}, \eta_2)$ become a smooth function on $\mathbb{R}$ which is supported on $(-2,2)$ and coincides with the function $\Psi_j(\eta_{1}, \eta_2)$ as a function of $\eta_2$ on $[0, \infty)$. Then we extend the function $\eta_2 \to \zeta(\eta_2) \Psi_j(\eta_{1}, \eta_2)$ periodically as a $4$-periodic function to the whole of $\mathbb{R}$. Therefore, it admits a Fourier series expansion:
\begin{align}
\label{Fourier series decomposition with chi tilde}
    \zeta(\eta_2) \Psi_j(\eta_1, \eta_{2}) &= \sum_{l \in \mathbb{Z}} \phi_{j,l}(\eta_1) e^{i \pi l \eta_{2}/2} ,
\end{align}
where for $l \in \mathbb{Z}$, the Fourier coefficients $\phi_{j,l}$ are given by
\begin{align}
\label{Definition of Fourier coefficient}
    \phi_{j,l}(\eta_1) &= \frac{1}{4} \int_{-2}^2 \zeta(\eta_2) \Psi_j(\eta_1, \eta_{2}) e^{-i \pi l \eta_{2}/2}\, d\eta_{2} .
\end{align}
Now using the support of $\Psi_j(\eta_1, \cdot)$ from \eqref{Definition of Fourier coefficient} we have the following estimate of $\phi_{j,l}$:
\begin{align}
\label{Convergence of sum of l using phi}
    \|\phi_{j,l}\|_{L^p_s([0,1])} &\leq C \, (1 + |l|)^{-(1 + \beta)} 2^{j (\beta+s)} \quad \text{for all} \quad \beta, s \geq 0, \ 1\leq p\leq \infty .
\end{align}
Recall that the function $\eta_2 \mapsto \zeta(\eta_2) \Psi_j(\eta_{1}, \eta_2)$ agrees with the function $\Psi_j(\eta_{1}, \eta_2)$ on $[0, \infty)$, therefore from \eqref{Fourier series decomposition with chi tilde} for $\eta_1, \eta_2 \in [0, \infty)$ we have
\begin{align}
\label{Fourier series decomposition}
    \Psi_j(\eta_1, \eta_{2}) &= \sum_{l \in \mathbb{Z}} \phi_{j,l}(\eta_1) e^{i \pi l \eta_{2}/2} \zeta(\eta_2) =: \sum_{l \in \mathbb{Z}} \phi_{j,l}(\eta_1) \psi_l(\eta_2) ,
\end{align}
where $\psi_l(\eta_2)= e^{i \pi l \eta_{2}/2} \zeta(\eta_2)$.

Hence in view of the above decomposition we write
\begin{align}
\label{Decomposition of Bochner-Riesz into Fourier series}
    \mathcal{B}_{R,j}(f,g)(z) &= \sum_{l \in \mathbb{Z}} \left\{ \sum_{\ell_1,\ell_1' =0}^{\infty} \phi_{j,l}\left(\frac{\sqrt{\lambda_{\ell_1, \ell_1'}}}{R}\right) \pi_{\ell_1, \ell_1'} f(z) \right\} \left\{ \sum_{\ell_2,\ell_2' =0}^{\infty} \psi_l\left(\frac{\sqrt{\lambda_{\ell_2, \ell_2'}}}{R}\right) \pi_{\ell_2, \ell_2'} g(z) \right\} \\
    &\nonumber=: \sum_{l \in \mathbb{Z}} \phi_{j,l}(R^{-1} \sqrt{\mathcal{L}})f(z)\, \psi_{l}(R^{-1} \sqrt{\mathcal{L}})g(z) .
\end{align}
Before proceed further note for $0<p<1$, we have the following inequality
\begin{align}
\label{Triangle inequality for p less than 1 case}
    \|f+g\|_{L^p}^p &\leq \|f\|_{L^p}^p + \|g\|_{L^p}^p .
\end{align}
Using the above inequality \eqref{Triangle inequality for p less than 1 case} and H\"older's inequality we see that
\begin{align}
\label{Use of triangle for p less 1 and Holder}
    \|\mathcal{B}_{R,j}(f,g)\|_{L^{1/2}} &\leq \left(\sum_{l \in \mathbb{Z}} \|\phi_{j,l}(R^{-1} \sqrt{\mathcal{L}})f\|_{L^1}^{1/2} \, \|\psi_{l}(R^{-1} \sqrt{\mathcal{L}})g\|_{L^1}^{1/2} \right)^2 .
\end{align}
Let $\mathcal{K}_{\phi_{j,l}(R^{-1} \sqrt{\mathcal{L}})}$ denote the integral kernel of the operator $\phi_{j,l}(R^{-1} \sqrt{\mathcal{L}})$, that is,
\begin{align*}
    \phi_{j,l}(R^{-1} \sqrt{\mathcal{L}})f(z) &= \int_{\mathbb{S}} \mathcal{K}_{\phi_{j,l}(R^{-1} \sqrt{\mathcal{L}})}(z, w) f(w) \, d\sigma(w) .
\end{align*}
Note that $\supp \phi_{j,l}(R^{-1} \cdot) \subseteq [0, R]$. Therefore, using Proposition \ref{Prop: Linear L1 norm of kernel} for $s>d/2$ we obtain
\begin{align}
\label{L1 norm estimate of phijl operator}
    \|\phi_{j,l}(R^{-1} \sqrt{\mathcal{L}})f\|_{L^1} &\leq \|f\|_{L^1} \left(\sup_{w \in \mathbb{S}} \int_{\mathbb{S}} |\mathcal{K}_{\phi_{j,l}(R^{-1} \sqrt{\mathcal{L}})}(z, w)| \, d\sigma(z) \right)  \leq C \|\phi_{j,l}\|_{L^2_{s}} \|f\|_{L^1} ,
\end{align}
and similarly for $s>d/2$ we also get
\begin{align}
\label{L1 norm of psil}
    \|\psi_{l}(R^{-1} \sqrt{\mathcal{L}})g\|_{L^1} &\leq C \|\psi_{l}\|_{L^2_s} \|g\|_{L^1} .
\end{align}
On the other hand, from \eqref{Convergence of sum of l using phi} for any $\beta>0$ we see that
\begin{align}
\label{Sobolev norm estimate of multiplier}
    \|\phi_{j,l}\|_{L^2_{s}} \leq C (1+|l|)^{-(1+\beta)} 2^{j(\beta+s)} ,
\end{align}
and since $\psi_l(\eta_2)= e^{i \pi l \eta_{2}/2} \zeta(\eta_2)$ we also have
\begin{align}
\label{Sobolev norm for psil}
    \|\psi_{l}\|_{L^2_s} \leq C (1+|l|)^{s} .
\end{align}
Finally, combining all the above estimates \eqref{L1 norm estimate of phijl operator} - \eqref{Sobolev norm for psil} and plugging them into \eqref{Use of triangle for p less 1 and Holder} for $s>d/2$ we obtain
\begin{align}
\label{Final estimate of BRj at 1,1}
    \|\mathcal{B}_{R,j}(f,g)\|_{L^{1/2}} &\leq C 2^{j(\beta+s)} \|f\|_{L^1} \|g\|_{L^1} \left(\sum_{l \in \mathbb{Z}} (1+|l|)^{(-1-\beta+s)/2} \right)^2 .
\end{align}
Note that the above series in $l$ converges if $\beta>1+s$. Since $s>d/2$, therefore $\beta>d/2+1$, and hence we can choose $\epsilon>0$ sufficiently small so that
\begin{align*}
    \|\mathcal{B}_{R,j}(f,g)\|_{L^{1/2}} &\leq C 2^{j(d+1+\epsilon)} \|f\|_{L^1} \|g\|_{L^1} ,
\end{align*}
which proves the claim \eqref{Required to proof after dyadic decomposition} for $\alpha(1,1)=d+1$.

\subsection{Proof of (\ref{Required to proof after dyadic decomposition}) at \texorpdfstring{$(p_1, p_2, p)=(1,2,2/3)$}{}}
The idea of the proof at this point follows the same line of argument used for the proof of \eqref{Required to proof after dyadic decomposition} at the point $(p_1, p_2, p)=(1,1,1/2)$. As in the previous argument, using \eqref{Triangle inequality for p less than 1 case} and H\"older's inequality, we get 
\begin{align*}
    \|\mathcal{B}_{R,j}(f,g)\|_{L^{2/3}} &\leq \left(\sum_{l \in \mathbb{Z}} \|\phi_{j,l}(R^{-1} \sqrt{\mathcal{L}})f\|_{L^1}^{2/3} \, \|\psi_{l}(R^{-1} \sqrt{\mathcal{L}})g\|_{L^2}^{2/3} \right)^{3/2} .
\end{align*}
Using orthogonality in place of \eqref{L1 norm of psil} and then estimating analogous to \eqref{Final estimate of BRj at 1,1} for $s>d/2$ we obtain
\begin{align*}
    \|\mathcal{B}_{R,j}(f,g)\|_{L^{2/3}} &\leq C 2^{j(\beta+s)} \|f\|_{L^1} \|g\|_{L^2} \left(\sum_{l \in \mathbb{Z}} (1+|l|)^{-2(1+\beta)/3}  \, \|\psi_{l}\|_{L^{\infty}}^{2/3} \right)^{3/2}
\end{align*}
Since $\psi_{l} \in C_c^{\infty}$ we have $\|\psi_{l}\|_{L^{\infty}} < \infty$. Therefore the above sum in $l$ converges provided $\beta>1/2$. Now for $s>d/2$ and $\beta>1/2$, we can choose $\epsilon>0$ such that
\begin{align*}
    \|\mathcal{B}_{R,j}(f,g)\|_{L^{2/3}} &\leq C 2^{j (d+1)/2+j\epsilon} \|f\|_{L^1} \|g\|_{L^2} ,
\end{align*}
which proves the claim \eqref{Required to proof after dyadic decomposition} for $\alpha(1,2)=(d+1)/2$.

\subsection{Proof of (\ref{Required to proof after dyadic decomposition}) at \texorpdfstring{$(p_1, p_2, p)=(2,2,1)$}{}}
Recall \eqref{Decomposition of Bochner-Riesz into Fourier series} and then using H\"older's inequality we get
\begin{align}
\label{Use of Holder at 2,2,1 point}
    \|\mathcal{B}_{R,j}(f,g)\|_{L^{1}} &\leq \sum_{l \in \mathbb{Z}} \|\phi_{j,l}(R^{-1} \sqrt{\mathcal{L}})f\|_{L^2} \, \|\psi_{l}(R^{-1} \sqrt{\mathcal{L}})g\|_{L^2} \\
    &\nonumber \leq C \|f\|_{L^2} \|g\|_{L^2} \sum_{l \in \mathbb{Z}} \|\phi_{j,l}\|_{L^{\infty}} \|\psi_{l}\|_{L^{\infty}} ,
\end{align}
where the last inequality was obtained via the $L^2$-boundedness of the spectral multiplier.

Now using the fact \eqref{Convergence of sum of l using phi} for any $\epsilon>0$, from \eqref{Use of Holder at 2,2,1 point} we get
\begin{align*}
    \|\mathcal{B}_{R,j}(f,g)\|_{L^{1}} &\leq C 2^{j\epsilon} \|f\|_{L^2} \|g\|_{L^2} ,
\end{align*}
which proves the claim \eqref{Required to proof after dyadic decomposition} for $\alpha(2,2)=0$.

\medskip
This completes the proof of \eqref{Estimate for B R 1 case} for the points $(p_1, p_2, p)=(1,1,1/2)$, $(1,2,2/3)$, $(2,2,1)$. Therefore, it remains to prove \eqref{Estimate for B R 1 case} at $(p_1, p_2, p)=(2, \infty, 2)$, $(\infty, \infty, \infty)$ and $(1,\infty,1)$.

\medskip
\noindent \textbf{Estimate of \eqref{Estimate for B R 1 case} at $(p_1, p_2, p)=(2, \infty, 2)$, $(\infty, \infty, \infty)$ and $(1,\infty,1)$:}
Now we turn to the estimate of \eqref{Estimate for B R 1 case} for the points $(p_1, p_2, p)=(2, \infty, 2)$, $(\infty, \infty, \infty)$ and $(1,\infty,1)$. Let us first write
\begin{align}
\label{Decomposition of BR1 into main and error}
    \mathcal{B}_{R}^{\alpha}(f,g) (z) &= \sum_{j = 0} ^{\lfloor\log_2 R \rfloor} 2^{-j\alpha} \mathcal{B}_{R,j}(f,g)(z) + \sum_{j = \lfloor\log_2 R\rfloor+1}^{\infty} 2^{-j\alpha} \mathcal{B}_{R,j}(f,g)(z) \\
    &\nonumber =: \sum_{j = 0} ^{\lfloor\log_2 R \rfloor} 2^{-j\alpha} \mathcal{B}_{R,j}(f,g)(z) + \mathcal{E}_{R}^{\alpha}(f,g)(z) .
\end{align}
Hence, to prove \eqref{Estimate for B R 1 case} at $(p_1, p_2, p)=(2, \infty, 2)$, $(\infty, \infty, \infty)$ and $(1,\infty,1)$, it is enough to show, whenever $\alpha>\alpha(p_1, p_2)$ we have
\begin{align}
\label{To prove for Error at two points}
    \|\mathcal{E}_{R}^{\alpha}(f,g)\|_{L^p(\mathbb{S})} &\leq C \|f\|_{L^{p_1}(\mathbb{S})} \|g\|_{L^{p_2}(\mathbb{S})} ,
\end{align}
and for $0\leq j \leq \lfloor\log_2 R \rfloor$ we have
\begin{align}
\label{To prove main at two points}
    \|\mathcal{B}_{R,j}(f,g)\|_{L^p(\mathbb{S})} &\leq C 2^{j \alpha(p_1, p_2)+j\epsilon} \|f\|_{L^{p_1}(\mathbb{S})} \|g\|_{L^{p_2}(\mathbb{S})} ,
\end{align}
for some $\epsilon>0$ sufficiently small.

Let us start with the estimate of \eqref{To prove main at two points}. In the subsequent steps we further reduce the required estimate \eqref{To prove main at two points} to the estimate \eqref{New claim for bilinear} given below.

\medskip
\noindent \textbf{Reduction of \eqref{New claim for bilinear} from \eqref{To prove main at two points}:}
In this case we need to further decompose the kernel of $\mathcal{B}^{\alpha}_{R,j}$ and the input functions $f$ and $g$. Let $\mathcal{K}_{R,j}$ denote the kernel corresponding to the operator $\mathcal{B}_{R,j}$. Then for $\gamma>0$, we decompose the kernel $\mathcal{K}_{R,j}$ further into following four parts depending on whether $w$ and $u$ lie inside or outside of the ball of radius $\frac{2^{j(1+\gamma)}}{R}$ centered at $z$, as follows
\begin{align*}
    \mathcal{K}_{R,j}^{in,in}(z,w,u) &= \mathcal{K}_{R,j}(z,w,u) \chi_{B(z, \tfrac{2^{j(1+\gamma)}}{R})}(w) \chi_{B(z, \tfrac{2^{j(1+\gamma)}}{R})}(u) \\
    \mathcal{K}_{R,j}^{in,out}(z,w,u) &= \mathcal{K}_{R,j}(z,w,u) \chi_{B(z, \tfrac{2^{j(1+\gamma)}}{R})}(w) \chi_{B(z, \tfrac{2^{j(1+\gamma)}}{R})^c}(u) \\
    \mathcal{K}_{R,j}^{out,in}(z,w,u) &= \mathcal{K}_{R,j}(z,w,u) \chi_{B(z, \tfrac{2^{j(1+\gamma)}}{R})^c}(w) \chi_{B(z, \tfrac{2^{j(1+\gamma)}}{R})}(u) \\
    \mathcal{K}_{R,j}^{out,out}(z,w,u) &= \mathcal{K}_{R,j}(z,w,u) \chi_{B(z, \tfrac{2^{j(1+\gamma)}}{R})^c}(w) \chi_{B(z, \tfrac{2^{j(1+\gamma)}}{R})^c}(u) .
\end{align*}

Let $\mathcal{B}_{R,j}^{in,in}$, $\mathcal{B}_{R,j}^{in,out}$, $\mathcal{B}_{R,j}^{out,in}$ and $\mathcal{B}_{R,j}^{out,out}$ denote the bilinear multiplier associate to the kernel $\mathcal{K}_{R,j}^{in,in}$, $\mathcal{K}_{R,j}^{in,out}$, $\mathcal{K}_{R,j}^{out,in}$ and $\mathcal{K}_{R,j}^{out,out}$ respectively. Therefore it is enough to prove \eqref{To prove main at two points} for $\mathcal{B}_{R,j}^{in,in}$, $\mathcal{B}_{R,j}^{in,out}$, $\mathcal{B}_{R,j}^{out,in}$ and $\mathcal{B}_{R,j}^{out,out}$.

\medskip
Before proceeding further, we state the following two lemmas, whose proofs are essentially contained in \cite[Lemma 4.1]{Bagchi_Molla_Singh_Bilinear_Bochner_Riesz_Grushin} and \cite[Lemma 2.4]{Bagchi_Molla_Singh_Bilinear_Bochner_Riesz_Grushin} respectively. Since the proofs are follows from the almost same idea, we omit the details here.

\begin{lemma}
\label{Lemma: Pointwise kernel estimate}
For any $\beta_1, \beta_2 \geq 0$ and $\epsilon>0$, we have
\begin{align*}
    |\mathcal{K}_{R,j}(z,w,u) (1+R \varrho(z,w))^{\beta_1} (1+R \varrho(z,u))^{\beta_2} | &\leq C R^{2Q}\, 2^{j(\beta_1 +\beta_2 +1/2+\epsilon)} .
\end{align*}
    
\end{lemma}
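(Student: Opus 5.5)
Write $\Psi_j(\eta_1,\eta_2)=\Psi(2^j(1-\eta_1-\eta_2))$, as in the decomposition of the Bochner--Riesz multiplier. The plan is to reduce the pointwise weighted bound to an unweighted $L^\infty$ bound on the kernel of a modified bivariate multiplier, using finite propagation speed (the Davies--Gaffney/Gaussian bound \eqref{Gaussian type heat kernel bound}) to trade powers of $R\varrho$ for derivatives of the multiplier. By the representation \eqref{Definition of bilinear kernel},
\begin{align*}
    \mathcal{K}_{R,j}(z,w,u) = \sum_{\ell_1,\ell_1',\ell_2,\ell_2'} \Psi_j\big(\tfrac{\sqrt{\lambda_{\ell_1,\ell_1'}}}{R}, \tfrac{\sqrt{\lambda_{\ell_2,\ell_2'}}}{R}\big)\, \overline{Y_z^{\ell_1,\ell_1'}(w)}\, \overline{Y_z^{\ell_2,\ell_2'}(u)} .
\end{align*}
I would first reduce to $\beta_1,\beta_2 \in \mathbb{N}_0$, using monotonicity in $\beta_i$ together with interpolation (or simply rounding up, which costs only an $\epsilon$ after interpolating between consecutive integers via H\"older in the pointwise product). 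The starting point is the trivial bound $\beta_1=\beta_2=0$. By Cauchy--Schwarz in each variable and $\sum_{\lambda_{\ell,\ell'}\le R^2}\|Y^{\ell,\ell'}_z\|_{L^2}^2 = \sum |Y^{\ell,\ell'}_z(z)| \lesssim R^{Q}$ (this is \eqref{To prove for discrete norm in weighted Plancherel} with $F=\chi_{[0,R]}$), we get $|\mathcal{K}_{R,j}|\le \|\Psi_j\|_{L^\infty}R^{2Q}\lesssim R^{2Q}$.

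For the weights, the approach of \cite[Lemma 4.1]{Bagchi_Molla_Singh_Bilinear_Bochner_Riesz_Grushin} is to write $\Psi_j(\eta_1,\eta_2)$ through a Fourier representation in two variables. Write $\Psi_j(\eta_1,\eta_2)=\iint \widehat{\Psi_j\zeta\otimes\zeta}(t_1,t_2)\, \cos(t_1 R\eta_1)\cos(t_2R\eta_2)\,dt_1dt_2$ after even extension in each variable. This gives
\begin{align*}
    \mathcal{K}_{R,j}(z,w,u)=\iint \widehat{\Theta}(t_1,t_2)\, \mathcal{K}_{\cos(t_1\sqrt{\mathcal{L}})}(z,w)\,\mathcal{K}_{\cos(t_2\sqrt{\mathcal{L}})}(z,u)\,dt_1dt_2 ,
\end{align*}
with $\Theta=\Psi_j\cdot(\zeta\otimes\zeta)$. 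Finite propagation speed, which follows from \eqref{Gaussian type heat kernel bound} for the doubling space $(\mathbb{S},\varrho,\sigma)$, says the wave kernels are supported in $\varrho\le C|t|$. One therefore splits $\Theta$ using a dyadic decomposition of $(t_1,t_2)$ at scales $2^{k_1}/R$ and $2^{k_2}/R$. On the piece where $R\varrho(z,w)\sim 2^{k_1}$ and $R\varrho(z,u)\sim 2^{k_2}$, only $|t_i|\gtrsim 2^{k_i}/R$ contribute. The multiplier corresponding to that piece has $L^\infty$ norm bounded by the tail of $\widehat\Theta$, which is $\lesssim 2^{-k_1\beta_1-k_2\beta_2}\|\Theta\|_{L^2_{\beta_1+\beta_2+1/2+\epsilon}}$ after Sobolev embedding in two variables. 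Each such piece is again a bivariate multiplier supported in $[0,CR]^2$, so the trivial bound above gives $R^{2Q}$ times its sup norm. Summing in $k_1,k_2$ leaves $R^{2Q}\|\Psi_j\|_{L^2_{\beta_1+\beta_2+1/2+\epsilon}}$.

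The key step is then the Sobolev computation. Since $\Psi_j$ is a function of $2^j(1-\eta_1-\eta_2)$, derivatives cost $2^j$ each. The support is a strip of width $2^{-j}$, whose area gives a factor $2^{-j/2}$ in $L^2$, while the Sobolev norm in two variables needs order just over $1/2$ extra to control sup norms of Fourier tails along one direction. The net effect is $\|\Psi_j\|_{L^2_{\beta_1+\beta_2+1/2+\epsilon}}\lesssim 2^{j(\beta_1+\beta_2+1/2+\epsilon)}$, which matches the claim.

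The main obstacle I expect is making the \emph{bilinear} localization rigorous on the sphere. The two wave kernels share the base point $z$ but are evaluated at different points, so the trivial $L^\infty$ bound must be applied to each dyadic piece as a tensor-type multiplier of $\sqrt{\mathcal{L}_1},\sqrt{\mathcal{L}_2}$. The truncated pieces are no longer compactly supported in the spectrum, so I would handle this with a Schwartz tail estimate, bounding $\sum_{\ell}|m(\sqrt{\lambda})|\,|Y_z^{\ell,\ell'}(z)|$ by $\sup_\lambda (1+\lambda/R)^{Q+1}|m(\lambda)|\, R^{Q}$.
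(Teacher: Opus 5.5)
The step that fails is the even extension. To use finite propagation speed you have to represent the multiplier through $\cos(t_1\sqrt{\mathcal{L}}/R)\cos(t_2\sqrt{\mathcal{L}}/R)$. That means you must work with $\Theta(|\eta_1|,|\eta_2|)$, and this function is not smooth. The strip $\{1-2^{-j+1}<\eta_1+\eta_2<1-2^{-j-1}\}$ meets the axis $\{\eta_1=0\}$ along $\eta_2\in(1-2^{-j+1},1-2^{-j-1})$ (and symmetrically meets $\{\eta_2=0\}$). There
\[
\partial_{\eta_1}\Psi_j(0^+,\eta_2)=-2^j\Psi'(2^j(1-\eta_2))\neq 0 .
\]
So the even extension has a crease along the axes. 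Consequently $\widehat{\Theta}$ decays only like $|t_1|^{-2}$, and $\Theta\notin L^2_s(\mathbb{R}^2)$ for $s\geq 3/2$. The weighted sum over dyadic $t_1$-shells that your argument needs, roughly $\sum_{k_1}2^{k_1\beta_1}2^{-k_1}$, diverges once $\beta_1\geq 1$. Your Sobolev computation is for the smooth function $\Psi_j\,\zeta\otimes\zeta$ on $\mathbb{R}^2$, not for the function that actually enters the wave representation.

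This cannot be repaired by a cleverer decomposition, because the kernel itself sees the crease. Take $j=0$ and $u=z$. Then $\mathcal{K}_{R,0}(z,w,z)$ is the kernel of $G(\sqrt{\mathcal{L}}/R)$, where
\[
G(\eta)=\omega_{2n-1}^{-1}\sum_{\ell_2,\ell_2'}\Psi\bigl(1-\eta-\sqrt{\lambda_{\ell_2,\ell_2'}}/R\bigr)\,d(\ell_2,\ell_2') .
\]
The Weyl law (eigenvalue count $\sim c\Lambda^Q$) gives $G'(0)\approx -c' R^{Q}\int\Psi(1-\eta)\eta^{Q-2}\,d\eta$ with $c'>0$, so $|G'(0)|\sim R^Q$ for $\Psi\geq 0$.

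The even part of $G$ contributes a kernel that is negligible at $\varrho\sim 1$. The odd part equals $\eta H(\eta^2)$ with $H(0)=G'(0)$, so it contributes $R^{-1}\sqrt{\mathcal{L}}\,H(\mathcal{L}/R^2)$. At $\varrho(z,w)\sim 1$ its kernel is $R^{-1}G'(0)\,\mathcal{K}_{\sqrt{\mathcal{L}}}(z,w)$ plus lower-order terms. By subordination, off the diagonal
\[
\mathcal{K}_{\sqrt{\mathcal{L}}}(z,w)=-\tfrac{1}{2\sqrt{\pi}}\int_0^\infty \mathcal{K}_{\exp(-t\mathcal{L})}(z,w)\,t^{-3/2}\,dt ,
\]
which is bounded away from $0$ on the compact sphere.

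Hence $|\mathcal{K}_{R,0}(z,w,z)|(1+R\varrho(z,w))^{\beta_1}\gtrsim R^{Q-1+\beta_1}$. This is incompatible with $CR^{2Q}$ once $\beta_1>Q+1$, which is the large-$\beta_i$ regime the lemma is needed for in the out-of-ball estimates. So with $\sqrt{\lambda_1}+\sqrt{\lambda_2}$ in the multiplier, the bound cannot hold for all $\beta_i$. The paper gives no proof of its own, only a pointer to the Grushin case, so it offers no way around this.

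Your scheme is sound when the pieces are smooth and even in each $\sqrt{\lambda_i}$, for instance for $(1-(\lambda_1+\lambda_2)/R^2)_+^{\alpha}$ as in the Euclidean statement. Even there your exponent count contains two slips that happen to cancel:
\begin{itemize}
\item In two variables the tail bound needs isotropic Sobolev order $\beta_1+\beta_2+1+\epsilon$ (or mixed order $(\beta_1+\frac12+\epsilon,\beta_2+\frac12+\epsilon)$), not $\beta_1+\beta_2+\frac12+\epsilon$.
\item $\|\Psi_j\|_{L^2_s}\sim 2^{j(s-1/2)}$ because of the strip width, not $2^{js}$.
\end{itemize}
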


\begin{lemma}
\label{lemma: outside distance}
    Let $R> 0$. Then for any $N> Q$ we have
    \begin{align*}
        \int_{\varrho(z, w) \geq R} \frac{|f(w)| }{\big( 1 + \varrho(z, w)\big)^N} \, d\sigma(w) &\leq C R^{-N + Q} \, \mathcal{M}f(z) ,
    \end{align*}
    where $\mathcal{M}$ denotes the Hardy-Littlewood maximal function. 
    
    In particular, taking $f \equiv 1$ for $N>Q$, we get
    \begin{align*}
        \int_{\varrho(z, w) \geq R} \frac{d\sigma(w)}{\big( 1 + \varrho(z, w) \big)^N} \leq C R^{-N + Q}.
    \end{align*}
\end{lemma}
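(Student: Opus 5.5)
The statement to prove is Lemma \ref{lemma: outside distance}. The plan is a dyadic decomposition of the region $\{\varrho(z,w)\geq R\}$ into shells, combined with the doubling property of $(\mathbb{S},\varrho,\sigma)$ and the definition of the Hardy--Littlewood maximal function. I read the lemma with the natural normalization $\varrho(z,w)\geq R$ and weight $(1+\varrho(z,w))^{-N}$; the scaling in the statement suggests one should really think of $\varrho$ as rescaled, i.e.\ of $R\varrho$ against $R$. The argument is identical in either reading, so I write it for the literal form.

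\textbf{Step 1: the shells.} For $k\geq 0$ set
\begin{align*}
    A_k(z) := \{ w \in \mathbb{S} : 2^k R \leq \varrho(z,w) < 2^{k+1} R \}, \qquad \{\varrho(z,w)\geq R\} = \bigcup_{k \geq 0} A_k(z).
\end{align*}
On $A_k(z)$ we have $(1+\varrho(z,w))^{-N} \leq (2^k R)^{-N}$. Hence the integral is at most
\begin{align*}
    \sum_{k\geq 0} (2^kR)^{-N} \int_{B(z,2^{k+1}R)} |f(w)|\, d\sigma(w) \leq \sum_{k\geq 0} (2^kR)^{-N}\, \sigma\big(B(z,2^{k+1}R)\big)\, \mathcal{M}f(z).
\end{align*}

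\textbf{Step 2: the volume bound and summation.} By the ball-volume estimate $\sigma(B(z,r))\sim\min\{1,r^Q\}\lesssim r^Q$, each term is bounded by $C\,2^{k(Q-N)}R^{Q-N}\,\mathcal{M}f(z)$. Because $N>Q$, the geometric series in $k$ converges, and we get $C R^{-N+Q}\mathcal{M}f(z)$ as claimed.

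\textbf{Compactness and the special case.} Since $\mathbb{S}$ is compact and $\varrho\leq \sqrt2$, only finitely many shells are nonempty, and for large $R$ the region is empty. This causes no difficulty: the bound $\sigma(B)\lesssim r^Q$ still holds, using $\min\{1,r^Q\}\leq r^Q$. The particular case $f\equiv 1$ follows because $\mathcal{M}1\equiv 1$.

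The only point needing care is the normalization of $\varrho$ versus $R$. In the rescaled form $(1+R\varrho)^{-N}$ over $\{R\varrho\geq r\}$, the same shells $2^k r\leq R\varrho<2^{k+1}r$ give ball radii $2^{k+1}r/R$. This yields the factor $R^{-Q}r^{Q-N}$, and $R^{-Q}$ is comparable to the reciprocal volume of the ball of radius $1/R$. So the argument is unchanged, and in this setting there is no real obstacle.
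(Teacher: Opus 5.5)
Your dyadic-shell argument is correct. Bounding $(1+\varrho)^{-N}\le (2^kR)^{-N}$ on each shell, controlling $\int_{B(z,2^{k+1}R)}|f|\,d\sigma$ by $\sigma(B(z,2^{k+1}R))\,\mathcal{M}f(z)$, and using the uniform bound $\sigma(B(z,r))\lesssim r^Q$ gives a convergent geometric series when $N>Q$. The paper omits the proof and defers to the Grushin reference, where the same standard argument is used; your remark on the rescaled form $(1+R\varrho)^{-N}$ over $\{R\varrho\ge r\}$ is precisely the version needed when the lemma is applied to the $in,out$ and $out,out$ pieces.
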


\medskip
\noindent \textbf{Estimate of $\mathcal{B}_{R,j}^{in,out}$, $\mathcal{B}_{R,j}^{out,in}$ and $\mathcal{B}_{R,j}^{out,out}$:}
With the help of Lemma \ref{lemma: outside distance} and the Lemma \ref{Lemma: Pointwise kernel estimate}, one can prove the estimate \eqref{To prove main at two points} with $\mathcal{B}_{R,j}$ replaced by $\mathcal{B}_{R,j}^{in,out}$, $\mathcal{B}_{R,j}^{out,in}$ and $\mathcal{B}_{R,j}^{out,out}$ similarly as in the Subsection 4.3 of \cite{Bagchi_Molla_Singh_Bilinear_Bochner_Riesz_Grushin}.

\medskip
\noindent \textbf{Estimate of $\mathcal{B}_{R,j}^{in, in}$:}
Note that we have to show, for $(p_1, p_2, p)=(2, \infty, 2)$, $(\infty, \infty, \infty)$ and $(1,\infty,1)$ and $0 \leq j \leq \lfloor\log_2 R \rfloor$ whenever $\alpha>\alpha(p_1, p_2)$ we have
\begin{align}
\label{To prove main at two points inside in in part}
    \|\mathcal{B}_{R,j}^{in, in}(f,g)\|_{L^p(\mathbb{S})} &\leq C 2^{j \alpha(p_1, p_2)+j\epsilon} \|f\|_{L^{p_1}(\mathbb{S})} \|g\|_{L^{p_2}(\mathbb{S})} ,
\end{align}
for some $\epsilon>0$ sufficiently small.

In this case we further localize the input functions $f$ and $g$, and consequently, the support of $\mathcal{K}_{R,j}^{in, in}$, which also gives us space localization.

Indeed, let $\gamma>0$ be the same as above. For fixed $j$ and $R$, we can choose points $\{a_m\}$ in $\mathbb{S}$ such that $\varrho(a_{m_1}, a_{m_2})>\tfrac{2^{j(1+\gamma)}}{10 R}$ for $m_1 \neq m_2$ and $\sup_a \inf_m \varrho(a,a_m) \leq \tfrac{2^{j(1+\gamma)}}{10 R}$. Associated to this $\{a_m\}$ we define disjoint sets
\begin{align*}
    S_{m,R}^j &:= B(a_m, \tfrac{2^{j(1+\gamma)}}{10 R}) \setminus \bigcup_{r<m} B(a_r, \tfrac{2^{j(1+\gamma)}}{10 R}) .
\end{align*}
Note that
\begin{align*}
    \supp{\mathcal{K}_{R,j}^{in, in}} \subseteq \mathcal{D}_{R,j} := \left\{(z,w,u) : \varrho(z,w) \leq \tfrac{2^{j(1+\gamma)}}{R}, \  \varrho(z,u) \leq \tfrac{2^{j(1+\gamma)}}{R} \right\} .
\end{align*}
Hence we see that
\begin{align*}
    \mathcal{D}_{R,j} \subseteq \bigcup_{\substack{m,m_1,m_2: \varrho(a_m, a_{m_1}) \leq 2 \cdot \tfrac{2^{j(1+\gamma)}}{R} \\ \varrho(a_m, a_{m_2}) \leq 2 \cdot \tfrac{2^{j(1+\gamma)}}{R} }} S_{m,R}^j \times (S_{m_1,R}^j \times S_{m_2,R}^j) .
\end{align*}
Therefore in view of the above observation we can write
\begin{align*}
    \mathcal{B}_{R,j}^{in, in}(f,g)(z) &= \sum_{m} \sum_{\substack{m_1: \varrho(a_m, a_{m_1}) \leq 2 \cdot \tfrac{2^{j(1+\gamma)}}{R} \\ m_2: \varrho(a_m, a_{m_2}) \leq 2 \cdot \tfrac{2^{j(1+\gamma)}}{R}}} \chi_{S_{m,R}^j}(z) \mathcal{B}_{R,j}^{in, in}(f_{m_1,R}^j,g_{m_2,R}^j)(z) ,
\end{align*}
where $f_{m_1,R}^j(z) = f(z) \chi_{S_{m_1,R}^j}(z)$ and $g_{m_2,R}^j(z) = g(z) \chi_{S_{m_2,R}^j}(z)$.

\medskip
Now let us make the following claim: for
\begin{align*}
    \varrho(a_m, a_{m_1}) \leq 2 \cdot \tfrac{2^{j(1+\gamma)}}{R} \quad \text{and} \quad \varrho(a_m, a_{m_2}) \leq 2 \cdot \tfrac{2^{j(1+\gamma)}}{R} ,
\end{align*}
whenever $\alpha>\alpha(p_1, p_2)$ we have
\begin{align}
\label{Claim: For Bj1 case}
    \|\chi_{S_{m,R}^j} \mathcal{B}_{R,j}^{in, in}(f_{m_1,R}^j,g_{m_2,R}^j)\|_{L^p} &\leq C 2^{j \alpha(p_1, p_2)+j\epsilon} \|f_{m_1,R}^j\|_{L^{p_1}} \|g_{m_2,R}^j\|_{L^{p_2}} ,
\end{align}
for some $\epsilon>0$ sufficiently small and $(p_1, p_2, p)=(2, \infty, 2)$, $(\infty, \infty, \infty)$ and $(1,\infty,1)$.

Assuming the above claim \eqref{Claim: For Bj1 case} for the moment, we can complete the proof of \eqref{To prove main at two points inside in in part}. In fact, first note that for $n \neq m$,
\begin{align*}
    B(a_m, \tfrac{2^{j(1+\gamma)}}{20 R}) \cap B(a_n, \tfrac{2^{j(1+\gamma)}}{20 R}) \neq \emptyset .
\end{align*}
Using the above fact we get the following bounded overlapping property of the balls:
\begin{align}
\label{Bounded overlapping property}
    \sup_{n} \#\{m : \varrho(a_{n}, a_{m}) \leq  2 \cdot \tfrac{2^{j(1+\gamma)}}{R}\} \leq C .
\end{align}
Let $(p_1, p_2, p)=(2, \infty, 2)$, $(\infty, \infty, \infty)$ and $(1,\infty,1)$. Therefore, since the sets $S_{m,R}^j$ are disjoint and using H\"older's inequality along with \eqref{Bounded overlapping property} we obtain
\begin{align}
\label{Proving claim using bounded overlap}
    \|\mathcal{B}_{R,j}^{in, in}(f,g)\|_{L^p}^p &= \Bigg\|\sum_{m} \sum_{\substack{m_1: \varrho(a_m, a_{m_1}) \leq 2 \cdot \tfrac{2^{j(1+\gamma)}}{R} \\ m_2: \varrho(a_m, a_{m_2}) \leq 2 \cdot \tfrac{2^{j(1+\gamma)}}{R}}} \chi_{S_{m,R}^j} \mathcal{B}_{R,j}^{in, in}(f_{m_1,R}^j,g_{m_2,R}^j)\Bigg\|_{L^p}^p \\
    &\nonumber \leq C \sum_{m} \sum_{\substack{m_1: \varrho(a_m, a_{m_1}) \leq 2 \cdot \tfrac{2^{j(1+\gamma)}}{R} \\ m_2: \varrho(a_m, a_{m_2}) \leq 2 \cdot \tfrac{2^{j(1+\gamma)}}{R}}} \Big\|\chi_{S_{m,R}^j} \mathcal{B}_{R,j}^{in, in}(f_{m_1,R}^j,g_{m_2,R}^j)\Big\|_{L^p}^p .
\end{align}
Now applying the claim \eqref{Claim: For Bj1 case}, whenever $\alpha>\alpha(p_1, p_2)$ and $\epsilon>0$, the above expression can be dominated by
\begin{align*}
    C 2^{j \alpha(p_1, p_2) p+j\epsilon p} \sum_{m} \Big(\sum_{m_1: \varrho(a_m, a_{m_1}) \leq 2 \cdot \tfrac{2^{j(1+\gamma)}}{R}} \|f_{m_1,R}^j\|_{L^{p_1}}^p \Big) \Big(\sum_{m_1: \varrho(a_m, a_{m_1}) \leq 2 \cdot \tfrac{2^{j(1+\gamma)}}{R}} \|g_{m_2,R}^j\|_{L^{p_2}}^p \Big) .
\end{align*}
Note that we have $p/p_1 + p/p_2=1$. Therefore using H\"older's inequality the above expression can be further dominated by
\begin{align*}
    & C 2^{j \alpha(p_1, p_2) p+j\epsilon p} \Big(\sum_{m} \sum_{m_1: \varrho(a_m, a_{m_1}) \leq 2 \cdot \tfrac{2^{j(1+\gamma)}}{R}} \|f_{m_1,R}^j\|_{L^{p_1}}^{p_1} \Big)^{\frac{p}{p_1}} \Big(\sum_{m} \sum_{m_1: \varrho(a_m, a_{m_1}) \leq 2 \cdot \tfrac{2^{j(1+\gamma)}}{R}} \|g_{m_2,R}^j\|_{L^{p_2}}^{p_2} \Big)^{\frac{p}{p_2}} \\
    &\leq C 2^{j \alpha(p_1, p_2) p+j\epsilon p} \|f\|_{L^{p_1}}^p \|g\|_{L^{p_2}}^p ,
\end{align*}
where in the last inequality we have used the fact that the sets $\{S_{m_i,R}^j\}_{m_i}$ are disjoint for $i=1,2$ and also \eqref{Bounded overlapping property}.

Hence taking the $1/p$'th power and combining the above estimate and \eqref{Proving claim using bounded overlap} we obtain
\begin{align*}
    \|\mathcal{B}_{R,j}^{in, in}(f,g)\|_{L^p} &\leq C 2^{j \alpha(p_1, p_2)+j\epsilon} \|f\|_{L^{p_1}} \|g\|_{L^{p_2}} .
\end{align*}
This completes the proof of \eqref{To prove main at two points inside in in part}, under the assumption that the claim \eqref{Claim: For Bj1 case} is true.

\medskip
Since the complex sphere $\mathbb{S}$ is transitive under the action of $\mathbb{U}(n)$, let $U_m \in \mathbb{U}(n)$, which takes $e=(1,0, \ldots, 0)$ to $a_m \in \mathbb{S}$. Let us set
\begin{align*}
    S_{e,R}^j = U_m^{-1} S_{m,R}^j , \quad S_{m_1,e}^{R,j} = U_m^{-1} S_{m_1,R}^j , \quad S_{m_2,e}^{R,j} = U_m^{-1} S_{m_2,R}^j .
\end{align*}
Since $\mathbb{S}$ is $\mathbb{U}(n)$-invariant, therefore we have
\begin{align*}
    \|\chi_{S_{m,R}^j} \mathcal{B}_{R,j}^{in, in}(f_{m_1,R}^j,g_{m_2,R}^j)\|_{L^p} &= \|\chi_{S_{e,R}^j} \mathcal{B}_{R,j}^{in, in}(f_{m_1,e}^{R,j},g_{m_2,e}^{R,j})\|_{L^p} ,
\end{align*}
where $f_{m_1,e}^{R,j}(z) = f(z) \chi_{S_{m_1,e}^{R,j}}(z)$ and $g_{m_2,e}^{R,j}(z) = g(z) \chi_{S_{m_2,e}^{R,j}}(z)$.

\medskip
Hence the above claim \eqref{Claim: For Bj1 case} is equivalent to the following claim: for 
\begin{align}
\label{distance condition}
    \varrho(e, a_{m_1}) \leq 2 \cdot \tfrac{2^{j(1+\gamma)}}{R} \quad \text{and} \quad \varrho(e, a_{m_2}) \leq 2 \cdot \tfrac{2^{j(1+\gamma)}}{R}
\end{align}
whenever $\alpha>\alpha(p_1, p_2)$ we have
\begin{align}
\label{Claim: For Bj1 reduced case}
    \|\chi_{S_{e,R}^j} \mathcal{B}_{R,j}^{in, in}(f_{m_1,e}^{R,j},g_{m_2,e}^{R,j})\|_{L^p} &\leq C 2^{j \alpha(p_1, p_2)+j\epsilon} \|f_{m_1,e}^{R,j}\|_{L^{p_1}} \|g_{m_2,e}^{R,j}\|_{L^{p_2}} ,
\end{align}
for $\epsilon>0$ sufficiently small and $(p_1, p_2, p)=(2, \infty, 2)$, $(\infty, \infty, \infty)$ and $(1,\infty,1)$.

\medskip
Note that $S_{e,R}^j \subseteq B(e, \tfrac{2^{j(1+\gamma)}}{10 R})$. Then from \eqref{distance condition} we also get
\begin{align*}
    S_{m_1,e}^{R,j} \subseteq B(e, 3 \cdot \tfrac{2^{j(1+\gamma)}}{ R}) \quad \text{and} \quad S_{m_2,e}^{R,j} \subseteq B(e, 3 \cdot \tfrac{2^{j(1+\gamma)}}{ R}) .
\end{align*}
Therefore all the sets $S_{e,R}^j$, $S_{m_1,e}^{R,j}$ and $S_{m_2,e}^{R,j}$ are contained in $B_{R}^j := B(e, 3 \cdot \tfrac{2^{j(1+\gamma)}}{ R})$. Hence to prove the claim in \eqref{Claim: For Bj1 reduced case}, it is enough to prove: for $\epsilon>0$ sufficiently small, whenever $\alpha>\alpha(p_1, p_2)$
\begin{align}
\label{Reduce it to the same support}
    \|\chi_{B_{R}^j} \mathcal{B}_{R,j}^{in, in}(f_{R,j},g_{R,j})\|_{L^p} &\leq C 2^{j \alpha(p_1, p_2)+j\epsilon} \|f_{R,j}\|_{L^{p_1}} \|g_{R,j}\|_{L^{p_2}},
\end{align}
where for functions $f_{R,j}$, $g_{R,j}$ such that $\supp{f_{R,j}}, \supp{g_{R,j}} \subseteq B_{R}^j$ and $(p_1, p_2, p)=(2, \infty, 2)$, $(\infty, \infty, \infty)$ and $(1,\infty,1)$.

Let us write
\begin{align*}
    \chi_{B_{R}^j}(z) \mathcal{B}_{R,j}^{in, in}(f_{R,j},g_{R,j})(z) &= \chi_{B_{R}^j}(z) \mathcal{B}_{R,j}(f_{R,j},g_{R,j})(z) - \chi_{B_{R}^j}(z) (\mathcal{B}_{R,j}-\mathcal{B}_{R,j}^{in, in})(f_{R,j},g_{R,j})(z) .
\end{align*}
For the second term in the right hand side of above equality, we write
\begin{align*}
    & \chi_{B_{R}^j}(z) (\mathcal{B}_{R,j}-\mathcal{B}_{R,j}^{in, in})(f_{R,j},g_{R,j})(z) \\
    &= \chi_{B_{R}^j}(z) \int_{\mathbb{S}} \int_{\mathbb{S}} (\mathcal{K}_{R,j}-\mathcal{K}_{R,j}^{in, in})(z,w,u) f_{R,j}(w) g_{R,j}(u) \, d\sigma(w) \, d\sigma(u) \\
    &=: \chi_{B_{R}^j}(z) \int_{\mathbb{S}} \int_{\mathbb{S}} \widetilde{K}_{R,j}(z,w,u) f_{R,j}(w) g_{R,j}(u) \, d\sigma(w) \, d\sigma(u) .
\end{align*}
Note $\supp{f_{R,j}}, \supp{g_{R,j}} \subseteq B_{R}^j$, hence whenever $z \in B_{R}^j$, $w \in \supp{f_{R,j}}$ and $u \in \supp{g_{R,j}}$, we get
\begin{align*}
    \supp{\mathcal{K}_{R,j}} \subseteq \left\{(z,w,u) : \varrho(z,w) \leq 6 \cdot \tfrac{2^{j(1+\gamma)}}{R}, \varrho(z,u) \leq 6 \cdot \tfrac{2^{j(1+\gamma)}}{R} \right\} .
\end{align*}
Since $\widetilde{K}_{R,j} = \mathcal{K}_{R,j}-\mathcal{K}_{R,j}^{in, in}$, therefore we have 
\begin{align}
\label{Support of K r j tilde}
    \supp{\widetilde{K}_{R,j}} &\subseteq \left\{(z,w,u) : \tfrac{2^{j(1+\gamma)}}{R}< \varrho(z,w) \leq 6 \cdot \tfrac{2^{j(1+\gamma)}}{R}; \  \varrho(z,u) \leq 6 \cdot \tfrac{2^{j(1+\gamma)}}{R} \right\} \\
    &\nonumber \hspace{1cm} \cup \left\{(z,w,u) : \varrho(z,w) \leq 6 \cdot \tfrac{2^{j(1+\gamma)}}{R};\  \tfrac{2^{j(1+\gamma)}}{R}< \varrho(z,u) \leq 6 \cdot \tfrac{2^{j(1+\gamma)}}{R} \right\} .
\end{align}
Consequently, in the view of the above observation \eqref{Support of K r j tilde}, the $L^p$-norm of $\chi_{B_{R}^j} (\mathcal{B}_{R,j}-\mathcal{B}_{R,j}^{in, in})$ can be estimated similar to the estimates of $\mathcal{B}_{R,j}^{in,out}$, $\mathcal{B}_{R,j}^{out,in}$.

\medskip

Therefore in order to show \eqref{Reduce it to the same support}, it is enough to prove the following: for $\epsilon>0$ sufficiently small and $0 \leq j \leq \lfloor\log_2 R \rfloor$ whenever $\alpha>\alpha(p_1, p_2)$ we have
\begin{align}
\label{New claim for bilinear}
    \|\chi_{B_{R}^j} \mathcal{B}_{R,j}(f_{R,j},g_{R,j})\|_{L^p} &\leq C 2^{j \alpha(p_1, p_2)+j\epsilon} \|f_{R,j}\|_{L^{p_1}} \|g_{R,j}\|_{L^{p_2}} ,
\end{align}
where $\supp{f_{R,j}}, \supp{g_{R,j}} \subseteq B_{R}^j$ and $(p_1, p_2, p)=(2, \infty, 2)$, $(\infty, \infty, \infty)$ and $(1,\infty,1)$.

\medskip
This completes the proof of the reduction of \eqref{New claim for bilinear} from \eqref{To prove main at two points}. Therefore now to prove \eqref{Estimate for B R 1 case} at $(p_1, p_2, p)=(2, \infty, 2)$ and $(1,\infty,1)$, it suffices to show \eqref{To prove for Error at two points} and \eqref{New claim for bilinear} at these two points.

\subsection{Proof of (\ref{To prove for Error at two points}) and (\ref{New claim for bilinear}) at \texorpdfstring{$(p_1, p_2, p)=(2, \infty, 2)$}{}}

Recall from Theorem \ref{Theorem: Bilinear Bochner-Riesz Main theorem} that in this case $\alpha(2, \infty)=(d-1)/2$. Let us first start with the proof of the claim \eqref{New claim for bilinear}.

\subsubsection{Proof of (\ref{New claim for bilinear}) at \texorpdfstring{$(p_1, p_2, p)=(2,\infty,2)$}{}}
\label{Subsection: Proof of main theorem at 2 infinity}
To prove the required estimate at this point, we start by recalling
\begin{align*}
    \mathcal{B}_{R,j}(f_{R,j},g_{R,j})(z) &= \int_{\mathbb{S}} \int_{\mathbb{S}} \mathcal{K}_{R,j}(z, w, u) f_{R,j}(w) g_{R,j}(u) \, d\sigma(w) \, d\sigma(u) ,
\end{align*}
where $\mathcal{K}_{R,j}$ denote the integral kernel corresponding to the bilinear Bochner-Riesz operator $\mathcal{B}_{R,j}$ (see \eqref{Definition of dyadic bilinear Bochner-Riesz}).

Now using H\"older's inequality we get
\begin{align*}
    & |\chi_{B_{R}^j}(z) \mathcal{B}_{R,j}(f_{R,j},g_{R,j})(z)| \\
    &\leq \left(\int_{\mathbb{S}} \frac{|g_{R,j}(u)|^2}{\varpi(z,u)^{2\beta}} \,d\sigma(u) \right)^{1/2} \left(\int_{\mathbb{S}} \varpi(z,u)^{2\beta} \left|\int_{\mathbb{S}} \mathcal{K}_{R,j}(z, w, u) f_{R,j}(w) \, d\sigma(w) \right|^2 \,d\sigma(u) \right)^{1/2} .
\end{align*}
Then
\begin{align}
\label{After taking L2 norm in @ infinity}
    \|\chi_{B_{R}^j} \mathcal{B}_{R,j}(f_{R,j},g_{R,j})\|_{L^2} &\leq \left\{\sup_{z \in \mathbb{S}} \left(\int_{\mathbb{S}} \frac{|g_{R,j}(u)|^2}{\varpi(z,u)^{2\beta}} \,d\sigma(u) \right)^{1/2} \right\} \\
    &\nonumber \hspace{0.5cm} \times \left\{ \int_{\mathbb{S}} \int_{\mathbb{S}} \varpi(z,u)^{2\beta} \left|\int_{\mathbb{S}} \mathcal{K}_{R,j}(z, w, u) f_{R,j}(w) \, d\sigma(w) \right|^2 \,d\sigma(u) \,d\sigma(z)\right\}^{1/2} .
\end{align}
Observe that since $\supp{g_{R,j}} \subseteq B_{R}^j$, an application of Lemma \ref{Lemma: Integral of weight} with $0\leq \beta<1/2$ implies
\begin{align}
\label{Integration of g with weight}
    \sup_{z \in \mathbb{S}} \left(\int_{\mathbb{S}} \frac{|g_{R,j}(u)|^2}{\varpi(z,u)^{2\beta}} \,d\sigma(u) \right)^{1/2} &\leq C \|g_{R,j}\|_{L^{\infty}} \left(\frac{2^{j(1+\gamma)}}{ R} \right)^{Q/2-\beta} .
\end{align}
Now in order to estimate the second factor in the right hand side of \eqref{After taking L2 norm in @ infinity}, first in view of \eqref{Writing projection as inner product} and \eqref{Linear kernel of spectral multiplier} we write
\begin{align}
\label{Rewriting the kernel in different form}
    & \int_{\mathbb{S}} \mathcal{K}_{R,j}(z, w, u) f_{R,j}(w) \, d\sigma(w) \\
    &\nonumber = \int_{\mathbb{S}} \sum_{\substack{\ell_1,\ell_1', \ell_2,\ell_2' =0}}^{\infty} \Psi_j(R^{-1}\sqrt{\lambda_{\ell_1,\ell_1'}}, R^{-1}\sqrt{\lambda_{\ell_2,\ell_2'}}) \overline{Y_z^{(\ell_1,\ell_1')}(w)}\,  \overline{Y_z^{(\ell_2,\ell_2')}(u)} f_{R,j}(w) \, d\sigma(w) \\
    &\nonumber = \sum_{\ell_2,\ell_2' =0}^{\infty} \left(\sum_{\ell_1,\ell_1' =0}^{\infty} \Psi_j(R^{-1}\sqrt{\lambda_{\ell_1,\ell_1'}}, R^{-1}\sqrt{\lambda_{\ell_2,\ell_2'}}) \, \pi_{\ell_1, \ell_1'} f_{R,j}(z) \right) \overline{Y_z^{(\ell_2,\ell_2')}(u)} \\
    &\nonumber =: \sum_{\ell_2,\ell_2' =0}^{\infty} F_{j, z}(R^{-1}\sqrt{\lambda_{\ell_2,\ell_2'}}) \, \overline{Y_z^{(\ell_2,\ell_2')}(u)} \\
    &\nonumber =: \mathcal{K}_{F_{j, z}(R^{-1}\sqrt{\mathcal{L}})}(z, u) .
\end{align}
Note that from Corollary \ref{Corollary: Linear weighted Plancherel} for $0\leq \beta<1/2$ and $\mu>1/2$ we obtain
\begin{align}
\label{Calculating weighted Plancherel}
    \int_{\mathbb{S}} \varpi(z,u)^{2\beta} |\mathcal{K}_{F_{j, z}(R^{-1}\sqrt{\mathcal{L}})}(z, u) |^2 \,d\sigma(u) & \leq C R^{Q-2\beta} \left( \|F_{j, z}\|_{L^2}^2 + R^{-2\mu} \|F_{j, z}\|_{L^2_{\mu}}^2 \right) .
\end{align}
Therefore combining the above two observations \eqref{Rewriting the kernel in different form} and \eqref{Calculating weighted Plancherel} and using Plancherel theorem we get
\begin{align}
\label{Weighted kernel estimate with last f hat}
     &\left\{ \int_{\mathbb{S}} \int_{\mathbb{S}} \varpi(z,u)^{2\beta} \left|\int_{\mathbb{S}} \mathcal{K}_{R,j}(z, w, u) f_{R,j}(w) \, d\sigma(w) \right|^2 \,d\sigma(u) \,d\sigma(z)\right\}^{1/2} \\
     &\nonumber = \left\{ \int_{\mathbb{S}} \left( \int_{\mathbb{S}} \varpi(z,u)^{2\beta} |\mathcal{K}_{F_{j, z}(R^{-1}\sqrt{\mathcal{L}})}(z, u) |^2 \,d\sigma(u) \right) \,d\sigma(z) \right\}^{1/2} \\
     &\nonumber \leq C R^{Q/2-\beta} \left\{ \int_{\mathbb{S}} \left( \int_{\mathbb{R}} |\widehat{F_{j, z}}(\xi_2)|^2 \, d\xi_2 + R^{-2{\mu}} \int_{\mathbb{R}} (1+\xi_2^2)^{\mu} |\widehat{F_{j, z}}(\xi_2)|^2 \, d\xi_2 \right) \,d\sigma(z) \right\}^{1/2} .
\end{align}
For any function $m$ in $\mathbb{R}^2$, let $\mathcal{F}_2 m(\cdot, \xi_2)$ denote the Fourier transform of $m$ at $\xi_2$ with respect to the second variable. Then using the definition of $F_{j, z}$ from \eqref{Rewriting the kernel in different form} and an application of orthogonality yields
\begin{align}
\label{Fjs hat with sobolev}
    & \int_{\mathbb{R}} (1+\xi_2^2)^{\mu} \int_{\mathbb{S}} |\widehat{F_{j, z}}(\xi_2)|^2 \,d\sigma(z) \, d\xi_2 \\
    &\nonumber = \int_{\mathbb{R}} (1+\xi_2^2)^\mu\int_{\mathbb{S}} \left| \sum_{\ell_1,\ell_1' =0}^{\infty} \mathcal{F}_2 \Psi_j(R^{-1}\sqrt{\lambda_{\ell_1,\ell_1'}}, \xi_2) \, \pi_{\ell_1, \ell_1'} f_{R,j}(z) \right|^2 \,d\sigma(z) \, d\xi_2 \\
    &\nonumber = \int_{\mathbb{R}} (1+\xi_2^2)^{\mu} \sum_{\ell_1,\ell_1' =0}^{\infty} |\mathcal{F}_2 \Psi_j(R^{-1}\sqrt{\lambda_{\ell_1,\ell_1'}}, \xi_2)|^2 \|\pi_{\ell_1, \ell_1'} f_{R,j}\|_{L^2}^2 \, d\xi_2 \\
    &\nonumber = \sum_{\ell_1,\ell_1' =0}^{\infty} \left(\int_{\mathbb{R}} (1+\xi_2^2)^{\mu} |\mathcal{F}_2 \Psi_j(R^{-1}\sqrt{\lambda_{\ell_1,\ell_1'}}, \xi_2)|^2 \, d\xi_2 \right) \|\pi_{\ell_1, \ell_1'} f_{R,j}\|_{L^2}^2 \\
    &\nonumber \leq C 2^{2j \mu-j} \|f_{R,j}\|_{L^2}^2 ,
\end{align}
where in the last inequality we have used the following fact 
\begin{align*}
    \int_{\mathbb{R}} (1+\xi_2^2)^{\mu} |\mathcal{F}_2 \Psi_j(R^{-1}\sqrt{\lambda_{\ell_1,\ell_1'}}, \xi_2)|^2 \, d\xi_2 &= \|\Psi_j(R^{-1}\sqrt{\lambda_{\ell_1,\ell_1'}}, \cdot)\|_{L^2_{\mu}}^2 \leq C \, 2^{2j \mu-j} .
\end{align*}
In particular from \eqref{Fjs hat with sobolev} we also get
\begin{align}
\label{Fjs hat estimate without s factor}
    \int_{\mathbb{R}} \int_{\mathbb{S}} |\widehat{F_{j, z}}(\xi_2)|^2 \,d\sigma(z) \, d\xi_2 &\leq C 2^{-j} \|f_{R,j}\|_{L^2}^2 .
\end{align}
Combining both the above estimates \eqref{Fjs hat with sobolev}, \eqref{Fjs hat estimate without s factor} and plugging them into \eqref{Weighted kernel estimate with last f hat} for $0\leq \beta<1/2$ and $\mu>1/2$ we obtain 
\begin{align}
\label{weighted kernel estimate for 2 infinity}
    & \left\{ \int_{\mathbb{S}} \int_{\mathbb{S}} \varpi(z,u)^{2\beta} \left|\int_{\mathbb{S}} \mathcal{K}_{R,j}(z, w, u) f_{R,j}(w) \, d\sigma(w) \right|^2 \,d\sigma(u) \,d\sigma(z)\right\}^{1/2} \\
    &\nonumber\leq C R^{Q/2-\beta} 2^{-j/2} \|f_{R,j}\|_{L^2} (1+ R^{-2\mu} 2^{2j \mu})^{1/2} .
\end{align}

Recall that, in this case we have $0 \leq j \leq \lfloor\log_2 R \rfloor$. Therefore substituting the estimates \eqref{Integration of g with weight} and \eqref{weighted kernel estimate for 2 infinity} into \eqref{After taking L2 norm in @ infinity} we get
\begin{align*}
    \|\chi_{B_{R}^j} \mathcal{B}_{R,j}(f_{R,j},g_{R,j})\|_{L^2} &\leq C \|f_{R,j}\|_{L^2} \|g_{R,j}\|_{L^{\infty}} \left(\frac{2^{j(1+\gamma)}}{ R} \right)^{Q/2-\beta} R^{Q/2-\beta} 2^{-j/2} .
\end{align*}
Since $0\leq \beta<1/2$, choosing $\beta$ close to $1/2$ and $\gamma>0$ sufficiently small yields
\begin{align}
\label{Final estimate of case I in 2 infinity}
    \|\chi_{B_{R}^j} \mathcal{B}_{R,j}(f_{R,j},g_{R,j})\|_{L^2} &\leq C 2^{j (d-1)/2+j \epsilon} \|f_{R,j}\|_{L^2} \|g_{R,j}\|_{L^{\infty}} ,
\end{align}
for some $\epsilon>0$.

This proves the claim \eqref{New claim for bilinear} at $(p_1, p_2, p)=(2, \infty, 2)$ with $\alpha(2, \infty)=(d-1)/2$.

\subsubsection{Proof of \eqref{To prove for Error at two points} at \texorpdfstring{$(p_1, p_2, p)=(2,\infty,2)$}{}}
Recall that 
\begin{align*}
    \mathcal{B}_{R,j}(f,g)(z) &= \int_{\mathbb{S}} \int_{\mathbb{S}} \mathcal{K}_{R,j}(z, w, u) f(w) g(u) \, d\sigma(w) \, d\sigma(u) .
\end{align*}
Applying H\"older's inequality and taking $L^2$-norm we obtain
\begin{align}
\label{Use of Holder in error part}
    \|\mathcal{B}_{R,j}(f, g)\|_{L^2} &\leq \left\{\sup_{z \in \mathbb{S}} \left(\int_{\mathbb{S}} \frac{|g(u)|^2}{\varpi(z,u)^{2\beta}} \,d\sigma(u) \right)^{1/2} \right\} \\
    &\nonumber \hspace{1cm} \times \left\{ \int_{\mathbb{S}} \int_{\mathbb{S}} \varpi(z,u)^{2\beta} \left|\int_{\mathbb{S}} \mathcal{K}_{R,j}(z, w, u) f(w) \, d\sigma(w) \right|^2 \,d\sigma(u) \,d\sigma(z)\right\}^{1/2} .
\end{align}
Note that $\varpi (z,u) \geq 1$, therefore
\begin{align}
\label{Integration of g for error}
    \sup_{z \in \mathbb{S}} \left(\int_{\mathbb{S}} \frac{|g(u)|^2}{\varpi(z,u)^{2\beta}} \,d\sigma(u) \right)^{1/2} &\leq C \|g\|_{L^{\infty}} .
\end{align}
Since in this case $j> \lfloor\log_2 R \rfloor$, from \eqref{weighted kernel estimate for 2 infinity} for $0\leq \beta<1/2$ and $\mu>1/2$ we get
\begin{align}
\label{Weighted Plancherel calculation for j bigger}
    \left\{ \int_{\mathbb{S}} \int_{\mathbb{S}} \varpi(z,u)^{2\beta} \left|\int_{\mathbb{S}} \mathcal{K}_{R,1,j}(z, w, u) f(w) \, d\sigma(w) \right|^2 \,d\sigma(u) \,d\sigma(z)\right\}^{1/2} &\leq C 2^{j(Q/2-\beta)} 2^{-j/2} R^{-\mu} 2^{j \mu} \|f\|_{L^2} .
\end{align}
Then plugging the above two estimates \eqref{Integration of g for error} and \eqref{Weighted Plancherel calculation for j bigger} into \eqref{Use of Holder in error part}, for $\mu>1/2$ and choosing $\beta$ very close to $1/2$ yields
\begin{align*}
    \|\mathcal{B}_{R,j}(f,g)\|_{L^2} &\leq C 2^{j(d-1)/2 +j \mu +j \epsilon_1} R^{-\mu} \|f\|_{L^2} \|g\|_{L^{\infty}} ,  
\end{align*}
for some $\epsilon_1>0$.

Consequently, since $R>1$ and taking $\alpha>\alpha(2, \infty)=(d-1)/2$ we obtain
\begin{align*}
    \|\mathcal{E}_{R}^{\alpha}(f,g)\|_{L^2} &\leq \sum_{j=\lfloor\log_2 R \rfloor+1}^{\infty} 2^{-j \alpha} \|\mathcal{B}_{R,j}(f,g)\|_{L^2} \\
    &\leq C R^{-\mu} \|f\|_{L^2} \|g\|_{L^{\infty}} \sum_{j=\lfloor\log_2 R \rfloor+1}^{\infty} 2^{-j[ \alpha-(d-1)/2-\mu-\epsilon_1]} \\
    &\leq C R^{-[ \alpha-(d-1)/2-\epsilon_1]} \|f\|_{L^2} \|g\|_{L^{\infty}} \\
    &\leq C \|f\|_{L^2} \|g\|_{L^{\infty}} ,
\end{align*}
where we can choose $\epsilon_1>0$ sufficiently small so that $\alpha-(d-1)/2-\epsilon_1>0$.

This completes the proof \eqref{To prove for Error at two points} for the point $(p_1, p_2, p)=(2,\infty, 2)$.

\subsection{Proof of (\ref{To prove for Error at two points}) and (\ref{New claim for bilinear}) at \texorpdfstring{$(p_1, p_2, p)=(\infty, \infty, \infty)$}{}}
At this point we have $\alpha(\infty, \infty)=d-1/2$.

\subsubsection{Proof of (\ref{New claim for bilinear}) at \texorpdfstring{$(p_1, p_2, p)=(\infty,\infty,\infty)$}{}}
Applying H\"older's inequality we have
\begin{align}
\label{Use of twice Holder in infinity}
    |\chi_{B_{R}^j}(z) \mathcal{B}_{R,j}(f_{R,j},g_{R,j})(z)| &\leq \left(\int_{\mathbb{S}} \int_{\mathbb{S}}  \varpi(z,w)^{2\beta_1} \varpi(z,u)^{2\beta_2} | \mathcal{K}_{R,j}(z, w, u)|^2 \,d\sigma(w) \, \,d\sigma(u) \right)^{1/2} \\
    &\nonumber \hspace{1cm} \left(\int_{\mathbb{S}} \frac{|f_{R,j}(w)|^2}{\varpi(z,w)^{2\beta_1}} \,d\sigma(w) \right)^{1/2} \left(\int_{\mathbb{S}} \frac{|g_{R,j}(u)|^2}{\varpi(z,u)^{2\beta_2}} \,d\sigma(u) \right)^{1/2} ,
\end{align}
From Proposition \ref{Proposition: Bilinear weighted Plancherel with weight} we have
\begin{align}
\label{Weighted Plancherel in psi j part for main before discrete norm}
    \left(\int_{\mathbb{S}} \int_{\mathbb{S}}  \varpi(z,w)^{2\beta_1} \varpi(z,u)^{2\beta_2} | \mathcal{K}_{R,j}(z, w, u)|^2 \,d\sigma(w) \, \,d\sigma(u) \right)^{1/2} & \leq C R^{Q-\beta_1-\beta_2} \|\Psi_j(R\cdot, R \cdot)\|_{\lceil R \rceil, \lceil R \rceil, 2} .
\end{align}
Recall that
\begin{align*}
    \Psi_j(R\eta_1,R\eta_2) &= \Psi\bigl(2^j(1-\eta_1-\eta_2)\bigr), \qquad \operatorname{supp}\Psi\subseteq (1/2,2).
\end{align*}
Then $\Psi_j(\eta_1,\eta_2)\neq 0$ gives
\begin{align*}
    1-2^{-j+1} < \eta_1+\eta_2 < 1-2^{-j-1}.
\end{align*}
Thus, $\operatorname{supp}\Psi_j$ is contained in a strip of width $2^{-j}$ around the line $\eta_1+\eta_2=1$.

Let
\begin{align*}
    Q_{k_1,k_2} &= \left[\frac{k_1-1}{\lceil R \rceil},\frac{k_1}{\lceil R \rceil}\right] \times \left[\frac{k_2-1}{\lceil R \rceil},\frac{k_2}{\lceil R \rceil}\right].
\end{align*}
Then
\begin{align*}
    \|\Psi_j(R\cdot,R\cdot)\|_{\lceil R \rceil, \lceil R \rceil,2}^2 &= \frac{1}{\lceil R \rceil^2} \sum_{k_1,k_2=1}^{\lceil R \rceil} \sup_{(\eta_1,\eta_2)\in Q_{k_1,k_2}} \left| \Psi_j(R\eta_1,R\eta_2) \right|^2.
\end{align*}

\noindent \textbf{Case 1: $R\leq 2^j$.}
In this case $2^{-j}\leq R^{-1}$. Thus, the width of the $\supp \Psi_j$ is smaller than the side length $R^{-1}$ of each square $Q_{k_1,k_2}$. Since the $\supp \Psi_j$ has length $O(1)$ inside the unit square, it intersects at most $O(R)$ such squares. Hence
\begin{align*}
    \#\left\{ (k_1,k_2): Q_{k_1,k_2}\cap\operatorname{supp}\Psi_j\neq\emptyset \right\} \leq C R.
\end{align*}
Hence
\begin{align*}
    \|\Psi_j(R\cdot,R\cdot)\|_{\lceil R \rceil ,\lceil R \rceil,2}^2 &\leq C \frac{R}{\lceil R \rceil^2}\|\Psi\|_{L^\infty}^2 \leq C R^{-1}\|\Psi\|_{L^\infty}^2 .
\end{align*}

\noindent \textbf{Case 2: $R\geq 2^j$.}
In this case $2^{-j}\geq R^{-1}$. Thus, the width of the $\supp \Psi_j$ is larger than the side length $R^{-1}$ of each square $Q_{k_1,k_2}$. Since the $\supp \Psi_j$ has length $O(1)$, its area is $O(2^{-j})$. On the other hand, each square $Q_{k_1,k_2}$ has area $R^{-2}$. Consequently,
\begin{align*}
    \#\left\{ (k_1,k_2): Q_{k_1,k_2}\cap\operatorname{supp}\Psi_j\neq\emptyset \right\} &\leq C \frac{2^{-j}}{R^{-2}} = C R^2 2^{-j}.
\end{align*}
Therefore,
\begin{align*}
    \|\Psi_j(R\cdot,R\cdot)\|_{\lceil R \rceil, \lceil R \rceil,2}^2 &\leq C \frac{R^2 2^{-j}}{\lceil R \rceil^2}\|\Psi\|_{L^\infty}^2 \leq C\, 2^{-j}\|\Psi\|_{L^\infty}^2 .
\end{align*}
Combining the two cases, we obtain
\begin{align}
\label{Discrete norm computation}
    \|\Psi_j(R\cdot,R\cdot)\|_{\lceil R \rceil, \lceil R \rceil,2} &\leq C \max\left\{R^{-1/2},2^{-j/2}\right\}\|\Psi\|_{L^\infty} .
\end{align}
Consequently, from \eqref{Weighted Plancherel in psi j part for main before discrete norm} we obtain
\begin{align}
\label{Weighted Plancherel in psi j part for main}
    \left(\int_{\mathbb{S}} \int_{\mathbb{S}}  \varpi(z,w)^{2\beta_1} \varpi(z,u)^{2\beta_2} | \mathcal{K}_{R,j}(z, w, u)|^2 \,d\sigma(w) \, \,d\sigma(u) \right)^{1/2} &\leq C R^{Q-\beta_1-\beta_2} \max\{2^{-j/2}, R^{-1/2}\} .
\end{align}
On the other hand, similarly as in \eqref{Integration of g with weight}, for $0\leq \beta_1 <1/2$ yields
\begin{align}
\label{Integral of wight with f case for main}
    \left(\int_{\mathbb{S}} \frac{|f_{R,j}(w)|^2}{\varpi(z,w)^{2\beta_1}} \,d\sigma(w) \right)^{1/2} & \leq C \|f_{R, j}\|_{L^{\infty}} \left(\tfrac{2^{j(1+\gamma)}}{ R} \right)^{Q/2-\beta_1} ,
\end{align}
and for $0\leq \beta_2 <1/2$ we also get
\begin{align}
\label{Integral of weight with g case for main}
    \left(\int_{\mathbb{S}} \frac{|g_{R,j}(u)|^2}{\varpi(z,u)^{2\beta_2}} \,d\sigma(u) \right)^{1/2} &\leq C \|g_{R, j}\|_{L^{\infty}} \left(\tfrac{2^{j(1+\gamma)}}{ R} \right)^{Q/2-\beta_2} .
\end{align}
Consequently, since $0 \leq j \leq \lfloor\log_2 R \rfloor$ and $R>1$, plugging the above three estimates into \eqref{Use of twice Holder in infinity} for $0 \leq \beta_1, \beta_2<1/2$ we obtain
\begin{align*}
    |\chi_{B_{R}^j}(z) \mathcal{B}_{R,j}(f_{R,j},g_{R,j})(z)| &\leq C R^{Q-\beta_1-\beta_2} 2^{-j/2} \left(\tfrac{2^{j(1+\gamma)}}{ R} \right)^{Q-\beta_1-\beta_2} \|f_{R, j}\|_{L^{\infty}} \|g_{R, j}\|_{L^{\infty}} \\
    &\leq C 2^{-j/2} 2^{j \epsilon} 2^{j (Q-\beta_1-\beta_2)+ j \gamma (Q-\beta_1-\beta_2)} \|f_{R, j}\|_{L^{\infty}} \|g_{R, j}\|_{L^{\infty}} \\
    &\leq C 2^{j (d-1/2+\epsilon)} \|f_{R,j}\|_{L^{\infty}} \|g_{R,j}\|_{L^{\infty}} ,
\end{align*}
for some $\epsilon>0$, provided we choose $\beta_1$ and $\beta_2$ sufficiently close to $1/2$ and $\gamma>0$ sufficiently small.

This proves the claim \eqref{New claim for bilinear} for $\alpha(\infty,\infty)=d-1/2$.

\subsubsection{Proof of \eqref{To prove for Error at two points} at \texorpdfstring{$(p_1, p_2, p)=(\infty,\infty,\infty)$}{}}
Observe that, we have $\varpi(z, w) \geq 1$. Applying H\"older's inequality and proceeding similarly as in \eqref{Use of twice Holder in infinity}, \eqref{Weighted Plancherel in psi j part for main} for $j> \lfloor\log_2 R \rfloor$ and $0 \leq \beta_1, \beta_2<1/2$ yields
\begin{align*}
    |\mathcal{B}_{R,j}(f,g)(z)| &\leq \left(\int_{\mathbb{S}} \int_{\mathbb{S}}  \varpi(z,w)^{2\beta_1} \varpi(z,u)^{2\beta_2} | \mathcal{K}_{R,j}(z, w, u)|^2 \,d\sigma(w) \, \,d\sigma(u) \right)^{1/2} \\
    &\nonumber \hspace{2cm} \left(\int_{\mathbb{S}} |f(w)|^2 \,d\sigma(w) \right)^{1/2} \left(\int_{\mathbb{S}} |g(u)|^2 \,d\sigma(u) \right)^{1/2} \\
    &\leq C R^{Q-\beta_1-\beta_2-1/2} \|f\|_{L^{\infty}} \|g\|_{L^{\infty}} .
\end{align*}
Consequently, since $R>1$ and taking $\alpha>d-1/2$ we obtain
\begin{align}
\label{Final estimate second case infinity}
    \|\mathcal{E}_{R}^{\alpha}(f,g)\|_{L^{\infty}} &\leq \sum_{j=\lfloor\log_2 R \rfloor+1}^{\infty} 2^{-j \alpha} \|\mathcal{B}_{R,j}(f,g)\|_{L^{\infty}} \\
    &\nonumber \leq C R^{Q-\beta_1-\beta_2-1/2} \|f\|_{L^{\infty}} \|g\|_{L^{\infty}} \sum_{j=\lfloor\log_2 R \rfloor+1}^{\infty} 2^{-j \alpha} \\
    &\nonumber \leq C R^{-\alpha+(Q-\beta_1-\beta_2-1/2)} \|f\|_{L^{\infty}} \|g\|_{L^{\infty}} \\
    &\nonumber \leq C \|f\|_{L^{\infty}} \|g\|_{L^{\infty}} ,
\end{align}
provided we choose $\beta_1 +\beta_2$ very close to $1$.

\subsection{Proof of (\ref{To prove for Error at two points}) and (\ref{New claim for bilinear}) at \texorpdfstring{$(p_1, p_2, p)=(1, \infty, 1)$}{}}

Recall that from Theorem \ref{Theorem: Bilinear Bochner-Riesz Main theorem} in this case we have $\alpha(1, \infty)=Q/2$. We will begin with the proof of claim \eqref{New claim for bilinear}.

\subsubsection{Proof of (\ref{New claim for bilinear}) at \texorpdfstring{$(p_1, p_2, p)=(1,\infty,1)$}{}}
Using similar decomposition as in \eqref{Decomposition of Bochner-Riesz into Fourier series} here we have
\begin{align*}
    \mathcal{B}_{R,j}(f_{R,1,j},g_{R,j})(z) &= \sum_{l \in \mathbb{Z}} \phi_{j,l}(R^{-1} \sqrt{\mathcal{L}})f_{R,j}(z)\, \psi_{l}(R^{-1} \sqrt{\mathcal{L}})g_{R,j}(z) .
\end{align*}
Employing H\"older's inequality yields
\begin{align}
\label{Use of Holder in 1 infinity case}
    \|\chi_{B_{R}^j} \mathcal{B}_{R,j}(f_{R,j},g_{R,j})\|_{L^1} &\leq \sum_{l \in \mathbb{Z}} \|\phi_{j,l}(R^{-1} \sqrt{\mathcal{L}})f_{R,j}\|_{L^2} \|\psi_{l}(R^{-1} \sqrt{\mathcal{L}})g_{R,j}\|_{L^2} .
\end{align}
Applying Minkowski's integral inequality and using Corollary \ref{Corollary: Linear weighted Plancherel} with $\gamma=0$ for $\mu>1/2$ we have
\begin{align}
\label{Estimate of f in i inifinity}
    \|\phi_{j,l}(R^{-1} \sqrt{\mathcal{L}})f_{R,j}\|_{L^2} &\leq C R^{Q/2} \left(\|\phi_{j,l}\|_{L^2} + R^{-\mu} \|\phi_{j,l}\|_{L^2_{\mu}} \right) \|f_{R,j}\|_{L^1} \\
    &\nonumber \leq C R^{Q/2} \|\phi_{j,l}\|_{L^{\infty}} (1+ R^{-\mu} 2^{j \mu}) \|f_{R,j}\|_{L^1} .
\end{align}
On the other hand, using orthogonality and since $\supp{g_{R,j}} \subseteq B_{R}^j$, we obtain
\begin{align}
\label{Estimate of g in i infinity}
    \|\psi_{l}(R^{-1} \sqrt{\mathcal{L}})g_{R,j}\|_{L^2} &\leq \|\psi_{l}\|_{L^{\infty}} \|g_{R,j}\|_{L^2} \leq C \left(\frac{2^{j(1+\gamma)}}{ R} \right)^{Q/2} \|g_{R,j}\|_{L^{\infty}} .
\end{align}
Since in this case $0 \leq j \leq \lfloor\log_2 R \rfloor$, plugging \eqref{Estimate of f in i inifinity}, \eqref{Estimate of g in i infinity} into \eqref{Use of Holder in 1 infinity case} and using the fact \eqref{Convergence of sum of l using phi} we get
\begin{align*}
    \|\chi_{B_{R}^j} \mathcal{B}_{R,j}(f_{R,j},g_{R,j})\|_{L^1} &\leq C  R^{Q/2} \left(\frac{2^{j(1+\gamma)}}{ R} \right)^{Q/2} \left( \sum_{l \in \mathbb{Z}} \|\phi_{j,l}\|_{L^{\infty}} \right) \|f_{R,j}\|_{L^1} \|g_{R,j}\|_{L^{\infty}} \\
    &\leq C 2^{j Q/2 + j \epsilon} \|f_{R,j}\|_{L^1} \|g_{R,j}\|_{L^{\infty}} ,
\end{align*}
for some $\epsilon>0$, provided we choose $\gamma$ sufficiently small.

This proves the claim \eqref{New claim for bilinear} at $(p_1, p_2, p)=(1,\infty,1)$ with $\alpha(1, \infty)=Q/2$.

\subsubsection{Proof of \eqref{To prove for Error at two points} at \texorpdfstring{$(p_1, p_2, p)=(1,\infty,1)$}{}}
Recall from \eqref{Decomposition of Bochner-Riesz into Fourier series} that
\begin{align*}
    \mathcal{B}_{R,j}(f,g)(z) &= \sum_{l \in \mathbb{Z}} \phi_{j,l}(R^{-1} \sqrt{\mathcal{L}})f(z)\, \psi_{l}(R^{-1} \sqrt{\mathcal{L}})g(z) .
\end{align*}
In this case $j> \lfloor\log_2 R \rfloor$, hence the estimate \eqref{Estimate of f in i inifinity} provides
\begin{align}
\label{Estimate of phi using Placherel error}
    \|\phi_{j,l}(R^{-1} \sqrt{\mathcal{L}})f\|_{L^2} &\leq C 2^{j Q/2} \|\phi_{j,l}\|_{L^{\infty}} R^{-\mu} 2^{j \mu} \|f\|_{L^1} .
\end{align}
Since $\mathbb{S}$ is compact, we get
\begin{align}
\label{Estimate of psi in error}
     \|\psi_{l}(R^{-1} \sqrt{\mathcal{L}})g\|_{L^2} &\leq \|\psi_{l}\|_{L^{\infty}} \|g\|_{L^2} \leq C \|g\|_{L^{\infty}} .
\end{align}
Using H\"older;s inequality, plugging the above two estimates \eqref{Estimate of phi using Placherel error} and \eqref{Estimate of psi in error} and the fact \eqref{Convergence of sum of l using phi} we obtain
\begin{align}
\label{Estimate of B Rj for error}
    \|\mathcal{B}_{R,j}(f,g)\|_{L^1} &\leq \sum_{l \in \mathbb{Z}} \|\phi_{j,l}(R^{-1} \sqrt{\mathcal{L}})f\|_{L^2} \|\psi_{l}(R^{-1} \sqrt{\mathcal{L}})g\|_{L^2} \\
    &\nonumber \leq C 2^{j Q/2+j\mu} R^{-\mu} \left( \sum_{l \in \mathbb{Z}} \|\phi_{j,l}\|_{L^{\infty}} \right) \|f\|_{L^1} \|g\|_{L^{\infty}} \\
    &\nonumber \leq C 2^{j Q/2+j\mu} R^{-\mu} \|f\|_{L^1} \|g\|_{L^{\infty}}  .
\end{align}
Since $R>1$, and taking $\alpha>Q/2$ the above estimate \eqref{Estimate of B Rj for error} yields
\begin{align*}
    \|\mathcal{E}_{R}^{\alpha}(f,g)\|_{L^1} &\leq \sum_{j=\lfloor\log_2 R \rfloor+1}^{\infty} 2^{-j \alpha} \|\mathcal{B}_{R,j}(f,g)\|_{L^1} \\
    &\leq C R^{-\mu} \|f\|_{L^1} \|g\|_{L^{\infty}} \sum_{j=\lfloor\log_2 R \rfloor+1}^{\infty} 2^{-j[ \alpha-Q/2-\mu]} \\
    &\leq C R^{-( \alpha-Q/2)} \|f\|_{L^1} \|g\|_{L^{\infty}} \\
    &\leq C \|f\|_{L^1} \|g\|_{L^{\infty}} .
\end{align*}
This finishes the claim \eqref{To prove for Error at two points} at $(p_1, p_2, p)=(1,\infty,1)$ with $\alpha(1, \infty)=Q/2$.

Therefore the proof of \eqref{Estimate for B R 1 case} at the remaining two points $(p_1, p_2, p)=(2, \infty, 2)$, $(1,\infty,1)$ is also also completed.

\section{Proof of Theorem \ref{Theorem: Bilinear Bochner-Riesz theorem with restricted f and g}}
\label{Section: Proof of second main theorem}
In this section we will prove Theorem \ref{Theorem: Bilinear Bochner-Riesz theorem with restricted f and g}. Following the proof of Theorem \ref{Theorem: Bilinear Bochner-Riesz Main theorem}, it is enough to prove Theorem \ref{Theorem: Bilinear Bochner-Riesz theorem with restricted f and g} for $(p_1, p_2, p)=(1,1,1/2)$, $(1,2,2/3)$, $(2,2,1)$, $(2, \infty, 2)$ and $(1, \infty, 1)$. Notice that proof at the points $(p_1, p_2, p)=(2, \infty, 2)$ and $(2,2,1)$ can be seen from Theorem \ref{Theorem: Bilinear Bochner-Riesz Main theorem}. Therefore we only need to proof the theorem for the points $(p_1, p_2, p)=(1,1,1/2)$, $(1,2,2/3)$ and $(1, \infty, 1)$. Here we only present the proof at the point $(p_1, p_2, p)=(1,1,1/2)$, since the proof at the remaining two points $(p_1, p_2, p)=(1,2,2/3)$ and $(1, \infty, 1)$ can be deduced using the same idea as of $(1,1,1/2)$ (one can also see \cite[Theorem 1.3]{Bagchi_Molla_Singh_Bilinear_Metivier_2026} for more details). What follows, is the proof for the point $(p_1, p_2, p)=(1,1,1/2)$. It needs to be noted that at this point the proof of Theorem \ref{Theorem: Bilinear Bochner-Riesz theorem with restricted f and g} is more technical in nature and considerably different in comparison to proof at this point in Theorem \ref{Theorem: Bilinear Bochner-Riesz Main theorem} as we end up with an improved smoothness threshold $\alpha$ from $d+1$ to $d$ under the additional assumptions on the input functions.

\subsection{Proof at \texorpdfstring{$(p_1, p_2, p)=(1,1,1/2)$}{}}
Recall that from \eqref{Estimate for B R 1 case}, $f \in \mathfrak{A}_R$ and $g \in \mathfrak{B}_R$ share the same parity, that is, either
\begin{align*}
    \{f = f^{<} \quad \text{and} \quad g= g^{<}\} \quad \quad \text{or} \quad \quad \{f = f^{>} \quad \text{and} \quad g = g^{>}\} ,
\end{align*}
where
\begin{align*}
    f^{<} \in \bigoplus\limits_{\substack{\ell_1, \ell_1'=0 \\ \{|\ell_1-\ell_1'| \leq \kappa_1 R\}}}^{\infty} \mathcal{H}_{\ell_1, \ell_1'} \quad \text{and} \quad f^{>} \in \bigoplus\limits_{\substack{\ell_1, \ell_1'=0 \\ \{|\ell_1-\ell_1'| \geq \kappa_1' R^2\}}}^{\infty} \mathcal{H}_{\ell_1, \ell_1'} ,
\end{align*}
and similarly $g^{<}$ and $g^{>}$ are also defined.

Therefore, it is suffices to estimate $\mathcal{B}_{R}^{\alpha}(f^{<},g^{<})$ and $\mathcal{B}_{R}^{\alpha}(f^{>},g^{>})$. First we will estimate $\mathcal{B}_{R}^{\alpha}(f^{>},g^{>})$.

\medskip
\noindent \textbf{Estimate of $\mathcal{B}_{R}^{\alpha}(f^{>},g^{>})$:}
Let $\Omega_{\kappa_1', R}^{>}$, $\Omega_{\kappa_2', R}^{>}$ be two smooth functions on $\mathbb{R}$ such that
\begin{align*}
    \Omega_{\kappa_1', R}^{>}(\tau_1) = \left\{\begin{array}{ll}
        1, & \quad |\tau_1| \geq \kappa_1' R^2 \\
        0, & \quad |\tau_1| \leq \kappa_1' R^2/2 
    \end{array}\right. \quad \text{and} \quad 
    \Omega_{\kappa_2', R}^{>}(\tau_2) = \left\{\begin{array}{ll}
        1, & \quad |\tau_2| \geq \kappa_2' R^2 \\
        0, & \quad |\tau_2| \leq \kappa_2' R^2/2 
    \end{array}\right.
\end{align*}
and defined smoothly elsewhere.

With the aid of above smooth functions, we may express $\mathcal{B}_{R}^{\alpha}(f^{>},g^{>})$ as 
\begin{align}
\label{Rewriting Bochner-Riesz grater different way}
    & \mathcal{B}_{R}^{\alpha}(f^{>},g^{>})(z) = \sum_{\substack{\ell_1,\ell_1', \ell_2,\ell_2' =0}}^{\infty} \left(1- \tfrac{\sqrt{\lambda_{\ell_1, \ell_1'}} + \sqrt{\lambda_{\ell_2, \ell_2'}}}{R} \right)_{+}^{\alpha} \pi_{\ell_1, \ell_1'} f^{>}(z) \, \pi_{\ell_2, \ell_2'} g^{>}(z) \\
    &\nonumber = \sum_{\substack{\ell_1,\ell_1', \ell_2,\ell_2' =0}}^{\infty} \left(1- \tfrac{\sqrt{\lambda_{\ell_1, \ell_1'}} + \sqrt{\lambda_{\ell_2, \ell_2'}}}{R} \right)_{+}^{\alpha} \Omega_{\kappa_1',R}^{>}(|\ell_1-\ell_1'|) \Omega_{\kappa_2',R}^{>}(|\ell_2-\ell_2'|) \pi_{\ell_1, \ell_1'} f(z) \,  \pi_{\ell_2, \ell_2'} g(z) \\
    &\nonumber =: \mathcal{B}^{\alpha, >, >}_{R}(f,g)(z) .
\end{align}
In the same manner as in \eqref{Decomposition of BR1 into main and error}, we decompose
\begin{align}
\label{Decomposition of Bochner-Riesz greater}
    \mathcal{B}_{R}^{\alpha, >, >}(f,g)(z) &= \sum_{j = 0} ^{\lfloor\log_2 R \rfloor} 2^{-j\alpha} \mathcal{B}_{R,j}^{>, >}(f,g)(z) + \mathcal{E}_{R}^{\alpha, >, >}(f,g)(z) ,
\end{align}
where
\begin{align}
\label{Definition of error partfor restricted theorem}
    \mathcal{E}_{R}^{\alpha, >, >}(f,g)(z) &= \sum_{j = \lfloor\log_2 R \rfloor + 1}^{\infty} 2^{-j\alpha} \mathcal{B}_{R,j}^{>, >}(f,g)(z) ,
\end{align}
and
\begin{align*}
    \mathcal{B}_{R,j}^{>, >}(f,g)(z) &= \sum_{\substack{\ell_1,\ell_1', \ell_2,\ell_2' =0}}^{\infty} \Psi_j\left( \tfrac{\sqrt{\lambda_{\ell_1, \ell_1'}}}{R}, \tfrac{\sqrt{\lambda_{\ell_2, \ell_2'}}}{R} \right) \Omega_{\kappa_1',R}^{>}(|\ell_1-\ell_1'|) \Omega_{\kappa_2',R}^{>}(|\ell_2-\ell_2'|) \pi_{\ell_1, \ell_1'} f(z) \,  \pi_{\ell_2, \ell_2'} g(z) .
\end{align*}
Note that in Theorem \ref{Theorem: Bilinear Bochner-Riesz theorem with restricted f and g} we have $\alpha(1,1)=d$. Therefore enough to show: whenever $\alpha>d$ we have
\begin{align}
\label{To prove for Error for second theorem}
    \|\mathcal{E}_{R}^{\alpha, >, >}(f,g)\|_{L^{1/2}} &\leq C \|f\|_{L^{1}} \|g\|_{L^{1}} ,
\end{align}
and for $0 \leq j \leq \lfloor\log_2 R \rfloor$ we have
\begin{align}
\label{To prove main for second theorem}
    \|\mathcal{B}_{R,j}^{>, >}(f,g)\|_{L^{1/2}} &\leq C 2^{j d+j\epsilon} \|f\|_{L^{1}} \|g\|_{L^{1}} ,
\end{align}
for some $\epsilon>0$ sufficiently small.

\noindent \textbf{Estimate of (\ref{To prove main for second theorem}):}
Using the same reduction as in the derivation of \eqref{New claim for bilinear} from \eqref{To prove main at two points}, in this case in order to establish \eqref{To prove main for second theorem}, it is enough to show that, for $0 \leq j \leq \lfloor\log_2 R \rfloor$, whenever $\alpha>d$ we have
\begin{align*}
    \|\chi_{B_{R}^j} \mathcal{B}_{R,j}^{>, >}(f_{R,j},g_{R,j})\|_{L^{1/2}} &\leq C 2^{j d+j\epsilon} \|f_{R,j}\|_{L^{1}} \|g_{R,j}\|_{L^{1}}, \quad \text{where} \quad \supp{f_{R,j}}, \supp{g_{R,j}} \subseteq B_{R}^j ,
\end{align*}
for some $\epsilon>0$ sufficiently small.

Let us set \begin{align*}
    \phi_{j,l,R}^{>}(\sqrt{\eta_1}, \tau_1) = \phi_{j,l}(R^{-1}\sqrt{\eta_1}) \Omega_{\kappa_1',R}^{>}(\tau_1) \quad \text{and} \quad \psi_{l,R}^{>}(\sqrt{\eta_2}, \tau_2) = \psi_l(R^{-1}\sqrt{\eta_2}) \Omega_{\kappa_2',R}^{>}(\tau_2) .
\end{align*}
Then using the Fourier series expansion \eqref{Fourier series decomposition} we get
\begin{align}
\label{Fourier series decomposition in restricted}
    & \chi_{B_{R}^j}(z) \mathcal{B}_{R,j}^{>, >}(f_{R,j},g_{R,j})(z) \\
    &\nonumber = \chi_{B_{R}^j}(z) \sum_{l \in \mathbb{Z}} \left\{ \sum_{\ell_1,\ell_1' =0}^{\infty} \phi_{j,l}\left(\frac{\sqrt{\lambda_{\ell_1, \ell_1'}}}{R}\right) \Omega_{\kappa_1',R}^{>}(|\ell_1-\ell_1'|) \pi_{\ell_1, \ell_1'} f_{R,j}(z) \right\} \\
    &\nonumber \hspace{5cm} \left\{ \sum_{\ell_2,\ell_2' =0}^{\infty} \psi_l\left(\frac{\sqrt{\lambda_{\ell_2, \ell_2'}}}{R}\right) \Omega_{\kappa_2',R}^{>}(|\ell_2-\ell_2'|) \pi_{\ell_2, \ell_2'} g_{R,j}(z) \right\} \\
    &\nonumber = \chi_{B_{R}^j}(z) \sum_{l \in \mathbb{Z}} \phi_{j,l,R}^{>}(\sqrt{\mathcal{L}}, |T|)f_{R,j}(z)\, \psi_{l,R}^{>}(\sqrt{\mathcal{L}}, |T|)g_{R,j}(z) .
\end{align}
Using the cut-off function $\Theta$  as in \eqref{Definition of Theta} and  for $M_1, M_2 \in \mathbb{Z}$, we set
\begin{align*}
    \phi_{j,l,R}^{>, M_1}(\sqrt{\eta_1}, \tau_1) = \phi_{j,l,R}^{>}(\sqrt{\eta_1}, \tau_1) \Theta(2^{M_1} \tau_1) \quad \text{and} \quad \psi_{l,R}^{>, M_2}(\sqrt{\eta_2}, \tau_2) = \psi_{l,R}^{>}(\sqrt{\eta_2}, \tau_2) \Theta(2^{M_2}\tau_2) .
\end{align*}
This yields the following decomposition:
\begin{align}
\label{Decompositing in terms of theta inside proof}
    \phi_{j,l,R}^{ >}(\sqrt{\mathcal{L}}, |T|)f_{R,j}(z) &= \sum_{M_1=0}^{\infty} \phi_{j,l,R}^{>, M_1}(\sqrt{\mathcal{L}}, |T|)f_{R,j}(z) ,
\end{align}
where $\phi_{j,l,R}^{> M_1}(\sqrt{\mathcal{L}}, |T|)$ is defined as in \eqref{Definition for the truncated operator}, that is,
\begin{align}
\label{Definition of phi j r l M1}
    \phi_{j,l,R}^{>, M_1}(\sqrt{\mathcal{L}}, |T|)f_{R,j}(z) &= \sum_{\ell_1,\ell_1' =0 }^{\infty} \phi_{j,l,R}^{>, M_1}(\sqrt{\lambda_{\ell_1, \ell_1'}}, |\ell_1-\ell_1'|) \pi_{\ell_1, \ell_1'} f_{R,j}(z) .
\end{align}
Observe that in \eqref{Decompositing in terms of theta inside proof}, the term corresponds to $M_1$ is non-zero only when $M_1 \geq 0$. This can be seen as follows: since $\ell_1 \neq \ell_1'$, using the support of $\phi_{j,l}$ and $\Theta$ we can see
\begin{align*}
    R^2 \geq \lambda_{\ell_1, \ell_1'} = \frac{\lambda_{\ell_1, \ell_1'}}{|\ell_1-\ell_1'|} |\ell_1-\ell_1'| \geq \frac{\lambda_{\ell_1, \ell_1'}}{|\ell_1-\ell_1'|} 2^{-M_1-1} R^2 .
\end{align*}
On the other hand, we have $2 |\ell_1-\ell_1'| \leq \lambda_{\ell_1, \ell_1'}$, hence from the above estimate we obtain
\begin{align*}
    2 \leq \frac{\lambda_{\ell_1, \ell_1'}}{|\ell_1-\ell_1'|} \leq 2^{M_1+1} ,
\end{align*} 
which immediately implies that $M_1 \geq 0$.

Similarly as in \eqref{Decompositing in terms of theta inside proof}, we also have
\begin{align}
\label{Expression of psil R with theta}
    \psi_{l,R}^{>}(\sqrt{\mathcal{L}}, |T|)g_{R,j}(z) &= \sum_{M_2=0}^{\infty} \psi_{l,R}^{>, M_2}(\sqrt{\mathcal{L}}, |T|)g_{R,j}(z) .
\end{align}
Therefore plugging \eqref{Decompositing in terms of theta inside proof} and \eqref{Expression of psil R with theta} into \eqref{Fourier series decomposition in restricted}, we see that
\begin{align}
\label{Introducing cuoff of M1 and M2 term}
    \chi_{B_{R}^j}(z) \mathcal{B}_{R,j}^{>, >}(f_{R,j},g_{R,j})(z) &= \chi_{B_{R}^j}(z) \sum_{M_1=0}^{\infty} \sum_{M_2=0}^{\infty} \mathcal{B}_{R,j, M_1, M_2}^{>, >}(f_{R,j},g_{R,j})(z) ,
\end{align}
where
\begin{align}
\label{Definition of Brj greater greater}
    \mathcal{B}_{R,j, M_1, M_2}^{>, >}(f,g)(z) &= \sum_{l \in \mathbb{Z}} \phi_{j,l,R}^{>, M_1}(\sqrt{\mathcal{L}}, |T|)f(z)\, \psi_{l,R}^{>, M_2}(\sqrt{\mathcal{L}}, |T|)g(z) .
\end{align}
Note that without loss of generality we can assume $M_1<M_2$, since the case of $M_2\leq M_1$ can be treated in exactly  the same way. So from now we restrict our attention to the case $M_1 <M_2$. We continue with the same notation for the operator for brevity and for each $j \in \mathbb{Z}$, we decompose the sum of $M_1$ and $M_2$ as follows
\begin{align}
\label{Decomposition in terms of M1 and M2}
    & \chi_{B_{R}^j}(z) \mathcal{B}_{R,j}^{>, >}(f_{R,j},g_{R,j})(z) \\
    &\nonumber = \chi_{B_{R}^j}(z) \sum_{M_2=0}^{j} \sum_{M_1=0}^{M_2} \mathcal{B}_{R,j, M_1, M_2}^{>, >}(f_{R,j},g_{R,j})(z) + \chi_{B_{R}^j}(z) \sum_{M_2=j+1}^{\infty} \sum_{M_1=0}^{j} \mathcal{B}_{R,j, M_1, M_2}^{>, >}(f_{R,j},g_{R,j})(z) \\
    &\nonumber + \chi_{B_{R}^j}(z) \sum_{M_2=j+1}^{\infty} \sum_{M_1=j+1}^{M_2} \mathcal{B}_{R,j, M_1, M_2}^{>, >}(f_{R,j},g_{R,j})(z) \\
    &\nonumber =: S_1 + S_2 + S_3 .
\end{align}

In the following we estimate each $S_1$, $S_2$ and $S_3$ separately. Let us first start with the estimate of $S_3$, since it is is relatively easy to handle using Proposition \ref{Proposition: Truncated restriction type estimate}.
\subsubsection{Estimate of $S_3$}
\label{Estimate of S3 case}
Applying H\"older's inequality twice we obtain
\begin{align*}
    \|S_3\|_{L^{1/2}} &\leq C \left(\frac{2^{j(1+\gamma)}}{ R} \right)^{Q} \sum_{M_2=j+1}^{\infty} \sum_{M_1=j+1}^{M_2} \|\mathcal{B}_{R,j, M_1, M_2}^{>, >}(f_{R,j},g_{R,j})\|_{L^1} \\
    &\leq C \left(\frac{2^{j(1+\gamma)}}{ R} \right)^{Q} \sum_{M_2=j+1}^{\infty} \sum_{M_1=j+1}^{M_2} \sum_{l \in \mathbb{Z}} \|\phi_{j,l,R}^{>, M_1}(\sqrt{\mathcal{L}}, |T|)f_{R,j}\|_{L^2} \|\psi_{l,R}^{>, M_2}(\sqrt{\mathcal{L}}, |T|)g_{R,j}\|_{L^2} .
\end{align*}
Then using Proposition \ref{Proposition: Truncated restriction type estimate} and inequality \eqref{Discrete norm dominated by sup norm} we get
\begin{align}
\label{Use of proposition inside S3 estimate}
    \|S_3\|_{L^{1/2}} &\leq C \left(\frac{2^{j(1+\gamma)}}{ R} \right)^{Q} \sum_{M_2=j+1}^{\infty} \sum_{M_1=j+1}^{M_2} \sum_{l \in \mathbb{Z}} \left( R^{d/2+\epsilon} 2^{-M_1 \epsilon/2} + R^{Q/2} 2^{-M_1/2} \right) \|\phi_{j, l}\|_{L^\infty} \|f_{R,j}\|_{L^1} \\
    &\nonumber \hspace{6cm} \left( R^{d/2+\epsilon} 2^{-M_2 \epsilon/2} + R^{Q/2} 2^{-M_2/2} \right) \|\psi_{l}\|_{\infty} \|g_{R,j}\|_{L^1} \\
    &\nonumber \leq C 2^{j \epsilon_1} \sum_{M_2=j+1}^{\infty} \sum_{M_1=j+1}^{M_2} \Big(2^{j Q} R^{-1+2\epsilon} 2^{-M_1 \epsilon/2} 2^{-M_2 \epsilon/2} + 2^{j Q} R^{-1/2+\epsilon} 2^{-M_1 \epsilon/2} 2^{-M_2/2} \\
    &\nonumber \hspace{1cm} + 2^{j Q} R^{-1/2+\epsilon} 2^{-M_1/2} 2^{-M_2 \epsilon/2} + 2^{j Q} 2^{-M_1/2} 2^{-M_2/2} \Big) \left(\sum_{l \in \mathbb{Z}} \|\phi_{j, l}\|_{L^\infty} \right) \|f_{R,j}\|_{L^1} \|g_{R,j}\|_{L^1} \\
    &\nonumber \leq C 2^{j \epsilon_2} \Big(2^{j Q} R^{-1+2\epsilon} + 2^{j(Q+d)/2} R^{-1/2+\epsilon} + 2^{j d} \Big) \|f_{R,j}\|_{L^1} \|g_{R,j}\|_{L^1} ,
\end{align}
for some $\epsilon_1, \epsilon_2>0$ and where in the last inequality we have also used \eqref{Convergence of sum of l using phi}.

Since $0 \leq j \leq \lfloor\log_2 R\rfloor$, from the previous estimate we obtain
\begin{align*}
    \|S_3\|_{L^{1/2}} &\leq C 2^{j d + j \epsilon_3} \|f_{R,j}\|_{L^1} \|g_{R,j}\|_{L^1} ,
\end{align*}
for some $\epsilon_3>0$.

\subsubsection{Estimate of $S_1$}
\label{Estimate of S1 case}
Note that we have $\supp{f_{R,j}}, \supp{g_{R,j}} \subseteq B_{R}^j = B(e, 3 \cdot \tfrac{2^{j(1+\gamma)}}{ R})$. Then from Lemma \ref{Lemma: Ball contained in product of two Euclidean like ball} we get
\begin{align*}
    B(e, 3 \cdot \tfrac{2^{j(1+\gamma)}}{ R}) \subseteq B^{\varpi, \Im} \Big(e, C (\tfrac{2^{j(1+\gamma)}}{ R}, \tfrac{2^{2j(1+\gamma)}}{ R^{2}}) \Big) .
\end{align*}
Since in this case we have $0\leq M_1 \leq j$, we decompose $\supp{f_{R,j}}$ as follows:
\begin{align}
\label{Decomposition of the support of f}
    \supp{f_{R,j}} &= \bigcup_{m_1=1}^{N_{M_1}} B_{m_1, M_1, R}^{\varpi, \Im, j} ,
\end{align}
such that
\begin{align}
\label{Definition of B m1 set}
    B_{m_1, M_1, R}^{\varpi, \Im, j} \subseteq \Big\{z \in B_{R}^j : \varpi(z, a_{m_1}) \leq C \tfrac{2^{M_1(1+\gamma)}}{ R}, \ \  |\Im \langle z, e \rangle| \leq C \tfrac{2^{2j(1+\gamma)}}{ R^{2}} \Big\} ,
\end{align}
and for $m_1 \neq m_1'$ it satisfies $\varpi(a_{m_1}, a_{m_1'}) \geq C \tfrac{2^{M_1(1+\gamma)}}{2 R}$.

Moreover, since the sets $B_{m_1, M_1, R}^{\varpi, \Im, j}$ are well separated with respect to $\varpi$, the number $N_{M_1}$ of such pieces can be estimated by
\begin{align*}
    N_{M_1} &\lesssim \frac{(\tfrac{2^{j(1+\gamma)}}{ R})^{2(n-1)}}{(\tfrac{2^{M_1 (1+\gamma)}}{R})^{2(n-1)}} \lesssim 2^{(j-M_1)(1+\gamma)2(n-1)} .
\end{align*}
For each $1\leq m_1 \leq N_{M_1}$ and $\vartheta>0$, we define
\begin{align}
\label{Define B tilde set for m1}
    \widetilde{B}_{m_1, M_1, R}^{\varpi, \Im, j} := \Big\{z \in B_{R}^j : \varpi(z, a_{m_1}) \leq C \tfrac{2^{M_1(1+\gamma)} 2^{j\vartheta+1}}{ R}, \ \  |\Im \langle z, e \rangle| \leq C \tfrac{2^{2j(1+\gamma)}}{ R^{2}} \Big\} .
\end{align}
Similarly, for $0\leq M_1 \leq j$, we also decompose the support of $g_{R,j}$ as
\begin{align}
\label{Decomposition of support of g}
    \supp{g_{R,j}} = \bigcup_{m_2=1}^{N_{M_1}} B_{m_2, M_1, R}^{\varpi, \Im, j} ,
\end{align}
where $B_{m_2, M_1, R}^{\varpi, \Im, j}$ is defined similar to \eqref{Definition of B m1 set} and also define $\widetilde{B}_{m_2, M_1, R}^{\varpi, \Im, j}$ analogous to \eqref{Define B tilde set for m1}. Note that for the shake of brevity we are using the same notation $N_{M_1}$ for the number of pieces in the decomposition of support of $f_{R,j}$ and $g_{R,j}$.

Then in view of the decompositions of the supports of $f_{R,j}$ and $g_{R,j}$, we further decompose the functions $f_{R,j}$ and $g_{R,j}$ as follows:
\begin{align*}
    f_{R,j} = \sum_{m_1=1}^{N_{M_1}} f^{R,j}_{m_1, M_1} \qquad \text{and} \qquad g_{R,j} = \sum_{m_2=1}^{N_{M_1}} g^{R,j}_{m_2, M_1} ,
\end{align*}
where
\begin{align*}
    f^{R,j}_{m_1, M_1} = f_{R,j} \chi_{B_{m_1, M_1, R}^{\varpi, \Im, j}} \qquad \text{and} \qquad g^{R,j}_{m_2, M_1} = g_{R,j} \chi_{B_{m_2, M_1, R}^{\varpi, \Im, j}} .
\end{align*}
Consequently, we decompose $S_1$ as follows
\begin{align}
\label{Decomposition of S1 wrt B tilde}
    S_1 &= \sum_{M_2=0}^{j} \sum_{M_1=0}^{M_2} \sum_{m_1=1}^{N_{M_1}} \chi_{B_{R}^j}(z) (1-\chi_{\widetilde{B}_{m_1, M_1, R}^{\varpi, \Im, j}})(z) \mathcal{B}_{R,j, M_1, M_2}^{>, >}(f^{R,j}_{m_1, M_1} ,g_{R,j})(z) \\
    &\nonumber + \sum_{M_2=0}^{j} \sum_{M_1=0}^{M_2} \sum_{m_1=1}^{N_{M_1}} \sum_{m_2=1}^{N_{M_1}} \chi_{B_{R}^j}(z) \chi_{\widetilde{B}_{m_1, M_1, R}^{\varpi, \Im, j}}(z) \chi_{\widetilde{B}_{m_2, M_1, R}^{\varpi, \Im, j}}(z) \mathcal{B}_{R,j, M_1, M_2}^{>, >}(f^{R,j}_{m_1, M_1} ,g^{R,j}_{m_2, M_1})(z) \\
    &\nonumber + \sum_{M_2=0}^{j} \sum_{M_1=0}^{M_2} \sum_{m_1=1}^{N_{M_1}} \sum_{m_2=1}^{N_{M_1}} \chi_{B_{R}^j}(z) \chi_{\widetilde{B}_{m_1, M_1, R}^{\varpi, \Im, j}}(z) (1-\chi_{\widetilde{B}_{m_2, M_1, R}^{\varpi, \Im, j}})(z) \mathcal{B}_{R,j, M_1, M_2}^{>, >}(f^{R,j}_{m_1, M_1} ,g^{R,j}_{m_2, M_1})(z) \\
    &\nonumber =: S_{11} + S_{12} + S_{13} .
\end{align}
Now we estimate each of these pieces. Let us start with the estimate of $S_{11}$.

\subsubsection{Estimate of $S_{11}$}
\label{Estimate of S11 case}
Using H\"older's inequality we obtain
\begin{align}
\label{First use of Holder in S11 part}
    \|S_{11}\|_{L^{1/2}} &\leq C \left(\frac{2^{j(1+\gamma)}}{ R} \right)^{Q} \sum_{M_2=0}^{j} \sum_{M_1=0}^{M_2} \sum_{m_1=1}^{N_{M_1}} \|\chi_{B_{R}^j} (1-\chi_{\widetilde{B}_{m_1, M_1, R}^{\varpi, \Im, j}}) \mathcal{B}_{R,j, M_1, M_2}^{>, >}(f^{R,j}_{m_1, M_1},g_{R,j})\|_{L^1} .
\end{align}
Recalling the definition from \eqref{Definition of Brj greater greater}, and applying H\"older's inequality we get
\begin{align}
\label{Use of Holder in S11 part}
    & \|\chi_{B_{R}^j} (1-\chi_{\widetilde{B}_{m_1, M_1, R}^{\varpi, \Im, j}}) \mathcal{B}_{R,j, M_1, M_2}^{>, >}(f^{R,j}_{m_1, M_1},g_{R,j})\|_{L^1} \\
    &\nonumber \leq C \sum_{l \in \mathbb{Z}} \|\chi_{B_{R,j}} (1-\chi_{\widetilde{B}_{m_1, M_1, R}^{\varpi, \Im, j}}) \phi_{j,l,R}^{ >, M_1}(\sqrt{\mathcal{L}}, |T|)f^{R,j}_{m_1, M_1}\|_{L^2} \|\psi_{l,R}^{>, M_2}(\sqrt{\mathcal{L}}, |T|)g_{R,j}\|_{L^2} .
\end{align}
For each $l \in \mathbb{Z}$, we will first estimate the first factor in the right hand side of the above inequality. Let us write
\begin{align*}
    & \chi_{B_{R}^j}(z) (1-\chi_{\widetilde{B}_{m_1, M_1, R}^{\varpi, \Im, j}})(z) \phi_{j,l,R}^{>, M_1}(\sqrt{\mathcal{L}}, |T|)f^{R,j}_{m_1, M_1}(z) \\
    &= \int_{\mathbb{S}} \chi_{B_{R}^j}(z) (1-\chi_{\widetilde{B}_{m_1, M_1, R}^{\varpi, \Im, j}})(z) \mathcal{K}_{\phi_{j,l,R}^{>, M_1}(\sqrt{\mathcal{L}}, |T|)}(z,w) f^{R,j}_{m_1, M_1}(w) \, d\sigma(w) .
\end{align*}
Note that if $z \in \supp{\chi_{B_{R}^j} (1-\chi_{\widetilde{B}_{m_1, M_1, R}^{\varpi, \Im, j}})}$ and $w \in \supp{f^{R,j}_{m_1, M_1}}$, then
\begin{align*}
    \varpi(z, a_{m_1}) \geq C \tfrac{2^{M_1(1+\gamma)} 2^{j\vartheta+1}}{ R}, \quad \quad \varpi(w, a_{m_1}) \leq C \tfrac{2^{M_1(1+\gamma)}}{ R} .
\end{align*}
Now using Lemma \ref{Lemma: Triangle inequality for weight}, whenever $z \in \supp{\chi_{B_{R}^j} (1-\chi_{\widetilde{B}_{m_1, M_1, R}^{\varpi, \Im, j}})}$ and $w \in \supp{f^{R,j}_{m_1, M_1}}$ we obtain
\begin{align*}
    \varpi(z,w) &\geq \varpi(z, a_{m_1}) - \varpi(w, a_{m_1}) \geq C \tfrac{2^{M_1(1+\gamma)} 2^{j\vartheta}}{ R} ,
\end{align*}
for some $C>0$.

Applying Minkowski's integral inequality and Proposition \ref{Proposition: Weighted Plancherel for large N case} and the fact \eqref{Discrete norm dominated by sup norm} for any $N>0$ we get
\begin{align}
\label{Use of minkowski inequality in S1}
    & \|\chi_{B_{R}^j} (1-\chi_{\widetilde{B}_{m_1, M_1, R}^{\varpi, \Im, j}}) \phi_{j,l,R}^{>, M_1}(\sqrt{\mathcal{L}}, |T|)f^{R,j}_{m_1, M_1}\|_{L^2} \\
    &\nonumber \leq \int_{\mathbb{S}} |f^{R,j}_{m_1, M_1}(w)| \left(\int_{\mathbb{S}} |\chi_{B_{R}^j}(z) (1-\chi_{\widetilde{B}_{m_1, M_1, R}^{\varpi, \Im, j}})(z) \mathcal{K}_{\phi_{j,l,R}^{>, M_1}(\sqrt{\mathcal{L}}, |T|)}(z,w)|^2 \, d\sigma(z) \right)^{1/2} \, d\sigma(w) \\
    &\nonumber \leq C \int_{\mathbb{S}} |f^{R,j}_{m_1, M_1}(w)| \left[ \left( \tfrac{2^{M_1(1+\gamma)} 2^{j\vartheta}}{ R} \right)^{-N} \left(\int_{\mathbb{S}} |\varpi(z,w)^N \mathcal{K}_{\phi_{j,l,R}^{>, M_1}(\sqrt{\mathcal{L}}, |T|)}(z,w)|^2 \, d\sigma(z) \right)^{1/2} \right] \, d\sigma(w)  \\
    &\nonumber \leq C \left( \tfrac{2^{M_1(1+\gamma)} 2^{j\vartheta}}{ R} \right)^{-N} R^{(Q/2-N)} 2^{M_1 N} \|\phi_{j,l}\|_{L^{\infty}} \|f^{R,j}_{m_1, M_1}\|_{L^1} \\
    &\nonumber \leq C 2^{-j \vartheta N} 2^{-M_1 \gamma N} R^{Q/2} \|\phi_{j,l}\|_{L^{\infty}} \|f^{R,j}_{m_1, M_1}\|_{L^1} .
\end{align}
On the other hand from Proposition \ref{Proposition: Truncated restriction type estimate} and inequality \eqref{Discrete norm dominated by sup norm} we have
\begin{align*}
    \|\psi_{l,R}^{>, M_2}(\sqrt{\mathcal{L}}, |T|)g_{R,j}\|_{L^2} &\leq C \left( R^{d/2+\epsilon} 2^{-M_2 \epsilon/2} + R^{Q/2} 2^{-M_2/2} \right) \|\psi_{l}\|_{\infty} \|g_{R,j}\|_{L^1} .
\end{align*}
Hence combining the above two estimates and using \eqref{Convergence of sum of l using phi}, from \eqref{Use of Holder in S11 part} we get
\begin{align}
\label{L1 norm estimate in S11 part}
    & \|\chi_{B_{R}^j} (1-\chi_{\widetilde{B}_{m_1, M_1, R}^{\varpi, \Im, j}}) \mathcal{B}_{R,j, M_1, M_2}^{>, >}(f^{R,j}_{m_1, M_1},g_{R,j})\|_{L^1} \\
    &\nonumber \leq C 2^{-j \vartheta N} 2^{-M_1 \gamma N} R^{Q/2} \Big(\sum_{l \in \mathbb{Z}} \|\phi_{j,l}\|_{L^{\infty}} \Big) \|f^{R,j}_{m_1, M_1}\|_{L^1} \left( R^{d/2+\epsilon} 2^{-M_2 \epsilon/2} + R^{Q/2} 2^{-M_2/2} \right) \|g_{R,j}\|_{L^1} \\
    &\nonumber \leq C 2^{-j \vartheta N} \left( R^{(Q+d)/2+\epsilon} 2^{-M_2 \epsilon/2} + R^{Q} 2^{-M_2/2} \right) \|f^{R,j}_{m_1, M_1}\|_{L^1} \|g_{R,j}\|_{L^1} .
\end{align}
Therefore plugging the above estimate into \eqref{First use of Holder in S11 part} we obtain
\begin{align*}
    \|S_{11}\|_{L^{1/2}} &\leq C \left(\frac{2^{j(1+\gamma)}}{ R} \right)^{Q} \sum_{M_2=0}^{j} \sum_{M_1=0}^{M_2} \sum_{m_1=1}^{N_{M_1}} 2^{-j \vartheta N} \\
    &\hspace{3cm} \left( R^{(Q+d)/2+\epsilon} 2^{-M_2 \epsilon/2} + R^{Q} 2^{-M_2/2} \right) \|f^{R,j}_{m_1, M_1}\|_{L^1} \|g_{R,j}\|_{L^1} \\
    &\leq C 2^{j \epsilon_1} 2^{-j \vartheta N} 2^{j Q} \|f_{R,j}\|_{L^1} \|g_{R,j}\|_{L^1} ,
\end{align*} 
for some $\epsilon_1>0$.

Consequently, choosing $N>0$ sufficiently large such that $\vartheta N > Q+\epsilon_1$ we get
\begin{align*}
    \|S_{11}\|_{L^{1/2}} &\leq C 2^{-j \epsilon_2} \|f_{R,j}\|_{L^1} \|g_{R,j}\|_{L^1} ,
\end{align*}
for some $\epsilon_2>0$.

\subsubsection{Estimate of $S_{12}$}
\label{Estimate of S12 case}
Similar to \eqref{Decomposition of the support of f} and \eqref{Decomposition of support of g} we decompose $B_{R}^j$ as follows
\begin{align*}
    B_{R}^j &= \bigcup_{m=1}^{N_{M_1}} B_{m, M_1, R}^{\varpi, \Im, j} 
\end{align*}
such that
\begin{align*}
    B_{m, M_1, R}^{\varpi, \Im, j} \subseteq \{z \in B_{R}^j : \varpi(z, a_{m}) \leq C \tfrac{2^{M_1(1+\gamma)}}{ R},\ \  |\Im \langle z, e \rangle| \leq C \tfrac{2^{2j(1+\gamma)}}{ R^{2}} \} ,
\end{align*}
and for $m \neq m'$ it satisfies $\varpi(a_{m}, a_{m'}) \geq C \tfrac{2^{M_1(1+\gamma)}}{2 R}$. 

Therefore in view of the above decomposition we write
\begin{align*}
    S_{12} &= \sum_{M_2=0}^{j} \sum_{M_1=0}^{M_2} \sum_{m_1=1}^{N_{M_1}} \sum_{m_2=1}^{N_{M_1}} \sum_{m=1}^{N_{M_1}} \chi_{B_{m, M_1, R}^{\varpi, \Im, j}}(z) \chi_{\widetilde{B}_{m_1, M_1, R}^{\varpi, \Im, j}}(z) \chi_{\widetilde{B}_{m_2, M_1, R}^{\varpi, \Im, j}}(z) \mathcal{B}_{R,j, M_1, M_2}^{>, >}(f^{R,j}_{m_1, M_1} ,g^{R,j}_{m_2, M_1})(z) \\
    &= \sum_{M_2=0}^{j} \sum_{M_1=0}^{M_2} \sum_{m=1}^{N_{M_1}} \sum_{m_1: B_{m, M_1, R}^{\varpi, \Im, j} \cap \widetilde{B}_{m_1, M_1, R}^{\varpi, \Im, j} \neq \emptyset} \sum_{m_2: B_{m, M_1, R}^{\varpi, \Im, j} \cap \widetilde{B}_{m_2, M_1, R}^{\varpi, \Im, j} \neq \emptyset} \\
    &\hspace{4cm} \chi_{B_{m, M_1, R}^{\varpi, \Im, j}}(z) \chi_{\widetilde{B}_{m_1, M_1, R}^{\varpi, \Im, j}}(z) \chi_{\widetilde{B}_{m_2, M_1, R}^{\varpi, \Im, j}}(z) \mathcal{B}_{R,j, M_1, M_2}^{>, >}(f^{R,j}_{m_1, M_1} ,g^{R,j}_{m_2, M_1})(z) .
\end{align*}
Now using the fact \eqref{Triangle inequality for p less than 1 case} and H\"older's inequality we get
\begin{align}
\label{Calculation of S12 with power 1/2 case}
    \|S_{12}\|_{L^{1/2}}^{1/2} &\leq \sum_{M_2=0}^{j} \sum_{M_1=0}^{M_2} \sum_{m=1}^{N_{M_1}} \sum_{m_1: B_{m, M_1, R}^{\varpi, \Im, j} \cap \widetilde{B}_{m_1, M_1, R}^{\varpi, \Im, j} \neq \emptyset} \sum_{m_2: B_{m, M_1, R}^{\varpi, \Im, j} \cap \widetilde{B}_{m_2, M_1, R}^{\varpi, \Im, j} \neq \emptyset} \\
    &\nonumber \hspace{3cm} \|\chi_{B_{m, M_1, R}^{\varpi, \Im, j}} \chi_{\widetilde{B}_{m_1, M_1, R}^{\varpi, \Im, j}} \chi_{\widetilde{B}_{m_2, M_1, R}^{\varpi, \Im, j}} \mathcal{B}_{R,j, M_1, M_2}^{>, >}(f^{R,j}_{m_1, M_1} ,g^{R,j}_{m_2, M_1})\|_{L^{1/2}}^{1/2} \\
    &\nonumber \leq C \sum_{M_2=0}^{j} \sum_{M_1=0}^{M_2} \sum_{m=1}^{N_{M_1}} \sum_{m_1: B_{m, M_1, R}^{\varpi, \Im, j} \cap \widetilde{B}_{m_1, M_1, R}^{\varpi, \Im, j} \neq \emptyset} \sum_{m_2: B_{m, M_1, R}^{\varpi, \Im, j} \cap \widetilde{B}_{m_2, M_1, R}^{\varpi, \Im, j} \neq \emptyset} \\
    &\nonumber \hspace{3cm} \left(\frac{2^{M_1(1+\gamma)}}{ R} \right)^{Q/2} \|\mathcal{B}_{R,j, M_1, M_2}^{>, >}(f^{R,j}_{m_1, M_1} ,g^{R,j}_{m_2, M_1})\|_{L^{1}}^{1/2} .
\end{align}
Applying H\"older's inequality, Proposition \ref{Proposition: Truncated restriction type estimate} and inequality \eqref{Discrete norm dominated by sup norm} yields
\begin{align}
\label{L1 norm estimate in S12 case}
    & \|\mathcal{B}_{R,j, M_1, M_2}^{>, >}(f^{R,j}_{m_1, M_1} ,g^{R,j}_{m_2, M_1})\|_{L^{1}} \\
    &\nonumber \leq C \sum_{l \in \mathbb{Z}} \|\phi_{j,l,R}^{>, M_1}(\sqrt{\mathcal{L}}, |T|)f^{R,j}_{m_1, M_1}\|_{L^2} \|\psi_{l,R}^{>, M_2}(\sqrt{\mathcal{L}}, |T|)g^{R,j}_{m_2, M_1}\|_{L^2} \\
    &\nonumber \leq C \sum_{l \in \mathbb{Z}} \left( R^{d/2+\epsilon} 2^{-M_1 \epsilon/2} + R^{Q/2} 2^{-M_1/2} \right) \|\phi_{j, l}\|_{L^\infty} \|f^{R,j}_{m_1, M_1}\|_{L^1} \\
    &\nonumber \hspace{5cm} \times \left( R^{d/2+\epsilon} 2^{-M_2 \epsilon/2} + R^{Q/2} 2^{-M_2/2} \right) \|\psi_{l}\|_{\infty} \|g^{R,j}_{m_2, M_1}\|_{L^1} \\
    &\nonumber \leq C 2^{j \epsilon_1} \left( R^{d+2\epsilon} 2^{-M_1 \epsilon} + R^{Q} 2^{-M_1} \right) \|f^{R,j}_{m_1, M_1}\|_{L^1} \|g^{R,j}_{m_2, M_1}\|_{L^1} ,
\end{align}
for some $\epsilon_1>0$ and where in the last inequality we have used $M_1<M_2$, \eqref{Convergence of sum of l using phi} and $2ab \leq a^2 + b^2$ for any $a, b \in \mathbb{R}$.

Hence plugging the above estimate into \eqref{Calculation of S12 with power 1/2 case} we get
\begin{align}
\label{After plugging the L1 estimate}
    \|S_{12}\|_{L^{1/2}}^{1/2} &\leq C \sum_{M_2=0}^{j} \sum_{M_1=0}^{M_2} \sum_{m=1}^{N_{M_1}} \sum_{m_1: B_{m, M_1, R}^{\varpi, \Im, j} \cap \widetilde{B}_{m_1, M_1, R}^{\varpi, \Im, j} \neq \emptyset} \sum_{m_2: B_{m, M_1, R}^{\varpi, \Im, j} \cap \widetilde{B}_{m_2, M_1, R}^{\varpi, \Im, j} \neq \emptyset} \\
    &\nonumber \hspace{1cm} 2^{j \epsilon_2} \left( 2^{M_1 Q/2} R^{-1/2+2\epsilon} + 2^{M_1 d/2} \right) \|f^{R,j}_{m_1, M_1}\|_{L^1}^{1/2} \|g^{R,j}_{m_2, M_1}\|_{L^1}^{1/2} ,
\end{align}
for some $\epsilon_2>0$ sufficiently small, which we get by choosing $\gamma>0$ very small.

Since for $i=1,2$ and $m_i \neq m_i'$ we have $\varpi(a_{m_i}, a_{m_i'}) \geq C \tfrac{2^{M_1(1+\gamma)}}{2 R}$, using Lemma \ref{Lemma: Triangle inequality for weight}, there exists a dimensional constant $C>0$ such that
\begin{align*}
    \sup_m \#\{m_i: B_{m, M_1, R}^{\varpi, \Im, j} \cap \widetilde{B}_{m_i, M_1, R}^{\varpi, \Im, j} \neq \emptyset \} &\leq \sup_m \#\{m_i: \varpi(a_m, a_{m_i}) \leq C \tfrac{2^{M_1(1+\gamma)} 2^{j\vartheta+1}}{ R} \} \\
    & \leq C 2^{C \vartheta j} .
\end{align*}
Therefore using the previous estimate and successive use of H\"older's inequality in \eqref{After plugging the L1 estimate} with respect to $m_1$, $m_2$ and $m$ we see 
\begin{align}
\label{Estimate of S12 before the condition}
    & \|S_{12}\|_{L^{1/2}}^{1/2} \leq C \sum_{M_2=0}^{j} \sum_{M_1=0}^{M_2} 2^{C \vartheta j} 2^{j \epsilon_2} \left( 2^{M_1 Q/2} R^{-1/2+2\epsilon} + 2^{M_1 d/2} \right) \\
    &\nonumber \left( \sum_{m=1}^{N_{M_1}} \sum_{m_1: B_{m, M_1, R}^{\varpi, \Im, j} \cap \widetilde{B}_{m_1, M_1, R}^{\varpi, \Im, j} \neq \emptyset} \|f^{R,j}_{m_1, M_1}\|_{L^1} \right)^{1/2} \left(\sum_{m=1}^{N_{M_1}} \sum_{m_2: B_{m, M_1, R}^{\varpi, \Im, j} \cap \widetilde{B}_{m_2, M_1, R}^{\varpi, \Im, j} \neq \emptyset} \|g^{R,j}_{m_2, M_1}\|_{L^1} \right)^{1/2} \\
    &\nonumber \leq C \sum_{M_2=0}^{j} \sum_{M_1=0}^{M_2} 2^{C \vartheta j} 2^{j \epsilon_2} \left( 2^{M_1 Q/2} R^{-1/2+2\epsilon} + 2^{M_1 d/2} \right) \|f_{R,j}\|_{L^1}^{1/2} \|g_{R,j}\|_{L^1}^{1/2} .
\end{align}
Since $0 \leq j \leq \lfloor\log_2 R\rfloor$, from the above estimate we obtain
\begin{align*}
    \|S_{12}\|_{L^{1/2}}^{1/2} &\leq C 2^{C \vartheta j} 2^{j \epsilon_3} 2^{j d/2} \|f_{R,j}\|_{L^1}^{1/2} \|g_{R,j}\|_{L^1}^{1/2} ,
\end{align*}
for some $\epsilon_3>0$.

Now taking square on both side of the above inequality we get
\begin{align*}
    \|S_{12}\|_{L^{1/2}} &\leq C 2^{2C \vartheta j} 2^{j 2\epsilon_3} 2^{j d} \|f_{R,j}\|_{L^1} \|g_{R,j}\|_{L^1} \leq C 2^{j d + j \epsilon_4} \|f_{R,j}\|_{L^1} \|g_{R,j}\|_{L^1} ,
\end{align*}
for some $\epsilon_4>0$, provided we choose $\vartheta>0$ sufficiently small.

\subsubsection{Estimate of $S_{13}$}
\label{Estimate of S13 case}
This estimate is similar to the estimate of $S_{11}$ (see \eqref{Estimate of S11 case}), but the main difference is that here we crucially use the assumption made on the Theorem \ref{Theorem: Bilinear Bochner-Riesz theorem with restricted f and g} about the input function $g$.

\medskip

Using H\"older's inequality we get
\begin{align*}
    & \|S_{13}\|_{L^{1/2}} \\
    &\leq C \left(\frac{2^{j(1+\gamma)}}{ R} \right)^{Q} \sum_{M_2=0}^{j} \sum_{M_1=0}^{M_2} \sum_{m_1=1}^{N_{M_1}} \sum_{m_2=1}^{N_{M_1}} \| \chi_{B_{R}^j} \chi_{\widetilde{B}_{m_1, M_1, R}^{\varpi, \Im, j}} (1-\chi_{\widetilde{B}_{m_2, M_1, R}^{\varpi, \Im, j}}) \mathcal{B}_{R,j, M_1, M_2}^{>, >}(f^{R,j}_{m_1, M_1} ,g^{R,j}_{m_2, M_1})\|_{L^1} .
\end{align*}
Again applying H\"older's inequality we obtain
\begin{align}
\label{Use of Holder from L1 to 1/2 1/2}
    & \| \chi_{B_{R}^j} \chi_{\widetilde{B}_{m_1, M_1, R}^{\varpi, \Im, j}} (1-\chi_{\widetilde{B}_{m_2, M_1, R}^{\varpi, \Im, j}}) \mathcal{B}_{R,j, M_1, M_2}^{>, >}(f^{R,j}_{m_1, M_1} ,g^{R,j}_{m_2, M_1})\|_{L^1} \\
    &\nonumber \leq C \sum_{l \in \mathbb{Z}} \| \phi_{j,l,R}^{ >, M_1}(\sqrt{\mathcal{L}}, |T|)f^{R,j}_{m_1, M_1}\|_{L^2} \|\chi_{B_{R}^j} (1-\chi_{\widetilde{B}_{m_2, M_1, R}^{\varpi, \Im, j}}) \psi_{l,R}^{>, M_2}(\sqrt{\mathcal{L}}, |T|)g^{R,j}_{m_2, M_1}) \|_{L^2} .
\end{align}
First we will focus on the estimate of $\|\chi_{B_{R}^j} (1-\chi_{\widetilde{B}_{m_2, M_1, R}^{\varpi, \Im, j}}) \psi_{l,R}^{>, M_2}(\sqrt{\mathcal{L}}, |T|)g^{R,j}_{m_2, M_1}) \|_{L^2}$. Applying Minkowski's integral inequality we get
\begin{align}
\label{Application of Minkowski in S13}
    & \|\chi_{B_{R}^j} (1-\chi_{\widetilde{B}_{m_2, M_1, R}^{\varpi, \Im, j}}) \psi_{l,R}^{>, M_2}(\sqrt{\mathcal{L}}, |T|)g^{R,j}_{m_2, M_1})\|_{L^2} \\
    &\nonumber \leq \int_{\mathbb{S}} |g^{R,j}_{m_2, M_1}(w)| \left(\int_{\mathbb{S}} |\chi_{B_{R}^j}(z) (1-\chi_{\widetilde{B}_{m_2, M_1, R}^{\varpi, \Im, j}})(z) \mathcal{K}_{\psi_{l,R}^{>, M_2}(\sqrt{\mathcal{L}}, |T|)}(z,w)|^2 \, d\sigma(z) \right)^{1/2} \, d\sigma(w) .
\end{align}
Notice that as earlier if $z \in \supp{\chi_{B_{R}^j} (1-\chi_{\widetilde{B}_{m_2, M_1, R}^{\varpi, \Im, j}})}$ and $w \in \supp{g^{R,j}_{m_2, M_1}}$, then we obtain
\begin{align*}
    \varpi(z,w) &\geq C \tfrac{2^{M_1(1+\gamma)} 2^{j\vartheta}}{ R} .
\end{align*}
Recall that $\psi_{l,R}^{>, M_2}(\sqrt{\eta_2}, \tau_2) = \psi_{l,R}^{>}(\sqrt{\lambda_{\ell_2, \ell_2'}}, |\ell_2 - \ell_2'|) \Omega_{\kappa_2',R}^{>}(|\ell_2 - \ell_2'|) \Theta(2^{M_2} |\ell_2 - \ell_2'|)$. Note that due to the support of $\Omega_{\kappa_2',R}^{>}$ we have $|\ell_2-\ell_2'| \geq \kappa_2' R^2$. Also note that from the support of $\Theta$ we have $|\ell_2-\ell_2'| \leq 2^{-M_2+1} R^2$. Hence combining both the facts we get
\begin{align*}
    2^{M_2} \leq 2 \kappa_2'^{-1} .
\end{align*}
Therefore using the above bound on $M_2$ and from Proposition \ref{Proposition: Weighted Plancherel for large N case} as well as the fact \eqref{Discrete norm dominated by sup norm} for any $N>0$ we get
\begin{align}
\label{L2 norm of the kernel estimate}
    &  \left(\int_{\mathbb{S}} |\chi_{B_{R}^j}(z) (1-\chi_{\widetilde{B}_{m_2, M_1, R}^{\varpi, \Im, j}})(z) \mathcal{K}_{\psi_{l,R}^{>, M_2}(\sqrt{\mathcal{L}}, |T|)}(z,w)|^2 \, d\sigma(w) \right)^{1/2} \\
    &\nonumber \leq C \left( \tfrac{2^{M_1(1+\gamma)} 2^{j\vartheta}}{ R} \right)^{-N} \left(\int_{\mathbb{S}} |\varpi(z,w)^N \mathcal{K}_{\psi_{l,R}^{>, M_2}(\sqrt{\mathcal{L}}, |T|)}(z,w)|^2 \, d\sigma(w) \right)^{1/2} \\
    &\nonumber \leq C \left( \tfrac{2^{M_1(1+\gamma)} 2^{j\vartheta}}{ R} \right)^{-N} R^{(Q/2-N)} 2^{M_2 N} \|\psi_{l}\|_{L^{\infty}} \\
    &\nonumber \leq C_{\kappa_2'} 2^{-j \vartheta N} R^{Q/2} .
\end{align}
On the other hand from Proposition \ref{Proposition: Truncated restriction type estimate} and inequality \eqref{Discrete norm dominated by sup norm} we have
\begin{align}
\label{L2 norm estimate for phi}
    \|\phi_{j,l,R}^{>, M_1}(\sqrt{\mathcal{L}}, |T|)f^{R,j}_{m_1, M_1}\|_{L^2} &\leq C \left( R^{d/2+\epsilon} 2^{-M_2 \epsilon/2} + R^{Q/2} 2^{-M_2/2} \right) \|\phi_{j,l}\|_{\infty} \|f^{R,j}_{m_1, M_1}\|_{L^1} .
\end{align}
Hence combining the estimates \eqref{L2 norm of the kernel estimate} and \eqref{Application of Minkowski in S13} and also using \eqref{L2 norm estimate for phi} from \eqref{Use of Holder from L1 to 1/2 1/2} we get
\begin{align}
\label{L1 norm estimate for S13 case}
    & \| \chi_{B_{R}^j} \chi_{\widetilde{B}_{m_1, M_1, R}^{\varpi, \Im, j}} (1-\chi_{\widetilde{B}_{m_2, M_1, R}^{\varpi, \Im, j}}) \mathcal{B}_{R,j, M_1, M_2}^{>, >}(f^{R,j}_{m_1, M_1} ,g^{R,j}_{m_2, M_1})\|_{L^1} \\
    &\nonumber \leq C 2^{-j \vartheta N} R^{Q/2} \sum_{l \in \mathbb{Z}} \|\phi_{j,l}\|_{L^{\infty}} \|f^{R,j}_{m_1, M_1}\|_{L^1} \left( R^{d/2+\epsilon} 2^{-M_2 \epsilon/2} + R^{Q/2} 2^{-M_2/2} \right) \|g^{R,j}_{m_2, M_1}\|_{L^1} \\
    &\nonumber \leq C 2^{-j \vartheta N} \left( R^{(Q+d)/2+\epsilon} 2^{-M_2 \epsilon/2} + R^{Q} 2^{-M_2/2} \right) \|f^{R,j}_{m_1, M_1}\|_{L^1} \|g^{R,j}_{m_2, M_1}\|_{L^1} ,
\end{align}
where in the last inequality we have used \eqref{Convergence of sum of l using phi}.

Now proceeding similarly as in the estimate of $S_{11}$ (see \eqref{Estimate of S11 case}) we obtain
\begin{align*}
    \|S_{13}\|_{L^{1/2}} &\leq C 2^{-j \epsilon_1} \|f_{R,j}\|_{L^1} \|g_{R,j}\|_{L^1} ,
\end{align*}
for some $\epsilon_1>0$.

Finally, combining all the estimates of $S_{11}$ \eqref{Estimate of S11 case}, $S_{12}$ \eqref{Estimate of S12 case} and $S_{13}$ \eqref{Estimate of S13 case} where $0 \leq j \leq \lfloor\log_2 R\rfloor$, we obtain
\begin{align*}
    \|S_{1}\|_{L^{1/2}} &\leq C 2^{jd+ j \epsilon} \|f_{R,j}\|_{L^1} \|g_{R,j}\|_{L^1} ,
\end{align*}
for some $\epsilon>0$.

\subsubsection{Estimate of $S_{2}$}
\label{Estimate of S2}
Note that we have
\begin{align*}
    S_2 &= \chi_{B_{R}^j}(z) \sum_{M_2=j+1}^{\infty} \sum_{M_1=0}^{j} \mathcal{B}_{R,j, M_1, M_2}^{>, >}(f_{R,j},g_{R,j})(z) .
\end{align*}
Estimate of $S_{2}$ is similar to the estimate of $S_{1}$ (see \eqref{Estimate of S1 case}). All the estimates of $S_{11}$, $S_{12}$ and $S_{13}$ will go through, one has to just notice that $M_2$ sum is summable. In fact, for analogous estimate as of \eqref{L1 norm estimate in S12 case} in $S_{12}$, instead of using $M_1<M_2$, in this case we sum over $M_2 >j$ and obtain the same estimate.

This completes the estimate of \eqref{To prove main for second theorem}. Now we move to the estimate of \eqref{To prove for Error for second theorem}.

\medskip
\noindent \textbf{Estimate of (\ref{To prove for Error for second theorem}):}
Similarly as in \eqref{Introducing cuoff of M1 and M2 term} and \eqref{Decomposition in terms of M1 and M2} we have the following decomposition
\begin{align}
\label{Writing Brj as sum of E1 E2 and E3}
    \mathcal{B}_{R,j}^{>, >}(f,g)(z) & = \sum_{M_2=0}^{j} \sum_{M_1=0}^{M_2} \mathcal{B}_{R,j, M_1, M_2}^{>, >}(f,g)(z) + \sum_{M_2=j+1}^{\infty} \sum_{M_1=0}^{j} \mathcal{B}_{R,j, M_1, M_2}^{>, >}(f,g)(z) \\
    &\nonumber\hspace{2cm} + \sum_{M_2=j+1}^{\infty} \sum_{M_1=j+1}^{M_2} \mathcal{B}_{R,j, M_1, M_2}^{>, >}(f,g)(z) \\
    &\nonumber =: E_1 + E_2 + E_3 ,
\end{align}
where from \eqref{Definition of Brj greater greater} one has
\begin{align*}
    \mathcal{B}_{R,j, M_1, M_2}^{>, >}(f,g)(z) &= \sum_{l \in \mathbb{Z}} \phi_{j,l,R}^{>, M_1}(\sqrt{\mathcal{L}}, |T|)f(z)\, \psi_{l,R}^{>, M_2}(\sqrt{\mathcal{L}}, |T|)g(z) .
\end{align*}

We now establish bounds for $E_1$, $E_2$ and $E_3$ analogous to those obtained for $S_1$, $S_2$ and $S_3$ respectively. Arguments follows the same line, with the following modifications. One has to just replace $\frac{2^{j(1+\gamma)}}{ R}$ by $1$ everywhere and also have to use the fact $j>\lfloor\log_2 R\rfloor$ in place of $0 \leq j \leq \lfloor\log_2 R\rfloor$. Hence we are very brief.

\subsubsection{Estimate of $E_3$}
\label{Estimate of E3 case}
Since $\mathbb{S}$ is compact, using H\"older's inequality twice we obtain
\begin{align*}
    \|E_3\|_{L^{1/2}(\mathbb{S})} &\leq C \sum_{M_2=j+1}^{\infty} \sum_{M_1=j+1}^{M_2} \|\mathcal{B}_{R,j, M_1, M_2}^{>, >}(f,g)\|_{L^1(\mathbb{S})} \\
    &\leq C \sum_{M_2=j+1}^{\infty} \sum_{M_1=j+1}^{M_2} \sum_{l \in \mathbb{Z}} \|\phi_{j,l,R}^{>, M_1}(\sqrt{\mathcal{L}}, |T|)f\|_{L^2} \|\psi_{l,R}^{>, M_2}(\sqrt{\mathcal{L}}, |T|)g\|_{L^2} .
\end{align*}
Proceeding similarly as in the estimate of \eqref{Use of proposition inside S3 estimate} and using Proposition \ref{Proposition: Truncated restriction type estimate}, inequality \eqref{Discrete norm dominated by sup norm} and the fact $j>\lfloor\log_2 R\rfloor$ we have
\begin{align*}
    \|E_3\|_{L^{1/2}} &\leq C \sum_{M_2=j+1}^{\infty} \sum_{M_1=j+1}^{M_2} \sum_{l \in \mathbb{Z}} \left( R^{d/2+\epsilon} 2^{-M_1 \epsilon/2} + R^{Q/2} 2^{-M_1/2} \right) \|\phi_{j, l}\|_{L^\infty} \|f\|_{L^1} \\
    &\nonumber \hspace{6cm} \left( R^{d/2+\epsilon} 2^{-M_2 \epsilon/2} + R^{Q/2} 2^{-M_2/2} \right) \|\psi_{l}\|_{\infty} \|g\|_{L^1} \\
    &\nonumber \leq C \sum_{M_2=j+1}^{\infty} \sum_{M_1=j+1}^{M_2} \Big(2^{j d} 2^{-M_1 \epsilon/2} 2^{-M_2 \epsilon/2} + 2^{j (d+1/2)} 2^{-M_1 \epsilon/2} 2^{-M_2/2} \\
    &\nonumber \hspace{1cm} + 2^{j (d+1/2)} 2^{-M_1/2} 2^{-M_2 \epsilon/2} + 2^{j Q} 2^{-M_1/2} 2^{-M_2/2} \Big) \Big(\sum_{l \in \mathbb{Z}} \|\phi_{j, l}\|_{L^\infty} \Big) \|f\|_{L^1} \|g\|_{L^1} \\
    &\nonumber \leq C 2^{j d + j \epsilon_1} \|f\|_{L^1} \|g\|_{L^1} ,
\end{align*}
for some $\epsilon_1>0$.

\subsubsection{Estimate of $E_1$}
\label{Estimate of E1 case}
Note that $\mathbb{S} = B(e, R_0)$ for $R_0>\sqrt{2}$. Hence from Lemma \ref{Lemma: Ball contained in product of two Euclidean like ball} we have
\begin{align*}
    \mathbb{S} = B(e, R_0) \subseteq B^{\varpi, \Im} \big(e, C (1,1) \big) \quad \quad \text{for some} \quad C>0 .
\end{align*}
For $0\leq M_1 \leq j$, decompositing the support of $f$ we get
\begin{align}
\label{Decomposition of f in E1}
    \supp{f} &= \bigcup_{m_1=1}^{N_{M_1}} B_{m_1, M_1}^{\varpi, \Im} ,
\end{align}
such that
\begin{align*}
    B_{m_1, M_1}^{\varpi, \Im} \subseteq \Big\{z \in \mathbb{S} : \varpi(z, a_{m_1}) \leq C \tfrac{2^{M_1(1+\gamma)}}{ R}, \ \  |\Im \langle z, e \rangle| \leq C \Big\} ,
\end{align*}
and for $m_1 \neq m_1'$ and it satisfies $\varpi(a_{m_1}, a_{m_1'}) \geq C \tfrac{2^{M_1(1+\gamma)}}{2 R}$.

Correspondingly, for $\vartheta>0$ we also define the dilated sets by
\begin{align*}
    \widetilde{B}_{m_1, M_1}^{\varpi, \Im} := \Big\{z \in \mathbb{S} : \varpi(z, a_{m_1}) \leq C \tfrac{2^{M_1(1+\gamma)} 2^{j\vartheta+1}}{ R}, \ \  |\Im \langle z, e \rangle| \leq C \Big\} ,
\end{align*}
and decompose the support of $g$ as $\supp g = \cup_{m_2=1}^{N_{M_1}} B_{m_2, M_1}^{\varpi, \Im} $.

Let us set $f_{m_1, M_1} := f \chi_{B_{m_1, M_1}^{\varpi, \Im}}$ and $g_{m_2, M_1} := g \chi_{B_{m_2, M_1}^{\varpi, \Im}}$, then decomposing similarly as in the estimate of \eqref{Decomposition of S1 wrt B tilde} we have
\begin{align*}
    E_1 &= \sum_{M_2=0}^{j} \sum_{M_1=0}^{M_1} \sum_{m_1=1}^{N_{M_1}} (1-\chi_{\widetilde{B}_{m_1, M_1}^{\varpi, \Im}})(z) \mathcal{B}_{R,j, M_1, M_2}^{>, >}(f_{m_1, M_1} ,g)(z) \\
    &\nonumber + \sum_{M_2=0}^{j} \sum_{M_1=0}^{M_2} \sum_{m_1=1}^{N_{M_1}} \sum_{m_2=1}^{N_{M_1}} \chi_{\widetilde{B}_{m_1, M_1}^{\varpi, \Im}}(z) \chi_{\widetilde{B}_{m_2, M_1}^{\varpi, \Im}}(z) \mathcal{B}_{R,j, M_1, M_2}^{>, >}(f_{m_1, M_1} ,g_{m_2, M_1})(z) \\
    &\nonumber + \sum_{M_2=0}^{j} \sum_{M_1=0}^{M_2} \sum_{m_1=1}^{N_{M_1}} \sum_{m_2=1}^{N_{M_1}} \chi_{\widetilde{B}_{m_1, M_1}^{\varpi, \Im}}(z) (1-\chi_{\widetilde{B}_{m_2, M_1}^{\varpi, \Im}})(z) \mathcal{B}_{R,j, M_1, M_2}^{>, >}(f_{m_1, M_1} ,g_{m_2, M_1})(z) \\
    &\nonumber =: E_{11} + E_{12} + E_{13} .
\end{align*}

\subsubsection{Estimate of $E_{11}$}
\label{Estimate of E11 case}
Since $\mathbb{S}$ is compact, applying H\"older's inequality yields
\begin{align}
\label{Holder in estimate of E11 case}
    \|E_{11}\|_{L^{1/2}} &\leq C \sum_{M_2=0}^{j} \sum_{M_1=0}^{M_2} \sum_{m_1=1}^{N_{M_1}} \|(1-\chi_{\widetilde{B}_{m_1, M_1}^{\varpi, \Im}}) \mathcal{B}_{R,j, M_1, M_2}^{>, >}(f_{m_1, M_1},g)\|_{L^1} .
\end{align}
Estimating similar to \eqref{L1 norm estimate in S11 part} in this case we get
\begin{align*}
    & \|(1-\chi_{\widetilde{B}_{m_1, M_1}^{\varpi, \Im}}) \mathcal{B}_{R,j, M_1, M_2}^{>, >}(f_{m_1, M_1},g)\|_{L^1} \\
    &\leq C 2^{-j \vartheta N} \left( R^{(Q+d)/2+\epsilon} 2^{-M_2 \epsilon/2} + R^{Q} 2^{-M_2/2} \right) \|f_{m_1, M_1}\|_{L^1} \|g\|_{L^1} .
\end{align*}
Now plugging the above estimate into \eqref{Holder in estimate of E11 case} we obtain
\begin{align*}
    \|E_{11}\|_{L^{1/2}} &\leq C \sum_{M_2=0}^{j} \sum_{M_1=0}^{M_2} \sum_{m_1=1}^{N_{M_1}} 2^{-j \vartheta N} \left( R^{(Q+d)/2+\epsilon} 2^{-M_2 \epsilon/2} + R^{Q} 2^{-M_2/2} \right) \|f_{m_1, M_1}\|_{L^1} \|g\|_{L^1} \\
    &\nonumber \leq C 2^{j \epsilon_1} 2^{-j \vartheta N} 2^{j Q} \|f\|_{L^1} \|g\|_{L^1} \\
    &\nonumber \leq C 2^{-j \epsilon_2} \|f\|_{L^1} \|g\|_{L^1} ,
\end{align*}
for some $\epsilon_2>0$, where in the second last inequality we have used that $j>\lfloor\log_2 R\rfloor$ and we get the last inequality by choosing $N$ sufficiently large enough.

\subsubsection{Estimate of $E_{12}$}
\label{Estimate of E12 case}
As seen previously in \eqref{Decomposition of f in E1} we now decompose 
\begin{align*}
    \mathbb{S} &= \bigcup_{m=1}^{N_{M_1}} B_{m, M_1}^{\varpi, \Im} ,
\end{align*}
such that
\begin{align*}
    B_{m, M_1}^{\varpi, \Im} \subseteq \Big\{z \in \mathbb{S} : \varpi(z, a_{m}) \leq C \tfrac{2^{M_1(1+\gamma)}}{ R}, \ \  |\Im \langle z, e \rangle| \leq C \Big\} ,
\end{align*}
and for $m \neq m'$ and it satisfies $\varpi(a_{m}, a_{m'}) \geq C \tfrac{2^{M_1(1+\gamma)}}{2 R}$.

Now proceeding exactly as in the estimate of $S_{12}$ (see subsection \ref{Estimate of S12 case}) from \eqref{Estimate of S12 before the condition} we get
\begin{align*}
    \|E_{12}\|_{L^{1/2}} &\leq C \left(\sum_{M_2=0}^{j} \sum_{M_1=0}^{M_2} 2^{C \vartheta j} 2^{j \epsilon_2} \left( 2^{M_1 Q/2} R^{-1/2+2\epsilon} + 2^{M_1 d/2} \right) \right)^2 \|f\|_{L^1} \|g\|_{L^1} \\
    &\leq C 2^{2 C \vartheta j} 2^{j \epsilon_3} \Big(2^{j Q} R^{-1+4\epsilon} + 2^{j d} \Big) \|f\|_{L^1} \|g\|_{L^1} \\
    &\nonumber \leq C 2^{j \epsilon_4} 2^{j Q} R^{-1+4\epsilon} \|f\|_{L^1} \|g\|_{L^1} ,
\end{align*}
for some $\epsilon_3, \epsilon_4>0$, provided we choose $\vartheta$ sufficiently small.

\subsubsection{Estimate of $E_{13}$}
\label{Estimate of E13 case}
In this case using H\"older's inequality and arguing as in \eqref{L1 norm estimate for S13 case} and the fact $j>\lfloor\log_2 R\rfloor$ we get
\begin{align*}
    & \|E_{13}\|_{L^{1/2}} \\
    &\leq C \sum_{M_2=0}^{j} \sum_{M_1=0}^{M_2} \sum_{m_1=1}^{N_{M_1}} \sum_{m_2=1}^{N_{M_1}} \| \chi_{\widetilde{B}_{m_1, M_1}^{\varpi, \Im}} (1-\chi_{\widetilde{B}_{m_2, M_1}^{\varpi, \Im}}) \mathcal{B}_{R,j, M_1, M_2}^{>, >}(f_{m_1, M_1} ,g_{m_2, M_1})\|_{L^1} \\
    &\leq C \sum_{M_2=0}^{j} \sum_{M_1=0}^{M_2} \sum_{m_1=1}^{N_{M_1}} \sum_{m_2=1}^{N_{M_1}} 2^{-j \vartheta N} \left( R^{(Q+d)/2+\epsilon} 2^{-M_2 \epsilon/2} + R^{Q} 2^{-M_2/2} \right) \|f_{m_1, M_1}\|_{L^1} \|g_{m_2, M_1}\|_{L^1} \\
    &\leq C 2^{j \epsilon_1} 2^{-j \vartheta N} 2^{j Q} \|f\|_{L^1} \|g\|_{L^1} \\
    &\leq C 2^{-j \epsilon_2} \|f\|_{L^1} \|g\|_{L^1} ,
\end{align*}
for some $\epsilon_1, \epsilon_2>0$ and provided we choose $N>0$ sufficiently large.

\medskip

Combining all the estimates of $E_{11}$, $E_{12}$ and $E_{13}$, for $j>\lfloor\log_2 R\rfloor$ we obtain
\begin{align*}
    \|E_1\|_{L^{1/2}} &\leq C 2^{j \epsilon} 2^{j Q} R^{-1+4\epsilon} \|f\|_{L^1} \|g\|_{L^1} ,
\end{align*}
for some $\epsilon>0$.

\subsubsection{Estimate of $E_{2}$}
Estimate of $E_2$ is similar to the estimate of $S_2$ (see \eqref{Estimate of S2}).

\medskip
Finally, combining the estimates of $E_1$, $E_2$ and $E_3$ from \eqref{Writing Brj as sum of E1 E2 and E3} we obtain
\begin{align}
\label{Final estimate of Brj bigger bigger}
    \|\mathcal{B}_{R,j}^{>, >}(f,g)\|_{L^{1/2}} &\leq C 2^{j \epsilon} (2^{j d} + 2^{j Q} R^{-1+4\epsilon}) \|f\|_{L^1} \|g\|_{L^1} ,
\end{align}
for some $\epsilon>0$.

Since $R>1$ and $\alpha>\alpha(1,1)=d$, using the definition \eqref{Definition of error partfor restricted theorem}, above estimate \eqref{Final estimate of Brj bigger bigger} and \eqref{Triangle inequality for p less than 1 case} we have
\begin{align*}
    \|\mathcal{E}_{R}^{\alpha, >, >}(f,g)\|_{L^{1/2}}^{1/2} &\leq \sum_{j=\lfloor\log_2 R \rfloor +1}^{\infty} 2^{-j\alpha/2} \|\mathcal{B}_{R,j}^{>, >}(f,g)\|_{L^{1/2}}^{1/2} \\
    &\leq C \sum_{j=\lfloor\log_2 R \rfloor +1}^{\infty} 2^{-j\alpha/2} 2^{j \epsilon/2} (2^{j d/2} + 2^{j Q/2} R^{-1/2+2\epsilon}) \|f\|_{L^1}^{1/2} \|g\|_{L^1}^{1/2} \\
    &\leq C \big(R^{-(\alpha-d-\epsilon)/2} + R^{-(\alpha-Q-5\epsilon+1)/2} \big) \|f\|_{L^1}^{1/2} \|g\|_{L^1}^{1/2} \\
    &\leq C \|f\|_{L^1}^{1/2} \|g\|_{L^1}^{1/2} ,
\end{align*}
provided we choose $\epsilon>0$ sufficiently small such that $\alpha-d-5\epsilon>0$.

Now squaring both side of the above inequality, the proof of the estimate \eqref{To prove for Error for second theorem} is completed.

\medskip
This completes the estimate of $\mathcal{B}_{R}^{\alpha}(f^{>},g^{>})$. Now to complete the proof of Theorem \ref{Theorem: Bilinear Bochner-Riesz theorem with restricted f and g}, it remains to estimate $\mathcal{B}_{R}^{\alpha}(f^{<},g^{<})$. This estimate is relatively simpler than the previous estimate of $\mathcal{B}_{R}^{\alpha}(f^{>},g^{>})$, this is because of the Corollary \ref{Corollary: L1 to L2 estimate for diagonal operator}.

\medskip
\noindent \textbf{Estimate of $\mathcal{B}_{R}(f^{<},g^{<})$:}
Once again, let $\Omega_{\kappa_1,R}^{<}$, $\Omega_{\kappa_2,R}^{<}$ be two smooth functions on $\mathbb{R}$ such that
\begin{align*}
    \Omega_{\kappa_1,R}^{<}(\tau_1) = \left\{\begin{array}{ll}
        1, & \quad |\tau_1| \leq \kappa_1 R \\
        0, & \quad |\tau_1| \geq 2\kappa_1 R 
    \end{array}\right. \quad \text{and} \quad 
    \Omega_{\kappa_2,R}^{<}(\tau_2) = \left\{\begin{array}{ll}
        1, & \quad |\tau_2| \leq \kappa_2 R \\
        0, & \quad |\tau_2| \geq 2\kappa_2 R 
    \end{array}\right.
\end{align*}
and defined smoothly elsewhere.

Previously as in \eqref{Rewriting Bochner-Riesz grater different way}, we can write
\begin{align}
\label{Rewriting the definition of Br less less}
    & \mathcal{B}_{R}(f^{<},g^{<})(z) = \sum_{\substack{\ell_1,\ell_1', \ell_2,\ell_2' =0}}^{\infty} \left(1- \tfrac{\sqrt{\lambda_{\ell_1, \ell_1'}} + \sqrt{\lambda_{\ell_2, \ell_2'}}}{R} \right)_{+}^{\alpha} \pi_{\ell_1, \ell_1'} f^{<}(z) \, \pi_{\ell_2, \ell_2'} g^{<}(z) \\
    &\nonumber = \sum_{\substack{\ell_1,\ell_1', \ell_2,\ell_2' =0}}^{\infty} \left(1- \tfrac{\sqrt{\lambda_{\ell_1, \ell_1'}} + \sqrt{\lambda_{\ell_2, \ell_2'}}}{R} \right)_{+}^{\alpha} \Omega_{\kappa_1,R}^{<}(|\ell_1-\ell_1'|) \Omega_{\kappa_2,R}^{<}(|\ell_2-\ell_2'|) \pi_{\ell_1, \ell_1'} f(z) \,  \pi_{\ell_2, \ell_2'} g(z) \\
    &\nonumber =: \mathcal{B}^{<, <}_{R}(f,g)(z) .
\end{align}
Analogous to the decomposition of \eqref{Decomposition of Bochner-Riesz greater} here we have
\begin{align*}
    \mathcal{B}_{R}^{\alpha, <, <}(f,g)(z) &= \sum_{j = 0} ^{\lfloor\log_2 R \rfloor} 2^{-j\alpha} \mathcal{B}_{R,j}^{<, <}(f,g)(z) + \mathcal{E}_{R}^{\alpha, <, <}(f,g)(z) ,
\end{align*}
where
\begin{align}
\label{Definition of error part for less less}
    \mathcal{E}_{R}^{\alpha, <, <}(f,g)(z) &= \sum_{j = \lfloor\log_2 R \rfloor + 1}^{\infty} 2^{-j\alpha} \mathcal{B}_{R,j}^{<, <}(f,g)(z) ,
\end{align}
and
\begin{align*}
    \mathcal{B}_{R,j}^{<, <}(f,g)(z) &= \sum_{\substack{\ell_1,\ell_1', \ell_2,\ell_2' =0}}^{\infty} \Psi_j\left( \tfrac{\sqrt{\lambda_{\ell_1, \ell_1'}}}{R}, \tfrac{\sqrt{\lambda_{\ell_2, \ell_2'}}}{R} \right) \Omega_{\kappa_1,R}^{<}(|\ell_1-\ell_1'|) \Omega_{\kappa_2,R}^{<}(|\ell_2-\ell_2'|) \pi_{\ell_1, \ell_1'} f(z) \,  \pi_{\ell_2, \ell_2'} g(z) .
\end{align*}
Therefore, it is enough to show that, whenever $\alpha>d$ we have
\begin{align}
\label{To prove for Error for second theorem in less less}
    \|\mathcal{E}_{R}^{\alpha, <, <}(f,g)\|_{L^{1/2}} &\leq C \|f\|_{L^{1}} \|g\|_{L^{1}} ,
\end{align}
and for $0 \leq j \leq \lfloor\log_2 R \rfloor$ we have
\begin{align}
\label{To prove main for second theorem in less less}
    \|\chi_{B_{R}^j} \mathcal{B}_{R,j}^{<, <}(f_{R,j},g_{R,j})\|_{L^{1/2}} &\leq C 2^{j d+j\epsilon} \|f_{R,j}\|_{L^{1}} \|g_{R,j}\|_{L^{1}} \quad \text{where} \quad \supp{f_{R,j}}, \supp{g_{R,j}} \subseteq B_{R}^j ,
\end{align}
and for some $\epsilon>0$ sufficiently small.

\medskip
\noindent \textbf{Estimate of (\ref{To prove main for second theorem in less less}):}
Similar to \eqref{Fourier series decomposition in restricted} here we write
\begin{align*}
    \chi_{B_{R}^j}(z) \mathcal{B}_{R,j}^{<, <}(f_{R,j},g_{R,j})(z) & = \chi_{B_{R}^j}(z) \sum_{l \in \mathbb{Z}} \phi_{j,l,R}^{<}(\sqrt{\mathcal{L}}, |T|)f_{R,j}(z)\, \psi_{l,R}^{<}(\sqrt{\mathcal{L}}, |T|)g_{R,j}(z) ,
\end{align*}
where $\phi_{j,l,R}^{<}(\sqrt{\mathcal{L}}, |T|)$ and $\psi_{l,R}^{<}(\sqrt{\mathcal{L}}, |T|)$ are defined analogously to those in \eqref{Fourier series decomposition in restricted}.

Using H\"older's inequality twice we obtain
\begin{align*}
    \|\chi_{B_{R}^j} \mathcal{B}_{R,j}^{<, <}(f_{R,j},g_{R,j})\|_{L^{1/2}} &\leq C \left(\frac{2^{j(1+\gamma)}}{ R} \right)^{Q} \sum_{l \in \mathbb{Z}} \|\phi_{j,l,R}^{<}(\sqrt{\mathcal{L}}, |T|)f_{R,j}\|_{L^2} \|\psi_{l,R}^{<}(\sqrt{\mathcal{L}}, |T|)g_{R,j}\|_{L^2} .
\end{align*}
Note that (see \eqref{Definition of the operator Ls})
\begin{align*}
    \phi_{j,l,R}^{<}(\sqrt{\mathcal{L}}, |T|)f_{R,j}(z) &= \sum_{\substack{\ell_1, \ell_1'=0 \\ |\ell_1- \ell_1'| \leq 2\kappa_1 R}}^{\infty} \phi_{j,l}(R^{-1}\sqrt{\lambda_{\ell_1, \ell_1'}})\, \pi_{\ell_1, \ell_1'}f_{R,j}(z) =: \chi_1(|T|) \phi_{j,l}(R^{-1}\sqrt{\mathcal{L}})f(z) .
\end{align*}
Hence from Corollary \ref{Corollary: L1 to L2 estimate for diagonal operator} with $s=1$ we get
\begin{align}
\label{L1 L2 estimate of near diagonal part}
    \|\phi_{j,l,R}^{<}(\sqrt{\mathcal{L}}, |T|)f_{R,j}\|_{L^2} &\leq C R^{d/2} \|\phi_{j,l}\|_{L^{\infty}} \|f_{R,j}\|_{L^1} .
\end{align}
Similarly we also have
\begin{align}
\label{L1 L2 estimate near diagonal for psi}
    \|\psi_{l,R}^{<}(\sqrt{\mathcal{L}}, |T|)g_{R,j}\|_{L^2} &\leq C R^{d/2} \|\psi_{l}\|_{L^{\infty}} \|g_{R,j}\|_{L^1} .
\end{align}
Therefore combining the above two estimate and \eqref{Convergence of sum of l using phi} yields
\begin{align*}
    \|\chi_{B_{R}^j} \mathcal{B}_{R,j}^{<, <}(f_{R,j},g_{R,j})\|_{L^{1/2}} &\leq C \left(\frac{2^{j(1+\gamma)}}{ R} \right)^{Q} \sum_{l \in \mathbb{Z}} R^{d/2} \|\phi_{j,l}\|_{L^{\infty}} \|f_{R,j}\|_{L^1} R^{d/2} \|\psi_{l}\|_{L^{\infty}} \|g_{R,j}\|_{L^1} \\
    &\leq C 2^{j \epsilon_1} 2^{j Q} R^{-1} \|f_{R,j}\|_{L^1} \|g_{R,j}\|_{L^1} ,
\end{align*}
for some $\epsilon_1>0$.

Since $0 \leq j \leq \lfloor\log_2 R\rfloor$ we obtain
\begin{align*}
    \|\chi_{B_{R}^j} \mathcal{B}_{R,j}^{<, <}(f_{R,j},g_{R,j})\|_{L^{1/2}} &\leq C 2^{j d + j \epsilon_1} \|f_{R,j}\|_{L^1} \|g_{R,j}\|_{L^1} .
\end{align*}
This completes the estimate of \eqref{To prove main for second theorem in less less}.

\medskip
\noindent \textbf{Estimate of (\ref{To prove for Error for second theorem in less less}):}
Since $\mathbb{S}$ is compact, using H\"older's inequality and analogous estimate as of \eqref{L1 L2 estimate of near diagonal part}, \eqref{L1 L2 estimate near diagonal for psi} we obtain
\begin{align*}
    \|\mathcal{B}_{R,j}^{<, <}(f,g)\|_{L^{1/2}} &\leq C \sum_{l \in \mathbb{Z}} \|\phi_{j,l,R}^{<}(\sqrt{\mathcal{L}}, |T|)f\|_{L^2} \|\psi_{l,R}^{<}(\sqrt{\mathcal{L}}, |T|)g\|_{L^2} \\
    &\leq C R^d \left(\sum_{l \in \mathbb{Z}} \|\phi_{j,l}\|_{L^{\infty}} \right) \|f\|_{L^1} \|g\|_{L^{1}} \\
    &\leq C 2^{j d + j \epsilon}  \|f\|_{L^1} \|g\|_{L^{1}} ,
\end{align*}
for some $\epsilon>0$, where in the last inequality we have used $j>\lfloor\log_2 R\rfloor$ and \eqref{Convergence of sum of l using phi}.

Consequently, in view of the above estimate and from \eqref{Definition of error part for less less} we get the required estimate \eqref{To prove for Error for second theorem in less less} whenever $\alpha>d$, and hence with this the estimate of $\mathcal{B}_{R}^{\alpha}(f^{<},g^{<})$ is also completed.


\section{Proof of Theorem \ref{Theorem: Bilinear near diagonal case}}
\label{Section: Proof of third main theorem}
In order to prove Theorem \ref{Theorem: Bilinear near diagonal case}, due to bilinear interpolation \cite[Section 4.3]{Bernicot_Grafakos_Song_Yan_Bilinear_Bochner_Riesz_2015}, it is enough to prove Theorem \ref{Theorem: Bilinear near diagonal case} for $(p_1, p_2, p)=(1,1,1/2)$, $(1,2,2/3)$, $(2,2,1)$, $(\infty, \infty, \infty)$, $(2, \infty, 2)$ and $(1, \infty, 1)$. Here we will provide the proof for the points $(p_1, p_2, p)=(1,1,1/2)$, $(\infty, \infty, \infty)$ and $(2, \infty, 2)$. Note that the estimate at the point $(2,2,1)$ can be seen in the proof of the Theorem \ref{Theorem: Bilinear Bochner-Riesz Main theorem}. The estimates at the remaining two points $(p_1, p_2, p)=(1,2,2/3)$ and $(1, \infty, 1)$ can be obtain by following the proof at the point $(1,1,1/2)$.

\subsection{Proof at \texorpdfstring{$(p_1, p_2, p)=(1,1,1/2)$}{}}
Observe that
\begin{align*}
    \mathcal{B}_{R, s}^{\alpha}(f,g)(z) &= \mathcal{B}^{<, <}_{R^s}(f,g)(z) ,
\end{align*}
where $\mathcal{B}^{<, <}_{R^s}$ is as defined in \eqref{Rewriting the definition of Br less less} with $R$ replaced by $R^s$ and $\kappa_1=\kappa_2=\kappa$.

Consequently, proceeding similarly as in the estimate of $\mathcal{B}^{<, <}_{R}(f,g)$ (see bellow of \eqref{Rewriting the definition of Br less less}), it is enough to show, whenever $\alpha>\alpha(1,1)=d-1+s$ we have
\begin{align}
\label{To prove for Error for second theorem in less less near zero}
    \|\mathcal{E}_{R^s}^{\alpha, <, <}(f,g)\|_{L^{1/2}} &\leq C \|f\|_{L^{1}} \|g\|_{L^{1}} ,
\end{align}
and for $0 \leq j \leq \lfloor\log_2 R \rfloor$ and $\epsilon>0$ sufficiently small we have
\begin{align}
\label{To prove main for second theorem in less less near zero}
    \|\chi_{B_{R}^j} \mathcal{B}_{R^s,j}^{<, <}(f_{R,j},g_{R,j})\|_{L^{1/2}} &\leq C 2^{j (d-1+s)+j\epsilon} \|f_{R,j}\|_{L^{1}} \|g_{R,j}\|_{L^{1}} \quad \text{where} \quad \supp{f_{R,j}}, \supp{g_{R,j}} \subseteq B_{R}^j ,
\end{align}
and
\begin{align}
\label{Error part definition for near zero}
    \mathcal{E}_{R^s}^{\alpha, <, <}(f,g)(z) &= \sum_{j = \lfloor\log_2 R\rfloor+1}^{\infty} 2^{-j\alpha} \mathcal{B}_{R^s,j}^{<, <}(f,g)(z) .
\end{align}
and 
\begin{align}
\label{Dyadic part definition near zero}
    \mathcal{B}_{R^s,j}^{<, <}(f,g)(z) &= \sum_{\substack{\ell_1,\ell_1', \ell_2,\ell_2' =0\\ |\ell_1-\ell_1'|\leq \kappa R^s, |\ell_2-\ell_2'|\leq \kappa R^s }}^{\infty} \Psi_j\left( \tfrac{\sqrt{\lambda_{\ell_1, \ell_1'}}}{R}, \tfrac{\sqrt{\lambda_{\ell_2, \ell_2'}}}{R} \right) \pi_{\ell_1, \ell_1'} f(z) \,  \pi_{\ell_2, \ell_2'} g(z) .
\end{align}

Following the same line of argument as in the proof of estimates \eqref{To prove for Error for second theorem in less less} and \eqref{To prove main for second theorem in less less} we get required above two estimates \eqref{To prove for Error for second theorem in less less near zero} and \eqref{To prove main for second theorem in less less near zero} and completes the proof at the point $(p_1, p_2, p)=(1,1,1/2)$.

\subsection{Proof at \texorpdfstring{$(p_1, p_2, p)=(\infty,\infty,\infty)$}{}}
In this case it is enough to show, whenever $\alpha>d+s-3/2$ we have
\begin{align}
\label{To prove error part in infinity with s}
    \|\mathcal{E}_{R^s}^{\alpha, <, <}(f,g)\|_{L^{\infty}(\mathbb{S})} &\leq C \|f\|_{L^{\infty}(\mathbb{S})} \|g\|_{L^{\infty}(\mathbb{S})} ,
\end{align}
and for $0\leq j \leq \lfloor\log_2 R \rfloor$ we have
\begin{align}
\label{To prove main at two points with s operator}
    \|\chi_{B_{R}^j} \mathcal{B}_{R^s,j}^{<, <}(f_{R,j},g_{R,j})\|_{L^{\infty}(\mathbb{S})} &\leq C 2^{j (d+s-3/2)+j\epsilon} \|f_{R,j}\|_{L^{\infty}(\mathbb{S})} \|g_{R,j}\|_{L^{\infty}(\mathbb{S})} ,
\end{align}
for some $\epsilon>0$ sufficiently small, where $\mathcal{E}_{R^s}^{\alpha, <, <}$ and $\mathcal{B}_{R^s,j}^{<, <}$ are defined as in \eqref{Error part definition for near zero} and \eqref{Dyadic part definition near zero}.

We will start with proof of the estimate \eqref{To prove main at two points with s operator}.

\subsubsection{Proof of \eqref{To prove main at two points with s operator}}
Applying H\"older's inequality we get
\begin{align}
\label{Use of twice Holder in infinity with s}
    |\chi_{B_{R}^j}(z) \mathcal{B}_{R^s,j}^{<, <}(f_{R,j},g_{R,j})(z)| &\leq \left(\int_{\mathbb{S}} \int_{\mathbb{S}}  \varpi(z,w)^{2\beta_1} \varpi(z,u)^{2\beta_2} | \mathcal{K}_{R^s,j}^{<, <}(z, w, u)|^2 \,d\sigma(w) \, \,d\sigma(u) \right)^{1/2} \\
    &\nonumber \hspace{1cm} \times \left(\int_{\mathbb{S}} \frac{|f_{R,j}(w)|^2}{\varpi(z,w)^{2\beta_1}} \,d\sigma(w) \right)^{1/2} \left(\int_{\mathbb{S}} \frac{|g_{R,j}(u)|^2}{\varpi(z,u)^{2\beta_2}} \,d\sigma(u) \right)^{1/2} ,
\end{align}
where $\mathcal{K}_{R^s,j}^{<, <}$ is the integral kernel corresponding to the operator $\mathcal{B}_{R^s,j}^{<, <}$ given by (see also \eqref{Definition of bilinear kernel with s operator})
\begin{align*}
    \mathcal{K}_{R^s,j}^{<, <}(z, w, u) &= \sum_{\substack{\ell_1,\ell_1', \ell_2,\ell_2' =0\\ |\ell_1-\ell_1'|\leq \kappa R^s, |\ell_2-\ell_2'|\leq \kappa R^s }}^{\infty} \Psi_j\left(\tfrac{\sqrt{\lambda_{\ell_1, \ell_1'}}}{R}, \tfrac{\sqrt{\lambda_{\ell_2, \ell_2'}}}{R} \right) \overline{Y_z^{\ell_1,\ell_1'}(w)}\,  \overline{Y_z^{\ell_2,\ell_2'}(u)} \\
    &= \mathcal{K}_{\Psi_j(\sqrt{\mathcal{L}_1}, \sqrt{\mathcal{L}_2})}^{bi, \chi_s}(z, w, u) .
\end{align*}
From Proposition \ref{Proposition: Bilinear weighted Plancherel with weight and s operator} and \eqref{Discrete norm computation} we have
\begin{align}
\label{Weighted Plancherel in psi j part}
    & \left(\int_{\mathbb{S}} \int_{\mathbb{S}}  \varpi(z,w)^{2\beta_1} \varpi(z,u)^{2\beta_2} | \mathcal{K}_{R^s,j}^{<, <}(z, w, u)|^2 \,d\sigma(w) \, \,d\sigma(u) \right)^{1/2} \\
    &\nonumber \leq C R^{Q-1+s-\beta_1-\beta_2} \|\Psi_j(R\cdot, R \cdot)\|_{\lceil R \rceil, \lceil R \rceil, 2} \\
    &\nonumber \leq C R^{Q-1+s-\beta_1-\beta_2} \max\{2^{-j/2}, R^{-1/2} \} .
\end{align}
Consequently, since $0 \leq j \leq \lfloor\log_2 R \rfloor$ and $R>1$, plugging the above estimate \eqref{Weighted Plancherel in psi j part} and \eqref{Integral of wight with f case for main}, \eqref{Integral of weight with g case for main} into \eqref{Use of twice Holder in infinity with s} for $s \in [0,1]$ and $0 \leq \beta_1, \beta_2<1/2$ we obtain
\begin{align*}
    |\chi_{B_{R}^j}(z) \mathcal{B}_{R^s,j}^{<, <}(f_{R,j},g_{R,j})(z)| &\leq C R^{Q-\beta_1-\beta_2-1+s} 2^{-j/2} \left(\tfrac{2^{j(1+\gamma)}}{ R} \right)^{Q-\beta_1-\beta_2} \|f_{R, j}\|_{L^{\infty}} \|g_{R, j}\|_{L^{\infty}} \\
    &\leq C 2^{-j(1-s)} 2^{-j/2} 2^{j \epsilon} 2^{j d} \|f_{R, j}\|_{L^{\infty}} \|g_{R, j}\|_{L^{\infty}} \\
    &\leq C 2^{j (d+s-3/2)+j\epsilon} \|f_{R,j}\|_{L^{\infty}} \|g_{R,j}\|_{L^{\infty}} ,
\end{align*}
for some $\epsilon>0$.

From the above the proof of \eqref{To prove main at two points with s operator} follows.

\subsubsection{Proof of \eqref{To prove error part in infinity with s}} Observe that, we have $\varpi(z, w) \geq 1$. Applying H\"older's inequality and arguing similarly as in \eqref{Use of twice Holder in infinity with s}, \eqref{Weighted Plancherel in psi j part} for $j> \lfloor\log_2 R \rfloor$ and $0 \leq \beta_1, \beta_2<1/2$ yields
\begin{align*}
    |\mathcal{B}_{R^s,j}^{<, <}(f,g)(z)| &\leq \left(\int_{\mathbb{S}} \int_{\mathbb{S}}  \varpi(z,w)^{2\beta_1} \varpi(z,u)^{2\beta_2} | \mathcal{K}_{R^s,j}^{<, <}(z, w, u)|^2 \,d\sigma(w) \, \,d\sigma(u) \right)^{1/2} \\
    &\nonumber \hspace{2cm} \left(\int_{\mathbb{S}} |f(w)|^2 \,d\sigma(w) \right)^{1/2} \left(\int_{\mathbb{S}} |g(u)|^2 \,d\sigma(u) \right)^{1/2} \\
    &\leq C R^{Q-1+s-\beta_1-\beta_2} R^{-\mu} 2^{j\mu-j/2} \|f\|_{L^{\infty}} \|g\|_{L^{\infty}} .
\end{align*}
Consequently, similar to \eqref{Final estimate second case infinity}, since $R>1$ and taking $\alpha>d+s-3/2$ we obtain
\begin{align*}
    \|\mathcal{E}_{R^s}^{\alpha, <, <}(f,g)\|_{L^{\infty}} &\leq C R^{Q-\beta_1-\beta_2+s-1} R^{-\mu} \|f\|_{L^{\infty}} \|g\|_{L^{\infty}} \sum_{j=\lfloor\log_2 R \rfloor+1}^{\infty} 2^{-j \alpha} 2^{j(\mu-1/2)} \\
    &\leq C R^{-\alpha+(Q-\beta_1-\beta_2+s-3/2)} \|f\|_{L^{\infty}} \|g\|_{L^{\infty}} \\
    &\leq C \|f\|_{L^{\infty}} \|g\|_{L^{\infty}} ,
\end{align*}
provided we choose $\beta_1 +\beta_2$ very close to $1$.

\subsection{Proof at \texorpdfstring{$(p_1, p_2, p)=(2,\infty,2)$}{}}
The proof of Theorem \ref{Theorem: Bilinear near diagonal case} at $(p_1, p_2, p)=(2,\infty,2)$ follows closely to that of Theorem \ref{Theorem: Bilinear Bochner-Riesz Main theorem} at $(p_1, p_2, p)=(2,\infty,2)$ (see Subsection \ref{Subsection: Proof of main theorem at 2 infinity}) with obvious modification. If fact, in order get the desired analogous estimate as of \eqref{Calculating weighted Plancherel} and \eqref{Weighted Plancherel calculation for j bigger}, instead of using Corollary \ref{Corollary: Linear weighted Plancherel}, here we have to use Proposition \ref{Proposition: Improved weighted Plancherel for near} and then the rest of the proof follows similarly.

\section*{Acknowledgments}
The first and third author would like to acknowledge the support provided by the FIST grant (SR/FST/MS-II/2019/51) at IISER Kolkata provided by the Government of India. S. Bagchi was partially supported by the Anusandhan National Research Foundation (ANRF), India, under the research project ANRF/ARG/2025/003732/MS. J. Singh was supported from Prime Minister's Research Fellowship (PMRF), and institute post-doctoral fellowship at Indian Institute of Science Education and Research Mohali. M. N.~Vempati's research was supported, in part, by the Louisiana Experimental Program to Stimulate Competitive Research (EPSCoR), funded by the National Science Foundation and the Board of Regents Support Fund award number OIA-2437963.


\providecommand{\bysame}{\leavevmode\hbox to3em{\hrulefill}\thinspace}
\providecommand{\MR}{\relax\ifhmode\unskip\space\fi MR }
\providecommand{\MRhref}[2]{%
  \href{http://www.ams.org/mathscinet-getitem?mr=#1}{#2}
}
\providecommand{\href}[2]{#2}

\end{document}